\tikzset{math3d/.style=
    {x= {(-0.353cm,-0.353cm)}, z={(0cm,1cm)},y={(1cm,0cm)}}}
\definecolor{Chocolat}{rgb}{0.36, 0.2, 0.09}
\definecolor{BleuTresFonce}{rgb}{0.215, 0.215, 0.36}
\theoremstyle{plain}
\newtheorem{proposition}{Proposition}[section]
\newtheorem{theorem}{Theorem}[section]
\newtheorem{corollary}{Corollary}[section]
\newtheorem{lemma}{Lemma}[section]
\theoremstyle{definition}
\newtheorem{definition}{Definition}[section]
\newtheorem{remark}{\sc Remark}[section]
\newtheorem{exam}{\sc Example}[section]
\newtheorem{cond}{\sc Condition}
\def\QD{\textbf{QD}}
\def\ot{\otimes}
\def\L{\mathrm{L}}
\def\S{\mathrm{S}}
\def\Lie{\textbf{Lie-alg}}
\def\Com{\textbf{Com-alg}}
\def\Comco{\textbf{Com-coalg}}
\def\wgComco{\textbf{wg-Com-coalg}}
\def\A{\mathrm{A}}
\def\Ac{\mathrm{A}^\mathrm{c}}
\def\Sc{\mathrm{S}^\mathrm{c}}
\def\Ass{\textbf{Ass-alg}}
\def\Lie{\textbf{Lie-alg}}
\def\Com{\textbf{Com-alg}}
\def\wgAss{\textbf{wg-Ass-alg}}
\def\wgLie{\textbf{wg-Lie-alg}}
\def\wgCom{\textbf{wg-Com-alg}}
\def\Assco{\textbf{Ass-coalg}}
\def\wgAssco{\textbf{wg-Ass-coalg}}
\def\uot{\underline{\otimes}}
\newcommand{\acc}{\text{\raisebox{\depth}{\textexclamdown}}}
\def\Sy{\mathbb{S}}
\def\C{\textbf{C}}
\def\D{\textbf{D}}
\def\CC{\mathcal{C}}
\def\P{\mathcal{P}}
\def\O{\mathcal{O}}
\def\Fin{\textbf{Fin}}
\def\id{\mathrm{id}}
\def\F{\mathrm{F}}
\def\BQO{\textbf{BOQD}}
\def\g{\mathfrak{g}}
\def\t{\mathrm{t}}
\def\e{\mathrm{e}}
\DeclareMathOperator{\As}{As}
\DeclareMathOperator{\gr}{gr}
\def\QQ{\mathbb{Q}}
\def\GraS{\mathrm{Gra}_{S^1}}
\def\LG{\mathrm{LG}}
\newcommand{\Hom}{\ensuremath{\mathrm{Hom}}}
\def\un{\underline{n}}
\def\kHG{k\text{-}\mathrm{HG}}
\def\krHG{k\text{-}\overline{\mathrm{HG}}}
\def\kLHG{k\text{-}\mathrm{LHG}}
\def\EHKR{\mathrm{EHKR}}
\title{Monoidal structures on the categories of quadratic data}
\date{\today}
\thanks{B.V. was supported by the IUF and the grant ANR-14-CE25-0008-01 project SAT}
\author{Yuri I. Manin}
\address{Max--Planck--Institute for Mathematics, Vivatsgasse 7, Bonn 53111, Germany}
\email{manin@mpim-bonn.mpg.de}
\author{Bruno Vallette}
\address{Laboratoire Analyse, G\'eom\'etrie et Applications, Universit\'e Paris Nord 13, Sorbonne Paris Cit\'e, CNRS, UMR 7539, 93430 Villetaneuse, France.}
\email{vallette@math.univ-paris13.fr}
\subjclass[2010]{Primary 18D10; Secondary 16S37, 18D50}
\keywords{Monoidal categories, 2--monoidal categories, quadratic data, operads, black and white products, Koszul duality.}
\begin{document}

\maketitle

\begin{abstract}
The notion of 2--monoidal category used here was introduced
by B.~Vallette in 2007 for applications in the operadic context.
The starting point for this article was a remark by Yu. Manin that
in the category of quadratic algebras (that is, ``quantum linear spaces'') one can also define
2--monoidal structure(s) with rather unusual properties.
Here we give a detailed exposition of these constructions,
together with their generalisations to the case of quadratic operads.

Their parallel exposition was motivated by the following remark.
Several important operads/cooperads such as genus zero quantum cohomology operad,
the operad classifying Gerstenhaber algebras, and more generally,
(co)operads of homology/cohomology  of some topological operads, start with collections of
quadratic algebras/coalgebras rather than simply linear spaces.

Suggested here enrichments of the categories to which components of these operads belong, as well of the operadic structures themselves, might lead to the better understanding of these fundamental objects.
\end{abstract}

\setcounter{tocdepth}{1}
\tableofcontents

\section{Brief summary and plan of exposition}

A {\it monoidal category}, as it was defined and studied in \cite[Chapter VII]{MacLane98},
is a category $\mathcal{C}$ endowed with a bifunctor $\boxtimes :\, \mathcal{C}\times \mathcal{C}\to
\mathcal{C}$ satisfying the associativity axiom (``pentagon diagram'') and  equipped with a 
(left and right) unit object.

\smallskip

A  {\it lax 2--monoidal category}, as it was introduced in \cite{Vallette08},  is a category $\mathcal{C}$
endowed with {\it two} structures of monoidal category, whose respective bifunctors
$\boxtimes$ and $\otimes$ are related by the natural transformation
called {\it an interchange law}:
$$
\varphi_{AA^{\prime}BB^{\prime}} :\ 
(A\otimes A^{\prime}) \boxtimes (B\otimes B^{\prime}) \to 
(A\boxtimes B)\otimes  (A^{\prime}\boxtimes B^{\prime})
\eqno(1.1)
$$
compatible with associativity of $\boxtimes$ and $\boxtimes$--unit object
in the sense made explicit in the Proposition 2 of [Va08].
Inverting all arrows (i.e. working in the opposite category), one gets the notion of
{\it colax 2--monoidal category}. Finally, 
a $2$-monoidal category equipped with a  lax and a colax structure is simply called a {\it 2--monoidal category}.
A close  but more restricted notion, which now often called \emph{duoidal category} was coined by M. Aguiar and S. Mahajan in \cite{AguiarMahajan06}.
\smallskip

Notice that A. Joyal and R. Street, in the work \cite{JoyalStreet93} on braided
tensor categories, came up with a notion of a category endowed with two monoidal products but related by a natural isomorphism, which forces the two monoidal structures to be isomorphic. 
C. Balteanu, Z. Fiedorowicz, R.
Schw\"anzl and R. Vogt in \cite{BalteanuFiedorowiczSchwanzlVogt03} introduced a notion of iterated
monoidal category in order to study  iterated loop spaces. But in their framework, the units for the monoidal products should be equal. Neither of these two restrictions is imposed in our present examples. 

\smallskip

\cref{sec:DefQD} of our paper starts with a systematic formalization of the
general notions: ``algebra/operad defined by quadratic relations between
(graded) generators'' and their reduction to the notions of ``quadratic data''.
We then introduce various relevant categorical frameworks involving
monoidal structures on the categories of such data, various canonical
functors between them, and basic commutative diagrams
relating these functors.

\smallskip

          This is a development and generalization of constructions
introduced in \cite{Manin88} as an approach to quantum algebra: quantum
linear spaces, black and white products, bialgebras of their
quantum endomorphisms,  and quantum groups.

\smallskip

          The central result of \cref{sec:2MonoQ} is a new construction of
2-monoidal structures on the categories of quadratic data $\QD$  
(defined in \cref{subsec:DefQD}): we start with a simple construction of 2-monoidal
structure on the category of graded vector spaces, and then show that it lifts
to the category of quadratic data.

\smallskip

          The central result of \cref{sec:2MonoOp} is a generalization of this construction
to the categories of binary operadic quadratic data  $\BQO$
defined in \cref{subsec:BQO}.

\smallskip

         Finally, in \cref{Sec:HopfOp}, we return to the quantum picture of \cite{Manin88}
and generalize it to our framework, as was done in \cite{Manin18} for the simplest
case of the genus zero component of the quantum cohomology operad.

\smallskip

        The most important new feature of our picture is the fact that
there is an abundance of operads/cooperads with postulated
properties arising naturally in various geometric contexts.

\smallskip

         More precisely, any topological operad like the little discs operad (loop spaces) or the Deligne--Mumford operad of moduli spaces of stable genus 0 curves with marked points (quantum cohomology), induces 
a \emph{homology} operad in the category of cocommutative coalgebras, 
a \emph{cohomology} cooperad in the category of commutative algebras, 
and a \emph{``homotopy''} operad in the category of Lie algebras. 
It is difficult  to pass from one to another directly at the level of Lie algebras and (co)commutative (co)algebras.

\smallskip

 It is however well-known that the Koszul duality of \cite{GinzburgKapranov94} between the two operads $\mathrm{Lie}-\mathrm{Com}$  coincides with the duality Homotopy-(co)Homology in rational homotopy theory. 
Our idea here is to lift these operadic structures on the level of simple categories of quadratic data without any loss of information (under \cref{Cond:ConditionI} of \cref{prop:(Co)Ho(Co)Op}). In order to do so, we introduce the relevant notions of (symmetric, skew-symmetric) quadratic data together with suitable symmetric monoidal structures. On that level, we do have the Koszul duality and the linear duality functors. There are also ``realisation'' functors from these categories of quadratic data to categories of (co)algebras. Since all these functors are symmetric monoidal, they preserve (co)operad structures. 

\[\xymatrix@C=30pt@R=30pt{
(\QD^-, \oplus) \ar[r]^{\acc}  \ar[d]_{\textrm{L}} \ar@/^1.5pc/[rr]^{!}  & 
(\QD^+,\uot)   \ar[d]_{\Sc}  \ar[r]^{*} &
(\QD^+, \vee) \ar[d]_{\S}\\
(\Lie, \oplus)  & (\Comco,\ot)   &(\Com,\ot) \\
\boxed{\textstyle \textsc{Homotopy}\atop \textstyle \text{Lie Operads}} &
\boxed{\textstyle \textsc{Homology}\atop \textstyle \text{Hopf Operads}}& 
\boxed{\textstyle \textsc{Cohomology}\atop \textstyle \text{Hopf Cooperads}}
}
\]
The simplest case is when one has to deal with operads in the category of skew-symmetric quadratic data $\QD^-$,  where the underlying monoidal structure is particularly easy: the direct sum. So this category is our favorite site to describe operadic structures. Then, we get for free (co)operad structures in all the other symmetric monoidal categories. 
It turns out that, this way, one can recover many of the most important (co)operad structures present in the literature, like the graph (co)operads,  the ones related to the little discs/configuration spaces of points 
$\mathcal{D}_2(n)\sim \mathrm{Conf}_n(\mathbb{R}^2)$, 
the  real locus of the  moduli spaces of stable curves of genus $0$ with marked points
$\overline{\mathcal{M}_{0,n}}(\mathbb{R})$, and 
their non-commutative versions. This point has two main interests: it makes particularly easy the passage between Lie operads and Hopf (co)operads and it allows us to organise the various operad structures in a commun pattern. 
For instance, we construct a family of operads in skew--symmetric quadratic data whose first two cases are provided by the Drinfeld--Kohno quadratic data $\mathrm{Conf}_n(\mathbb{R}^2)$ and the Etingof--Henriques--Kamnitzer--Rains quadratic data $\overline{\mathcal{M}_{0,n}}(\mathbb{R})$; we also give them a canonical operadic interpretation. This gives a new family of operads quite similar to the $e_k$-operads, except that instead of having a degree $k-1$ (binary) Lie bracket, we have a degree $1$ ``Lie bracket'' of arity $k$.
\smallskip

With the same method, one can also study complex cases like the operad made up of the complex locus of the moduli spaces of stable curves of genus $0$ with marked points $\overline{\mathcal{M}_{0,n}}(\mathbb{C})$  \cite{KontsevichManin94, Getzler95, KontsevichManin96, Manin99}, whose cohomology rings admit a quadratic presentation by \cite{Keel92}. 
There is also its non-commutative version $\mathcal{B}(n)$ introduced in \cite{DotsenkoShadrinVallette15} by means of toric varieties called brick manifolds  and  the dihedral topological operad ${\mathcal{M}^\delta_{0,n}}(\mathbb{C})$ introduced by F. Brown in  \cite{Brown09} as a partial compactification with a view to understand multiple zeta values, see also \cite{DV17, AP17}. The details are left to an interested reader.

\subsection*{Acknowledgements}
We are  grateful to Clemens Berger, Ricardo Campos, Vladimir Dotsenko, Anton Khoroshkin, and Daniel Robert--Nicoud for interesting and useful discussions. 

\section{Quadratic data, monoidal structures, and their algebraic realisations}\label{sec:DefQD}

\subsection{Notations and conventions}
We work over a ground field $K$ of characteristic $\neq 2$ and over the underlying category of finite dimensional $\mathbb{Z}$-graded $K$-vector spaces equipped with their morphisms of degree zero. The linear dual $V^*$ is considered degree-wise: $(V^*)_{-n}\coloneqq\mathrm{Hom}(V_n, K)$. 
We equip this category with the usual tensor product $(M\otimes N)_n\coloneqq\bigoplus_{k+l=n} M_k\otimes N_l$ and with the natural isomorphisms $\sigma(x\otimes y)\coloneq (-1)^{|x||y|} y\otimes x$ in order to make it into a symmetric monoidal category denoted simply by $(\textbf{grVect}, \otimes)$. We denote by $s$ (respectively its linear dual $s^{-1}$) the one-dimensional graded vector space concentrated in degree $1$ (respectively $-1$) and the degree shift operator by $sV\coloneq s\ot V$ (respectively $s^{-1}V\coloneq s^{-1}\ot V$). 

\subsection{Categories of quadratic data}\label{subsec:DefQD} For any graded vector space $V$, we consider the canonical decomposition $V^{\ot 2}\cong V^{\odot 2} \oplus V^{\wedge 2}$, where
\begin{align*}
V^{\odot 2}\coloneq
\left\langle  x\odot y\coloneq x\ot y+ (-1)^{|x||y|}y\ot x \right\rangle \qquad  \text{and} \qquad
V^{\wedge 2}\coloneq
\left\langle  x\wedge y\coloneq x\ot y- (-1)^{|x||y|}y\ot x \right\rangle
\ .
\end{align*}

\begin{definition}[Quadratic data]
An object of the category $\QD$ of \emph{quadratic data} is a pair $(V, R)$ made up of a graded vector space $V$ and a subspace $R\subset V^{\otimes 2}$. A morphism $f : (V,R) \to (W,S)$ of quadratic data amounts to a morphism $f : V \to W$ of graded vector spaces satisfying $f^{\otimes 2}(R)\subset S$.

The category of \emph{symmetric quadratic data} $\QD^+$ (respectively  \emph{skew-symmetric quadratic data} $\QD^-$) is defined similarly with pairs $(V, R)$ such that $R\subset 
V^{\odot 2}$ (respectively $R\subset V^{\wedge 2}$) this time. 
\end{definition}

\subsection{Functors}\label{subsec:Func} There are first obvious ``realisation'' functors from the categories of quadratic data to the categories of unital associative algebras, unital commutative algebras, and Lie algebras respectively: 
\[
\begin{array}{llll}
\A \ :&\QD & \to & \Ass\\
& (V,R) & \mapsto & \frac{T(V)}{(R)}
\end{array} \ , \qquad 
\begin{array}{llll}
\S \ :&\QD^+ & \to & \Com\\
& (V,R) & \mapsto & \frac{S(V)}{(R)}
\end{array}  \ ,\qquad 
\begin{array}{llll}
\L \ :&\QD^- & \to & \Lie\\
& (V,R) & \mapsto & \frac{Lie(V)}{(R)}
\end{array}\ .
\]

\medskip

In order to lift the universal enveloping algebra functor 
\[
\begin{array}{llll}
U \ :&\Lie & \to & \Ass\\
& (\mathfrak{g}, [\; ,\,]) & \mapsto & U(\mathfrak{g})\coloneq\frac{T(\mathfrak{g})}{\left(x\ot y -(-1)^{|x||y|}y\ot x-[x,y]\right)}
\end{array} 
\]
to the quadratic data level, we consider the functor
\[
\begin{array}{llll}
\Lambda \ :&\QD^-& \to & \QD \\
& (V,R) & \mapsto & (V, \Lambda(R))\ , 
\end{array} 
\]
where $\Lambda(R)\in  V^{\wedge 2}\subset V^{\ot 2}$ is the natural inclusion.

Similarly, we lift the inclusion functor $\Com \hookrightarrow  \Ass$ 
to the quadratic data level by
\[
\begin{array}{llll}
\mathcal{S} \ :&\QD^+& \to & \QD \\
& (V,R) & \mapsto & (V, \Sigma(R)\oplus V^{\wedge 2})\ , 
\end{array} 
\]
where $\Sigma(R) \in V^{\odot 2} \subset V^{\ot 2}$ is the natural inclusion.
\medskip

One can notice that the images of these algebraic realisation functors always produce a \emph{weight graded} algebra, that is $A\cong \bigoplus_{n \in \mathbb{N}} A^{(n)}$, where each component $A^{(n)}$ is finite dimensional. We denote the associated categories respectively by $\wgAss$, $\wgCom$, and $\wgLie$. 

\medskip

Dually, we consider the two categories of weight graded counital coassociative coalgebras $\textbf{wg-}\allowbreak\textbf{Ass-}\allowbreak\textbf{coalg}$ and weight graded counital cocommutative coalgebras $\wgComco$, with finite dimensional components. 
There are also realisation functors from the categories of quadratic data to these two categories: 
\[
\begin{array}{llll}
\Ac \ :&\QD & \to & \wgAssco\\
& (V,R) & \mapsto & T^c(V,R)
\end{array}  \qquad \text{and} \qquad
\begin{array}{llll}
\S^c \ :&\QD^+ & \to & \wgComco\\
& (V,R) & \mapsto & S^c(V,R)
\end{array} \ ,
\]
where the quadratic coalgebra $T^c(V,R)$ (and similarly the quadratic cocommutative coalgebra $S^c(V,\allowbreak R)$) is 
initial object in the category of (conilpotent) counital coassociative  coalgebras under $T^c(V)$ such that the composite with the projection onto $\frac{T^c(V)}{R}$ vanishes: 
\[
\xymatrix{
C  \ar[r] \ar@{..>}[d]_(0.45){\exists} \ar@/^1pc/[rr]^0 & T^c(V)\ar@{->>}[r]& \frac{T^c(V)}{R} \\
 T^c(V,R) \ar[ur] \ar@/_1pc/[urr]_0& & } .
\]
It is explicitly given by 
\[
T^c(V,R)\cong K \oplus V \oplus R \oplus \cdots \oplus 
\left(\bigcap_{i+2+j=n} V^{\ot i} \ot R \ot V^{\ot j}
\right) \oplus \cdots \ ,
\]
see  \cite[Section~2]{Vallette08} or \cite[Section~3.1.3]{LodayVallette12} for more details. The category of  cocommutative coalgebras naturally imbeds into the category of coassociative coalgebras: $\Comco \hookrightarrow  \Assco$ and similarly $\wgComco \hookrightarrow  \wgAssco$. These functors lift on the level of quadratic data by 
\[
\begin{array}{llll}
\Sigma \ :&\QD^+& \to & \QD \\
& (V,R) & \mapsto & (V, \Sigma(R))\ .
\end{array} 
\]

There are first \emph{Koszul dual} functors  
\[
\begin{array}{llll}
\acc \ :&\QD& \to & \QD \\
& (V,R) & \mapsto & (sV, s^{2}R) 
\end{array} 
\qquad \text{and} \qquad 
\begin{array}{llll}
\acc \ :&\QD^\pm& \to & \QD^\mp \\
& (V,R) & \mapsto & (sV, s^{2}R)\ , 
\end{array} 
\]
where the double degree shift operator is defined by $s(x \otimes y)\coloneq (-1)^{|x|}s x \otimes s  y$ and which sends symmetric quadratic data to skew-symmetric quadratic data and vice versa. 
Notice that, all the above-mentioned functors are covariant. 

\medskip 

Now, we consider the linear dual \emph{contravariant} functors 
\[
\begin{array}{llll}
* \ :&\QD& \to & \QD \\
& (V,R) & \mapsto & (V^*, R^\perp) 
\end{array} 
\qquad \text{and} \qquad 
\begin{array}{llll}
* \ :&\QD^\pm& \to & \QD^\pm \\
& (V,R) & \mapsto & (V^*, R^\perp)\ , 
\end{array} 
\]
In the former case, since $R\subset V^{\otimes 2}$, its orthogonal is understoof in $R^\perp\subset (V^*)^{\otimes 2}\cong \left(V^{\ot 2}\right)^*$. In the latter case, since $R\subset V^{\odot 2}$ (respectively $R\subset V^{\wedge 2}$), its orthogonal is understood in $R^\perp\subset (V^*)^{\odot 2}$ (respectively in $R^\perp\subset (V^*)^{\wedge 2}$). 

One can iterate the above two types of functors to produce the \emph{second Koszul dual} (contravariant) functors: 
\[
\begin{array}{llll}
!\coloneq*\acc \ :&\QD& \to & \QD \\
& (V,R) & \mapsto & (s^{-1}V^*, s^{-2}R^\perp) 
\end{array} 
\qquad \text{and} \qquad \begin{array}{llll}
!\coloneq*\acc \ :&\QD^-& \to & \QD^+ \\
& (V,R) & \mapsto & (s^{-1}V^*, s^{-2}R^\perp)\ .
\end{array} 
\]

The weight-wise linear duality functor sends coalgebras to algebras (and vice-versa): 
\[
\begin{array}{llll}
* \ :&\wgAssco& \to & \wgAss  \\
& \bigoplus_{n\in \mathbb{N}} C^{(n)} & \mapsto &  \bigoplus_{n\in \mathbb{N}} \left(C^{(n)}\right)^*
\end{array} 
\qquad \text{and} \qquad 
\begin{array}{llll}
* \ :&\wgComco& \to & \wgCom  \\
& \bigoplus_{n\in \mathbb{N}} C^{(n)} & \mapsto &  \bigoplus_{n\in \mathbb{N}} \left(C^{(n)}\right)^*\ .
\end{array} 
\]

\begin{proposition}\label{Prop:ComDiag}
All these functors assemble into the following commutative diagram. 
\rm
\[\xymatrix@C=17pt{
&\QD\ar[rr]^{\acc} \ar[dd]|(0.3)\hole|(0.5)\hole_(0.6){\A} \ar@/^2pc/[rrrr]^{!} & & \QD\ar[dd]_(0.6){\Ac}|(0.28)\hole|\hole \ar[rr]^{*} &&  \QD \ar[dd]_(0.6){\A} \\
\QD^- \ar[rr]^(0.6){\acc}  \ar[ru]^{\Lambda} \ar[dd]_{\textrm{L}} \ar@/^2pc/[rrrr]^{!}  & &
\QD^+ \ar[ru]^(0.65){\Sigma}|(0.57)\hole  \ar[dd]_(0.6){\Sc}  \ar[rr]^(0.6){*} & & \QD^+\ar[ru]^{\mathcal{S}} \ar[dd]_(0.6){\S}&\\
& \wgAss \ar@{^{(}->}[dd]& & \wgAssco \ar[rr]|(0.52)\hole^(0.6){*} \ar@{^{(}->}[dd]|\hole& & \wgAss\ar@{^{(}->}[dd]\\
\wgLie\ar@{^{(}->}[dd] & &\wgComco  \ar[rr]^(0.6){*} \ar@{^{(}->}[ru]\ar@{^{(}->}[dd]&  &\wgCom\ar@{^{(}->}[ru] \ar@{^{(}->}[dd]&\\
& \Ass&&\Assco   &  &\Ass 
\\
\Lie \ar[ru]^{\mathrm{U}} & &\Comco   \ar@{^{(}->}[ru]&  &\Com \ar@{^{(}->}[ru] 
&
}
\]
\end{proposition}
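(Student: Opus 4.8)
The plan is to verify the commutativity of the cube-shaped diagram face by face, reducing everything to elementary identities about degree shifts, linear duals, and the orthogonal-complement operation. Since all the functors in sight are defined by very explicit formulas on objects (and are the identity or the obvious map on the underlying morphisms), commutativity on morphisms is automatic once it is checked on objects; so the whole proof is a finite sequence of object-level computations. I would organise the diagram into three ``slabs'': the top slab of quadratic-data categories, the bottom slab of (co)algebra categories, and the vertical realisation functors connecting them.

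First I would dispatch the top slab. The squares built from $\acc$ and $*$ commute because these operations act independently: $\acc$ only shifts degrees, $(V,R)\mapsto(sV,s^2R)$, while $*$ only dualises, $(V,R)\mapsto(V^*,R^\perp)$, and $(sV)^*\cong s^{-1}V^*$ canonically with $(s^2R)^\perp = s^{-2}R^\perp$ under the identification $(s^2V^{\otimes 2})^*\cong s^{-2}(V^{\otimes 2})^*$ — this is exactly the content of the definition $!\coloneq *\acc$, so the outer ``$!$'' arcs commute by definition. The triangles involving $\Lambda$, $\Sigma$, and $\mathcal S$ require noting that $\acc$ is compatible with the decomposition $V^{\otimes 2}\cong V^{\odot 2}\oplus V^{\wedge 2}$ up to the sign twist $s(x\otimes y)=(-1)^{|x|}sx\otimes sy$, which interchanges the symmetric and skew-symmetric parts; one then checks that $\Lambda$ followed by $\acc^{\QD}$ agrees with $\acc^{\QD^\pm}$ followed by $\Sigma$ (both send $(V,R)$ with $R\subset V^{\wedge 2}$ to $(sV, s^2R)$ with $s^2R\subset (sV)^{\odot 2}$), and similarly for the $*$-squares, using that $(\;)^\perp$ computed inside $V^{\odot 2}$ versus inside $V^{\otimes 2}$ differ precisely by the $V^{\wedge 2}$ summand — which is why $\mathcal S$ adds the $V^{\wedge 2}$ term while $\Sigma$ does not.

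Next, the vertical faces. The left face says $\A\circ\Lambda = \mathrm U\circ\mathrm L$ up to the $\wgAss\hookrightarrow\Ass$ inclusion: starting from $(V,R)$ with $R\subset V^{\wedge 2}$, the top route forms $T(V)/(\iota R)$ while the bottom route forms $U(Lie(V)/(R))$; both are $T(V)$ modulo the ideal generated by $R$ together with the Lie-relations, and the Poincaré--Birkhoff--Witt-type presentation of $U$ makes these the same weight-graded algebra. The middle vertical face relates $\Ac$, $\Sigma$, $\Sc$ and the coalgebra inclusions: here I would invoke the explicit formula $T^c(V,R)\cong\bigoplus_n\bigcap_{i+2+j=n}V^{\otimes i}\otimes R\otimes V^{\otimes j}$, observe that for $R\subset V^{\odot 2}$ this intersection lands inside the cocommutative part, and identify it with $S^c(V,R)$; dually on the right face one uses that $\S(V,R)=S(V)/(R)$ maps to $\Ass$ through $\Com\hookrightarrow\Ass$ compatibly with $\mathcal S$ adding the $V^{\wedge 2}$ relations (which are exactly the relations forcing commutativity in $T(V)$). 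Finally the bottom slab, involving the weight-wise duality $*\colon\wgAssco\to\wgAss$ and $\wgComco\to\wgCom$, commutes with $\Ac,\Sc,\A,\S$ because quadratic duality of algebras is, weight-component by weight-component, exactly linear duality: $(T^c(V,R))^*\cong T(V^*)/(R^\perp)$ and $(S^c(V,R))^*\cong S(V^*)/(R^\perp)$, which is the classical statement that $\A\circ{*} = {*}\circ\Ac$ on the relevant categories.

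The main obstacle I anticipate is bookkeeping the signs and the canonical identifications rather than any conceptual difficulty: one must pin down, once and for all, the isomorphisms $(sV)^{\otimes 2}\cong s^2 V^{\otimes 2}$, $(V^{\otimes 2})^*\cong (V^*)^{\otimes 2}$, and $(V^{\odot 2})^*\cong (V^*)^{\odot 2}$ and their interaction with the Koszul sign $(-1)^{|x|}$ in the definition of the shift on tensors, and then verify that every face commutes \emph{on the nose} (not merely up to isomorphism) with these choices — in particular that the twist in $\acc$ is precisely what makes $\QD^\pm\to\QD^\mp$ well defined and compatible with $\Lambda,\Sigma$. Once a consistent sign convention is fixed, each face reduces to a one-line check, so I would present the proof as: (i) fix the identifications; (ii) check the six or seven ``atomic'' faces (three in the top slab, three vertical, and the bottom-slab duality squares), each in a sentence; (iii) conclude that the full diagram commutes since it is tiled by commuting faces and all functors are determined by their action on objects.
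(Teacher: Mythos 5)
Your proposal is correct and follows essentially the same route as the paper: a face-by-face verification reducing each face to an explicit object-level identity, namely $s^2\Lambda(R)\cong\Sigma(s^2R)$ and $\Sigma(R)^\perp\cong\Sigma(R^\perp)$ for the top triangles, $\mathrm{U}\bigl(Lie(V)/(R)\bigr)\cong T(V)/(\Lambda(R))$ and $S(V)/(R)\cong T(V)/(\Sigma(R)\oplus V^{\wedge 2})$ for the side faces, and $T^c(V,R)^*\cong T(V^*,R^\perp)$, $S^c(V,R)^*\cong S(V^*,R^\perp)$, $S^c(V,R)\cong T^c(V,\Sigma(R))$ for the duality and central faces. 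The only divergence is cosmetic (your ``slab'' organisation and the PBW phrasing where an adjunction argument suffices), so no further comment is needed.
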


\begin{proof}
The commutativity of the extreme top  face amounts to 
$\Lambda(R)^\perp\cong \Sigma(s^{-2}R^\perp)\oplus (s^{-1}V^*)^{\wedge 2}$, 
the commutativity of the left top face amounts to 
$s^2\Lambda(R)\cong \Sigma(s^{2}R)$, and 
the commutativity of the right top face amounts to 
$\Sigma(R^\perp)\cong \Sigma(R)^\perp$.

The commutativity of the front face is given by $S^c(V,R)^*\cong S(V^*, R^\perp)$ 
and the commutativity of the back face is given by $T^c(V,R)^*\cong T(V^*, R^\perp)$. This comes from the fact that the  universal property satisfied by  quadratic algebras is categorically dual to the universal property defining  quadratic coalgebras, see \cite[Section~3.2.2]{LodayVallette12}.

The commutativity of the left-hand side vertical face comes from  $\mathrm{U}\left(\frac{Lie(V)}{(R)}\right)\cong \frac{T(V)}{(\Lambda(R))}$, and the commutativity of the right-hand side vertical  face comes from 
$\frac{S(V)}{(R)}\cong \frac{T(V)}{(\Sigma(R)\oplus V^{\wedge 2})}$. 
The commutativity of the central horizontal face is obvious. It induces the commutativity of the 
central vertical face: the isomorphism $S^c(V,R)\cong T^c(V, \Sigma(R))$ can be seen under the weight-wise linear dual from the above isomorphism and the dual characterisations of quadratic (co)algebras. (One can also prove that $S^c(V,R)$ satisfies the universal property  of coassociative quadratic coalgebra generated by $(V, \Sigma(R))$.)

The commutativity of the other faces involving only forgetful functors is straightforward.
\end{proof}

\begin{remark}
We could also consider Koszul dual and linear dual inverse functors, going in the opposite direction, defined by formulas like $(s^{-1}V, s^{-2}R)$ for $\acc$ and  $(sV^*, s^2R^\perp)$ for $*$\ . We keep the exposition to the present degree of details for reasons that will be apparent in \cref{Sec:HopfOp}, when dealing with (co)operad structures. So far, we would like the vertex labeled by the category $\QD^-$ to be the unique top vertex of this diagram.
\end{remark}

\subsection{Symmetric monoidal structures}\label{subsec:MonoStruc}
We now enrich the above categories with symmetric mo\-no\-idal structures. 
On the category $\QD$ of quadratic data, we consider the two symmetric monoidal products $\ot$ and $\uot$:
\[
(V,S)\ot (W,R)\coloneq(V\oplus W, R\oplus  [V,W]_- \oplus S)
\quad \text{and} \quad
(V,S)\, \underline{\ot}\,  (W,R)\coloneq(V\oplus W, R\oplus  [V,W]_+ \oplus S)
\ ,
\]
where $[V,W]_\pm\coloneq\left\langle v\ot w \pm (-1)^{|v||w|}w\ot v\ \big| \ v\ot w \in V\ot W \right\rangle$ .

\medskip

The category $\QD^+$ of symmetric quadratic data is endowed with 
\[
(V, R)\vee (W,S)\coloneq(V\oplus W, R\oplus S)  \qquad \text{and} \qquad (V, R)\, \uot\, (W,S)\coloneq(V\oplus W, R\oplus 
[V,W]_+ \oplus S) 
\]
and the category $\QD^-$ of skew-symmetric quadratic data is endowed with 
\[
(V, R)\oplus (W,S)\coloneq(V\oplus W, R\oplus [V, W]_- \oplus S)\ .
\]
The bottom categories of algebras are equipped with the following monoidal products. 
We consider the direct sum $\mathfrak{g}\oplus \mathfrak{h}$ of Lie algebras, where 
$[x+y, x'+y']\coloneq[x,x']+[y,y']$, for any $x,x'\in \mathfrak{g}$ and $y,y'\in \mathfrak{h}$. This is the categorical product in the category $\Lie$. The underlying tensor product $\ot$ of two associative algebras  $A,B \in \Ass$ carries a natural associative product: 
$\mu(a\ot b,a'\ot b')\coloneq(-1)^{|a'||b|}\mu_A(a,a')\ot \mu_B(b, b')$. If the two algebras happen to be commutative, so is their tensor product. The same holds true for the tensor product of coassociative or cocommutative coalgebras and in the weight graded case. 

\begin{proposition}\label{Prop:SymMono}
The above-mentioned monoidal products endow their respective categories with a  symmetric monoidal structure. 
\end{proposition}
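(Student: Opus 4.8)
The plan is to verify, for each of the five monoidal products introduced in \cref{subsec:MonoStruc}, the three pieces of data of a symmetric monoidal structure---an associativity isomorphism, a unit object together with left/right unit isomorphisms, and a symmetry isomorphism---and then check the pentagon, triangle, and hexagon coherence axioms. The key observation that organizes the whole proof is that every one of these products has the same underlying graded vector space, namely the direct sum $V\oplus W$, with only the relation subspace varying. Consequently, all the structural isomorphisms will be \emph{induced by} the corresponding isomorphisms for the direct sum of graded vector spaces: the associator $(U\oplus V)\oplus W\xrightarrow{\sim} U\oplus(V\oplus W)$, the left/right unit isomorphisms with unit object the zero quadratic datum $(0,0)$, and the flip $V\oplus W\xrightarrow{\sim}W\oplus V$. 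Since $(\textbf{grVect},\oplus)$ is already a (strict, even) symmetric monoidal category, all the coherence diagrams commute at the level of underlying vector spaces \emph{for free}; what remains is purely the bookkeeping of checking that each such isomorphism is in fact a morphism of quadratic data, i.e. sends the source relation space into the target relation space (and likewise for its inverse, giving an isomorphism of quadratic data).

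First I would treat the product $\ot$ on $\QD$. Here the relation space of $(V,S)\ot(W,R)$ is $R\oplus[V,W]_-\oplus S\subset (V\oplus W)^{\ot 2}$, using the canonical decomposition $(V\oplus W)^{\ot 2}\cong V^{\ot 2}\oplus(V\ot W)\oplus(W\ot V)\oplus W^{\ot 2}$. Iterating, one computes that both $\bigl((U,T)\ot(V,S)\bigr)\ot(W,R)$ and $(U,T)\ot\bigl((V,S)\ot(W,R)\bigr)$ have underlying space $U\oplus V\oplus W$ and relation space $R\oplus[U,W]_-\oplus[V,W]_-\oplus S\oplus[U,V]_-\oplus T$ sitting inside $(U\oplus V\oplus W)^{\ot 2}$; the graded-vector-space associator matches these, so it is an isomorphism of quadratic data. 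For the unit: $(0,0)\ot(W,R)=(W,R)$ on the nose (the cross-term $[0,W]_-$ and the left slot $0$ both vanish), so the unit isomorphisms are actual identities and the triangle axiom is trivial. For the symmetry, one needs $[W,V]_-=\sigma\bigl([V,W]_-\bigr)$ under the flip $V\ot W\cong W\ot V$; this is immediate from the defining formula $[V,W]_\pm=\langle v\ot w\pm(-1)^{|v||w|}w\ot v\rangle$, since the flip sends the generator $v\ot w-(-1)^{|v||w|}w\ot v$ to $-\bigl(w\ot v-(-1)^{|v||w|}v\ot w\bigr)$, which still spans $[W,V]_-$. The hexagon then follows from the hexagon in $(\textbf{grVect},\oplus)$. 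The verification for $\uot$ on $\QD$ (and on $\QD^+$) is \emph{verbatim} the same with every occurrence of $[\,\cdot\,,\cdot\,]_-$ replaced by $[\,\cdot\,,\cdot\,]_+$, and one must additionally note that $\uot$ preserves symmetry of the relation space: if $R\subset V^{\odot 2}$ and $S\subset W^{\odot 2}$ then $R\oplus[V,W]_+\oplus S\subset(V\oplus W)^{\odot 2}$, which holds because $[V,W]_+$ is by construction spanned by symmetric tensors. For $\vee$ on $\QD^+$ and $\oplus$ on $\QD^-$ the story is even simpler: the relation space of a product is literally the direct sum of the relation spaces (plus, in the $\QD^-$ case, the skew cross-term $[V,W]_-\subset(V\oplus W)^{\wedge 2}$, which again is skew-symmetric by construction), so associativity, unitality with unit $(0,0)$, and symmetry are all immediate, and all coherence axioms reduce to those for $\oplus$ of vector spaces.

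I expect the only genuine (if still routine) obstacle to be the iterated relation-space computation for $\ot$ and $\uot$: one must carefully track how the cross-terms $[V,W]_\pm$ behave under the identification $\bigl((U\oplus V)\oplus W\bigr)^{\ot 2}\cong(U\oplus V\oplus W)^{\ot 2}$, in particular checking that the ``mixed'' cross-term $[U\oplus V,W]_\pm$ arising from the outer product decomposes exactly as $[U,W]_\pm\oplus[V,W]_\pm$ and that this agrees with what the inner product produces on the other side of the associator. This is a direct consequence of the bilinearity built into the definition $[V,W]_\pm=\langle v\ot w\pm(-1)^{|v||w|}w\ot v\mid v\ot w\in V\ot W\rangle$ together with the distributivity of $\ot$ over $\oplus$, but it is the one place where something needs to be written down rather than merely invoked. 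Everything else---pentagon, triangle, hexagon---is inherited from the corresponding strict symmetric monoidal structure on graded vector spaces, because all our structure morphisms are the underlying ones and a diagram of quadratic-data morphisms commutes as soon as it commutes on underlying spaces.
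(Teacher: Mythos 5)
Your proof is correct and follows essentially the same route as the paper: all structural isomorphisms (associator, unitors with unit $(0,0)$, braiding) are inherited from $(\textbf{grVect},\oplus)$, coherence is automatic on underlying spaces, and the only real work is checking compatibility of the relation subspaces, which you carry out in more detail than the paper does. The one omission is that the proposition also covers the bottom categories $(\Lie,\oplus)$ and the (weight-graded) associative, commutative, coassociative and cocommutative (co)algebras with $\otimes$ over $K$; the paper dispatches these with the standard unit $K$ and the usual associator, unitors and braiding, and you should at least record that these classical symmetric monoidal structures are also part of the claim.
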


\begin{proof}
Recall from \cite[Section~XI.1]{MacLane98} that to get a (strong) symmetric monoidal category 
besides monoidal products
described above we have to define coherent objects (units)
 and coherent natural isomorphisms (associator, left and right unitors, and braiding). For the five categories 
$(\QD, \ot)$, $(\QD, \uot)$,   $(\QD^+, \vee)$, $(\QD^+, \uot)$, and $(\QD^-, \oplus)$ of quadratic data, the unit is $(0,0)$, the associator is $(V\oplus W)\oplus Z \cong V\oplus (W\oplus Z)$, the unitors are $0\oplus V \cong V \cong V \oplus 0$, and the braiding is $V\oplus W \cong W \oplus V$. The symmetric monoidal structure on $(\Lie, \oplus)$ is given by a similar unit and by similar maps. 
For  all monoidal categories of (possibly weight graded) algebras and coalgebras, the unit is $K$, the associator is $(V\ot W)\ot Z \cong V\ot (W\ot Z)$, the unitors are $K\ot V \cong V \cong V \ot K$, and the braiding is $V\ot W \cong W \ot V$. 
The various coherence diagrams are then straightforward to check. 
\end{proof}

\begin{remark}
Notice that the categories $(\QD^-, \oplus)$, $(\QD^+,\uot)$,
$(\Lie, \oplus)$ and the category of 
coaugmented (weight-graded) cocommutative coalgebras with $\otimes$ are cartesian, that is their symmetric monoidal structure is given by their product and their terminal object. 
Dually the category $(\QD^+, \vee)$ and the category of augmented (weight-graded) commutative algebras with $\otimes$ are cocartesian, that is their symmetric monoidal structure is given by their coproduct and their initial object. 

\end{remark}

\subsection{Symmetric monoidal functors}\label{subsec:SymMonFun} 
We can now check the possible coherence between the various 
functors and symmetric monoidal structures  introduced above. 

\begin{theorem}\label{Thm:ComDiag}
The commutative diagram described on \cref{Prop:ComDiag}
          is made up of strong symmetric monoidal functors.
\rm 
\[\xymatrix@C=0pt{
&(\QD, \ot) \ar[rr]^{\acc} \ar[dd]|(0.3)\hole|(0.5)\hole_(0.6){\A} \ar@/^2pc/[rrrr]^{!} & & (\QD, \uot)\ar[dd]_(0.6){\Ac}|(0.28)\hole|\hole \ar[rr]^{*} &&  (\QD, \ot) \ar[dd]_(0.6){\A} \\
(\QD^-, \oplus) \ar[rr]^(0.6){\acc}  \ar[ru]^{\Lambda} \ar[dd]_{\textrm{L}} \ar@/^2pc/[rrrr]^{!}  & &
(\QD^+,\uot) \ar[ru]^(0.65){\Sigma}|(0.56)\hole  \ar[dd]_(0.6){\Sc}  \ar[rr]^(0.6){*} & & (\QD^+, \vee)\ar[ru]^{\mathcal{S}} \ar[dd]_(0.6){\S}&\\
&  (\wgAss,\ot)\ar@{^{(}->}[dd]& & (\wgAssco, \ot) \ar[rr]|(0.52)\hole^(0.6){*} \ar@{^{(}->}[dd]|\hole& & (\wgAss,\ot)\ar@{^{(}->}[dd]\\
 (\wgLie,\oplus)\ar@{^{(}->}[dd]& &(\wgComco, \ot)  \ar[rr]^(0.6){*} \ar@{^{(}->}[ru]\ar@{^{(}->}[dd]&  &(\wgCom, \ot)\ar@{^{(}->}[ru] \ar@{^{(}->}[dd]&\\
& (\Ass, \ot)&&(\Assco, \ot)   &  &(\Ass , \ot)
\\
(\Lie, \oplus) \ar[ru]^{\mathrm{U}} & &(\Comco,\ot)   \ar@{^{(}->}[ru]&  &(\Com,\ot) \ar@{^{(}->}[ru] 
&
}
\]
\end{theorem}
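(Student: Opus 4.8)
The plan is to verify, functor by functor and monoidal product by monoidal product, that the structure isomorphisms already exhibited in the proof of \cref{Prop:ComDiag} are in fact \emph{monoidal}: that is, for each functor $F$ in the cube we must produce a natural isomorphism $F(X) \otimes' F(Y) \cong F(X \otimes Y)$ (where $\otimes$, $\otimes'$ are the relevant products on source and target) together with the compatibility with the unit, and then check the three coherence hexagons/squares (associativity, unitality, and symmetry). Since every category in the diagram has a symmetric monoidal structure whose underlying bifunctor is either $\oplus$ of graded vector spaces, $\oplus$ of relation spaces inside $V^{\otimes 2}$, or $\otimes$ of (co)algebras, most of these checks reduce to elementary linear algebra on the generating spaces and the relation spaces; the coherence conditions will then follow formally because all the associators, unitors and braidings in sight are induced from those of $(\textbf{grVect}, \oplus)$ or $(\textbf{grVect}, \otimes)$.

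The key steps, in order, are as follows. First, the top face: for the realisation/lifting functors $\Lambda$, $\Sigma$ (lowercase, $\QD^\pm \to \QD$), $\mathcal{S}$, the underlying graded vector space is unchanged, so the candidate structure map on objects is the identity on $V \oplus W$; one must only check the relation spaces match. For instance, for $\Lambda \colon (\QD^-,\oplus) \to (\QD,\otimes)$ one needs $\Lambda(R) \oplus [V,W]_- \oplus \Lambda(S) = \Lambda\bigl(R \oplus [V,W]_- \oplus S\bigr)$ inside $(V\oplus W)^{\otimes 2}$, which is immediate since $\Lambda$ is just the inclusion $V^{\wedge 2} \hookrightarrow V^{\otimes 2}$ applied component-wise and $[V,W]_-$ already lies in the skew part. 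Similarly $\mathcal{S} \colon (\QD^+,\vee) \to (\QD, \otimes)$ requires $\bigl(\Sigma(R) \oplus V^{\wedge 2}\bigr) \oplus [V,W]_- \oplus \bigl(\Sigma(S) \oplus W^{\wedge 2}\bigr) = \Sigma(R\oplus S) \oplus (V\oplus W)^{\wedge 2}$, which holds because $(V\oplus W)^{\wedge 2} = V^{\wedge 2} \oplus [V,W]_- \oplus W^{\wedge 2}$. Second, the Koszul-dual functors $\acc$: here the structure map is $s(V\oplus W) \cong sV \oplus sW$, and one checks $s^2R \oplus [sV,sW]_- \oplus s^2 S = s^2(R \oplus [V,W]_+ \oplus S)$, using that the sign conventions in $s(x\otimes y) = (-1)^{|x|}sx\otimes sy$ convert $[V,W]_+$ to $[sV,sW]_-$ (this is exactly the mechanism by which $\acc$ swaps $\QD^+$ and $\QD^-$, and by which $\otimes$ is swapped with $\uot$). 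Third, the linear-dual contravariant functors $*$: one checks $(V\oplus W)^* \cong V^* \oplus W^*$ and that $(R \oplus [V,W]_\pm \oplus S)^\perp = R^\perp \oplus [V^*,W^*]_\pm \oplus S^\perp$ inside $\bigl((V\oplus W)^{\otimes 2}\bigr)^*$; here one uses that $\bigl((V\oplus W)^{\otimes 2}\bigr)^*$ decomposes as a direct sum of $V^{*\otimes 2}$, $W^{*\otimes 2}$, $V^*\otimes W^*$, $W^*\otimes V^*$ and that the orthogonal of a direct sum in a direct-sum decomposition is the direct sum of the orthogonals; one must also note that $*$ being contravariant means it sends the symmetric monoidal structure to itself but reversing the comparison maps appropriately, i.e.\ it is strong monoidal as a contravariant functor. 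Fourth, the realisation functors $\A$, $\S$, $\L$, $\Sc$, $\Ac$ down to (co)algebras: for $\L \colon (\QD^-,\oplus) \to (\Lie, \oplus)$ one uses that $Lie(V\oplus W) \cong Lie(V) \ltimes \cdots$ and the relations $[V,W]_-$ impose that the two sub-Lie-algebras commute, giving $\frac{Lie(V\oplus W)}{(R\oplus[V,W]_-\oplus S)} \cong \frac{Lie(V)}{(R)} \oplus \frac{Lie(W)}{(S)}$; for $\A \colon (\QD,\otimes) \to (\wgAss,\otimes)$ one uses the classical fact (going back to \cite{Manin88}) that $\frac{T(V\oplus W)}{(R\oplus[V,W]_-\oplus S)} \cong \frac{T(V)}{(R)} \otimes \frac{T(W)}{(S)}$ as algebras, and dually for $\Ac$, $\Sc$ with $\uot$ and the cofree-coalgebra description $T^c(V,R)$. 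Fifth and last, the forgetful/inclusion functors $U$, $\Comco\hookrightarrow\Assco$, $\Com\hookrightarrow\Ass$, etc., are strong monoidal essentially by inspection since they do not change the underlying object or product.

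Having assembled all the structure maps, the remaining task is coherence: for each functor one must check that its comparison maps intertwine the associators, unitors and braidings of source and target. The plan here is to observe that in every case both the source and target associator/unitor/braiding is, after transporting along the already-constructed comparison isomorphisms, literally the associator/unitor/braiding of $(\textbf{grVect},\oplus)$ or $(\textbf{grVect},\otimes)$ acting on the underlying spaces; hence the coherence diagrams commute because they commute in $\textbf{grVect}$. One subtlety worth isolating explicitly is the sign in the braiding: on the algebra side the braiding involves the Koszul sign $(-1)^{|a||b|}$, and one must confirm that this is compatible with the (sign-free, since it is just $V\oplus W \cong W\oplus V$) braiding on quadratic data under the realisation functors — this works because the algebra braiding is inherited from the braiding on $\textbf{grVect}$, which is also the source of the braiding on quadratic data. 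The main obstacle I anticipate is not any single hard computation but rather bookkeeping: getting all the sign conventions ($s^2(x\otimes y)$, the Koszul sign in the tensor product of algebras, the degree-wise dual $(V^*)_{-n}$) to line up consistently across the contravariant functors and the degree-shift functors simultaneously, so that $\acc$ really does exchange $\otimes \leftrightarrow \uot$ and $\QD^+ \leftrightarrow \QD^-$ compatibly with $*$ fixing each product. Once the conventions are fixed, each face of the cube is a short verification, and the theorem follows.
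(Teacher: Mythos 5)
Your proposal follows essentially the same route as the paper's proof: construct the structure isomorphisms functor by functor via the explicit direct-sum decompositions of the relation spaces (and the tensor decomposition of quadratic algebras for the realisation functors), then reduce all coherence checks to the underlying category of graded vector spaces. One small correction to your third step: orthogonality exchanges the two halves of the mixed block, i.e.\ $\left([V,W]_{+}\right)^{\perp}\cap\left((V^{*}\ot W^{*})\oplus(W^{*}\ot V^{*})\right)=[V^{*},W^{*}]_{-}$, so the signs on the two sides of your identity must be opposite rather than equal — this is precisely why $*$ carries $(\QD,\uot)$ to $(\QD,\ot)$ and $(\QD^{+},\uot)$ to $(\QD^{+},\vee)$.
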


\begin{proof}
Recall from \cite[Section~XI.2]{MacLane98} that a strong symmetric monoidal functor $\F : (\C, \ot_\C, 1_\C) \to (\textbf{D}, \ot_\textbf{D}, 1_\textbf{D})$, is a covariant functor between  monoidal categories equipped with  natural isomorphisms 
\[\psi : 1_\textbf{D} \xrightarrow{\cong} \F(1_\C) \qquad  \text{and} \qquad \varphi_{A,B} : \F(A)\ot_\textbf{D} \F(B) 
\xrightarrow{\cong} \F(A\ot_\C B)\ ,
\] subject to coherence diagrams with respect to the various associators, unitors, and braidings. 
Recall that the opposite of a  symmetric monoidal category 
is again a symmetric monoidal category. 
A contravariant functor $\F : \textbf{C} \to \textbf{D}$ is called  strong symmetric monoidal, when the induced covariant functor $\F^{\text{op}} : \textbf{C}^{\text{op}} \to \textbf{D}$ is strong symmetric monoidal.

Let us begin with the top faces functors. There, all the units are equal to $(0,0)$ and preserved by the various functors. The structural isomorphisms for the monoidal functor $\Lambda$ are given by 
\begin{align*}
\Lambda(V,R)\ot \Lambda(W,S) &\cong\left(V\oplus W, \Lambda(R)\oplus [V,W]_-\oplus \Lambda(S)\right)\\ &\cong 
\left(V\oplus W, \Lambda(R \oplus [V,W]_-\oplus S)\right)\\ &\cong\Lambda((V,R)\oplus(W,S)) \ ,
\end{align*}
the ones for the monoidal functor $\Sigma$ are given by 
\begin{align*}
\Sigma(V,R)\, \uot\, \Sigma(W,S) &\cong\left(V\oplus W, \Sigma(R)\oplus [V,W]_+\oplus \Sigma(S)\right)\\ &\cong 
\left(V\oplus W, \Sigma(R \oplus [V,W]_+\oplus S)\right)\\ 
&\cong\Sigma((V,R)\uot(W,S)) \ ,
\end{align*}
and the ones for the monoidal functor $\mathcal{S}$ are given by 
\begin{align*}
\mathcal{S}(V,R)\ot \mathcal{S}(W,S) &\cong\left(V\oplus W, \Sigma(R)\oplus 
V^{\wedge 2}
\oplus [V,W]_-\oplus \Sigma(S)\oplus W^{\wedge 2}\right)\\Ê& \cong 
\left(V\oplus W, \Sigma(R \oplus  S)\oplus (V\oplus W)^{\wedge 2}\right)\\ &\cong
\mathcal{S}((V,R)\vee(W,S)) \ ,
\end{align*}
since 
\begin{align*}
(V\oplus W)^{\ot 2}\cong (V\oplus W)^{\odot 2}\oplus (V\oplus W)^{\wedge 2} \cong 
\left(V^{\odot 2} \oplus [V,W]_+ \oplus W^{\odot 2}  \right)
\oplus 
\left(V^{\wedge 2} \oplus [V,W]_- \oplus W^{\wedge 2}  \right)\ .
\end{align*}

The natural isomorphisms for the first Koszul duality functors $\acc  : (\QD^-, \oplus) \to (\QD^+, \uot)$ 
 are given by
\begin{align*}
(V,R)^\acc\, \uot\,  (W,S)^\acc&\cong(sV \oplus sW, s^{2}R\oplus [sV, sW]_+  \oplus s^{2}S)\\&\cong
(s(V\oplus W), s^2(R\oplus [V,W]_-\oplus  S))\\&\cong \left( (V,R)\oplus (W,S)\right)^\acc;
\end{align*}
the ones for $\acc  : (\QD, \ot) \to (\QD, \uot)$ are similar. 

For the the linear dual functors, we consider the following isomorphisms 
\begin{align*}
(V,R)^*\vee (W,S)^*\cong(V^*\oplus W^*, R^\perp\oplus S^\perp)\cong
(V\oplus W, R\oplus [V,W]_+\oplus  S)^*\cong \left( (V,R)\, \uot\, (W,S)\right)^*
\end{align*}
and 
\begin{align*}
(V,R)^*\ot (W,S)^*\cong(V^*\oplus W^*, R^\perp\oplus [V^*,W^*]_- \oplus S^\perp)\cong
(V\oplus W, R\oplus [V,W]_+\oplus  S)^*\cong \left( (V,R)\, \uot\, (W,S)\right)^*\ .
\end{align*}
Since the second Koszul duality functors $!$ are the composites of two strong symmetric  functors (see below), they are also strong symmetric monoidal.

Each of the functors of the the left-hand side face sends directly the unit to the unit, since $\mathrm{L}(0,0)=0$, $\mathrm{A}(0,0)=K$, and $\mathrm{U}(0)=K$. The new structural isomorphisms are 
\begin{align*}
&\mathrm{L}(V,R)\oplus \mathrm{L}(W,S)\cong \mathrm{L}(V\oplus W, R\oplus [V,W]_-\oplus S)\cong \mathrm{L}
\left(
(V,R)\oplus (W,S)
\right)\ , \\
& \mathrm{A}(V,R)\ot \mathrm{A}(W,S)\cong \mathrm{A}(V\oplus W, R\oplus [V,W]_-\oplus S)\cong \mathrm{A}
\left(
(V,R)\ot (W,S)
\right)\ , 
\end{align*}
and 
\[
\mathrm{U}(\mathfrak{g}\oplus \mathfrak{h})\cong \mathrm{U}(\mathfrak{g}) \otimes \mathrm{U}(\mathfrak{h})\ .
\]

Regarding the right-hand side face, the respective units $(0,0)$ and 
$K$, are again directly sent to one another. In this case, the structural isomorphisms are 
\begin{align*}
&\mathrm{S}(V,R)\ot \mathrm{S}(W,S) \cong \mathrm{S}(V\oplus W, R\oplus S)\cong \mathrm{S}
\left(
(V,R)\vee (W,S)
\right)
\end{align*}
and the identities for the forgetful functors.

In the middle horizontal face, we also consider  the identities for  the forgetful functors. 
For the two weight-wise  linear dualisation functors $* : \wgComco\hookrightarrow\wgCom$ and $* : \wgAssco\hookrightarrow\wgAss$ the 
natural maps  $C^*\ot D^* \cong (C\ot D)^*$ are isomorphisms since we are working with spaces with finite dimensional weight components.

Finally, the two vertical coalgebra realisations functors send the unit $(0,0)$ to the unit $K$. The structural isomorphisms are respectively 
\begin{align*}
&\Sc(V,R)\ot \Sc(W,S) \cong \Sc(V\oplus W, R\oplus [V,W]_+\oplus S)\cong\Sc((V,R)\, \uot\,  (W,S))\ , \ \\
&\Ac(V,R)\ot \Ac(W,S) \cong \Ac(V\oplus W, R\oplus [V,W]_+\oplus S)\cong\Ac((V,R)\, \uot\,  (W,S)) \ . 
\end{align*}
They can be proved on two ways. One can first consider their respective weight-wise linear duals 
and apply the respective isomorphisms 
\begin{align*}
&\frac{S(V^*)}{(R^\perp)}\ot \frac{S(W^*)}{(S^\perp)} \cong
\frac{S(V^*\oplus W^*)}{(R^\perp\oplus S^\perp)}
\qquad \text{and} \qquad 
\frac{T(V^*)}{(R^\perp)}\ot \frac{T(W^*)}{(S^\perp)} \cong
\frac{T(V^*\oplus W^*)}{(R^\perp\oplus [V^*, W^*]_- \oplus S^\perp)}\ . 
\end{align*}
 One can also show that the left-hand side coalgebra satisfies each time the universal property of quadratic coalgebras. 

The commutativity of the  various coherence diagrams for the symmetric monoidal functors on the top face come from the fact that their underlying functors on the category $(\textbf{grVect}, \oplus)$ is either the identity, the degree shift, or the linear duality functor, which are symmetric monoidal. The bottom functors, namely the universal enveloping algebra functor, the inclusions, and the linear duality functor,  are known to be symmetric monoidal. Finally, it is straightforward to check the various coherence diagrams satisfied by the symmetric  monoidal functors $\mathrm{A}$, $\mathrm{S}$, $\mathrm{L}$, $\Sc$, and $\Ac$. 
\end{proof}

\section{2--monoidal structures upon quadratic data}\label{sec:2MonoQ}

\subsection{Notation and setting} As in \cite{Manin88, Manin18}, we will identify
the category of  quadratic algebras  over $K$ with the category $\QD$ of quadratic data, whose elements will be denoted by $A=(A_1, R(A))$. 

\smallskip 

In \cite[Section~1]{Vallette08}, we introduced the notion of a \emph{lax 2-monoidal category} as a category $\textbf{C}$ endowed with 
two monoidal products $\boxtimes$ and $\otimes$ such that the functor $\otimes : (\textbf{C}, \boxtimes)^2 \to (\textbf{C}, \boxtimes)$ is lax monoidal functor. Therefore, such a structure amounts to a natural transformation, called the \emph{interchange law},  
\[
\varphi_{AA^{\prime}BB^{\prime}} :\ 
(A\otimes A^{\prime}) \boxtimes (B\otimes B^{\prime}) \to (A\boxtimes B)\otimes  (A^{\prime}\boxtimes B^{\prime})
\]
satisfying the usual coherence diagrams. (Notice that the natural transformation might not be made up of isomorphisms, on the contrary to the strong monoidal functors considered in \cref{subsec:SymMonFun}.)
Dually, the notion of a \emph{colax 2-monoidal category} is obtained by a colax (or oplax) monoidal functor $\otimes : (\textbf{C}, \boxtimes)^2 \to (\textbf{C}, \boxtimes)$, that is by a natural transformation 
\[
\psi_{AA^{\prime}BB^{\prime}} :\ 
 (A\boxtimes B)\otimes  (A^{\prime}\boxtimes B^{\prime})
\to
(A\otimes A^{\prime}) \boxtimes (B\otimes B^{\prime}) \ .
\]
satisfying the opposite coherent diagrams. A category equipped with two monoidal structures carrying two compatible lax and  colax 2-monoidal structures is called a \emph{2 monoidal-category}. 

\begin{remark}
 In the \cite[Proposition~2]{Vallette08} (and its arXiv version as well), there are
two  misprints in the 
commutative triangle expressing compatibility with unit morphisms. First, the product
$(I\otimes A)\boxtimes (I\otimes A^{\prime})$ should be replaced by
$(I\boxtimes A)\otimes (I\boxtimes A^{\prime})$. Second, $F(A)$ should be replaced
by $F(A^{\prime})$ .
\end{remark}

\subsection{The interchange laws in $(\textbf{grVect}, \otimes , \oplus )$}\label{subsec:IntLawQD}
The category $\textbf{grVect}$
is endowed with two simple mo\-no\-idal structures: {\it tensor product $\otimes$} over $K$
and {\it direct sum $\oplus$.} They have unit objects and  standard associativity
morphisms. We may even assume these structures to be {\it strict} ones,
and sometimes will do it for simplicity. 

The interchange law
in this context must be a natural monoidal transformation
\begin{equation}\label{eq:2.1}
\varphi_{AA^{\prime}BB^{\prime}} :\ 
(A\oplus A^{\prime}) \otimes (B\oplus B^{\prime}) \to (A\otimes B)\oplus  (A^{\prime}\otimes B^{\prime})
\end{equation}
with the following notation change: 
$(\boxtimes, \otimes )$  are replaced here respectively by $(\otimes , \oplus )$.

\smallskip

Explicitly, we have natural identifications
\begin{equation}\label{eq:2.2}
(A\oplus A^{\prime}) \otimes (B\oplus B^{\prime}) \cong
(A\otimes B)\oplus (A^{\prime}\otimes B)\oplus (A\otimes B^{\prime})\oplus
(A^{\prime}\otimes B^{\prime})\ ,
\end{equation}
and we define the interchange law \eqref{eq:2.1} as the projection $pr_{14}$ of r.h.s. of 
\eqref{eq:2.2}
onto the sum of its first and fourth direct summand
\begin{equation}\label{eq:2.3}
pr_{14}:\   (A\oplus A^{\prime}) \otimes (B\oplus B^{\prime}) \to (A\otimes B)\oplus  (A^{\prime}\otimes B^{\prime})\ .
\end{equation}

In the other way round, the inclusion of the first and fourth direct summand defines the natural transformation
\begin{equation*}
\psi_{AA^{\prime}BB^{\prime}} :\ 
 (A\otimes B)\oplus  (A^{\prime}\otimes B^{\prime})
\to
(A\oplus A^{\prime}) \otimes (B\oplus B^{\prime}) \ .
\end{equation*}

The compatibility of  this projection and this inclusion with associativity and unit morphisms for $\otimes$ defined
in \cite[Proposition~2]{Vallette08} quoted above can be checked in a straightforward way. So the data $(\textbf{grVect}, \otimes , \oplus, \varphi, \psi)$ form a $2$-monoidal category. 

\smallskip

Now we will state and prove the main result of this section.

\subsection{Final notation change: lifting \eqref{eq:2.1} to quadratic data} 
Let us now reinterpret
the players of the interchange law \eqref{eq:2.1} as the first components of objects of $\QD$, that is
$A=(A_1, R(A))$ etc. We lift the monoidal structures $(\otimes , \oplus )$ on 1--components
of quadratic data to monoidal structures in $\QD$ denoted in \cite{Manin88} as 
$(\bullet , \underline{ \otimes} )$, where 
$$
(A_1, R(A))\bullet (B_1, R(B))\coloneq (A_1\otimes B_1, S_{(23)}(R(A)\otimes R(B) ))\ .
$$
Here $R(A) \otimes R(B) \subset A_1^{\otimes 2} \otimes B_1^{\otimes 2}$, and $S_{(23)}$
interchanges the middle two components of the tensor product, so
that the result lands in $(A_1\otimes B_1)^{\otimes 2} $ as it should be.
For a  description of $\underline{\otimes}$ refer to the beginning of \cref{subsec:MonoStruc}. 

\begin{proposition} \label{prop:2monocatQD}
The interchange morphism $pr_{14}$ applied to  1--components
of 
\begin{equation}\label{eq:2.4}
(A \,\underline{\otimes}\, A^{\prime}) \bullet (B \,\underline{\otimes}\, B^{\prime}) \to (A\bullet B)
\,\underline{\otimes}\,  (A^{\prime}\bullet B^{\prime})
\end{equation}
and then lifted to the tensor squares of these 1--components as $pr_{14}^{\otimes 2}$,
sends the subspace of relations of the l.h.s. to the subspace of relations of the r.h.s.:
\begin{equation}\label{eq:2.5}
R((A\,\underline{\otimes}\, A^{\prime}) \bullet (B\,\underline{\otimes}\, B^{\prime})) \to R((A\bullet B)
\,\underline{\otimes}\,  (A^{\prime}\bullet B^{\prime}))
\end{equation}
and thus lifts to an interchange morphism in $\emph{\QD}$. The quadruple $(\emph{\QD}, \bullet, \underline{\otimes}, \varphi)$ forms a lax $2$--monoidal category.
\end{proposition}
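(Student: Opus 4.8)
The plan is to verify the inclusion \eqref{eq:2.5} at the level of relation subspaces by writing out both sides explicitly, and then to check the coherence axioms for a lax $2$-monoidal category by reducing them to the already-established coherence in $(\textbf{grVect}, \otimes, \oplus)$. First I would compute the relation subspace on the left-hand side of \eqref{eq:2.4}. Using the defining formula for $\underline{\otimes}$ we have $R(A \,\underline{\otimes}\, A') = R(A) \oplus [A_1, A_1']_+ \oplus R(A')$ inside $(A_1 \oplus A_1')^{\otimes 2}$, and similarly for $B \,\underline{\otimes}\, B'$; then applying the $\bullet$-formula we get $R((A \,\underline{\otimes}\, A') \bullet (B \,\underline{\otimes}\, B')) = S_{(23)}\bigl( R(A \,\underline{\otimes}\, A') \otimes R(B \,\underline{\otimes}\, B') \bigr)$ inside $\bigl((A_1 \oplus A_1') \otimes (B_1 \oplus B_1')\bigr)^{\otimes 2}$. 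Expanding the tensor product of the two three-term direct sums gives nine summands of the form $S_{(23)}(X \otimes Y)$ with $X \in \{R(A), [A_1,A_1']_+, R(A')\}$ and $Y \in \{R(B), [B_1,B_1']_+, R(B')\}$.

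Next I would compute the relation subspace on the right-hand side. Here $R(A \bullet B) = S_{(23)}(R(A) \otimes R(B))$ lives in $(A_1 \otimes B_1)^{\otimes 2}$, $R(A' \bullet B') = S_{(23)}(R(A') \otimes R(B'))$ lives in $(A_1' \otimes B_1')^{\otimes 2}$, and $R((A \bullet B) \,\underline{\otimes}\, (A' \bullet B')) = R(A\bullet B) \oplus [A_1 \otimes B_1,\, A_1' \otimes B_1']_+ \oplus R(A' \bullet B')$ inside $\bigl((A_1 \otimes B_1) \oplus (A_1' \otimes B_1')\bigr)^{\otimes 2}$. The map $pr_{14}^{\otimes 2}$ kills all summands of the expanded left-hand side except the two ``pure'' ones, $S_{(23)}(R(A) \otimes R(B))$ and $S_{(23)}(R(A') \otimes R(B'))$ — because $pr_{14}$ on $(A_1 \oplus A_1') \otimes (B_1 \oplus B_1')$ retains only the $A_1 \otimes B_1$ and $A_1' \otimes B_1'$ blocks, so $pr_{14}^{\otimes 2}$ retains only tensor factors built entirely from one of these two blocks; the cross-terms (those involving $[A_1,A_1']_+$ or $[B_1,B_1']_+$, or mixing $R(A) \otimes R(B')$-type data) all map to zero. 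The two surviving summands land precisely in $R(A \bullet B)$ and $R(A' \bullet B')$ respectively (one must check here that $pr_{14}$ commutes appropriately with $S_{(23)}$, which is a routine bookkeeping of which tensor slots are permuted versus projected). This establishes \eqref{eq:2.5} and hence that $pr_{14}$ lifts to a morphism $\varphi$ in $\QD$.

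For the $2$-monoidal axioms, I would invoke \cref{Prop:SymMono}, which already gives $(\QD, \bullet)$ — i.e. $(\QD, \uot)$ in the notation of \cref{subsec:MonoStruc} after matching $\bullet = \uot$, wait: here $\bullet$ is the product with $[V,W]_-$, so $\bullet = \otimes$ of \cref{subsec:MonoStruc} and $\underline{\otimes} = \uot$ — as a (symmetric) monoidal category, and likewise $(\QD, \underline{\otimes})$. What remains is that $\underline{\otimes} : (\QD, \bullet)^2 \to (\QD, \bullet)$ is a lax monoidal functor, i.e. that $\varphi$ satisfies the associativity hexagon and the two unit triangles of \cite[Proposition~2]{Vallette08} (with the misprint corrections noted in the Remark). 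Since on underlying graded vector spaces all objects are direct sums, all the structural isomorphisms of both monoidal products on $\QD$ are induced by the corresponding ones on $(\textbf{grVect}, \otimes, \oplus)$, and $\varphi$ is induced by $pr_{14}$, each coherence diagram in $\QD$ projects to the corresponding coherence diagram in $(\textbf{grVect}, \otimes, \oplus)$ on $1$-components; the latter was already checked in \cref{subsec:IntLawQD}. One only needs the additional observation that a diagram of morphisms in $\QD$ commutes as soon as the underlying diagram of $1$-component maps commutes, since morphisms of quadratic data are determined by their underlying linear maps. I expect the main obstacle to be purely organizational: carefully tracking the nine-fold expansion and verifying that $pr_{14}^{\otimes 2} \circ S_{(23)}$ and $S_{(23)} \circ (\text{appropriate projection})$ agree on each summand, so that the image genuinely lands in the claimed relation subspace rather than merely inside the ambient tensor square — in other words, the interaction between the slot-permutation $S_{(23)}$ and the block-projection $pr_{14}$ is the one place where a sign or an indexing slip could occur, and it deserves to be written out once in full.
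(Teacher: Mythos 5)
Your verification of \eqref{eq:2.5} contains a false step at its center. You claim that $pr_{14}^{\otimes 2}$ annihilates every summand of the nine-fold expansion except the two ``pure'' ones $S_{(23)}(R(A)\otimes R(B))$ and $S_{(23)}(R(A')\otimes R(B'))$. This is not true for the summand $S_{(23)}([A_1,A_1']_+\otimes[B_1,B_1']_+)$. Expanding a typical generator $S_{(23)}\bigl((a\otimes a'\pm a'\otimes a)\otimes(b\otimes b'\pm b'\otimes b)\bigr)$ yields four monomials; the two mixed ones (beginning with $a\otimes b'$ or $a'\otimes b$) are indeed killed, but the other two, of the form $\pm\, a\otimes b\otimes a'\otimes b'$ and $\pm\, a'\otimes b'\otimes a\otimes b$, survive, and their sum is exactly a generator of the middle summand $[A_1\otimes B_1,\, A_1'\otimes B_1']_+$ of the target \eqref{eq:2.7}. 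Checking this --- including that the Koszul signs produced by $S_{(23)}$ match those of the bracket $[\,\cdot\,,\cdot\,]_+$ on $A_1\otimes B_1$ and $A_1'\otimes B_1'$ --- is precisely the delicate part of the paper's proof (its step (iv)), and it is the part you have skipped by asserting, incorrectly, that the term vanishes. Because your argument rests on this false intermediate claim, it does not establish the containment as written; the remainder of your case analysis (which summands vanish, which map isomorphically onto the outer summands of the target) is correct and coincides with steps (ii) and (iii) of the paper's proof.

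A secondary point: you appeal to \cref{Prop:SymMono} for the monoidality of $(\QD,\bullet)$, identifying $\bullet$ with one of the products of \cref{subsec:MonoStruc}. But the black product of \cref{sec:2MonoQ} has underlying space $A_1\otimes B_1$, not $A_1\oplus B_1$; it is not among the five direct-sum-based products covered by \cref{Prop:SymMono}, and its monoidal structure comes from \cite{Manin88}. Your strategy for the coherence diagrams --- reducing them to the already-verified ones in $(\textbf{grVect},\otimes,\oplus)$ on $1$-components, using that a diagram in $\QD$ commutes as soon as it does on underlying graded vector spaces --- is otherwise sound and is in the spirit of how the paper disposes of those axioms.
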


\medskip

\begin{proof}
 (i) {\it Preparation.}  Since from now on the four graded vector spaces in \eqref{eq:2.1}--\eqref{eq:2.4} will be 1--components of
quadratic data, we will add the subscript 1 in their notation.  According to the definitions
(5) and (7) on p.~19 of \cite{Manin88}, the source of the arrow \eqref{eq:2.5} can be explicitly written as
\begin{equation}\label{eq:2.6}
R((A\,\underline{\otimes\,} A^{\prime}) \bullet (B\,\underline{\otimes}\, B^{\prime}))
= S_{(23)}(\{ R(A)\oplus [A_1,A_1^{\prime}]_{+} \oplus R(A^{\prime})\} \otimes
\{ R(B)\oplus [B_1,B_1^{\prime}]_{+} \oplus R(B^{\prime})\})\ .
\end{equation}

\smallskip

On the other hand, the target becomes
\begin{equation}\label{eq:2.7}
R((A\bullet B)
\,\underline{\otimes}\,  (A^{\prime}\bullet B^{\prime}))
=S_{(23)}(R(A)\otimes R(B)) \oplus  [A_1\otimes B_1,  A_1^{\prime}\otimes B_1^{\prime}]_{+}
\oplus S_{(23)}(R(A^{\prime})\otimes R(B^{\prime}))\ .
\end{equation}
On the respective 1--components of \eqref{eq:2.4}, the kernel of  
$$ 
pr_{14}:\ 
(A_1\oplus A_1^{\prime}) \otimes  (B_1\oplus B_1^{\prime}) \to A_1\otimes B_1 \oplus
A_1^{\prime}\otimes B_1^{\prime}
$$
is
\begin{equation}\label{eq:2.9}
A_1\otimes B_1^{\prime}\oplus A_1^{\prime}\otimes B_1 \ .
\end{equation}
Therefore, whenever we apply $pr_{14}^{\otimes 2}$
to a tensor monomial of degree four, whose first two or last two
divisors (or both)  look like $a\otimes b^{\prime}$ or $a^{\prime}\otimes b$,
then it is annihilated.

\smallskip

For brevity, we will call such monomials in  
$[(A_1\oplus A_1^{\prime}) \otimes  (B_1\oplus B_1^{\prime})]^{\otimes 2}$
{\it vanishing ones.}
\smallskip

In the following sections of the proof we will apply this remark successively to various 
summands of \eqref{eq:2.6}:
$$
S_{(23)}(\{ R(A)\oplus [A_1,A_1^{\prime}]_{+} \oplus R(A^{\prime})\} \otimes
\{ R(B)\oplus [B_1,B_1^{\prime}]_{+} \oplus R(B^{\prime})\})\  .
$$

\medskip

(ii) {\it Summands annihilated by $pr_{14}^{\otimes 2}$. } 

\smallskip

(a) First, check that the whole subspace $S_{(23)}(R(A)\otimes R(B^{\prime}))$ is annihilated.
In fact any element of it is a linear combination of vanishing monomials because
after interchanging two middle elements  in $a_1\otimes a_2 \otimes b_1^{\prime}\otimes b_2^{\prime}$
where $a_1, a_2 \in A_1,\ b_1^{\prime} , b_2^{\prime}   \in B_1^{\prime}$, and
both left half and right half 
binary products  land in \eqref{eq:2.9}.

\smallskip

Essentially the same argument shows that $S_{(23)}(R(A^{\prime})\otimes R(B))$ is annihilated,
and 
$R(A^{\prime}) \otimes
[B_1,B_1^{\prime}]_{+}$
and
$[A_1,A_1^{\prime}]_{+} \otimes
 R(B^{\prime})$
as well.

\smallskip

One can treat in the same way $R(A)\otimes [B_1, B_1^{\prime} ]_{+}$
and $[A_1, A_1^{\prime} ]_{+}\otimes R(B)$. The only difference is that 
after applying $S_{(23)}$ to the respective monomials
only {\it one half of the result}, either to the left, or to the right of the middle $\otimes$
lands in the tensor square of \eqref{eq:2.9}.

\medskip 

(iii) {\it Summands upon which $pr_{14}^{\otimes 2}$ is injective.} A  direct observation
shows that  $pr_{14}^{\otimes 2}$ restricted to $S_{(23)}\allowbreak(R(A)\otimes R(B))$ is injective
and in fact identifies it with the respective summand of \eqref{eq:2.7}. 

\smallskip
Similarly,  $pr_{14}^{\otimes 2}$ identifies $S_{(23)}(R(A^{\prime})\otimes R(B^{\prime}))$
identifies it with the respective summand of \eqref{eq:2.7}. 

\medskip

(iv) {\it Remaining terms.}

\smallskip 

It remains to compare the terms 
\begin{equation}\label{eq:2.10}
S_{(23)} ([A_1,A_1^{\prime}]_{+}\otimes [B_1,B_1^{\prime}]_{+}) 
\end{equation}
in the source of \eqref{eq:2.5} with terms 
\begin{equation}\label{eq:2.11}
[A_1\otimes B_1,  A_1^{\prime} \otimes B_1^{\prime}]_{+}
\end{equation}
in its target.

\smallskip
The space \eqref{eq:2.10} is spanned by linear combinations
\begin{align}\label{eq:2.12}
&S_{(23)} \left(\left(a_i\otimes a_j^{\prime} + (-1)^{|a_i||a'_j|}a_j^{\prime}\otimes a_i\right) \otimes
\left(b_k\otimes b_l^{\prime} + (-1)^{|b_k||b'_l|} b_l^{\prime}\otimes b_k\right)\right)\notag
\\
&= (-1)^{|a'_j||b_k|}a_i\otimes b_k\otimes a_j^{\prime} \otimes b_l^{\prime}
+ (-1)^{|b'_l|(|a'_j|+|b_k|)} a_i\otimes b_l^{\prime}\otimes a_j^{\prime}\otimes b_k\notag \\
&+(-1)^{|a_i|(|a'_j|+|b_k|)} a_j^{\prime} \otimes b_k\otimes a_i\otimes b_l^{\prime}
 + (-1)^{|b_k||b'_l|+|a_i|(|a'_j|+b'_l|)} a_j^{\prime} \otimes b_l^{\prime}\otimes a_i\otimes b_k\ ,
\end{align}
where 
$$
a_i\in A_1,\  a_j^{\prime}\in A_1^{\prime}, \ 
b_k\in B_1,\  b_l^{\prime}\in B_1^{\prime} \ .
$$
Two middle terms in \eqref{eq:2.12} are vanishing ones.

\smallskip

With the same notation, \eqref{eq:2.11} is spanned by linear combinations
$$
(-1)^{|a'_j||b_k|} a_i\otimes b_k\otimes a_j^{\prime} \otimes b_l^{\prime}
+ (-1)^{|b_k||b'_l|+|a_i|(|a'_j|+b'_l|)}a_j^{\prime} \otimes b_l^{\prime}\otimes a_i\otimes b_k\ ,
$$
which are exactly images of sums of the two remaining terms of \eqref{eq:2.12}
after application of $pr_{14}^{\otimes 2}.$

\smallskip

This completes the proof of the \cref{prop:2monocatQD}.
\end{proof}

\subsection{The dual picture}

\begin{corollary}\label{cor:White2Lax}
 The quadruple $(\emph{\QD}, \otimes, \circ, \psi)$ is a lax 2--monoidal
category as well.
\end{corollary}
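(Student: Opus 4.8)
The plan is to deduce \cref{cor:White2Lax} from \cref{prop:2monocatQD} by a formal duality argument, rather than by repeating the (long) monomial computation. The key observation is that the two monoidal products on $\QD$ appearing in the corollary are related to the two products of the proposition by a Koszul-type change of variables, and that the interchange law $\psi$ is, componentwise, the inclusion $(A_1\otimes B_1)\oplus(A_1'\otimes B_1')\hookrightarrow (A_1\oplus A_1')\otimes(B_1\oplus B_1')$ which is precisely the transpose of the projection $pr_{14}$. So the entire content of the corollary is that this inclusion, lifted as $\psi^{\otimes 2}$ to tensor squares of $1$-components, carries the relation subspace of $(A\circ B)\otimes(A'\circ B')$ into the relation subspace of $(A\otimes A')\circ(B\otimes B')$, and that the resulting natural transformation satisfies the colax coherence diagrams.

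First I would make the identification of products precise. Under the linear duality functor $*\colon\QD\to\QD$, $(V,R)\mapsto(V^*,R^\perp)$ — which is a (contravariant) strong symmetric monoidal functor by \cref{Thm:ComDiag}, sending $\uot$ to $\otimes$ and $\bullet$ (here written $\circ$; note $A\circ B\coloneq(A_1\otimes B_1, S_{(23)}(R(A)\otimes R(B)))$ is the ``black'' product, the $*$-dual of $\bullet$ on $1$-components via $R^\perp$) appropriately — the colax interchange morphism $\psi$ for $(\QD,\otimes,\circ)$ is the image under $*$ of the lax interchange morphism $\varphi$ for $(\QD,\bullet,\uot)$ constructed in \cref{prop:2monocatQD}. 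Concretely: applying $*$ to \eqref{eq:2.4} reverses the arrow and turns $\bullet\leftrightarrow\otimes$, $\uot\leftrightarrow\circ$ on $1$-components because on $\textbf{grVect}$ the dual of $pr_{14}$ is the inclusion $\psi$; and on relation subspaces, $R^\perp$ turns the inclusion ``$pr_{14}^{\otimes2}(\text{relations})\subset\text{relations}$'' of \eqref{eq:2.5} into the inclusion ``$\psi^{\otimes2}(\text{relations})\subset\text{relations}$'' required here, by the standard fact that $f(U)\subset W \iff f^*(W^\perp)\subset U^\perp$ for a linear map $f$ and subspaces $U,W$ (applied to $f=pr_{14}^{\otimes2}$, using that $(pr_{14}^{\otimes2})^* = \psi^{\otimes2}$ up to the finite-dimensionality identifications already used throughout \cref{sec:DefQD}). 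Thus \cref{prop:2monocatQD} gives exactly that $\psi$ is well-defined on $\QD$.

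Next I would record that the colax coherence diagrams for $\psi$ are the diagrams obtained from the lax coherence diagrams for $\varphi$ by reversing all arrows, i.e.\ they hold in $\QD$ iff the lax ones hold in $\QD^{\mathrm{op}}$; and since $*\colon\QD\to\QD^{\mathrm{op}}$ is a strong symmetric monoidal equivalence onto its image sending the $(\bullet,\uot)$-structure to the $(\otimes,\circ)$-structure, it transports the lax coherence of \cref{prop:2monocatQD} to colax coherence of $(\QD,\otimes,\circ,\psi)$. One subtlety is that $*$ is not an equivalence of categories (it is an anti-involution only up to the double-dual natural isomorphism on finite-dimensional graded spaces, which is harmless here). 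Alternatively, and perhaps more cleanly for the writeup, I would avoid invoking $*$ at the categorical level and instead argue directly, mirroring the proof of \cref{prop:2monocatQD}: decompose the source relation subspace $R((A\circ B)\otimes(A'\circ B'))=S_{(23)}(R(A)\otimes R(B))\oplus[A_1\otimes B_1,\,A_1'\otimes B_1']_- \oplus S_{(23)}(R(A')\otimes R(B'))$, observe that $\psi^{\otimes2}$ is injective with image in the ``pure'' summands $(A_1\otimes B_1)^{\otimes2}$ and $(A_1'\otimes B_1')^{\otimes2}$ of the target, hence automatically kills nothing and lands inside $R(A)\otimes R(B)\oplus\cdots$ after re-sorting by $S_{(23)}$, and finally check that $\psi^{\otimes2}$ sends $[A_1\otimes B_1,\,A_1'\otimes B_1']_-$ into $S_{(23)}([A_1,A_1']_-\otimes[B_1,B_1']_-)$ — a two-term versus four-term bracket computation strictly analogous to \eqref{eq:2.12}, with the roles of the two ``vanishing'' middle terms now played by the fact that $\psi$ has zero component onto the mixed summands $A_1\otimes B_1'$ and $A_1'\otimes B_1$.

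The main obstacle I anticipate is purely bookkeeping: getting the signs right in the analogue of \eqref{eq:2.12} and making sure the symmetrized brackets $[\,-,-\,]_-$ (which define $\circ$-type relations on $\QD$, cf.\ the products in \cref{subsec:MonoStruc}) match up correctly with $S_{(23)}$ — in particular confirming that the inclusion $\psi^{\otimes2}$ of the two surviving monomials of the $[A_1\otimes B_1, A_1'\otimes B_1']_-$-bracket coincides on the nose with the image of the corresponding two non-vanishing monomials in $S_{(23)}([A_1,A_1']_-\otimes[B_1,B_1']_-)$. Since this is the exact transpose of the computation already carried out in step (iv) of the proof of \cref{prop:2monocatQD}, I expect no genuine difficulty, only the need to state it carefully; accordingly I would keep the writeup short and phrase it as ``dualize the argument of \cref{prop:2monocatQD}''.
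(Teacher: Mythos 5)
Your proposal is correct and takes essentially the same route as the paper: the published proof likewise obtains $\psi$ by applying the linear duality functor $*$ to the diagrams \eqref{eq:2.4}--\eqref{eq:2.5} of \cref{prop:2monocatQD}, using the identifications $(A\bullet B)^*\cong A^*\circ B^*$ and $(A\,\underline{\otimes}\,A')^*\cong A^*\otimes A'^*$ together with the quasi-involutivity of $*$, and it notes (exactly as you do) that without finite-dimensionality one should instead rerun the monomial argument of \cref{prop:2monocatQD} in parallel. One small slip worth fixing: the formula $(A_1\otimes B_1,\,S_{(23)}(R(A)\otimes R(B)))$ you quote is the black product $\bullet$, not the white product $\circ$, whose relation space is $S_{(23)}\bigl(R(A)\otimes B_1^{\otimes 2}+A_1^{\otimes 2}\otimes R(B)\bigr)$ --- this does not affect your argument, which only uses that $\circ$ is the $*$-dual of $\bullet$.
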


\begin{proof} In the case when all involved quadratic data are finite--dimensional, the interchange law in  $(\QD, \otimes, \circ, \psi )$ can be formally obtained by
applying the linear duality functor $*$  to the diagrams \eqref{eq:2.4} and \eqref{eq:2.5}.
Similarly, the commutativity of all relevant diagrams  (compatibility
with associativity of $\otimes$ and with unity for $\circ$) follows by duality from
the respective facts for $(\QD, \bullet, \underline{\otimes}, \varphi)$.

\smallskip

However, the statement itself of \cref{cor:White2Lax} remains true even without assumption
of finite--dim\-en\-sion\-ali\-ty: to prove it one should develop detailed arguments
parallel to those in given in the proof of \cref{prop:2monocatQD}.

\smallskip

Below we will only sketch the check that interchange laws in $(\QD, \otimes, \circ, \psi )$
and  $(\QD, \bullet, \underline{\otimes}, \varphi)$ are $*$--dual.

\smallskip

Applying formally $*$ to \eqref{eq:2.4} and rewriting the left and right hand sides with the help
of identifications, collected in \cite[Section~3]{Manin88}, especially in its subsection 5,
we obtain a morphism
\begin{equation}\label{eq:2.13}
\{(A\bullet B) \,\underline{\otimes}\, (A^{\prime}\bullet B^{\prime})\}^*
\to
\{(A \,\underline{\otimes}\, A^{\prime})\bullet (B \,\underline{\otimes}\, B^{\prime})\}^*\ .
\end{equation}

The l.~h.~s. of \eqref{eq:2.13} can be rewritten as
$$
(A \bullet B)^* \otimes (A^{\prime}\bullet B^{\prime})^* \cong
(A^*\circ B^*) \otimes (A^{\prime *}\circ B^{\prime *})\ .
$$
Similarly, the r.~h.~s. of \eqref{eq:2.13} is
$$
(A \,\underline{\otimes}\,A^{\prime})^* \circ (B \,\underline{\otimes}\, B^{\prime})^* \cong
(A^*\otimes A^{\prime *})\circ (B^*\otimes B^{\prime *})\ .
$$
So finally \eqref{eq:2.13} becomes
\begin{equation}\label{eq:2.14}
(A^*\circ B^*) \otimes (A^{\prime *}\circ B^{\prime *})\to
(A^*\otimes A^{\prime *})\circ (B^*\otimes B^{\prime *})\ .
\end{equation}
Since $*$ is a contravariant quasi--involution of $\QD$ that is, $**$
is equivalent to $\mathrm{id}$, \eqref{eq:2.14} is the required interchange morphism,
written for generic arguments. 
\end{proof}

\begin{remark}
%
%
Our two examples $(\QD, \bullet , \underline{\otimes}, \varphi )$
and  $(\QD, \otimes , \circ, \psi)$ are lax 2-monoidal categories, but fail to be  colax since the interchange laws $\psi$ and $\varphi$ from the $2$--monoidal category $\textbf{grVect}$ do not lift to the appropriate level. 

\smallskip
Several other pairs, consisting of $\bullet$ and one of the monoidal
structures from \cite{Manin88}, are either simultaneously lax and colax, or
neither lax/nor colax. These are less interesting cases. We will present
in \cref{sec:2MonoOp} their more interesting operadic versions.
\end{remark}

\subsection{Applications}

\begin{corollary}\label{cor:MonoProduct} \leavevmode

\begin{enumerate}
\item Let $M,N$ be two monoids in $\emph{\QD}$
wrt the black product $\bullet$. Then $M\underline{\otimes} N$ also has a natural
structure of such a monoid.

\item Similarly, let $M,N$ be two monoids in $\emph{\QD}$
wrt the tensor product $\otimes$. Then $M\circ N$ also has a natural
structure of such a monoid.
\end{enumerate}
\end{corollary}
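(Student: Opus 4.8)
The plan is to deduce \cref{cor:MonoProduct} as a formal consequence of the general fact that in a lax $2$--monoidal category $(\C, \boxtimes, \otimes, \varphi)$, the monoidal product $\otimes$ lifts to the category $\mathsf{Mon}(\C, \boxtimes)$ of $\boxtimes$--monoids. Both items are instances of this: item (1) applies it to $(\QD, \bullet, \underline{\otimes}, \varphi)$ from \cref{prop:2monocatQD}, and item (2) applies it to $(\QD, \otimes, \circ, \psi)$ from \cref{cor:White2Lax}. So the whole proof reduces to recalling this standard construction and observing that our two categories satisfy its hypotheses, which they do by the two preceding results.

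Concretely, for item (1): given monoids $(M, \mu_M, \eta_M)$ and $(N, \mu_N, \eta_N)$ in $(\QD, \bullet)$, I equip $M \underline{\otimes} N$ with the multiplication
\[
(M\underline{\otimes} N)\bullet (M\underline{\otimes} N) \xrightarrow{\ \varphi\ } (M\bullet M)\underline{\otimes}(N\bullet N) \xrightarrow{\ \mu_M \underline{\otimes}\, \mu_N\ } M\underline{\otimes} N
\]
and with the unit obtained from $I \xrightarrow{\sim} I \underline{\otimes} I \xrightarrow{\eta_M \underline{\otimes}\, \eta_N} M \underline{\otimes} N$, where $I=(0,0)$ is the common unit. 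Associativity of this product follows from the coherence of $\varphi$ with the associators of $\bullet$ (the hexagon-type axiom of \cite[Proposition~2]{Vallette08}) together with associativity of $\mu_M$ and $\mu_N$; unitality follows from the compatibility of $\varphi$ with the $\bullet$--unit established in the same proposition (and corrected in the Remark above). Item (2) is verbatim the same argument with $(\bullet, \underline{\otimes}, \varphi)$ replaced by $(\otimes, \circ, \psi)$, using \cref{cor:White2Lax}.

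I would also remark that these monoid structures are natural in $M$ and $N$, so that $\underline{\otimes}$ (respectively $\circ$) defines a bifunctor on $\mathsf{Mon}(\QD, \bullet)$ (respectively $\mathsf{Mon}(\QD, \otimes)$); in fact, since $\varphi$ and $\psi$ themselves satisfy all the $2$--monoidal coherences, these lifted products are again monoidal on the categories of monoids, but the statement as phrased only needs the object-level assertion.

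The only genuine content is bookkeeping: checking that the diagrams expressing associativity and unitality of the induced multiplication commute. This is entirely formal once one has the lax $2$--monoidal axioms in hand, so the main (and really only) obstacle is simply citing the precise coherence diagrams of \cite[Proposition~2]{Vallette08} in the right form; no computation specific to quadratic data is needed here, all of that having been absorbed into \cref{prop:2monocatQD} and \cref{cor:White2Lax}. I would therefore keep the proof short, stating the two structure maps explicitly and referring to the general principle rather than redrawing the pentagon-type diagrams.
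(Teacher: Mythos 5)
Your proposal is correct and matches the paper's argument: the paper also proves this corollary as a direct application of the fact that lax monoidal functors preserve monoids (citing it as a special case of \cite[Proposition~3]{Vallette08}), applied to the lax $2$--monoidal structures of \cref{prop:2monocatQD} and \cref{cor:White2Lax}. Your explicit description of the induced multiplication and unit is just a spelled-out version of that same standard construction.
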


\begin{proof} 
These statements are direct applications of the fact that lax monoidal functors preserve monoids. They are actually special cases of \cite[Proposition~3]{Vallette08}, which was a motivation for the definition of the notion of lax $2$-monoidal category. 
\end{proof}

\begin{exam}
 Let $A\coloneq(A_1, R(A_1))$ be a quadratic data. The canonical map $+ : A_1\oplus A_1 \to A_1$ induces a morphism of quadratic data $A \otimes A \to A$ 
if and only if $[A_1, A_1]_-\subset R(A_1)$. Quadratic data satisfying this property actually form the image of the functor $\mathcal{S}$ from symmetric quadratic data. \cref{cor:MonoProduct} shows that their white product carries again a canonical $\otimes$-monoid structure.

\smallskip

This canonical $\ot$-monoid structure on the quadratic data living in the image of the functor $\mathcal{S}$  actually comes from the  monoid structure on any  symmetric quadratic data $(V, R)\in (\QD^+,\vee)$ given by $+ : V \oplus V \to V$. These  two monoid structures induce respectively the concatenation product under the symmetric monoidal functor of algebraic realisations $\S(V,R)$ and $A(V,R)$. 

\end{exam}
\section{2--monoidal structures upon  operadic quadratic data}\label{sec:2MonoOp}

\subsection{Operads}
Let us recall the \emph{coordinate-free partial definition} of an operad. We denote by $\Fin$ the category of finite sets with bijections.
Given any subset $X\subset Y$, we use the notation $Y/X\coloneq (Y\backslash X) \sqcup \{*\}$.
Let $(\C, \ot, 1_\C, \alpha, \lambda, \rho, \tau)$ be a  symmetric monoidal category.

\begin{definition}[Operad]
An \emph{operad} in $\C$ is a presheaf $\P : \Fin^{\text{op}}\to \C$ endowed with partial operadic compositions 
$\circ_{X\subset Y}\ : \ \P(Y/X)\ot \P(X) \to \P(Y)$, for any $X\subset Y$,
and a unit  
$\eta \ : \ 1_\C \to \P(\{*\})$
such that the following diagrams  commute.
\begin{description}
\item[{\sc Sequential axiom}] For any $X\subset Y \subset Z$, 
\[\xymatrix@C=32pt{
\left(\P(Z/Y)\ot \P(Y/X)\right) \ot \P(X) \ar[r]^\alpha \ar[d]^\cong& \P(Z/Y)\ot \left(\P(Y/X) \ot \P(X)\right)  \ar[r]^(0.61){\id\ot \circ_{X\subset Y}}& \P(Z/Y)\ot \P(Y) \ar[d]^{\circ_{Y\subset Z}}\\
\left(\P((Z/X)/(Y/X))\ot \P(Y/X)\right) \ot \P(X) \ar[r]^(0.63){\circ_{Y/X\subset Z/X}\ot \id}  & \P(Z/X)\ot \P(X)  \ar[r]^(0.6){\circ_{X\subset Z}}   &\  \P(Z) \ .
}
\]

\item[\sc{Parallel axiom}]
For any $X\sqcup Y \subset Z$, 
\[\xymatrix{
\P(((Z/X)/Y))\ot \left(\P(X) \ot \P(Y)\right)  \ar[d]^{\alpha^{-1}} \ar[r]^{\id\ot \tau}&   
\P(((Z/X)/Y))\ot \left(\P(Y) \ot \P(X)\right) \ar[r]^(0.5){\alpha^{-1}}& 
\left(\P(((Z/X)/Y))\ot \P(Y)\right) \ot \P(X) \ar[d]^{\circ_{Y\subset Z/X}\ot \id}\\
\left(\P(((Z/X)/Y))\ot \P(X)\right) \ot \P(Y) \ar[d]^\cong&&\P(Z/X)\ot \P(X)\ar[d]^{\circ_{X\subset Z}}
\\
\left(\P(((Z/Y)/X))\ot \P(X)\right) \ot \P(Y)  \ar[r]^(0.63){\circ_{X\subset Z/Y}\ot \id}  & \P(Z/Y)\ot \P(Y)  \ar[r]^(0.6){\circ_{Y\subset Z}}   &\  \P(Z) \ .
}
\]

\item[\sc{Left/Right unital axioms}] For any $X\in \Fin$ and any $x\in X$,  
\[
\vcenter{\xymatrix{
1_\C\ot \P(X) \ar[r]^(0.42){\eta\ot \id}\ar[d]^\lambda&\P(\{*\})\ot  \P(X)\ar[d]^\cong \\
\P(X) & \P(X/X)\ot \P(X)\ar[l]^(0.6){\circ_{X\subset X}}
}}
\qquad 
\vcenter{\xymatrix{
\P(X)\ot 1_\C \ar[r]^(0.42){\id \ot\eta}\ar[d]^\rho& \P(X)\ot \P(\{*\})\ar[d]^\cong \\
\P(X) & \P(X/\{x\})\ot \P(\{x\})\ar[l]^(0.65){\circ_{\{x\}\subset X}}\ .
}}
\]

\item[\sc{Top equivariance}] For any subset $X\subset Y$ and any bijection $f : X \to X$, we consider the induced bijection $\bar{f} : Y \to Y$, which leaves the elements of $Y\backslash X$ invariant,
\[\xymatrix{
\P(Y/X)\ot \P(X) \ar[r]^(0.65){\circ_{X\subset Y}}\ar[d]^{\id \ot \P(f)}& \P(Y) \ar[d]^{\P(\bar{f})}\\
\P(Y/X)\ot \P(X) \ar[r]^(0.65){\circ_{X\subset Y}} & \ \P(Y) \ .
}\]

\item[\sc{Bottom equivariance}] For any subset $X\subset Y$ and any bijection $f : Y/X \to Y/X$, we consider 
$\widetilde{Y}\coloneq Y$ if $f(*)=*$ and 
$\widetilde{Y}\coloneq Y\backslash \{f(*)\} \sqcup \{*\}$ otherwise. 
We also consider the bijection $\widetilde{Y}\to Y$ which 
which sends $*$ to $f(*)$ and is equal to the identity otherwise. We denote by 
$\mathring{f} : \widetilde{Y}/X \to Y/X$ the induced bijection. Finally, we denote by 
$\widetilde{f} : Y \to \widetilde{Y}$ the bijection which coincides with $f$ except for the assignment $f(*)\mapsto *$.
\[\xymatrix@C=40pt{
\P(Y/X)\ot \P(X) \ar[r]^{\P(\mathring{f})\ot \id}\ar[d]^{\P(f)\ot \id}&
\P(\widetilde{Y}/X)\ot \P(X) \ar[r]^(0.6){\circ_{X\subset \widetilde{Y}}} &
\P(\widetilde{Y}) \ar[d]^{\P(\widetilde{f})} \\
\P(Y/X)\ot \P(X)\ar[rr]^{\circ_{X\subset Y}} & & \ \P(Y) \ .
}\]
\end{description}
\end{definition}
 
The skeletal category of $\Fin$ is the groupoid $\Sy$ whose objects are the sets $\{1, \ldots, n\}$, for $n\in\mathbb{N},$ and whose morphisms are the elements of the symmetric groups $\Sy_n$. A presheaf on $\Fin$ is thus equivalent to a collection $\{\P(n)\}_{n\in \mathbb{N}}$ of right $\Sy_n$-modules, see \cite[Section~1.1]{KapranovManin01}. In these terms, the above structure of an operad is equivalent to partial composition products 
$\circ_i : \P(n)\otimes\P(m)\to \P(n+m-1)$, for $1\leqslant i\leqslant n$, and a unit map $\eta : 1_\C \to \P(1)$ satisfying the analoguous axioms, given in  \cite[Section~5.3.4]{LodayVallette12} for example.

\subsection{Operadic quadratic data}\label{subsec:BQO}
The notions of black and white products were generalised to binary quadratic operads in 
\cite{GinzburgKapranov94, GinzburgKapranov95} and then to quadratic operads (and cooperads) in \cite{Vallette08}. 
\smallskip
In this section, we will work with the following analogous operadic notion of quadratic data. 

\begin{definition}[Binary operadic quadratic data]\leavevmode
  A \emph{binary operadic quadratic data} is a pair $\mathcal{A}=(\mathcal{A}_1, R(\mathcal{A}))$
where $\mathcal{A}_1$ is a graded $K$--linear representation of the symmetric group $\Sy_2$ (that is, an
$\Sy_2$--module) 
and $R(\mathcal{A})$ is a $\Sy_3$--submodule of  the part of arity 3 of the free operad 
$\mathcal{T} (\mathcal{A}_1)$ generated 
by $\mathcal{A}_1$.

 A morphism $f:\, (\mathcal{A}_1, R(\mathcal{A}))\to (\mathcal{B}_1, R(\mathcal{B}))$ is a map of 
$\Sy_2$--modules $\mathcal{A}_1\to \mathcal{B}_1$ whose extension $\mathcal{T}(f)$ restricted to the arity 3 part
of $\mathcal{T} (\mathcal{A}_1)$ sends $R(\mathcal{A})$ to $R(\mathcal{B})$. 
This category is denoted by $\BQO$.
\end{definition}

If we assume additionally that our graded $\Sy_2$--modules are finite--dimensional, we can imitate the definition of the linear dualisation functor in our new context
as the functor
$$
\mathcal{A}=(\mathcal{A}_1, R(\mathcal{A}))\mapsto  \mathcal{A}^* \coloneq(\mathcal{A}_1^*, R(\mathcal{A})^{\perp})\ .
 $$
 (Notice that this functor was denoted by $!$ in \cite[Section~2]{Vallette08}.)
 Otherwise, we can drop the finite--dimensionality restriction, and consider the Koszul dual functor $\acc$ which produces quadratic cooperads, see \cite[Section~7.1]{LodayVallette12}.

\subsection{The interchange laws  on the category of graded $\Sy_2$--modules}
The category of graded $\Sy_2$--modules is endowed with two
monoidal structures: the (Hadamard) {\it tensor product} $\otimes$ 
 and the {\it direct sum} $\oplus$, see for instance 
 \cite{KapranovManin01}. 
 The unit of the former one
is given by the trivial representation of $\Sy_2$ and the unit of the latter one is given by the 
 $\Sy_2$--module $\{0\}$.
We refer the reader to Section~1.4 and Appendix~A of \cite{Vallette08} for more details.  

\smallskip

 We will consider the following two interchange maps:
\begin{equation}\label{eq:3.1}
 \varphi_{\mathcal{A}_1\mathcal{A}_1^{\prime}\mathcal{B}_1\mathcal{B}_1^{\prime}} : 
(\mathcal{A}_1\oplus \mathcal{A}_1^{\prime})\otimes (\mathcal{B}_1\oplus \mathcal{B}_1^{\prime}) \to
(\mathcal{A}_1\otimes \mathcal{B}_1)\oplus  (\mathcal{A}_1^{\prime}\otimes \mathcal{B}_1^{\prime})
\end{equation}
 and
\begin{equation}\label{eq:3.2}
  \psi_{\mathcal{A}_1\mathcal{A}_1^{\prime}\mathcal{B}_1\mathcal{B}_1^{\prime}} : 
(\mathcal{A}_1\otimes \mathcal{B}_1)\oplus (\mathcal{A}_1^{\prime}\otimes \mathcal{B}_1^{\prime}) \to
(\mathcal{A}_1\oplus \mathcal{A}_1^{\prime})\otimes  (\mathcal{B}_1\oplus \mathcal{B}_1^{\prime}) \ .
\end{equation}
As in \cref{subsec:IntLawQD} above, they can be naturally seen in the context of canonical
identifications
$$
(\mathcal{A}_1\oplus \mathcal{A}_1^{\prime})\otimes (\mathcal{B}_1\oplus \mathcal{B}_1^{\prime}) \cong
(\mathcal{A}_1\otimes \mathcal{B}_1)\oplus (\mathcal{A}_1^{\prime}\otimes \mathcal{B}_1) \oplus
(\mathcal{A}_1\otimes \mathcal{B}_1^{\prime})\oplus  (\mathcal{A}_1^{\prime}\otimes \mathcal{B}_1^{\prime})\ .
$$
Namely, the law $\varphi$ is the projection $pr_{14}$ onto the first and the fourth summand
of the right hand side, whereas the law $\psi$ is the injection $inj_{14}$ of the sum of the first
and the fourth summands in the right hand side.

\smallskip

Together with the two interchange laws $\varphi$ and $\psi$, we obtain a 2--monoidal structure on the category
of graded $\Sy_2$--modules.

\subsection{The first product on the category of binary operadic quadratic data} We will now
start preparing the construction of the black product on the category $\BQO$. The central piece of the construction is the analog of the map $f^{\otimes 2}$
from \cref{subsec:DefQD}, which was denoted $\mathcal{T}(f)$ in the above definition of binary operadic quadratic data.
\smallskip

Let $(\mathcal{A}_1, R(\mathcal{A}))$ be an object of $\BQO$ and let $a\in \mathcal{A}_1$. Since
the arity of $a$ is 2, we will temporarily use the notation $a(x,y)$ where $x,y$
run over elements of an arbitrary algebra over the operad generated by $(\mathcal{A}_1, R(\mathcal{A}))$. 
Similarly, elements $c$ of arity 3 of such an operad can be written as $c(x,y,z)$ etc.
\smallskip

With this notation, there exists a basis of $\mathcal{T}(\mathcal{A}_1)(3)$ consisting 
of bilinear expressions in $a, a^{\prime} \in \mathcal{A}_1$
$$
\tau_1(a,a^{\prime})\ ,\quad  \tau_2(a,a^{\prime})\ ,\quad  \tau_3(a,a^{\prime})\ ,
$$
such that
$$
\tau_1(a,a^{\prime})(x,y,z)\coloneq a(a^{\prime}(x,y),z)\ ,\quad
\tau_2(a,a^{\prime})(x,y,z)\coloneq a(a^{\prime}(y,z),x)\ ,\quad
\tau_3(a,a^{\prime})(x,y,z)\coloneq a(a^{\prime}(z,x,),y)\ .
$$
This is a rewriting of the definition in \cite[Section~4]{Vallette08}, where the language
of planar rooted trees is used.

\smallskip

Now, for two binary operadic quadratic data   $(\mathcal{A}_1, R(\mathcal{A}))$ and $(\mathcal{B}_1, R(\mathcal{B}))$,
we can calculate in terms of these bases the map of $\Sy_3$--modules
$$
\Psi_{\mathcal{A}_1\mathcal{B}_1} :\quad \mathcal{T}(\mathcal{A}_1(3))\otimes 
\mathcal{T}(\mathcal{B}_1(3))\otimes \mathrm{sgn}_{\Sy_3}\quad \longrightarrow
\quad \mathcal{T}(\mathcal{A}_1\otimes 
\mathcal{B}_1\otimes \mathrm{sgn}_{\Sy_2}) (3)
$$
introduced in \cite{GinzburgKapranov94, GinzburgKapranov95}; our presentation is due to \cite{Vallette08}.
\smallskip

Namely, 
$$
\Psi_{\mathcal{A}_1\mathcal{B}_1}(\tau_i(a,a^{\prime})\otimes \tau_j(b,b^{\prime}))\coloneq
\delta_{ij}\tau_i(a\otimes b, a^{\prime}\otimes b^{\prime})\ .
$$

\smallskip

The main statement of this subsection is the following one. 

\smallskip

\begin{lemma} 
There exists a well defined monoidal structure
on $\emph{\BQO}$, called the black product $\bullet$,
given on objects by the formula
\begin{equation}\label{eq:3.3}
\mathcal{A}\bullet \mathcal{B} \coloneq (\mathcal{A}_1\otimes \mathcal{B}_1, \Psi_{\mathcal{A}_1\mathcal{B}_1}
(R(\mathcal{A})\otimes R(\mathcal{B})))\  .
\end{equation}
\end{lemma}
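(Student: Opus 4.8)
The plan is to verify that the formula \eqref{eq:3.3} genuinely defines a bifunctor on $\BQO$ and that it carries a monoidal structure with unit the binary operadic quadratic data generating the operad $\mathrm{As}$ (or rather, the associated free operad's arity-3 component carved out by the associativity relation). First I would check well-definedness on objects: one must see that $\Psi_{\mathcal{A}_1\mathcal{B}_1}(R(\mathcal{A})\otimes R(\mathcal{B}))$ is indeed an $\Sy_3$-submodule of $\mathcal{T}(\mathcal{A}_1\otimes\mathcal{B}_1\otimes\mathrm{sgn}_{\Sy_2})(3)$; this is immediate because $\Psi_{\mathcal{A}_1\mathcal{B}_1}$ is an $\Sy_3$-equivariant map (it is diagonal on the basis $\tau_i$, which is permuted by $\Sy_3$ compatibly on source and target) and $R(\mathcal{A})\otimes R(\mathcal{B})$ is an $\Sy_3$-submodule of the source. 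Functoriality is equally formal: given morphisms $f\colon \mathcal{A}\to\mathcal{C}$ and $g\colon\mathcal{B}\to\mathcal{D}$, the map $f\otimes g$ on generators induces $\mathcal{T}(f\otimes g)$, and one checks the square relating $\Psi_{\mathcal{A}_1\mathcal{B}_1}$ and $\Psi_{\mathcal{C}_1\mathcal{D}_1}$ commutes, again because $\Psi$ is diagonal in the $\tau_i$-basis and $\mathcal{T}(f)$, $\mathcal{T}(g)$ are compatible with that basis; then $(f\otimes g)(R(\mathcal{A})\otimes R(\mathcal{B}))\subset R(\mathcal{C})\otimes R(\mathcal{D})$ is pushed to the required inclusion.

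Next I would construct the associator. The underlying $\Sy_2$-module of $(\mathcal{A}\bullet\mathcal{B})\bullet\mathcal{C}$ is $(\mathcal{A}_1\otimes\mathcal{B}_1)\otimes\mathcal{C}_1$, naturally isomorphic to $\mathcal{A}_1\otimes(\mathcal{B}_1\otimes\mathcal{C}_1)$ via the associator of the Hadamard tensor product of $\Sy_2$-modules. One must check this isomorphism identifies the two spaces of relations. On the triple-tensor basis, iterating the formula gives $\Psi(\Psi(\tau_i(a,a')\otimes\tau_j(b,b'))\otimes\tau_k(c,c'))=\delta_{ij}\delta_{jk}\,\tau_i(a\otimes b\otimes c,\,a'\otimes b'\otimes c')$, which is manifestly symmetric in the order of bracketing; so both spaces of relations equal the image of $R(\mathcal{A})\otimes R(\mathcal{B})\otimes R(\mathcal{C})$ under the "fully diagonal" map, and the associator restricts correctly. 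Similarly I would exhibit the unit: take $\mathcal{I}\coloneq(K[\Sy_2], R_{\mathrm{As}})$ where the generator is the regular representation encoding a (non-symmetric) binary operation and $R_{\mathrm{As}}$ is spanned by $\tau_1-\tau_2$ and $\tau_2-\tau_3$ (the associativity relations), so that $\mathcal{T}(\mathcal{I}_1)/(R_{\mathrm{As}})$ is the operad $\mathrm{As}$. Unitality $\mathcal{I}\bullet\mathcal{A}\cong\mathcal{A}$ follows because $\otimes$ with $K[\Sy_2]$ is the identity up to isomorphism on the generator level, and $\Psi(R_{\mathrm{As}}\otimes R(\mathcal{A}))$ matches $R(\mathcal{A})$ under the diagonal formula — here one uses that the $\tau_i$-basis is permuted by $\Sy_3$ in a way that makes the span of $R_{\mathrm{As}}$-type combinations absorb into any $\Sy_3$-submodule under $\Psi$. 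Finally the pentagon and triangle axioms hold automatically since all structural isomorphisms are inherited from the already-established symmetric monoidal structure $(\textbf{grVect}^{\Sy_2},\otimes)$ on underlying modules.

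The step I expect to be the main obstacle is verifying unitality at the level of relation spaces, i.e. that $\Psi_{\mathcal{I}_1\mathcal{A}_1}(R_{\mathrm{As}}\otimes R(\mathcal{A}))$ is carried isomorphically onto $R(\mathcal{A})$ under the natural identification $\mathcal{I}_1\otimes\mathcal{A}_1\cong\mathcal{A}_1$ extended to arity 3. The diagonal formula $\Psi(\tau_i\otimes\tau_j)=\delta_{ij}\tau_i$ makes this plausible but one must be careful about the $\mathrm{sgn}_{\Sy_2}$ and $\mathrm{sgn}_{\Sy_3}$ twists appearing in the source and target of $\Psi$, and about which choice of $\mathcal{I}$ (and in particular which twist of $\mathrm{As}$) actually serves as a strict unit rather than a unit only up to a sign character; a clean way around this is to observe that the black product on $\BQO$ is, up to the realisation functor to operads, the black product of \cite{GinzburgKapranov95}, for which $\mathrm{As}$ (suitably twisted) is known to be the unit, and then to transport that fact through the faithful passage to operads. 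Alternatively one checks directly that for the generator of $\mathcal{I}$ the three expressions $\tau_1,\tau_2,\tau_3$ all become the same element of $\mathcal{T}(\mathcal{A}_1\otimes\mathrm{sgn}_{\Sy_2})(3)$ after the identification, so that quotienting by $R_{\mathrm{As}}\otimes R(\mathcal{A})$ precisely reconstructs $R(\mathcal{A})$. I would present the unit verification in detail and dispatch associativity and coherence by reference to the underlying symmetric monoidal structure.
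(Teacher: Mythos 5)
The paper offers no proof of this lemma (it is quoted from \cite{GinzburgKapranov94,GinzburgKapranov95} and \cite{Vallette08}), so your attempt has to be judged against the standard argument. Your treatment of well-definedness, functoriality, and associativity — the diagonal formula $\Psi(\tau_i\otimes\tau_j)=\delta_{ij}\tau_i$ making the iterated relation space manifestly independent of the bracketing — is the right one. The genuine error is your choice of unit. You propose $\mathcal{I}=(K[\Sy_2],R_{\mathrm{As}})$ with the \emph{regular} representation as generating $\Sy_2$-module. This already fails at the level of generators: the generating $\Sy_2$-module of $\mathcal{I}\bullet\mathcal{A}$ is the arity-wise Hadamard product $K[\Sy_2]\otimes\mathcal{A}_1$ (possibly twisted by $\mathrm{sgn}_{\Sy_2}$), which since $\mathrm{char}\,K\neq 2$ decomposes as $\mathcal{A}_1\oplus(\mathcal{A}_1\otimes\mathrm{sgn}_{\Sy_2})$ and so has twice the dimension of $\mathcal{A}_1$. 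The unit for the Hadamard product of $\Sy_2$-modules is the one-dimensional trivial representation, as the paper itself records; and because the target of $\Psi_{\mathcal{A}_1\mathcal{B}_1}$ carries the twist $\mathcal{A}_1\otimes\mathcal{B}_1\otimes\mathrm{sgn}_{\Sy_2}$, the correct unit for $\bullet$ is the quadratic data of the operad $\mathrm{Lie}$: a one-dimensional sign representation spanned by an antisymmetric generator $\mu$, with relation space spanned by the Jacobi element $\tau_1(\mu,\mu)+\tau_2(\mu,\mu)+\tau_3(\mu,\mu)$. Then $\mathrm{sgn}_{\Sy_2}\otimes\mathcal{A}_1\otimes\mathrm{sgn}_{\Sy_2}\cong\mathcal{A}_1$ and
$\Psi\bigl((\tau_1+\tau_2+\tau_3)\otimes\sum_i\tau_i(a_i,a_i')\bigr)=\sum_i\tau_i(\mu\otimes a_i,\mu\otimes a_i')$,
which is exactly $R(\mathcal{A})$ under this identification, so unitality holds on relations as well.

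Note, relatedly, that the relation space you assign to your candidate unit, spanned by $\tau_1-\tau_2$ and $\tau_2-\tau_3$, is of $\mathrm{Com}$ type and is the unit datum for the \emph{white} product $\circ$, not the black one: applying the diagonal $\Psi$ to $(\tau_1-\tau_2)\otimes r$ with $r=\sum_i\tau_i(a_i,a_i')$ produces $\tau_1(\cdots a_1\cdots)-\tau_2(\cdots a_2\cdots)$, which has no reason to correspond to an element of $R(\mathcal{A})$; only the element $\tau_1+\tau_2+\tau_3$ reassembles all three components of $r$. The operad $\mathrm{As}$ is the unit for the black and white products only in the \emph{nonsymmetric} setting, where arity components are plain vector spaces and the Hadamard unit is the ground field in each arity; your fallback appeal to ``$\mathrm{As}$, suitably twisted, is known to be the unit'' of the Ginzburg--Kapranov black product is therefore also incorrect in the symmetric context of $\BQO$, where the unit is $\mathrm{Lie}$ (and $\mathrm{Com}$ for $\circ$). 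Once the unit is replaced by the $\mathrm{Lie}$ quadratic data, the rest of your argument goes through.
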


\subsection{The second product on the category of binary quadratic operads}\label{subsec:SecProd}
We will now define the product of  binary operadic quadratic data
 $\underline{\circledcirc}$ which will serve as an analog
of the product $\underline{\otimes}$ on the category of quadratic data.

\smallskip

First of all, for a graded $\Sy_2$--module $\mathcal{A}_1$, denote by
$\mathcal{A}_1^+$, resp. $\mathcal{A}_1^-$, the submodule of $\Sy_2$--invariant 
elements, resp. (2,1)--antiinvariant elements of $\mathcal{A}_1$. 

\smallskip

Furthermore, denote by $\{\mathcal{A}_1,\mathcal{B}_1\}$ the sub-$\Sy_3$-module of
$\mathcal{T}(\mathcal{A}_1\oplus\mathcal{B}_1)(3)$ spanned by the elements
$\tau_1(a,b)+\tau_1(b,a)$, where either $(a,b)\in  \mathcal{A}_1^+ \times \mathcal{B}_1^+$,
or  $(a,b)\in  \mathcal{A}_1^- \times \mathcal{B}_1^-$.
\smallskip
Finally, put
$$
\mathcal{A} \,\underline{\circledcirc}\, \mathcal{B} \coloneq (\mathcal{A}_1 \oplus \mathcal{B}_1, R(\mathcal{A})\oplus \{\mathcal{A}_1, \mathcal{B}_1\}
\oplus R(\mathcal{B}))\ .
$$

We can now state and prove the analog of \cref{prop:2monocatQD} in the operadic
setting.

\medskip

\begin{proposition}\label{prop:2MonoOP}
   The interchange law $\varphi$ on the category of graded 
$\Sy_2$--modules  induces  morphisms in the category
of binary operadic quadratic data
$$
\varphi_{\mathcal{A}\mathcal{A}^{\prime}\mathcal{B}\mathcal{B}^{\prime} }:\quad
(\mathcal{A} \,\underline{\circledcirc}\, \mathcal{A}^{\prime}) \bullet
(\mathcal{B} \,\underline{\circledcirc}\, \mathcal{B}^{\prime}) \to
(\mathcal{A}\bullet \mathcal{B}) \,\underline{\circledcirc}\, (\mathcal{A}^{\prime}\bullet \mathcal{B}^{\prime})
$$
which define on $(\emph{\BQO},\bullet , \underline{\circledcirc},\varphi )$
the structure of a lax 2--monoidal category and on $(\emph{\BQO}, \underline{\circledcirc},\bullet ,\varphi )$
the structure of a colax 2--monoidal category.
\end{proposition}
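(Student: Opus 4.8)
The plan is to mirror, step by step, the proof of \cref{prop:2monocatQD}, replacing the tensor-square map $pr_{14}^{\otimes 2}$ by the operadic map $\Psi_{\mathcal{A}_1\mathcal{B}_1}$ precomposed with the arity-3 part of the free-operad functor applied to $\varphi=pr_{14}$. Concretely, on 1-components the interchange law is
\[
\varphi_{\mathcal{A}_1\mathcal{A}_1^{\prime}\mathcal{B}_1\mathcal{B}_1^{\prime}} = pr_{14} : (\mathcal{A}_1\oplus \mathcal{A}_1^{\prime})\otimes(\mathcal{B}_1\oplus \mathcal{B}_1^{\prime}) \to (\mathcal{A}_1\otimes\mathcal{B}_1)\oplus(\mathcal{A}_1^{\prime}\otimes\mathcal{B}_1^{\prime}),
\]
and the only thing to check for it to lift to $\BQO$ is that the induced map on arity-3 parts of the free operads carries the relation space of $(\mathcal{A} \,\underline{\circledcirc}\, \mathcal{A}^{\prime}) \bullet (\mathcal{B} \,\underline{\circledcirc}\, \mathcal{B}^{\prime})$ into that of $(\mathcal{A}\bullet \mathcal{B}) \,\underline{\circledcirc}\, (\mathcal{A}^{\prime}\bullet \mathcal{B}^{\prime})$. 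Using the basis $\tau_1,\tau_2,\tau_3$ of $\mathcal{T}(-)(3)$ and the formula $\Psi(\tau_i(a,a^{\prime})\otimes\tau_j(b,b^{\prime}))=\delta_{ij}\tau_i(a\otimes b,a^{\prime}\otimes b^{\prime})$, the source relation space is $\Psi$ applied to $\{R(\mathcal{A})\oplus\{\mathcal{A}_1,\mathcal{A}_1^{\prime}\}\oplus R(\mathcal{A}^{\prime})\}\otimes\{R(\mathcal{B})\oplus\{\mathcal{B}_1,\mathcal{B}_1^{\prime}\}\oplus R(\mathcal{B}^{\prime})\}$, while the target relation space is $\Psi(R(\mathcal{A})\otimes R(\mathcal{B}))\oplus\{\mathcal{A}_1\otimes\mathcal{B}_1,\mathcal{A}_1^{\prime}\otimes\mathcal{B}_1^{\prime}\}\oplus\Psi(R(\mathcal{A}^{\prime})\otimes R(\mathcal{B}^{\prime}))$.

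I would then run the same four-part bookkeeping as in \cref{prop:2monocatQD}. First, the "vanishing" observation: on 1-components, $\ker pr_{14} = \mathcal{A}_1\otimes\mathcal{B}_1^{\prime}\oplus\mathcal{A}_1^{\prime}\otimes\mathcal{B}_1$, so any generator of $\mathcal{T}((\mathcal{A}_1\oplus\mathcal{A}_1^{\prime})\otimes(\mathcal{B}_1\oplus\mathcal{B}_1^{\prime}))(3)$ one of whose two "letters" lies in this kernel is killed — here the role of "first two / last two divisors" in the associative case is played by the two slots $a, a^{\prime}$ inside $\tau_i(a,a^{\prime})$. The cross terms $\Psi(R(\mathcal{A})\otimes R(\mathcal{B}^{\prime}))$, $\Psi(R(\mathcal{A}^{\prime})\otimes R(\mathcal{B}))$, $\Psi(R(\mathcal{A})\otimes\{\mathcal{B}_1,\mathcal{B}_1^{\prime}\})$, $\Psi(\{\mathcal{A}_1,\mathcal{A}_1^{\prime}\}\otimes R(\mathcal{B}))$, etc., are all annihilated because after $\Psi$ has matched indices ($\delta_{ij}$) the resulting $\tau_i(a\otimes b, a^{\prime}\otimes b^{\prime})$ has $a\otimes b$ or $a^{\prime}\otimes b^{\prime}$ landing in $\ker pr_{14}$; the $\{\mathcal{B}_1,\mathcal{B}_1^{\prime}\}$-type terms are the analogue of the "only one half vanishes" case and need the slightly more careful remark that the surviving letter already has mismatched types so the whole $\tau_1(-,-)+\tau_1(-,-)$ expression is in the kernel. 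Second, $\Psi(R(\mathcal{A})\otimes R(\mathcal{B}))$ and $\Psi(R(\mathcal{A}^{\prime})\otimes R(\mathcal{B}^{\prime}))$ map isomorphically onto the corresponding summands of the target — immediate from the $\delta_{ij}$ formula and injectivity of $pr_{14}$ on those factors.

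The genuine content — and the step I expect to be the main obstacle — is matching the mixed term $\Psi(\{\mathcal{A}_1,\mathcal{A}_1^{\prime}\}\otimes\{\mathcal{B}_1,\mathcal{B}_1^{\prime}\})$ against $\{\mathcal{A}_1\otimes\mathcal{B}_1,\mathcal{A}_1^{\prime}\otimes\mathcal{B}_1^{\prime}\}$, exactly as \eqref{eq:2.10}–\eqref{eq:2.11} did in the algebra case, but now twisted by the sign representations $\mathrm{sgn}_{\Sy_2}$ and $\mathrm{sgn}_{\Sy_3}$ that enter the definition of $\Psi$ and of $\bullet$. A generator of $\{\mathcal{A}_1,\mathcal{A}_1^{\prime}\}$ is $\tau_1(a,a^{\prime})+\tau_1(a^{\prime},a)$ with $(a,a^{\prime})$ both $\Sy_2$-invariant or both antiinvariant, similarly for $\mathcal{B}$; one tensors these, applies $\Psi$, and must see that the $\delta_{ij}$-surviving part is precisely $\tau_1(a\otimes b, a^{\prime}\otimes b^{\prime})+\tau_1(a^{\prime}\otimes b^{\prime}, a\otimes b)$ with $a\otimes b$ and $a^{\prime}\otimes b^{\prime}$ of matching $\Sy_2$-type in $\mathcal{A}_1\otimes\mathcal{B}_1\otimes\mathrm{sgn}_{\Sy_2}$ — i.e.\ a generator of $\{\mathcal{A}_1\otimes\mathcal{B}_1,\mathcal{A}_1^{\prime}\otimes\mathcal{B}_1^{\prime}\}$. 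Here the point "invariant$\otimes$invariant or antiinvariant$\otimes$antiinvariant" is exactly what the sign twist in $\mathcal{A}_1\otimes\mathcal{B}_1\otimes\mathrm{sgn}_{\Sy_2}$ needs, and one checks the four cross-type pairings ($+\otimes-$ etc.) die for the same "vanishing" reason as above. Having established that the underlying map of $\Sy_2$-modules is $pr_{14}$ and that relations go to relations, I would conclude exactly as in \cref{prop:2monocatQD} and \cref{cor:White2Lax}: the interchange law $\varphi$ and its reverse $\psi=inj_{14}$ inherit from the $2$-monoidal category of graded $\Sy_2$-modules all the coherence diagrams (compatibility with associativity of $\bullet$ and $\underline{\circledcirc}$, and with their units, which are again sent to units), giving $(\BQO,\bullet,\underline{\circledcirc},\varphi)$ its lax and $(\BQO,\underline{\circledcirc},\bullet,\varphi)$ its colax $2$-monoidal structure. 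Verifying these coherence pentagons/triangles is routine since on underlying objects everything reduces to the already-checked diagrams for $pr_{14}$ and $inj_{14}$ on graded $\Sy_2$-modules.
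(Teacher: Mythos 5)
Your proposal follows essentially the same route as the paper's proof: identify the source and target relation spaces via $\Psi$, kill the cross terms using $\ker pr_{14}=\mathcal{A}_1\otimes\mathcal{B}_1^{\prime}\oplus\mathcal{A}_1^{\prime}\otimes\mathcal{B}_1$ together with the $\delta_{ij}$ in the formula for $\Psi$, observe that $\Psi(R(\mathcal{A})\otimes R(\mathcal{B}))$ and $\Psi(R(\mathcal{A}^{\prime})\otimes R(\mathcal{B}^{\prime}))$ map identically onto the outer summands, and reduce the real work to the mixed term $\Psi(\{\mathcal{A}_1,\mathcal{A}_1^{\prime}\}\otimes\{\mathcal{B}_1,\mathcal{B}_1^{\prime}\})$, which is exactly what the paper does. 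One correction to your treatment of that mixed term: the ``cross-type'' pairings (e.g.\ $(a,a^{\prime})$ both $\Sy_2$-invariant tensored against $(b,b^{\prime})$ both antiinvariant) do \emph{not} vanish --- the computation $\Psi\big((\tau_i(a,a^{\prime})+\tau_i(a^{\prime},a))\otimes(\tau_i(b,b^{\prime})+\tau_i(b^{\prime},b))\big)=(-1)^{|a^{\prime}||b|}\tau_i(a\otimes b,a^{\prime}\otimes b^{\prime})+(-1)^{|a||b^{\prime}|}\tau_i(a^{\prime}\otimes b^{\prime},a\otimes b)$ holds in all four type-combinations, and in each case $a\otimes b$ and $a^{\prime}\otimes b^{\prime}$ are \emph{simultaneously} even or odd, so the result lands in $\{\mathcal{A}_1\otimes\mathcal{B}_1,\mathcal{A}_1^{\prime}\otimes\mathcal{B}_1^{\prime}\}$ regardless; the only terms that genuinely die are the $i\neq j$ ones killed by $\delta_{ij}$. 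This does not affect the validity of the argument, since the terms you believed vanished are in the target anyway, but the stated mechanism is wrong and should be replaced by the direct verification above.
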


\begin{proof}
 As in the case of algebras (\cref{prop:2monocatQD}), we have to check that
the morphism of graded $\Sy_2$--modules \eqref{eq:3.1} induces a well defined morphism
of graded $\Sy_3$--modules of relations
$$
R((\mathcal{A} \,\underline{\circledcirc}\, \mathcal{A}^{\prime}) \bullet
(\mathcal{B} \,\underline{\circledcirc}\, \mathcal{B}^{\prime}))\to
R((\mathcal{A}\bullet \mathcal{B}) \,\underline{\circledcirc}\, (\mathcal{A}^{\prime}\bullet \mathcal{B}^{\prime}))
$$

The left hand side can be rewritten as
$$
\Psi (( R(\mathcal{A}) \oplus \{\mathcal{A}_1, \mathcal{A}_1^{\prime}\}\oplus R(\mathcal{A}^{\prime}))\otimes
( R(\mathcal{B}) \oplus \{\mathcal{B}_1, \mathcal{B}_1^{\prime}\}\oplus R(\mathcal{B}^{\prime})))\ ,
$$
and the right hand side as
$$
\Psi (R(\mathcal{A})\otimes R(\mathcal{B})) \oplus 
\{\mathcal{A}_1\otimes \mathcal{B}_1, \mathcal{A}_1^{\prime}\otimes \mathcal{B}_1^{\prime}\}\oplus
\Psi (R(\mathcal{A}^{\prime})\otimes R(\mathcal{B}^{\prime}))\ .
$$
\smallskip

From \eqref{eq:3.3}, using the same arguments as in the case of quadratic data we 
conclude that the following summands of the left hand side get
annihilated:
\begin{align*}
& \Psi (R(\mathcal{A}) \otimes \{\mathcal{B}_1, \mathcal{B}_1^{\prime}\})\ , \quad 
\Psi (R(\mathcal{A})\otimes R(\mathcal{B}^{\prime}))\ , \quad \Psi (\{\mathcal{A}_1, \mathcal{A}_1^{\prime}\} \otimes
R(\mathcal{B}))\ ,
\\
& \Psi (\{\mathcal{A}_1, \mathcal{A}_1^{\prime}\} \otimes
R(\mathcal{B^{\prime}}))\ , \quad    \Psi (R(\mathcal{A^{\prime}})\otimes
R(\mathcal{B}))\ , \quad \Psi (R(\mathcal{A}^{\prime}) \otimes \{\mathcal{B}_1, \mathcal{B}_1^{\prime}\})\ .
\end{align*}
The two summands in the left hand side
$$
\Psi (R(\mathcal{A}) \otimes R(\mathcal{B})) \quad \text{and} \quad  \Psi (R(\mathcal{A}^{\prime}) \otimes R(\mathcal{B}^{\prime}))
$$
map identically to the first and the last summands of the right hand side
respectively.

\smallskip

It remains to show that  the summand 
$\Psi (\{\mathcal{A}_1, \mathcal{A}_1^{\prime}\} \otimes \Psi (\{\mathcal{B}_1, \mathcal{B}_1^{\prime}\})$
lands in  
$$
\{\mathcal{A}_1, \mathcal{B}_1\} \otimes \{\mathcal{A}_1^{\prime}, \mathcal{B}_1^{\prime}\} .
$$

From the definition of the brackets $\{\mathcal{A}_1, \mathcal{B}_1\}$ given in \cref{subsec:SecProd},
it follows that the graded $\Sy_3$--module  $\{\mathcal{A}_1, \mathcal{A}_1^{\prime}\}$ is linearly spanned by the expressions
$\tau_i(a,a^{\prime})$ where $i =1,2,3$, and $a\in \mathcal{A}_1$, $a^{\prime}\in \mathcal{A}_1^{\prime}$
are either simultaneously $\Sy_2$--even, or simultaneously $\Sy_2$--odd. This comes from the following facts 
$$\tau_1(a,a^{\prime})^{(12)}=\pm \tau_1(a,a^{\prime})\ , \quad 
\tau_2(a,a^{\prime})^{(12)}=\pm \tau_3(a,a^{\prime})\ ,$$
$$ 
\tau_1(a,a^{\prime})^{(123)}=\tau_2(a,a^{\prime})\ , \quad 
\tau_2(a,a^{\prime})^{(123)}=\tau_3(a,a^{\prime})\ .
$$
Moreover,
\begin{align*}
&\Psi\big((\tau_i(a,a^{\prime})+ \tau_i(a^{\prime},a))\otimes (\tau_i(b,b^{\prime})+ \tau_i(b^{\prime},b))\big) \\
&=
(-1)^{|a'||b|}\tau_i(a\otimes b,a^{\prime}\otimes b^{\prime})+ 
(-1)^{|a||b'|} \tau_i(a^{\prime}\otimes b^{\prime}, a\otimes b) \in
\{\mathcal{A}_1 \otimes \mathcal{B}_1,  \mathcal{A}_1^{\prime} \otimes \mathcal{B}_1^{\prime} \}\ ,
\end{align*}
whereas for $i\neq j$,
$$
\Psi\big((\tau_i(a,a^{\prime})+ \tau_i(a^{\prime},a))\otimes (\tau_j(a,a^{\prime})+ \tau_j(a^{\prime},a))\big) = 0.
$$
This concludes the proof. 
\end{proof}

\subsection{White product in $\BQO$, yet another product, and the interchange law $\psi$} 
Similarly
to what happens in the category of quadratic data, we can introduce the following
white product in $\BQO$:
$$
\mathcal{A}\circ\mathcal{B} \coloneq (\mathcal{A}_1\otimes \mathcal{B}_1,
\Phi^{-1} (R(\mathcal{A})\otimes \mathcal{T}(\mathcal{B}_1)(3) +  \mathcal{T}(\mathcal{A}_1)(3)\otimes R(\mathcal{B})))\ ,
$$
where $\Phi$ is the natural map
$$
\Phi = \Phi_{\mathcal{A}_1\mathcal{B}_1} \coloneq \mathcal{T}(\mathcal{A}_1\otimes \mathcal{B}_1)(3) \to
\mathcal{T}(\mathcal{A}_1)(3) \otimes \mathcal{T}( \mathcal{B}_1)(3) \ ,
$$
which duplicates the underlying tree. 
Black and white products are also related to each other by the operadic  duality functor $*$\ . 

\smallskip

Similarly, the product $\underline{\circledcirc}$ defined in \cref{subsec:SecProd} is sent to the following product ${\circledcirc}$ under the operadic Koszul duality functor $*$.
We first consider the  sub-$\Sy_3$--module
$$
[\mathcal{A}_1,\mathcal{B}_1] \subset \mathcal{T}(\mathcal{A}_1\oplus \mathcal{B}_1)(3) ,
$$
spanned  by the elements
$\tau_1(a,b) -\tau_1(b,a)$ whenever $a,b$ are simultaneously
$\Sy_2$--even or odd, and in addition by the expressions
$\tau_1(a,b)$ when one of the arguments $a,b$ is even and another
is odd. Then, we define 
$$
\mathcal{A} \,{\circledcirc}\, \mathcal{B} \coloneq (\mathcal{A}_1 \oplus \mathcal{B}_1, R(\mathcal{A})\oplus [\mathcal{A}_1, \mathcal{B}_1]
\oplus R(\mathcal{B}))\ .
$$

These two monoidal products are related by the interchange law $\psi$ induced by \eqref{eq:3.2}.

\begin{proposition} 
The interchange law $\psi$ in the category
of graded $\Sy_2$--modules lifts to morphisms in $\emph{\BQO}$ 
$$
\psi_{\mathcal{A}\mathcal{A}^{\prime}\mathcal{B}\mathcal{B}^{\prime}}:\quad
(\mathcal{A}\circ \mathcal{A}^{\prime}) \circledcirc (\mathcal{B}\circ \mathcal{B}^{\prime}) \to
(\mathcal{A}\circledcirc \mathcal{B}) \circ (\mathcal{A}^{\prime} \circledcirc \mathcal{B}^{\prime})
$$
which make $(\emph{\BQO}, \circledcirc, \circ, \psi )$ a lax 2-monoidal
category and $(\emph{\BQO}, \circledcirc,\circ,  \psi )$ a colax
2--monoidal category.
\end{proposition}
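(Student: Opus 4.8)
The plan is to prove this proposition by mirroring the proof of \cref{prop:2MonoOP}, transporting everything through the operadic Koszul duality functor $*$. Since $*$ is a contravariant equivalence (a quasi-involution) of $\BQO$ which, by the discussion above, sends $\bullet$ to $\circ$, sends $\underline{\circledcirc}$ to $\circledcirc$, and sends $\varphi$ to $\psi$, the lax 2-monoidal structure $(\BQO, \bullet, \underline{\circledcirc}, \varphi)$ established in \cref{prop:2MonoOP} is carried by $*^{\mathrm{op}}$ to a colax 2-monoidal structure $(\BQO, \circ, \circledcirc, \psi)$, i.e.\ to a lax 2-monoidal structure on the opposite category; dually, the colax structure $(\BQO, \underline{\circledcirc}, \bullet, \varphi)$ of \cref{prop:2MonoOP} is carried to the lax structure $(\BQO, \circledcirc, \circ, \psi)$. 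Thus the entire statement follows formally once the three dualities $\bullet \leftrightarrow \circ$, $\underline{\circledcirc} \leftrightarrow \circledcirc$, and $\varphi \leftrightarrow \psi$ under $*$ are pinned down.

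First I would record the structural identities under $*$. For the products we have the already-asserted $(\mathcal{A}\bullet\mathcal{B})^* \cong \mathcal{A}^*\circ\mathcal{B}^*$, which on generators is the identity $\mathcal{A}_1^*\otimes\mathcal{B}_1^* \cong (\mathcal{A}_1\otimes\mathcal{B}_1)^*$ and on relations is the statement that $\Psi_{\mathcal{A}_1\mathcal{B}_1}$ and $\Phi_{\mathcal{A}_1\mathcal{B}_1}$ are mutually adjoint up to the sign twist by $\mathrm{sgn}_{\Sy_3}$, so that $\big(\Psi(R(\mathcal{A})\otimes R(\mathcal{B}))\big)^\perp = \Phi^{-1}\big(R(\mathcal{A})^\perp\otimes\mathcal{T}(\mathcal{B}_1)(3) + \mathcal{T}(\mathcal{A}_1)(3)\otimes R(\mathcal{B})^\perp\big)$. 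For the second products I would check $(\mathcal{A}\,\underline{\circledcirc}\,\mathcal{B})^* \cong \mathcal{A}^*\circledcirc\mathcal{B}^*$, which reduces, after identifying $(\mathcal{A}_1\oplus\mathcal{B}_1)^* \cong \mathcal{A}_1^*\oplus\mathcal{B}_1^*$ and noting that $*$ swaps $\Sy_2$-even and $\Sy_2$-odd parts (because of the sign twist built into the operadic $*$), to the orthogonality relation $\{\mathcal{A}_1,\mathcal{B}_1\}^\perp = [\mathcal{A}_1^*,\mathcal{B}_1^*]$ inside $\mathcal{T}(\mathcal{A}_1^*\oplus\mathcal{B}_1^*)(3)$. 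This is where the careful case bookkeeping of \cref{subsec:SecProd} pays off: the three families $\tau_1(a,b)\pm\tau_1(b,a)$ for $a,b$ of the same parity, together with the lone $\tau_1(a,b)$ in the mixed-parity case, were designed precisely so that the span defining $\underline{\circledcirc}$ and the span defining $\circledcirc$ are orthogonal complements of one another arity-component by arity-component. Finally $\varphi = pr_{14}$ and $\psi = inj_{14}$ are linear-algebra transposes of each other, so $\psi^{\mathrm{op}}$ on the dual spaces is $\varphi$.

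Once these identities are in hand, the coherence axioms for the lax (resp.\ colax) 2-monoidal structure on $(\BQO,\circledcirc,\circ,\psi)$ are exactly the images under the contravariant equivalence $*$ of the coherence axioms verified in \cref{prop:2MonoOP}, and no further computation is needed; associators, unitors and braidings for $\circ$ and $\circledcirc$ are the $*$-images of those for $\bullet$ and $\underline{\circledcirc}$, with units $(\mathrm{triv},0)$ and $(0,0)$ going to $(\mathrm{triv},0)$ and $(0,0)$ respectively. The one place where care is genuinely required — and the main obstacle — is verifying the sign-twisted adjunction between $\Psi$ and $\Phi$ and, relatedly, the orthogonality $\{\mathcal{A}_1,\mathcal{B}_1\}^\perp = [\mathcal{A}_1^*,\mathcal{B}_1^*]$: one must check that the $\mathrm{sgn}_{\Sy_3}$-twist in the target of $\Psi$ and the $\mathrm{sgn}_{\Sy_2}$-twists hidden in the operadic $*$ interact so as to match parities correctly on the nose, including the Koszul signs $(-1)^{|a||b'|}$ and $(-1)^{|a'||b|}$ appearing in the formula for $\Psi$ on bracket elements. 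For completeness I would also remark, as in \cref{cor:White2Lax}, that the finite-dimensionality hypothesis needed to make $*$ an honest quasi-involution can be dropped at the cost of replacing the transported argument by a direct computation strictly parallel to \cref{prop:2MonoOP}, checking that $\psi = inj_{14}$ applied to generators sends the relations of $(\mathcal{A}\circledcirc\mathcal{B})\circ(\mathcal{A}'\circledcirc\mathcal{B}')$ into those of $(\mathcal{A}\circ\mathcal{A}')\circledcirc(\mathcal{B}\circ\mathcal{B}')$, with the roles of ``annihilated summands'' and ``summands mapped isomorphically'' interchanged.
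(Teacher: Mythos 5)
Your proposal is correct and follows essentially the same route as the paper, which simply observes that the statement is Koszul dual to the preceding proposition under finite-dimensionality assumptions and holds in general by direct inspection; you have merely spelled out the intermediate identities $(\mathcal{A}\bullet\mathcal{B})^*\cong\mathcal{A}^*\circ\mathcal{B}^*$, $(\mathcal{A}\,\underline{\circledcirc}\,\mathcal{B})^*\cong\mathcal{A}^*\circledcirc\mathcal{B}^*$, and the transposition of $pr_{14}$ and $inj_{14}$ in more detail than the paper does. Your closing remark about dropping finite-dimensionality via a direct check parallel to the black-product case matches the paper's own caveat.
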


\begin{proof} As in the  quadratic data case, this proposition is Koszul dual to \cref{prop:2MonoOP} under finite dimensional assumptions. However, it holds in the general case by direct inspection. 
\end{proof}

\subsection{Applications}

\begin{corollary} \leavevmode
\begin{enumerate}
\item  Let $M,N$ be two monoids  in $\emph{\BQO}$
with respect to the black product $\bullet$ (resp. the $\circledcirc$  product). Then $M\underline{\circledcirc} N$ 
(resp. $M{\circ} N$) 
also has a natural
structure of a $\bullet$-monoid (resp. a $\circledcirc$-monoid).

\item Similarly, let $M,N$ be two comonoids  in $\emph{\BQO}$
with respect to the $\underline{\circledcirc}$  product 
(resp. the $\circ$  product). Then $M \bullet N$ 
(resp. $M{\circledcirc} N$)
also has a natural
structure of $\circledcirc$--comonoid (respectively $\circ$-comonoid).
\end{enumerate}
\end{corollary}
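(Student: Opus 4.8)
The plan is to derive this corollary formally from the lax/colax 2--monoidal structures established in the two preceding propositions, exactly as \cref{cor:MonoProduct} was derived from \cref{prop:2monocatQD}. The key observation is the general categorical fact (\cite[Proposition~3]{Vallette08}) that in a lax 2--monoidal category $(\textbf{C},\boxtimes,\otimes,\varphi)$ the functor $\otimes$ is lax monoidal from $(\textbf{C},\boxtimes)^{\times 2}$ to $(\textbf{C},\boxtimes)$, and lax monoidal functors carry monoids to monoids; dually, in a colax 2--monoidal category the interchange functor carries comonoids to comonoids.

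First I would treat part (1), case of the black product: \cref{prop:2MonoOP} states that $(\BQO,\bullet,\underline{\circledcirc},\varphi)$ is a lax 2--monoidal category, so $\underline{\circledcirc} : (\BQO,\bullet)^{\times 2}\to(\BQO,\bullet)$ is a lax monoidal functor. Given $\bullet$--monoids $M$ and $N$ with multiplications $\mu_M : M\bullet M\to M$, $\mu_N : N\bullet N\to N$ and units, the composite
\[
(M\underline{\circledcirc} N)\bullet(M\underline{\circledcirc} N)\xrightarrow{\ \varphi_{M N M N}\ }(M\bullet M)\underline{\circledcirc}(N\bullet N)\xrightarrow{\ \mu_M\underline{\circledcirc}\mu_N\ }M\underline{\circledcirc} N
\]
is the desired multiplication, with unit obtained from the lax unit constraint of $\underline{\circledcirc}$; associativity and unitality follow from the coherence axioms of a lax monoidal functor. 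The parenthetical case (the $\circledcirc$--monoid statement) is identical using the second lax 2--monoidal structure $(\BQO,\circledcirc,\circ,\psi)$ from the last proposition, whose interchange functor is $\circ$.

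For part (2) I would dualise: \cref{prop:2MonoOP} also asserts that $(\BQO,\underline{\circledcirc},\bullet,\varphi)$ is a \emph{colax} 2--monoidal category, so $\bullet : (\BQO,\underline{\circledcirc})^{\times 2}\to(\BQO,\underline{\circledcirc})$ is colax monoidal, and colax monoidal functors send comonoids to comonoids. Thus if $M,N$ are $\underline{\circledcirc}$--comonoids, $M\bullet N$ acquires a $\underline{\circledcirc}$--comonoid structure via $(M\bullet N)\xrightarrow{\ \delta_M\bullet\delta_N\ }(M\underline{\circledcirc} M)\bullet(N\underline{\circledcirc} N)\xrightarrow{\ \varphi\ }(M\bullet N)\underline{\circledcirc}(M\bullet N)$ and the colax counit constraint. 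The parenthetical case uses the colax structure $(\BQO,\circledcirc,\circ,\psi)$, giving $M\circledcirc N$ a $\circ$--comonoid structure. The main (and only) real obstacle is bookkeeping: one must be careful about which of the two monoidal products plays the role of the ``outer'' product $\boxtimes$ and which is the ``interchange'' functor in each of the four cases, and that the lax versus colax direction of the interchange law matches the monoid versus comonoid statement. Once this dictionary is fixed, there is no computation to do --- everything is a direct invocation of \cite[Proposition~3]{Vallette08} and its evident dual, so I would simply state that the proof is formally identical to that of \cref{cor:MonoProduct}.
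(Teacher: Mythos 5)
Your proof is correct and follows exactly the paper's argument: both simply invoke the fact that lax monoidal functors preserve monoids and colax monoidal functors preserve comonoids, applied to the interchange functors of the lax/colax 2--monoidal structures from the two preceding propositions. Your conclusion in part (2) that $M\bullet N$ is a $\underline{\circledcirc}$--comonoid (rather than the $\circledcirc$--comonoid printed in the statement) is in fact the logically correct output of the colax structure $(\BQO,\underline{\circledcirc},\bullet,\varphi)$, so the discrepancy appears to be a typo in the statement rather than a gap in your argument.
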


\begin{proof}
Again the proof relies on the fact that lax monoidal functors preserve monoids and that colax monoidal functors preserve comonoids.
\end{proof}

\begin{corollary}\label{cor:OPwhitemono}\leavevmode
\begin{enumerate}
\item Let $\P, \mathcal{Q}$ be two operads in the symmetric monoidal category $(\emph{\QD}, \bullet)$. Then 
their arity-wise $\uot$-product $(\P\,\uot\,\mathcal{Q})(n)\coloneq\P(n)\,\uot\, \mathcal{Q}(n)$ is again an operad in $(\emph{\QD}, \bullet)$.

\item Let $\P, \mathcal{Q}$ be two operads in the symmetric monoidal category $(\emph{\QD}, \ot)$. Then 
their arity-wise white product $(\P\circ\mathcal{Q})(n)\coloneq\P(n)\circ \mathcal{Q}(n)$ is again an operad in $(\emph{\QD}, \ot)$.
\end{enumerate}
\end{corollary}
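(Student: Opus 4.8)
The plan is to deduce \cref{cor:OPwhitemono} from the 2--monoidal structures established in \cref{prop:2monocatQD} and \cref{cor:White2Lax}, using the general principle that an operad internal to a symmetric monoidal category which is \emph{itself} the first (lax) monoidal product of a 2--monoidal category admits arity-wise products. More precisely, for part (2), the ambient category $\C = (\QD, \ot)$ is the target of a lax 2--monoidal structure $(\QD, \ot, \circ, \psi)$ from \cref{cor:White2Lax}; this means $\circ : (\QD, \ot)^{\times 2} \to (\QD, \ot)$ is a lax monoidal functor. The key step is then to observe that a lax monoidal functor $F : \C^{\times 2} \to \C$ sends a pair of operads in $\C$ to an operad in $\C$: given operads $\P, \mathcal{Q}$ in $\C$, one defines $(\P \circ \mathcal{Q})(n) \coloneq \P(n) \circ \mathcal{Q}(n)$, and the partial compositions
\[
(\P\circ\mathcal{Q})(n) \ot (\P\circ\mathcal{Q})(m) \xrightarrow{\ \psi\ } \bigl(\P(n)\ot\P(m)\bigr)\circ\bigl(\mathcal{Q}(n)\ot\mathcal{Q}(m)\bigr) \xrightarrow{\ \circ_i \,\circ\, \circ_i\ } \P(n+m-1)\circ\mathcal{Q}(n+m-1)
\]
are built by first applying the interchange law $\psi$ (the lax structural map of the functor $\circ$) and then the partial compositions of $\P$ and of $\mathcal{Q}$ separately. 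The unit is obtained from $1_\C = 1_\C \circ 1_\C \to \P(1) \circ \mathcal{Q}(1)$.

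The verification that these data satisfy the operad axioms (sequential, parallel, unital, equivariance) is exactly where one uses the coherence axioms of the lax 2--monoidal structure: compatibility of $\psi$ with the associativity of $\ot$ and with its unit, plus naturality of $\psi$. I would point out that this is not a new argument but the operadic incarnation of \cite[Proposition~3]{Vallette08}, which is precisely the statement that a lax 2--monoidal category allows one to transport monoid-like structures; indeed the earlier \cref{cor:MonoProduct} in this paper already invokes exactly this principle for plain monoids, and an operad is nothing but a monoid in an appropriate (colored / $\Sy$-twisted) monoidal setting. So part (2) is: apply \cite[Proposition~3]{Vallette08} to the lax 2--monoidal category $(\QD, \ot, \circ, \psi)$ from \cref{cor:White2Lax}, with "monoid" read in the operadic sense.

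For part (1), the situation is the mirror image: here the ambient symmetric monoidal category is $(\QD, \bullet)$, and by \cref{prop:2monocatQD} the quadruple $(\QD, \bullet, \uot, \varphi)$ is a lax 2--monoidal category, so $\uot : (\QD, \bullet)^{\times 2} \to (\QD, \bullet)$ is a lax monoidal functor. The same argument as above, with $\circ$ replaced by $\uot$ and $\psi$ replaced by the interchange law $\varphi$, produces the arity-wise $\uot$-product of two operads in $(\QD, \bullet)$ and equips it with an operad structure. One subtlety worth a sentence: the operad structure maps live in $(\QD, \bullet)$, so "operad in $(\QD,\bullet)$" means the partial compositions are morphisms $\P(n/X)\bullet\P(X)\to\P(n)$ of quadratic data, and it is $\bullet$ (not $\uot$) that plays the role of $\ot$ in the \cref{def:Operad}; the product being built, $\uot$, is the \emph{other} monoidal structure of the 2--monoidal pair, which is exactly the one that lax-functorially acts on $\bullet$.

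The main obstacle, and the only place where care is genuinely needed, is checking that the operadic coherence diagrams for $\P\circ\mathcal{Q}$ (resp. $\P\uot\mathcal{Q}$) commute; these reduce, after inserting definitions, to the associativity-compatibility and unit-compatibility coherence diagrams of the interchange law $\psi$ (resp. $\varphi$) — precisely the diagrams recorded in \cite[Proposition~2]{Vallette08} (with the two misprints corrected as noted in the remark following the definition of lax 2--monoidal category) — together with the naturality of the interchange law and the already-established fact that $\psi$, $\varphi$ are genuine interchange morphisms in $\QD$. Since all of this is subsumed by \cite[Proposition~3]{Vallette08}, the proof is short: it suffices to identify the relevant lax 2--monoidal categories and cite that proposition.

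\begin{proof}
Both statements are instances of \cite[Proposition~3]{Vallette08}: in a lax $2$--monoidal category $(\C, \boxtimes, \ot, \varphi)$ the functor $\ot : (\C, \boxtimes)^{\times 2} \to (\C, \boxtimes)$ is lax monoidal, hence preserves monoids, and more generally operads, internal to $(\C, \boxtimes)$. For part (1), take $\C = \QD$ with $\boxtimes = \bullet$ and $\ot = \uot$; by \cref{prop:2monocatQD} this is a lax $2$--monoidal category. Thus for operads $\P, \mathcal{Q}$ in $(\QD, \bullet)$ the arity-wise $\uot$-product, with partial compositions
\[
\bigl(\P(n/X)\,\uot\,\mathcal{Q}(n/X)\bigr)\bullet\bigl(\P(X)\,\uot\,\mathcal{Q}(X)\bigr) \xrightarrow{\ \varphi\ } \bigl(\P(n/X)\bullet\P(X)\bigr)\,\uot\,\bigl(\mathcal{Q}(n/X)\bullet\mathcal{Q}(X)\bigr) \xrightarrow{\ \circ_{X\subset n}\,\uot\,\circ_{X\subset n}\ } \P(n)\,\uot\,\mathcal{Q}(n)
\]
and unit $(0,0) \cong (0,0)\,\uot\,(0,0) \to \P(\{*\})\,\uot\,\mathcal{Q}(\{*\})$, is again an operad in $(\QD, \bullet)$. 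The operad axioms follow from the coherence of the interchange law $\varphi$ with the associativity and unit of $\uot$ (\cite[Proposition~2]{Vallette08}, corrected as in the remark above) together with the naturality of $\varphi$ and the operad axioms for $\P$ and $\mathcal{Q}$. For part (2), apply the same argument to $\C = \QD$ with $\boxtimes = \ot$ and $\ot = \circ$, which is a lax $2$--monoidal category by \cref{cor:White2Lax}.
\end{proof}
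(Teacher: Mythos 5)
Your proposal is correct and follows essentially the same route as the paper: both reduce the statement to the fact that the interchange functor ($\uot$ from $(\QD,\bullet)^2$ to $(\QD,\bullet)$, resp.\ $\circ$ from $(\QD,\ot)^2$ to $(\QD,\ot)$) is lax monoidal by \cref{prop:2monocatQD} and \cref{cor:White2Lax}, and that such a functor transports operads arity-wise, the paper citing \cref{Prop:SymMonoFunOp} where you cite the monoid-preservation principle of \cite[Proposition~3]{Vallette08}. The one point worth adding is that the interchange law must also be checked to be \emph{symmetric} (compatible with the braidings), since the parallel and equivariance axioms of the transported operad require a symmetric lax monoidal functor, not merely a lax monoidal one; the paper flags this explicitly, whereas your write-up only appeals to the associativity and unit coherence of \cite[Proposition~2]{Vallette08}.
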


\begin{proof}
The statement of \cref{prop:2monocatQD} actually says 
the functor $\uot$ 
is a lax monoidal functor from from $(\QD, \bullet)^2$ to $(\QD, \bullet)$. It is straightforward to see that it is also symmetric. The first statement thus follows from  \cref{Prop:SymMonoFunOp}. The second statement is proved in the way with the lax symmetric monoidal functor 
$\circ$ from $(\QD, \ot)^2$ to $(\QD, \ot)$ of \cref{cor:White2Lax}.

\end{proof}

This latter construction can be applied to the various examples of operads that we will give in \cref{Sec:HopfOp}. 

\begin{remark}
This result shows that one can refine the theory of 2-monoidal categories developed in \cite{Vallette08}: one can define a notion of a \emph{symmetric} $2$-monoidal category by requiring that the structural interchange law be a \emph{symmetric} monoidal functor. The present examples given in this paper will actually fall into this case; they provide us with symmetric 2-monoidal categories. We leave the details to  the interested reader.
\end{remark}

\subsection{Some more monoidal structures and Koszul dualities} 
As in the
case of quadratic data, mentioned in the last lines of \cref{sec:2MonoQ}, 
one can introduce several more pairs of monoidal structures in the context
of binary operadic quadratic data. Here is a list of possibilities in $\BQO$, including
the ones we have already considered.

\smallskip

We denote by $\mathcal{A}_1\circ_1 \mathcal{B}_1$ the sub-$\Sy_3$-module of  $\mathcal{T}(\mathcal{A}_1\oplus \mathcal{B}_1)(3)$
spanned by $\tau_1(b,a)$. We put
\begin{eqnarray*}
&\mathcal{A} \vee \mathcal{B} \coloneq (\mathcal{A}_1\oplus \mathcal{B}_1, R(\mathcal{A})\oplus R(\mathcal{B}))\ ,
&\\
&\mathcal{A} \oplus \mathcal{B} \coloneq (\mathcal{A}_1\otimes \mathcal{B}_1, R(\mathcal{A})\oplus 
\mathcal{A}_1\circ_1 \mathcal{B}_1 \oplus \mathcal{B}_1\circ_1 \mathcal{A}_1 \oplus       R(\mathcal{B}))\ ,
&\\ 
&\mathcal{A} \triangleleft \mathcal{B} \coloneq (\mathcal{A}_1\oplus \mathcal{B}_1, R(\mathcal{A})\oplus 
\mathcal{B}_1\circ_1 \mathcal{A}_1 \oplus        R(\mathcal{B}))\ ,
&\\
&\mathcal{A} \triangleright \mathcal{B} \coloneq (\mathcal{A}_1\otimes \mathcal{B}_1, R(\mathcal{A})\oplus 
\mathcal{A}_1\circ_1 \mathcal{B}_1 \oplus        R(\mathcal{B}))\ ,
&\\&
\mathcal{A} \,\underline{\circledcirc}\, \mathcal{B} \coloneq (\mathcal{A}_1\oplus  \mathcal{B}_1, R(\mathcal{A})\oplus 
\{\mathcal{A}_1, \mathcal{B}_1\} \oplus        R(\mathcal{B}))\ ,
&\\&
\mathcal{A}\circledcirc \mathcal{B} \coloneq (\mathcal{A}_1\oplus  \mathcal{B}_1, R(\mathcal{A})\oplus 
[\mathcal{A}_1, \mathcal{B}_1] \oplus        R(\mathcal{B}))\ .
\end{eqnarray*}

\begin{proposition}\leavevmode
\begin{enumerate}
\item  Let $\mathcal{P}_{\mathcal{A}}$ denote the operad
corresponding to the binary operadic quadratic data $\mathcal{A}$. Then $\mathcal{P}_{\mathcal{A}\vee \mathcal{B}}$
is the coproduct of $\mathcal{P}_{\mathcal{A}}$ and $\mathcal{P}_{\mathcal{B}}$ in the category of operads,
and furthermore
$$
\mathcal{P}_{\mathcal{A}\oplus \mathcal{B}}\cong \mathcal{P}_{\mathcal{A}}\oplus \mathcal{P}_{\mathcal{B}}\ , \quad
\mathcal{P}_{\mathcal{A}\triangleleft  \mathcal{B}}\cong \mathcal{P}_{\mathcal{A}}\otimes \mathcal{P}_{\mathcal{B}}\ , \quad
\mathcal{P}_{\mathcal{A}\triangleright  \mathcal{B}}\cong \mathcal{P}_{\mathcal{B}}\otimes \mathcal{P}_{\mathcal{A}}\ .
$$

\item These six monoidal structures are connected by the following Koszul duality involutions:
$$
(\mathcal{A} \vee \mathcal{B})^* \cong \mathcal{A}^*\oplus \mathcal{B}^* ,\quad
(\mathcal{A} \triangleleft \mathcal{B})^* \cong \mathcal{A}^*\triangleright \mathcal{B}^* ,\quad
(\mathcal{A} \,\underline{\circledcirc}\, \mathcal{B})^* \cong \mathcal{A}^*\circledcirc \mathcal{B}^* .
$$
\end{enumerate}
\end{proposition}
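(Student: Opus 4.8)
The plan is to verify the three claimed statements essentially by unwinding the definitions of the six products and checking that the operadic Koszul duality functor $*$, which sends $(\mathcal{A}_1, R(\mathcal{A}))$ to $(\mathcal{A}_1^*, R(\mathcal{A})^\perp)$, interchanges them as asserted. First I would treat part (1): for each of $\mathcal{A}\vee\mathcal{B}$, $\mathcal{A}\oplus\mathcal{B}$, $\mathcal{A}\triangleleft\mathcal{B}$, $\mathcal{A}\triangleright\mathcal{B}$ one exhibits the universal property (or an explicit isomorphism) of the corresponding operad. For $\mathcal{A}\vee\mathcal{B}=(\mathcal{A}_1\oplus\mathcal{B}_1, R(\mathcal{A})\oplus R(\mathcal{B}))$ the generators and relations are literally the disjoint union of those of $\mathcal{P}_\mathcal{A}$ and $\mathcal{P}_\mathcal{B}$, so $\mathcal{P}_{\mathcal{A}\vee\mathcal{B}}$ satisfies the universal property of the coproduct in the category of operads. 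For $\mathcal{A}\oplus\mathcal{B}$ one checks that the stated relations — namely $R(\mathcal{A})\oplus\mathcal{A}_1\circ_1\mathcal{B}_1\oplus\mathcal{B}_1\circ_1\mathcal{A}_1\oplus R(\mathcal{B})$ inside the free operad on $\mathcal{A}_1\otimes\mathcal{B}_1$ — force all mixed compositions of an $\mathcal{A}$-operation with a $\mathcal{B}$-operation to vanish, so an algebra over $\mathcal{P}_{\mathcal{A}\oplus\mathcal{B}}$ is exactly a pair of an $\mathcal{A}$-algebra structure and a $\mathcal{B}$-algebra structure on the same space, i.e. an algebra over the (categorical-product) operad $\mathcal{P}_\mathcal{A}\oplus\mathcal{P}_\mathcal{B}$. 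For $\mathcal{A}\triangleleft\mathcal{B}$ the only mixed relation killed is $\mathcal{B}_1\circ_1\mathcal{A}_1$, so $\mathcal{B}$-operations may be composed below $\mathcal{A}$-operations but not conversely, giving $\mathcal{P}_\mathcal{A}\otimes\mathcal{P}_\mathcal{B}$ with the convention that the left operad sits on top; $\mathcal{A}\triangleright\mathcal{B}$ is the mirror and yields $\mathcal{P}_\mathcal{B}\otimes\mathcal{P}_\mathcal{A}$. In each case the precise identification of the generating relations of the product operad with those of $\mathcal{T}(\mathcal{A}_1\oplus\mathcal{B}_1)$ is routine once one recalls that a quadratic operad is determined by its arity-$3$ relations.

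Next I would prove part (2), the three Koszul-duality formulas, which is the technical heart. The general principle is the standard one (used already in \cref{Prop:ComDiag}): if one has a monoidal structure on $\BQO$ given on objects by $(\mathcal{A},\mathcal{B})\mapsto (U(\mathcal{A}_1,\mathcal{B}_1), T(\mathcal{A},\mathcal{B}))$ with $T$ a functorially-defined subspace of some free-operad component, then its image under $*$ is computed via $T(\mathcal{A},\mathcal{B})^\perp$, using that for finite-dimensional $\Sy_3$-modules $(E\oplus F)^\perp=E^\perp\cap\ldots$ and, crucially, that under the basis $\tau_1,\tau_2,\tau_3$ of $\mathcal{T}(\mathcal{A}_1)(3)$ the various mixed subspaces $\mathcal{A}_1\circ_1\mathcal{B}_1$, $\{\mathcal{A}_1,\mathcal{B}_1\}$, $[\mathcal{A}_1,\mathcal{B}_1]$ are mutually orthogonal complements in the appropriate pieces. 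Concretely: for $\mathcal{A}\vee\mathcal{B}=(\mathcal{A}_1\oplus\mathcal{B}_1,R(\mathcal{A})\oplus R(\mathcal{B}))$, the orthogonal of $R(\mathcal{A})\oplus R(\mathcal{B})$ inside $\mathcal{T}(\mathcal{A}_1^*\oplus\mathcal{B}_1^*)(3)$ decomposes as $R(\mathcal{A})^\perp\oplus\bigl(\mathcal{A}_1^*\circ_1\mathcal{B}_1^*\oplus\mathcal{B}_1^*\circ_1\mathcal{A}_1^*\bigr)\oplus R(\mathcal{B})^\perp$ — the middle two summands appear because $\mathcal{T}(\mathcal{A}_1\oplus\mathcal{B}_1)(3)$ contains the mixed trees $\tau_1(a,b)$, $\tau_1(b,a)$, etc., which are entirely outside $R(\mathcal{A})\oplus R(\mathcal{B})$ — and this is exactly $R(\mathcal{A}^*\oplus\mathcal{B}^*)$, giving $(\mathcal{A}\vee\mathcal{B})^*\cong\mathcal{A}^*\oplus\mathcal{B}^*$. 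For $\mathcal{A}\triangleleft\mathcal{B}$, whose relations are $R(\mathcal{A})\oplus\mathcal{B}_1\circ_1\mathcal{A}_1\oplus R(\mathcal{B})$, taking orthogonals swaps $\mathcal{B}_1\circ_1\mathcal{A}_1$ with its complement $\mathcal{A}_1\circ_1\mathcal{B}_1$ (in the $2$-dimensional space spanned by $\tau_1(a,b)$ and $\tau_1(b,a)$, the orthogonal of one coordinate line is the other), so $(\mathcal{A}\triangleleft\mathcal{B})^*\cong(\mathcal{A}_1^*\otimes\mathcal{B}_1^*, R(\mathcal{A})^\perp\oplus\mathcal{A}_1^*\circ_1\mathcal{B}_1^*\oplus R(\mathcal{B})^\perp)=\mathcal{A}^*\triangleright\mathcal{B}^*$. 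For the last identity $(\mathcal{A}\,\underline{\circledcirc}\,\mathcal{B})^*\cong\mathcal{A}^*\circledcirc\mathcal{B}^*$ one must check that, inside the span of the mixed trees $\tau_i(a,b),\tau_i(b,a)$ with $a\in\mathcal{A}_1$, $b\in\mathcal{B}_1$, the symmetric subspace $\{\mathcal{A}_1,\mathcal{B}_1\}$ (spanned by $\tau_1(a,b)+\tau_1(b,a)$ for $(a,b)$ both $\Sy_2$-even or both $\Sy_2$-odd) has orthogonal complement precisely $[\mathcal{A}_1,\mathcal{B}_1]$ (spanned by the corresponding differences $\tau_1(a,b)-\tau_1(b,a)$, plus the full $\tau_1(a,b)$ when the parities differ) — this is a small, parity-bookkeeping linear-algebra check using that $\mathcal{A}_1=\mathcal{A}_1^+\oplus\mathcal{A}_1^-$, that $(\mathcal{A}_1^\pm)^*=(\mathcal{A}_1^*)^\pm$, and that the pairing on $\mathcal{A}_1\otimes\mathcal{B}_1$ is the tensor-product pairing (so ``sum'' and ``difference'' bases are mutually dual up to the $\pm$ signs already appearing in the $\Psi$-formula of the proof of \cref{prop:2MonoOP}).

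The step I expect to be the main obstacle is this last parity computation for $\underline{\circledcirc}$ versus $\circledcirc$: one has to be careful that $[\mathcal{A}_1,\mathcal{B}_1]$ is genuinely the \emph{full} orthogonal complement of $\{\mathcal{A}_1,\mathcal{B}_1\}$ within the mixed part of $\mathcal{T}(\mathcal{A}_1\oplus\mathcal{B}_1)(3)$, and in particular that the ``extra'' generators $\tau_1(a,b)$ with $a$ even and $b$ odd (and vice versa) that are thrown into $[\mathcal{A}_1,\mathcal{B}_1]$ are exactly what is needed so that the dimension count balances — on the $\{\mathcal{A}_1,\mathcal{B}_1\}$ side only the same-parity pairs contribute, so on the dual side the mixed-parity directions must be entirely free. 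Once the orthogonality of the three ``mixed'' subspaces $\circ_1$, $\{\,,\,\}$, $[\,,\,]$ against each other and against $R$ is set up cleanly, all three formulas in part (2) fall out uniformly, and part (1) is then just a matter of matching universal properties; accordingly I would organise the write-up so that the orthogonality lemma for mixed arity-$3$ trees is isolated once and reused three times.
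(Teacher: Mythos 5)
Your plan is sound and, in substance, it \emph{is} the proof: the paper gives no argument for this proposition beyond the pointer to \cite[Section~8.6]{LodayVallette12}, and what is done there is exactly what you describe --- the arity-$3$ part $\mathcal{T}(\mathcal{A}_1\oplus\mathcal{B}_1)(3)$ splits as (pure $\mathcal{A}$) $\oplus$ (mixed) $\oplus$ (pure $\mathcal{B}$), each of the six products is the choice of a subspace of the mixed block, and the three duality formulas follow from the block-orthogonality of this decomposition together with the identification of the orthogonal of each mixed subspace inside the mixed block. Your isolation of the $\{\,,\,\}$-versus-$[\,,\,]$ parity computation as the only nontrivial orthogonality check is exactly right; note also that the displayed definitions contain typos (the generators of $\oplus$ and $\triangleright$ must be $\mathcal{A}_1\oplus\mathcal{B}_1$, not $\mathcal{A}_1\otimes\mathcal{B}_1$, for the relations and for the $\triangleleft/\triangleright$ duality to be meaningful), which your reading silently corrects. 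Two small cautions. First, in part (1) your description of a $\mathcal{P}_{\mathcal{A}}\oplus\mathcal{P}_{\mathcal{B}}$-algebra as ``exactly a pair of structures'' is off: a pair of unrelated structures is an algebra over the coproduct $\mathcal{P}_{\mathcal{A}}\vee\mathcal{P}_{\mathcal{B}}$, whereas for the direct sum one must also impose that all mixed composites act by zero. It is cleaner to avoid algebras altogether and argue on the operad itself, using that the operadic ideal generated by the relations is homogeneous for the bigrading by the numbers of $\mathcal{A}_1$- and $\mathcal{B}_1$-labelled vertices, so that its pure part is generated by $R(\mathcal{A})\oplus R(\mathcal{B})$ while its mixed part exhausts all mixed trees. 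Second, the operadic orthogonal $R^\perp$ is taken with respect to the Koszul pairing on $\mathcal{T}(V)(3)\otimes\mathcal{T}(V^*)(3)$, which carries signs (and, in the classical convention, a twist by $\mathrm{sgn}_{\Sy_2}$); this does not affect the block decomposition, but it does enter the identification of $\{\mathcal{A}_1,\mathcal{B}_1\}^\perp$ with $[\mathcal{A}_1^*,\mathcal{B}_1^*]$, so the ``sum and difference bases are mutually dual up to sign'' step should be written out against whichever normalisation of the pairing the functor $*$ is using.
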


For more details, see \cite[Section~8.6]{LodayVallette12}.

\begin{proposition}  
 The interchange laws $\varphi$ and $\psi$ in the
category of graded $\Sy_2$--modules induce morphisms in the category
$\emph{\BQO}$ which make the quintuples
$$
(\emph{\BQO}, \bullet ,\vee ,\varphi ,\psi)\ ,\quad
(\emph{\BQO}, \bullet ,\oplus ,\varphi ,\psi)\ ,\quad
(\emph{\BQO}, \bullet ,\triangleleft ,\varphi ,\psi)\ ,\quad
(\emph{\BQO}, \bullet ,\triangleright ,\varphi ,\psi)
$$
and
$$
(\emph{\BQO}, \circ ,\vee ,\varphi ,\psi)\ ,\quad
(\emph{\BQO}, \circ ,\oplus ,\varphi ,\psi)\ ,\quad
(\emph{\BQO}, \circ ,\triangleleft ,\varphi ,\psi)\ ,\quad
(\emph{\BQO}, \circ ,\triangleright ,\varphi ,\psi) 
$$
into 2--monoidal categories, i.e. simultaneously lax and colax.
\end{proposition}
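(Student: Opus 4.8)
The plan is to dispatch all eight quintuples in parallel, by the mechanism that already settled $(\BQO,\bullet,\underline{\circledcirc},\varphi)$ in \cref{prop:2MonoOP} and its white--product counterpart. As in those cases, once the candidate interchange laws are known to be morphisms of binary operadic quadratic data the coherence diagrams are automatic: each is an equation between composites of the associators, unitors, braidings and interchange laws, all of which are arity--wise induced from morphisms of graded $\Sy_2$--modules, and a morphism of $\BQO$ is determined by its underlying map of $\Sy_2$--modules; since the category of graded $\Sy_2$--modules is $2$--monoidal with the interchange laws $pr_{14}$, $inj_{14}$ (or, where the two underlying products agree, with the symmetry isomorphism), every such equation already holds there and hence upstairs. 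So the whole content reduces, pair by pair, to the two statements that the arity--$2$ interchange $pr_{14}$ (resp.\ $inj_{14}$), extended to arity $3$ through $\Psi_{\mathcal{A}_1\mathcal{B}_1}$ (resp.\ through $\Phi_{\mathcal{A}_1\mathcal{B}_1}^{-1}$ in the white--product cases), carries the relation $\Sy_3$--submodule of the source into that of the target.

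I would obtain these by copying the bookkeeping of the proof of \cref{prop:2monocatQD}. Decompose the relations of source and target into: the two ``diagonal'' summands $\Psi(R(\mathcal{A})\otimes R(\mathcal{B}))$ and $\Psi(R(\mathcal{A}')\otimes R(\mathcal{B}'))$; the ``cross'' summands, each pairing a one--sided bracket $\mathcal{A}_1\circ_1\mathcal{B}_1=\langle\tau_1(b,a)\rangle$ (or $\mathcal{B}_1\circ_1\mathcal{A}_1$) with an $R$, or with another such bracket; and --- only when both inner products carry a cross term --- the ``bracket~$\otimes$~bracket'' summand. Under $pr_{14}^{\otimes 2}$ the two diagonal summands are carried isomorphically onto the two copies of $R(\mathcal{A}\bullet\mathcal{B})$ inside the target, exactly as in step~(iii) of loc.\ cit.; every summand with a slot of the form $a\otimes b'$ or $a'\otimes b$ among its first two or last two tensor factors is spanned by vanishing monomials and dies, as in step~(ii); and the surviving cross/bracket summand is controlled by the single identity $\Psi(\tau_i\otimes\tau_j)=\delta_{ij}\tau_i$ together with the $\Sy_3$--action formulas for the $\tau_i$ recalled inside the proof of \cref{prop:2MonoOP}. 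For the colax half one reruns the argument with $inj_{14}$: nothing is annihilated now, so one only checks that the inclusion of each summand of the target relations into the source lands among the listed summands, which is again immediate from $\Psi(\tau_i\otimes\tau_j)=\delta_{ij}\tau_i$. The four cases whose outer product is $\circ$ are Koszul dual, via $*$, to the four whose outer product is $\bullet$ in the finite--dimensional range, and follow in general by the same inspection with $\Phi^{-1}$ in place of $\Psi$.

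The only step I expect to require real care is the ``bracket~$\otimes$~bracket'' one, and even that is strictly lighter than its analogue, step~(iv), in \cref{prop:2monocatQD} and \cref{prop:2MonoOP}: the cross terms of $\vee,\oplus,\triangleleft,\triangleright$ are spanned by the \emph{unsymmetrised} generators $\tau_1(b,a)$ (and for $\vee$ there is no cross term at all), so that both $pr_{14}$ and $inj_{14}$ respect them --- which is precisely why all eight pairs are simultaneously lax and colax, whereas the symmetrised combinations $\tau_i(a,b)+\tau_i(b,a)$ occurring in $\underline{\circledcirc}$ and $\circledcirc$ obstruct $inj_{14}$ and leave those two examples only lax. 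The remaining verifications are routine and I would leave them to the reader, as was done for the earlier propositions.
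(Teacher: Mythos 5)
Your proposal is correct and follows exactly the route the paper intends: the paper's own proof of this proposition consists of the single sentence ``The proof can be obtained by direct computations,'' and your sketch is precisely those computations, organised on the template of the proofs of \cref{prop:2monocatQD} and \cref{prop:2MonoOP} (reduction of coherence to the underlying graded $\Sy_2$--modules, summand-by-summand analysis of the relation spaces under $pr_{14}$ and $inj_{14}$ via $\Psi(\tau_i\otimes\tau_j)=\delta_{ij}\tau_i$, and Koszul duality for the white-product cases). You in fact supply strictly more detail than the paper does, and your closing observation --- that the cross terms of $\vee,\oplus,\triangleleft,\triangleright$ are spanned by unsymmetrised generators, which is what makes both $\varphi$ and $\psi$ lift and hence these eight quintuples simultaneously lax and colax --- correctly isolates the point that distinguishes them from the $\underline{\circledcirc}$ and $\circledcirc$ cases.
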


\begin{proof} The proof can be obtained by direct computations.
\end{proof}

\section{Lie operads and Hopf (co)operads}\label{Sec:HopfOp}

The purpose of this section is to provide a simple categorical setting for the \emph{automatic} construction of several (co)operads in categories of (co)algebras starting from just a single and simple operad structure. This framework applies to many operads which play a key role in the literature. In quantum groups, deformation quantization, algebraic topology and Grothendieck--Teichm\"uller groups, like in \cite{Drinfeld90, KontsevichManin94, Tamarkin03, SeveralWillwacher11, LambrechtsVolic14, Fresse17}, it is crucial to work with Lie operads or Hopf (co)operads, that is operads in the category of Lie algebras and (co)operads in the category of (co)algebras. These kind of  (co)operad structures are produced here from topological operads; this way, we recover the ones present in the above-mentioned theories, as well as  interesting new ones. 

\smallskip

When dealing with symmetric monoidal categories which are obviously strong,  we will drop this  adjective for simplicity.

\subsection{Operads, cooperads, and symmetric monoidal functors}
Since the  opposite category  of a  symmetric monoidal category  is again  symmetric monoidal, we can consider the following notion dual to that of an operad. 

\begin{definition}[Cooperad]
A \emph{cooperad} $\CC$ in $\C$ is an operad  in the opposite symmetric monoidal category $\C^{\text{op}}$.
\end{definition}

This means that we are given a functor $\CC : \Fin^{\text{op}} \to \C^{\text{op}}$ (or equivalently $\CC : \Fin \to \C$) with partial decompositions maps in $\C$: 
\[\delta_{X\subset Y}\ : \ \CC(Y) \to \CC(Y/X)\ot \CC(X)\ , \quad \text{for any} \ X\subset Y\ ,\]
and a counit  
$\varepsilon \ : \  \P(\{*\}) \to 1_\C$ satisfying the dual commutative diagrams. 

\begin{proposition}\label{Prop:SymMonoFunOp}\leavevmode
\begin{enumerate}
\item Any covariant  symmetric monoidal functor sends operads to operads and cooperads to cooperads.
\item Any contravariant  symmetric monoidal functor sends operads to cooperads and cooperads to operads.
\end{enumerate}
\end{proposition}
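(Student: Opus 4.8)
The plan is to unwind the definitions and check that the structure maps of an operad (resp. cooperad) are sent, by a symmetric monoidal functor, to maps satisfying the axioms of a cooperad (resp. operad) in the target. Recall that an operad in $\C$ is a presheaf $\P : \Fin^{\text{op}}\to \C$ together with composition maps $\circ_{X\subset Y} : \P(Y/X)\ot \P(X) \to \P(Y)$ and a unit $\eta : 1_\C \to \P(\{*\})$ subject to the sequential, parallel, unital and equivariance axioms; a cooperad is the same data in $\C^{\text{op}}$. So for item (1), given a covariant symmetric monoidal functor $\F : (\C, \ot_\C, 1_\C) \to (\D, \ot_\D, 1_\D)$ with structure isomorphisms $\psi : 1_\D \xrightarrow{\cong} \F(1_\C)$ and $\varphi_{A,B} : \F(A)\ot_\D \F(B) \xrightarrow{\cong} \F(A\ot_\C B)$, and an operad $\P$ in $\C$, I would define $\F\P \coloneq \F\circ \P : \Fin^{\text{op}}\to \D$, with composition maps
\[
(\F\P)(Y/X)\ot_\D (\F\P)(X) \xrightarrow{\varphi} \F\bigl(\P(Y/X)\ot_\C \P(X)\bigr) \xrightarrow{\F(\circ_{X\subset Y})} \F(\P(Y))
\]
and unit $1_\D \xrightarrow{\psi} \F(1_\C) \xrightarrow{\F(\eta)} \F(\P(\{*\}))$. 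Dually, for a cooperad $\CC$ one uses $\varphi^{-1}$ and $\psi^{-1}$ to turn $\F(\delta_{X\subset Y})$ into a decomposition map $(\F\CC)(Y)\to (\F\CC)(Y/X)\ot_\D (\F\CC)(X)$.

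The main verification is then that $(\F\P, \circ, \eta)$ satisfies the operad axioms. The key point is purely formal: each axiom is a commutative diagram built out of the $\circ_{X\subset Y}$, $\eta$, the associator $\alpha$, the unitors $\lambda,\rho$ and the braiding $\tau$ of $\C$. Applying $\F$ to such a diagram yields a commutative diagram involving $\F(\circ)$, $\F(\eta)$, $\F(\alpha)$, etc. Now the coherence axioms of a strong symmetric monoidal functor say exactly that $\varphi$ and $\psi$ are compatible with $\alpha$, $\lambda$, $\rho$, $\tau$; inserting the appropriate instances of $\varphi$ and $\psi$ and using these compatibilities lets one rewrite the $\F$-image diagram as the corresponding axiom for $\F\P$. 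Naturality of $\varphi$ handles the equivariance axioms (one also uses that $\F$ is a functor on the $\Fin$-variable, so $(\F\P)(f)=\F(\P(f))$ for bijections $f$). Since the $\circ_i$-formulation and the $\circ_{X\subset Y}$-formulation are equivalent, it suffices to check the handful of axioms in either presentation; I would do it in the $\circ_i$ form as in \cite[Section~5.3.4]{LodayVallette12} since the diagrams are smaller.

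For item (2), a contravariant symmetric monoidal functor $\F : \C \to \D$ is by definition one for which $\F^{\text{op}} : \C^{\text{op}}\to \D$ is covariant symmetric monoidal. Given an operad $\P$ in $\C$, it is a cooperad in $\C^{\text{op}}$ wait — more directly, $\P$ is an operad in $\C$, equivalently its image structure under $\F^{\text{op}}$: since $\F^{\text{op}}$ is covariant symmetric monoidal out of $\C^{\text{op}}$, and an operad in $\C$ is a cooperad in $\C^{\text{op}}$... let me instead phrase it as: an operad $\P$ in $\C$ is the same as an operad in $(\C^{\text{op}})^{\text{op}}$, so applying part (1) to the covariant symmetric monoidal functor $\F^{\text{op}}:\C^{\text{op}}\to\D$ is not quite the right variance either. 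The clean statement is: $\F$ contravariant symmetric monoidal means $\F : \C^{\text{op}}\to\D$ is covariant symmetric monoidal; an operad in $\C$ is precisely a cooperad in $\C^{\text{op}}$, hence by part (1) its image is a cooperad in $\D$; and symmetrically a cooperad in $\C$ is an operad in $\C^{\text{op}}$, hence maps to an operad in $\D$. Thus (2) is an immediate consequence of (1) together with the definition of contravariant symmetric monoidal functor and the definition of cooperad as operad in the opposite category, requiring no further computation.

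The only genuinely non-trivial step is the diagram-chase for part (1): one must be careful about \emph{where} the instances of $\varphi$ and $\psi$ are inserted when an axiom involves nested tensor products such as $(\P(Z/Y)\ot\P(Y/X))\ot\P(X)$, since there $\F$ of a triple tensor must be compared with the triple tensor of $\F$-images via two applications of $\varphi$ and an application of the coherence pentagon relating $\varphi$ to the associators. This bookkeeping is routine given MacLane's coherence theorem for symmetric monoidal functors \cite[Section~XI.2]{MacLane98}, so I would simply invoke coherence rather than write out every pentagon and hexagon. I expect this to be the main, though entirely mechanical, obstacle; everything else is formal.
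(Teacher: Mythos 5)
Your proposal is correct and follows essentially the same route as the paper: transfer the structure maps through $\varphi$ and $\psi$ (resp.\ their inverses/opposites for cooperads), invoke coherence for the axiom-checking, and deduce part (2) formally from part (1) by viewing an operad in $\C$ as a cooperad in $\C^{\text{op}}$ and applying the covariant functor $\F^{\text{op}}$. The only cosmetic difference is that the paper phrases the verification for lax/oplax functors while you assume strong ones, which is the setting actually used in the paper anyway.
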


\begin{proof}
It is well-known that any covariant lax symmetric monoidal functor sends operads to operads. Thus any contravariant oplax symmetric monoidal functor, i.e. such that the associated covariant functor between the opposite categories, sends cooperads to cooperads. Let us just sketch the proof a little bit since we will use the transferred (co)operad structure later on.

Let $(\C, \ot_\C, 1_\C, \alpha_\C,  \lambda_\C,\allowbreak \rho_\C, \allowbreak \tau_\C)$, 
$(\D, \ot_\D, 1_\D, \alpha_\D,  \lambda_\D, \rho_\D, \tau_\D)$ be two symmetric monoidal categories and let 
$\F : \C \to \D$ be a covariant symmetric monoidal functor with structure maps
\[\psi : 1_\textbf{D} \to \F(1_\C) \qquad  \text{and} \qquad \varphi_{A,B} : \F(A)\ot_\textbf{D} \F(B) 
\to \F(A\ot_\C B)\ . \]
 For any operad $\P$, we consider the following structure maps of $\F \P$:
\begin{align*}
& \F\P(Y/X)\ot_\D \F\P(X) \xrightarrow{\varphi_{\P(Y/X), \P(X)}} \F(\P(Y/X)\ot_\C \P(X))\xrightarrow{\F(\circ_{X\subset Y})} \F \P(Y) 
\quad \text{and}  \\
& 1_\D  \xrightarrow{\psi} \F(1_\C) \xrightarrow{\F(\eta)} \F\P(\{*\}) \ .
\end{align*}
Dually, for any cooperad $\CC$, we consider the following structure maps of $\F \C$:
\begin{align*}
& 
\F \CC(Y) \xrightarrow{\F(\delta_{X\subset Y})}
 \F(\CC(Y/X)\ot_\C \CC(X)) \xrightarrow{\varphi^{\textrm{op}}_{\CC(Y/X), \CC(X)}} 
\F\CC(Y/X)\ot_\D \F\CC(X) 
\quad \text{and}  \\
& 
 \F\CC(\{*\}) \xrightarrow{\F(\varepsilon)}  
 \F(1_\C) \xrightarrow{\psi^{\textrm{op}}}
1_\D  
\ .
\end{align*}
It remains to check the various axioms of this new structure but this follows in a straightforward way from the defining axioms of the operad $\P$ (or the cooperad $\CC$), of the two monoidal categories $\C$ and $\D$, and the symmetric monoidal functor $\F$. 

 The second assertion is less present in the literature. It is however a formal consequence of the first assertion. Let 
 $\P : \Fin^{\text{op}} \to \C$ be an operad in $\C$. By definition, 
this means that $\P$ is a cooperad in the opposite category $\C^{\text{op}}$.
  It is thus sent to a cooperad in $\D$ 
 under the (covariant) symmetric monoidal functor $\F^{\text{op}} : \C^{\text{op}}\to \D$. 
\end{proof}

We have already been applying this result in \cref{cor:OPwhitemono}. Now \cref{Thm:ComDiag} and \cref{Prop:SymMonoFunOp} allow us to deduce seven operad structures and four cooperad structures out of the sole data of an operad structure in the category of skew-symmetric quadratic data.  Since the monoidal product $\oplus$ of this latter category is particularly simple, the data of an operad there is also not difficult to establish, as the following examples show. 

\begin{remark}
Notice that the left-to-right symmetric monoidal functors can all be inverted. So we could also induce transport (co)operad structures in the other way round. Moreover, one can often easily guess from a (co)operad structure in a category of (co)algebras the associated (co)operad structure in the above category of quadratic data. In the end, the global orientation of the diagram chosen here is not restrictive, but amounts rather to a choice of presentation. 
\end{remark}

\begin{definition}[Lie operad and (co)commutative Hopf (co)operad]
An operad in the symmetric monoidal category $(\Lie, \oplus)$ of Lie algebras is called 
a \emph{Lie operad}. 
An operad in the symmetric monoidal category 
$(\Comco, \otimes)$ of cocommutative coalgebras is called a \emph{cocommutative Hopf operad}. 
A cooperad in the symmetric monoidal category 
$(\Com, \otimes)$ of commutative algebras is called a \emph{commutative Hopf cooperad}. 
\end{definition}

\begin{remark}
The notion of a Lie operad should not be confused with the operad $\mathrm{Lie}$ encoding Lie algebras. 
\end{remark}

\emph{From now on, we work over the field $\QQ$ of rational numbers}.

\begin{exam}
The homology group functor $H_\bullet(-)\coloneqq H_\bullet(-, \QQ)$ is a covariant symmetric monoidal functor and 
the cohomology group functor $H^\bullet(-)\coloneqq H^\bullet(-, \QQ)$ is a contravariant symmetric monoidal functor. The former sends a topological operad to a cocommutative Hopf operad and the latter sends it to a commutative Hopf cooperad. 
\end{exam}

\subsection{Lie operads from pointed topological operads}\label{subsec:LiePointed}
Following the same pattern, we aim at producing functorially Lie operads from topological operads using  rational  fundamental groups. 
Suppose now that every component $\O(n)$ of the topological operad $\O$ admits a base point $*^n$ which is compatible with the operadic structure, i.e. $*^n \circ_k *^m=*^{n+m-1}$. In other words, this means that we consider an operad in the symmetric monoidal category of pointed topological spaces $(\textbf{Top}_*, \times)$.
In this case, one can consider the fundamental groups $\pi_1(\O(n))$ of each component and then their images under  the Magnus construction \cite{Magnus37, Lazard50}
\[\gr(G)\coloneqq\bigoplus_{k\geqslant 1} \Gamma_k G/\Gamma_{k+1} G\ , \]
which associates a Lie algebra over $\mathbb{Z}$ to any group $G$ by means of its lower central series, defined inductively by $\Gamma_1 G\coloneqq G$ and $\Gamma_{k+1}G\coloneqq [\Gamma_kG, G]$. Recall that  
the Lie bracket $[x, y]$ is induced by the group commutator $xyx^{-1}y^{-1}$. 

\begin{lemma}\label{lem:PiGrSymMono}\leavevmode
\begin{enumerate}
\item The fundamental group functor $\pi_1 : (\emph{\textbf{Top}}_*, \times) \to (\emph{\textbf{Gr}},\times)$ from the category of topological spaces to the category of groups is cartesian, i.e. strongly symmetric monoidal with respect to the products. 
\item The Magnus functor $\gr : (\emph{\textbf{Gr}}, \times) \to (\emph{\Lie}_{\mathbb{Z}},\oplus)$ from the category of groups to the category of Lie algebra over $\mathbb{Z}$ is cartesian.
\end{enumerate}
\end{lemma}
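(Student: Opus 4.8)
The plan is to prove the two statements essentially in parallel, since both are instances of the general fact that a right-adjoint-like functor which preserves finite products is automatically cartesian monoidal, once one observes that the target categories here carry their cartesian monoidal structure (for $(\textbf{Gr},\times)$ this is standard; for $(\Lie_{\mathbb{Z}},\oplus)$ this is the content of the remark following \cref{Prop:SymMono}, that the direct sum of Lie algebras is the categorical product). So the whole lemma reduces to checking that $\pi_1$ preserves finite products and that $\gr$ preserves finite products, together with the (automatic, by uniqueness of products up to canonical isomorphism) verification that the resulting structure isomorphisms satisfy the coherence axioms of a strong symmetric monoidal functor.

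For (1): first I would recall that $\pi_1(X\times Y, (x,y))\cong \pi_1(X,x)\times\pi_1(Y,y)$, the classical isomorphism induced by the two projections $X\times Y\to X$ and $X\times Y\to Y$; on a pointed space the base point is part of the data and compatible with products, so no basepoint bookkeeping is needed. The unit condition is the triviality $\pi_1(\mathrm{pt})=1$. Because the isomorphism is built from the universal property of the product in $\textbf{Gr}$ applied to the images of the projections, the associativity, unitality, and symmetry coherence diagrams commute automatically: both composites in each diagram are maps into a product, and they agree after postcomposition with every projection. This is the standard ``a product-preserving functor between cartesian categories is canonically strong symmetric monoidal'' argument, and I would state it at that level of generality rather than drawing the pentagon and hexagon.

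For (2): the key input is that the Magnus/Lazard construction turns a product of groups into a direct sum of graded Lie rings, i.e. $\gr(G\times H)\cong \gr(G)\oplus\gr(H)$ as graded Lie algebras over $\mathbb{Z}$. This follows from the fact that the lower central series is computed componentwise, $\Gamma_k(G\times H)=\Gamma_k(G)\times\Gamma_k(H)$ (an easy induction using $[A\times B,G\times H]=[A,G]\times[B,H]$), whence the successive quotients split as direct sums of abelian groups, and the induced bracket, coming from the group commutator, has no ``cross terms'' because elements of $G\times 1$ and $1\times H$ commute. The unit is $\gr(1)=0$. Then, exactly as in (1), since $\oplus$ is the categorical product in $\Lie_{\mathbb{Z}}$, the structure isomorphisms are forced by the universal property and all coherence diagrams commute automatically.

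The main obstacle — really the only point requiring genuine verification rather than invocation of generalities — is the identity $\Gamma_k(G\times H)=\Gamma_k(G)\times\Gamma_k(H)$ and the resulting bracket decomposition in step (2); everything else is either classical ($\pi_1$ of a product) or formal (product-preserving functors into cartesian categories are cartesian monoidal, with all coherence automatic). I would therefore spend the bulk of the written proof on that induction and on the observation about vanishing cross-term brackets, and dispatch the coherence checks with a single sentence citing the universal property of products.
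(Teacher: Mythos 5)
Your proposal is correct and fills in precisely the details that the paper dispatches with ``The proof is straightforward'': the product decomposition $\pi_1(X\times Y)\cong\pi_1(X)\times\pi_1(Y)$, the componentwise computation $\Gamma_k(G\times H)=\Gamma_k(G)\times\Gamma_k(H)$ with vanishing cross-brackets giving $\gr(G\times H)\cong\gr(G)\oplus\gr(H)$, and the observation that coherence is automatic because both functors are product-preserving functors into cartesian categories (the paper itself notes that $\oplus$ is the categorical product of Lie algebras). There is no divergence from the intended argument and no gap.
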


\begin{proof}
The proof is straightforward. 
\end{proof}

\begin{remark}
As usual, in order to get a nice behaviour of topological spaces with respect to products, one needs to restrict to the category of compactly generated Hausdorff spaces with Kelly product, which we implicitly do here. 
\end{remark}

\begin{proposition}
Any pointed topological operad $\O$ induces an operad in the category of Lie algebras over $\mathbb{Z}$: 
$$\gr\left(\pi_1(\O)\right)\ , $$ which is called the \emph{Magnus operad}. 
\end{proposition}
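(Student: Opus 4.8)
The plan is to combine the two functoriality statements from \cref{lem:PiGrSymMono} with the general transport principle of \cref{Prop:SymMonoFunOp}. The key observation is that the composite $\gr \circ \pi_1 : (\textbf{Top}_*, \times) \to (\Lie_{\mathbb{Z}}, \oplus)$ is, as a composite of two cartesian (hence strong symmetric monoidal) functors, itself a strong symmetric monoidal functor. Once this is established, the proposition is immediate from part (1) of \cref{Prop:SymMonoFunOp}: a covariant symmetric monoidal functor sends operads to operads, and a pointed topological operad $\O$ is by definition an operad in $(\textbf{Top}_*, \times)$, so $\gr(\pi_1(\O))$ is an operad in $(\Lie_{\mathbb{Z}}, \oplus)$.

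First I would recall that the composite of two strong (symmetric) monoidal functors is again strong (symmetric) monoidal: if $\F$ has structure isomorphisms $\psi_\F, \varphi_\F$ and $\mathrm{G}$ has $\psi_{\mathrm{G}}, \varphi_{\mathrm{G}}$, then $\mathrm{G}\F$ is equipped with $\mathrm{G}(\psi_\F)\circ \psi_{\mathrm{G}}$ and $\mathrm{G}(\varphi_\F) \circ \varphi_{\mathrm{G}, -}$, and the coherence hexagons/pentagons for the composite follow by pasting those for $\F$ and $\mathrm{G}$. This is entirely formal. Applying it to $\F = \pi_1$ and $\mathrm{G} = \gr$, both of which are cartesian by \cref{lem:PiGrSymMono}, yields that $\gr \circ \pi_1$ is strong symmetric monoidal from $(\textbf{Top}_*, \times)$ to $(\Lie_{\mathbb{Z}}, \oplus)$.

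Then I would simply invoke \cref{Prop:SymMonoFunOp}(1): the image under $\gr \circ \pi_1$ of the operad $\O$ in $(\textbf{Top}_*, \times)$ is an operad in $(\Lie_{\mathbb{Z}}, \oplus)$. Concretely, the partial compositions are the maps $\gr(\pi_1(\O(n))) \oplus \gr(\pi_1(\O(m))) \cong \gr(\pi_1(\O(n)\times \O(m))) \xrightarrow{\gr(\pi_1(\circ_k))} \gr(\pi_1(\O(n+m-1)))$, where the first isomorphism is the structural isomorphism of the composite functor, and the unit is the image of the base point $*^1 \in \O(1)$; the operad axioms hold because they are the images under a symmetric monoidal functor of the operad axioms for $\O$, exactly as spelled out in the proof of \cref{Prop:SymMonoFunOp}. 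The compatibility $*^n \circ_k *^m = *^{n+m-1}$ of the base points is precisely what makes $\O$ an operad in the \emph{pointed} category, so nothing further is needed.

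There is no real obstacle here; the only point requiring a word of care is that $\gr$ as defined (via the lower central series, with bracket induced by the group commutator) genuinely lands in Lie algebras over $\mathbb{Z}$ and is functorial — but this is classical (Magnus, Lazard) and is already asserted in \cref{lem:PiGrSymMono}(2). The mildest subtlety worth a remark is the set-theoretic caveat about compactly generated Hausdorff spaces needed for $\pi_1$ to be cartesian, which is handled by the remark following \cref{lem:PiGrSymMono}. Hence the proof reduces to: compose two cartesian functors, observe the composite is strong symmetric monoidal, and quote \cref{Prop:SymMonoFunOp}.
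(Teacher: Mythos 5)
Your proof is correct and follows exactly the paper's argument: the paper also deduces the proposition directly from \cref{Prop:SymMonoFunOp} and \cref{lem:PiGrSymMono}, and your additional remarks (composites of strong symmetric monoidal functors are strong symmetric monoidal, the explicit form of the transferred partial compositions) merely spell out details the paper leaves implicit.
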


\begin{proof}
This is a direct corollary of \cref{Prop:SymMonoFunOp} and \cref{lem:PiGrSymMono}. 
\end{proof}

\subsection{Operadic quadratic data from topological operads}
Now we study how the three aforementioned functors producing respectively ``(co)homology'' Hopf (co)operads and ``homotopy'' Lie operads from topological operads lift to the quadratic data level. 

\begin{remark}
In this paper, we need to put homology and cohomology on the same footing in order to treat them with the framework described in \cref{sec:DefQD}. Since we use the homological degree convention and since cohomology will always appear as linear dual of homology, the cohomology groups will be non-positively graded. 
In other words, we use the opposite of the usual convention. 
\end{remark}

\medskip

Let $\O$ be a topological operad. 
The restriction $\cup  : H^1(\O(n))^{\odot 2} \subset H^1(\O(n))^{\otimes 2}\to H^2(\O(n))$ of the cup-product gives rise to the 
symmetric quadratic data 
\[\big(H^1(\O(n)), \ker \cup\big)\in \QD^+\ ,\]
where $H^1(\O(n))$ is concentrated in "homological degree" $-1$. 
 When $H^1(\O(n))$ is finite dimensional, for any $n\geqslant 0$, we consider the (degree-wise) linear dual symmetric quadratic data 
 \[\big(H^1(\O(n)), \ker \cup\big)^*\cong (H_1(\O(n)),  \mathrm{im}\, \Delta)\in \QD^+\ ,\]
  where $\Delta\coloneqq{}^t\cup :  H_2(\O(n)) \to H_1(\O(n))^{\odot 2}$ is the restriction of the coproduct of the homology coalgebra.
Finally, the Koszul duality functor gives rise to the following skew-symmetric quadratic data 
\[\big(H_1(\O(n)),  \mathrm{im}\, \Delta\big)^{\acc}\cong 
\big(s^{-1}H_1(\O(n)), s^{-2} \mathrm{im}\, \Delta\big)\in \QD^-\ . \] 

\begin{definition}[Holonomy Lie algebra, after Chen--Kohno \cite{Chen73, Kohno85}]
The holonomy Lie algebras of the topological spaces $\O(n)$ are the quadratic  Lie algebras induced by the above presentations: 
\[
\g_{\O(n)}\coloneqq \L(s^{-1}H_1(\O(n)), s^{-2} \mathrm{im}\, \Delta)\ .
\]
\end{definition}

When each component $\O(n)$ is path connected, for $n\geqslant 0$, the (co)algebras 
$H^\bullet(\O(n))$ (respectively $H_\bullet(\O(n))$) are (co)augmented. In this case, 
the two (co)operad structures $H^\bullet(\O)$ and $H_\bullet(\O)$  induce respectively 
a cooperad structure on the collection of symmetric quadratic data $\left\{\big(H^1(\O(n)), \ker \cup\big)\right\}$ in the symmetric monoidal category $(\QD^+, \vee)$ and 
an operad structure on the collection of quadratic data $\left\{(H_1(\O(n)), \mathrm{im}\,  \Delta\big)\right\}$ in the symmetric monoidal category 
$(\QD^+, \uot)$. 
Since the Koszul duality functor $\acc$ (in the opposite direction) is symmetric monoidal, it induces 
an operad structure on the collection of skew-symmetric quadratic data $\left\{(s^{-1}H_1(\O(n)), s^{-2}\mathrm{im}\,  \Delta\big)\right\}$ in the symmetric monoidal category 
$(\QD^-, \oplus)$. In the end, we get a canonical Lie operad structure $\g_\O$ on the level of the holonomy Lie algebras. 

\begin{definition}[Holonomy operad]
The \emph{holonomy operad} is the operad $\g_\O$ made up of the holonomy Lie algebras associated to a path connected topological operad $\O$.
\end{definition}

\begin{proposition}\label{prop:(Co)Ho(Co)Op}
Let $\O$ be a topological operad satisfying the following condition. 
\begin{cond}\label{Cond:ConditionI}
For any $n\geqslant 0$, the cohomology algebras $H^\bullet(\O(n))$ admits a finitely generated homogenous quadratic presentation with generators in $H^1(\O(n))$\ .
\end{cond}
In this case, the canonical map $H^1(\O(n))\to H^\bullet(\O(n))$ induce the following isomorphism of commutative Hopf cooperads
\[
H^\bullet(\O)\cong \S\big(H^1(\O), \ker \cup\big)
\]
and the following isomorphism of cocommutative Hopf operads
\[
H_\bullet(\O)\cong \S^c\big(H_1(\O), \mathrm{im}\,  \Delta\big)\ .
\]
\end{proposition}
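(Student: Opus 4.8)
The plan is to reduce the statement about (co)operads to the corresponding statement about individual (co)algebras, and then to invoke the commutativity of the diagram in \cref{Prop:ComDiag} together with the fact, established in \cref{Thm:ComDiag}, that all the functors in play — in particular $\S : (\QD^+,\vee) \to (\Com,\otimes)$ and $\Sc : (\QD^+,\uot) \to (\Comco,\otimes)$ — are strong symmetric monoidal. By \cref{Prop:SymMonoFunOp} a strong symmetric monoidal functor transports (co)operad structures, so once we know the underlying arity-wise isomorphisms of (co)algebras are compatible with the (co)operadic composition maps, we are done. The bulk of the work is therefore the arity-wise statement: for each $n$, produce natural isomorphisms $H^\bullet(\O(n)) \cong \S\bigl(H^1(\O(n)), \ker\cup\bigr)$ of commutative algebras and $H_\bullet(\O(n)) \cong \Sc\bigl(H_1(\O(n)), \mathrm{im}\,\Delta\bigr)$ of cocommutative coalgebras.

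First I would treat the cohomology side. \cref{Cond:ConditionI} says precisely that $H^\bullet(\O(n))$ is isomorphic, as a graded commutative algebra, to $S(W)/(R)$ where $W = H^1(\O(n))$ sits in degree $-1$ (our homological convention) and $R \subset S(W)^{(2)} = W^{\odot 2}$ is the space of quadratic relations. The canonical map $W = H^1(\O(n)) \to H^\bullet(\O(n))$ identifies $W$ with the degree-$(-1)$ part, and the multiplication $H^\bullet(\O(n))^{\otimes 2} \to H^\bullet(\O(n))$ restricted to $W^{\odot 2}$ is exactly the cup product $\cup : H^1(\O(n))^{\odot 2} \to H^2(\O(n))$; hence the quadratic relations $R$ must equal $\ker\cup$. (Here one uses that a finitely generated quadratic presentation with generators in a single degree has its relations forced to be the kernel of the multiplication map on that degree, since $S(W)/(R)$ in weight $2$ is $W^{\odot 2}/R$ and this must map isomorphically to $H^2(\O(n))$.) Thus $H^\bullet(\O(n)) \cong S(W)/(\ker\cup) = \S\bigl(H^1(\O(n)), \ker\cup\bigr)$ by the very definition of $\S$ in \cref{subsec:Func}.

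For the homology side I would argue by linear duality, weight component by weight component, which is legitimate because each $H_\bullet(\O(n))^{(k)}$ is finite dimensional: the quadratic cocommutative coalgebra $S^c(V,S)$ is by construction the categorical dual of the quadratic commutative algebra $S(V^*, S^\perp)$, a fact recorded in the proof of \cref{Prop:ComDiag} (commutativity of the front face, $\Sc(V,R)^* \cong \S(V^*,R^\perp)$). Dualising the cohomology isomorphism just obtained and using $\ker\cup{}^\perp = \mathrm{im}\,\Delta$ with $\Delta = {}^t\!\cup$, together with the standard fact that $H_\bullet(\O(n))$ is the graded dual coalgebra of $H^\bullet(\O(n))$ (again finite type), gives $H_\bullet(\O(n)) \cong \Sc\bigl(H_1(\O(n)), \mathrm{im}\,\Delta\bigr)$. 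Alternatively one checks directly that $H_\bullet(\O(n))$ satisfies the universal property defining $S^c$, i.e. it is the terminal conilpotent cocommutative coalgebra mapping to $S^c(H_1(\O(n)))$ with the composite to the cokernel of $\mathrm{im}\,\Delta$ vanishing — this is the coalgebra incarnation of ``quadratic with corelations $\mathrm{im}\,\Delta$''.

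It then remains to upgrade the family of arity-wise isomorphisms to an isomorphism of (co)operads. This is where one must be slightly careful: one needs that the maps $H^1(\O(n)) \to H^\bullet(\O(n))$, assembled over all $n$, form a morphism in the appropriate category (a morphism of cooperads in $(\QD^+,\vee)$ after applying $\S$, resp. of operads in $(\QD^+,\uot)$ after applying $\Sc$), and this is exactly the content of the discussion preceding \cref{prop:(Co)Ho(Co)Op}, where the cooperad structure on $\{(H^1(\O(n)),\ker\cup)\}$ in $(\QD^+,\vee)$ and the operad structure on $\{(H_1(\O(n)),\mathrm{im}\,\Delta)\}$ in $(\QD^+,\uot)$ were constructed precisely so that $\S$, resp. $\Sc$, recovers the cohomology cooperad, resp. homology operad. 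So the compatibility with operadic composition is built into the construction; the main obstacle, such as it is, is bookkeeping — matching the sign and grading conventions (cohomology placed in negative homological degree, the double shift $s^2$ in $\acc$, the isomorphism $\uot$-to-$\otimes$ structure maps of $\Sc$ from the proof of \cref{Thm:ComDiag}) so that the arity-wise isomorphisms genuinely intertwine the transported structure maps. Once the conventions are pinned down this is a routine, if tedious, diagram chase, and \cref{Prop:SymMonoFunOp} does the rest.
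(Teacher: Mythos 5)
Your proposal is correct and follows the same route as the paper's (much terser) proof: use \cref{Cond:ConditionI} to identify the arity-wise (co)algebras with $\S$, resp.\ $\Sc$, of the quadratic data, note that the (co)operadic structure maps agree by construction on the level of the symmetric quadratic data, and conclude via the universal property of the (co)free (co)commutative (co)algebra. Your extra details — forcing the relations to be $\ker\cup$ from the weight-$2$ part of the presentation, and obtaining the homology statement by degree-wise linear duality — are exactly the steps the paper leaves implicit.
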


\begin{proof}
The proof is straightforward. \cref{Cond:ConditionI} ensures first ensures that the underlying components of the topological operad are path connected and then provides us with the underlying isomorphisms. By definition, the (co)operad structure coincides on the level of the symmetric quadratic data. The universal property of the (co)free (co)commutative (co)algebra concludes the proof. 
\end{proof}

\begin{remark}
The above treatment holds true in the same way when the first (co)homology groups $H^1$ and $H_1$ are replaced by the first non-trivial (co)homology groups $H^i$ and $H_i$, for $i\geqslant 1$. 
\end{remark}

So under \cref{Cond:ConditionI}, the six above mentioned (co)operads contain the exact same amount of data; in other words, there is no loss of generality by considering the operadic structures on the level of quadratic data. 

\begin{remark}
Notice that any cocommutative Hopf operad  induced by an operad  in the category $(\QD^+, \uot)$ under the quadratic cocommutative coalgebra functor $\Sc$ contains canonically the operad $\mathrm{uCom}$ encoding unital commutative algebras: this latter one is simply made up of the counits of each coalgebras. 
\end{remark}

Let us recall the following seminal result due to D. Sullivan. 

\begin{theorem}[\cite{Sullivan77}, see also \cite{Kohno85}]\label{thm:FormalHolonomy}
Let $X$ be a pointed, path connected, and 1-finite topological space. When $X$ is (rationally) 1-formal, its holonomy Lie algebra is isomorphic to its rational Magnus Lie algebra
\[\g_{X}\cong  \gr\left(\pi_1(X)\right)\otimes \QQ \ .\]
\end{theorem}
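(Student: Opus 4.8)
The plan is to compare both graded Lie algebras with the 1-minimal model of $X$ in the sense of Sullivan. On the one hand, Sullivan's theory reads the Malcev (pro-unipotent) completion of $\pi_1(X)$ — hence, after passing to the associated graded of its lower central series, the Lie algebra $\gr(\pi_1(X))\otimes\QQ$ — off the 1-minimal model of the polynomial de Rham algebra $\mathrm{A}_{\mathrm{PL}}(X)$; the hypotheses that $X$ be pointed, path connected, and 1-finite guarantee that this minimal model has finitely many generators in each degree and behaves well. On the other hand, I claim the holonomy Lie algebra $\g_X$ is, by its very construction, obtained the same way from the 1-minimal model of the formal cdga $\bigl(H^\bullet(X;\QQ),0\bigr)$: concretely the latter model is \emph{weight-graded} and its graded dual is exactly $\g_X$. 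Granting these two descriptions the theorem is immediate, since 1-formality of $X$ means precisely that the 1-minimal models of $\mathrm{A}_{\mathrm{PL}}(X)$ and of $\bigl(H^\bullet(X;\QQ),0\bigr)$ coincide, and so therefore do the Lie algebras they determine.

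First I would record the "cheap half", valid for any $X$: there is a natural surjection of graded Lie algebras $\g_X \twoheadrightarrow \gr(\pi_1(X))\otimes\QQ$ which is an isomorphism in weights $\le 2$. In weight $1$ it is the rational Hurewicz isomorphism $s^{-1}H_1(X;\QQ)\xrightarrow{\ \cong\ }\pi_1(X)^{\mathrm{ab}}\otimes\QQ$; in weight $2$ one invokes Hopf's formula, which computes $\gr_2(\pi_1(X))\otimes\QQ$ as the cokernel of the map dual to the cup product $H^1(X)\wedge H^1(X)\to H^2(X)$, so that the defining quadratic relations $s^{-2}\,\mathrm{im}\,\Delta$ of $\g_X$ are sent to zero and the induced map is an isomorphism in that weight. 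Since $\g_X$ is generated in weight $1$, the map is then surjective in all weights. This is essentially the Chen--Kohno computation via iterated integrals, and it is here that path-connectedness and 1-finiteness are used — to identify $H^1(X)=H^1(\pi_1 X)$, to embed $H^2(\pi_1 X)\hookrightarrow H^2(X)$, and to dualise $\ker\cup$ to $\mathrm{im}\,\Delta$ cleanly.

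It remains to prove injectivity under the 1-formality hypothesis, and this is where I would build the 1-minimal model $\mathcal{M}$ of $\bigl(H^\bullet(X;\QQ),0\bigr)$ explicitly by the standard inductive procedure: generators in degree $1$ dual to $H^1(X)$, with the quadratic part of the differential equal to the transpose of the cup product, and subsequent stages killing the higher cohomology. The pro-nilpotent Lie algebra dual to $\mathcal{M}$ — obtained from $\mathcal{M}$ by the degree-completed Chevalley--Eilenberg/bar correspondence — is, by direct inspection, the degreewise completion of the quadratic Lie algebra with generators $s^{-1}H_1(X)$ and relations $s^{-2}(\ker\cup)^{\perp}=s^{-2}\,\mathrm{im}\,\Delta$, that is $\widehat{\g_X}$; this step is precisely the low-weight Koszul duality between commutative algebras and Lie algebras encoded by the Koszul-dual functors of \cref{Thm:ComDiag}, and it uses that, under 1-finiteness, 1-formality supplies a quadratic presentation of $H^\bullet(X;\QQ)$ in the spirit of \cref{Cond:ConditionI}. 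Passing to the associated graded for the filtration by bracket length recovers $\g_X$ on the nose. Combined with Sullivan's identification of $\gr(\pi_1(X))\otimes\QQ$ with the graded dual of the 1-minimal model of $\mathrm{A}_{\mathrm{PL}}(X)$ and with the definition of 1-formality, this yields the desired isomorphism $\g_X\cong\gr(\pi_1(X))\otimes\QQ$, refining the surjection of the previous paragraph.

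The main obstacle is this last identification — that the Lie algebra dual to the 1-minimal model of a quadratically presented commutative dg algebra is the quadratic holonomy Lie algebra. This is a classical but genuinely non-formal computation in rational homotopy theory, requiring care with the degree shifts $s^{\pm1}$, the Koszul signs in the Chevalley--Eilenberg differential, and the passage from the Malcev Lie algebra to its associated graded; the remaining steps are routine bookkeeping. I would therefore cite \cite{Sullivan77} for the minimal-model input and \cite{Kohno85} (see also \cite{Chen73}) for the holonomy side rather than reproduce these arguments in full.
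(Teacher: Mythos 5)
Your proposal is correct and follows essentially the same strategy as the paper's (very brief) sketch: identify $\gr(\pi_1(X))\otimes\QQ$ with the data carried by the degree-one generators of the $1$-minimal model of $\mathrm{A}^\bullet_{\rm PL}(X)$ \`a la Sullivan, identify the dual of the degree-one generators of the $1$-minimal model of $\bigl(H^\bullet(X;\QQ),0\bigr)$ with the holonomy Lie algebra, and conclude by $1$-formality. Your extra ``cheap half'' (the canonical surjection $\g_X\twoheadrightarrow\gr(\pi_1(X))\otimes\QQ$, an isomorphism in weights $\leqslant 2$) and your explicit inductive construction of the minimal model, where the paper instead invokes the cobar-bar resolution and homotopy transfer, are harmless variations on the same argument.
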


The proof of this statement falls into two parts. First, one shows that the (cohomological) degree $1$ generators of the minimal model of the piece-wise linear forms ${A}^\bullet_{\rm PL}(X)$ give the rational Magnus Lie algebra. Then, under the formality assumption, one just needs to coin the minimal model of the cohomology algebra $H^\bullet(X)$. The linear dual of the space of degree $1$ generators is easily seen be the holonomy algebra, for instance by using the cobar-bar resolution and the homotopy transfer theorem. 

\medskip

In order to promote the above mentioned result to the operadic level (isomorphism between the  holonomy operad 
$\g_{\O}$ and the rational Magnus operad $\gr\left(\pi_1(\O)\right)\otimes \QQ$), one would need a \emph{rational Hopf (1-)formality property} satisfied by the topological operad $\O$ itself in order to control the operadic compatibility between the formality quasi-isomorphisms of dg commutative algebras 
\[
{A}^\bullet_{\rm PL}(\O(n)) \stackrel{\sim}{\longleftarrow} \bullet \cdots \bullet 
\stackrel{\sim}{\longrightarrow} H^\bullet(\O(n))\ .
\]
This general question  will be treated in the sequel of this paper, which will deal with the Hopf formality of topological operads. 

\medskip

We are now ready to give examples. 

\subsection{Berger--Kontsevich--Willwacher, i.e. graph operads}\label{subsec:BKW}
For  $n\geqslant 2$, we consider the complete graph $\Gamma_n$ on $n$ vertices labeled by $\{1, \ldots ,n\}$, that is with one and only one edge between every pair of distinct vertices. The edge between the vertices $i$ and $j$ is simply denoted by $ij=ji$. 
\[
\Gamma_4\,=\,\vcenter{\xymatrix@M=4pt@R=16pt@C=16pt{
		*+[o][F-]{1}\ar@{-}[r]\ar@{-}[d]\ar@{-}[dr] &*+[o][F-]{2} \ar@{-}[d]\ar@{-}[dl]|\hole\\
		*+[o][F-]{3}\ar@{-}[r] &*+[o][F-]{4}}}
\]

Let us now introduce a topological version $\GraS$ of the complete graph operad due to  C. Berger \cite{Berger96}, defined by the following (pointed) topological spaces 
\[\GraS(n)\coloneqq \{*\}\, ,\  \text{for} \  n=0\ \text{and}\ n=1,  \quad  \text{and}\quad \GraS(n)\coloneqq \big(S^1\big)^{\binom{n}{2}}\, ,\  \text{for} \  n\geqslant 2\ .\]
The elements $\{\mu_{ij}\}$ of $\GraS(n)$ can be thought of as elements of the circle $S^1$ labelling the edges $ij$ of the complete graph $\Gamma_n$. The  partial  composition products $\circ_p : \GraS(n)\times \GraS(m) \to 
\GraS(n+m-1)$ of two collections $\{\mu_{ij}\}$ and $\{\mu'_{i'j'}\}$ are defined as follows. The idea is to insert the complete graph $\Gamma_m$ at the $p$th vertex of the complete graph $\Gamma_n$ and to relabel the vertices accordingly: the labels of the vertices $1, \ldots, p-1$ of $\Gamma_n$ are stable, the labels of the vertices of $\Gamma_m$ are shifted by $p-1$, and the labels of the remaining vertices $p+1, \ldots, n$ of $\Gamma_n$ are shifted by $m-1$. 
If we denote the upshot of the partial composition product by $\{\nu_{kl}\}$, then $\nu_{kl}$ is equal to the corresponding element $\mu_{ij}$ when $k,l\in \{1, \ldots, p-1, p+m, \ldots, n+m-1\}$. It is equal to the corresponding element $\mu'_{i'j'}$ when $k,l\in \{p, \ldots, p+m-1\}$. When $k\in \{1, \ldots, p-1\}$ and $l\in \{p, \ldots, p+m-1\}$, we set $\nu_{kl}\coloneqq \mu_{k, p}$ and when
$k\in \{p, \ldots, p+m-1\}$ and $l\in \{p+m, \ldots, n+m-1\}$, we set $\nu_{kl}\coloneqq \mu_{p, l-m+1}$. 
With such a definition, the composite with $\GraS(1)$ is indeed the identity. 
The natural action of the symmetric group $\Sy_n$ on the vertices of the graph $\Gamma_n$ induces a right $\Sy$-module on $\GraS(n)$. The composite on the right-hand side with $\GraS(0)$ amounts to forgetting some data which, with the symmetric group action, produces a FI-module structure \cite{CEF15}. 

\begin{proposition}
The topological operad $\GraS$ is formal over $\mathbb{Z}$: there exists a  quasi-isomorphism of dg operads over $\mathbb{Z}$
\[C^{\mathrm{sing}}_\bullet(\GraS, \mathbb{Z})  \ \stackrel{\sim}{\longleftarrow}\  H^{\mathrm{sing}}_\bullet(\GraS, \mathbb{Z})\ .\]
\end{proposition}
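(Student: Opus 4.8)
The plan is to construct the formality quasi-isomorphism explicitly, using the simple structure of $\GraS(n)$ as a product of circles. First I would observe that for each fixed $n$, the cohomology ring $H^\bullet_{\mathrm{sing}}(\GraS(n),\mathbb{Z})$ is the exterior algebra $\Lambda\big[\omega_{ij}\big]$ on generators $\omega_{ij}$ in degree $1$, one for each edge $ij$ of the complete graph $\Gamma_n$ (this is just the K\"unneth formula for a torus $(S^1)^{\binom{n}{2}}$, and the integral cohomology is torsion-free and concentrated in the obvious multidegrees). Since each $S^1$ factor is formal over $\mathbb{Z}$ — the minimal/formal model being $\Lambda[\omega]$ with zero differential, mapping into the (normalized) singular cochains by sending $\omega$ to a cocycle representing the fundamental class — one gets, levelwise, a zig-zag (in fact a direct map after choosing a $dg$ model for cochains that receives the exterior algebra) realising formality of each space $\GraS(n)$.

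The real content is operadic compatibility: I must choose these levelwise quasi-isomorphisms so that they commute with the partial composition products $\circ_p$. The key is that the combinatorial description of $\circ_p$ given above is extremely rigid: on cohomology, $\circ_p^\ast$ sends a generator $\nu_{kl}$ of $H^1(\GraS(n+m-1))$ to exactly one generator of $H^1(\GraS(n)) \otimes H^\bullet(\GraS(m)) \oplus H^\bullet(\GraS(n)) \otimes H^1(\GraS(m))$ — namely to $\omega_{ij}\otimes 1$, $1\otimes\omega'_{i'j'}$, $\omega_{k,p}\otimes 1$, or $\omega_{p,l-m+1}\otimes 1$ according to the four cases in the definition of $\nu_{kl}$. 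So I would proceed as follows. I would work with a strictly functorial, lax symmetric monoidal cochain functor $C^\bullet$ (e.g. normalized singular cochains with the Eilenberg--Zilber shuffle product, or better, piecewise-linear/polynomial forms $A^\bullet_{\mathrm{PL}}$ over $\mathbb{Q}$ — but since the statement is over $\mathbb{Z}$ I should use a $\mathbb{Z}$-model; cubical cochains adapted to the product-of-intervals structure are natural here, or Sullivan's $\mathbb{Z}$-version via simplicial sets). On each circle $S^1$, pick once and for all a cocycle $e \in C^1(S^1,\mathbb{Z})$ representing the generator, pulled back to each $\GraS(n)$ along the coordinate projection $\mathrm{pr}_{ij}:\GraS(n)\to S^1$ to give $e_{ij}:=\mathrm{pr}_{ij}^\ast(e)$. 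Define $\phi_n : H_\bullet^{\mathrm{sing}}(\GraS)(n) \to C_\bullet^{\mathrm{sing}}(\GraS)(n)$ (dually, a map out of the exterior algebra of cochains) by sending $\omega_{ij}\mapsto e_{ij}$ and extending multiplicatively, using that the $e_{ij}$ graded-commute on the nose for the chosen model (products of the single circle's cocycle along distinct projections anticommute up to a boundary — one needs a model where this is strict, which is why a DGA model like $A^\bullet_{\mathrm{PL}}$ or a carefully chosen cubical model is convenient; alternatively absorb the homotopies, but strictness makes the operad maps cleaner). Then I would check, edge-by-edge along the four cases above, that $\circ_p^\ast(e_{kl})$ equals the corresponding product $e_{ij}\otimes 1$, etc., in $C^\bullet(\GraS(n))\otimes C^\bullet(\GraS(m))$ — this holds because the coordinate projections themselves satisfy the matching compatibility $\mathrm{pr}_{kl}\circ(\circ_p) = (\text{a projection of }\GraS(n)\times\GraS(m))$ dictated by exactly the combinatorics of the $\nu_{kl}$ formula. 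Dualising gives the asserted map of dg operads $C^{\mathrm{sing}}_\bullet(\GraS,\mathbb{Z}) \overset{\sim}{\longleftarrow} H^{\mathrm{sing}}_\bullet(\GraS,\mathbb{Z})$, equivariant for the $\Sy_n$-actions since everything is defined via labelled-edge projections and permuting labels permutes the $e_{ij}$ accordingly.

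The main obstacle I expect is strictness of the multiplicativity: in a generic cochain model the product $e_{ij}\cdot e_{ji'}$ is only a cocycle representing $\omega_{ij}\omega_{i'j'}$ up to a controlled boundary, and the symmetry $e_{ij}\cdot e_{kl} = -\,e_{kl}\cdot e_{ij}$ holds only up to homotopy, so the naive formula $\omega_{ij_1}\cdots\omega_{i_rj_r}\mapsto e_{i_1j_1}\cdots e_{i_rj_r}$ need not be well-defined or a chain map on the nose. There are two standard ways around this, and I would pick whichever the authors prefer: either (a) work with a commutative $\mathbb{Z}$-model for cochains — the subtlety being that no strictly commutative functorial cochain model over $\mathbb{Z}$ exists in general, but $\GraS(n)$ is a torus, which is a compact abelian Lie group / smooth manifold, so one can use translation-invariant (``harmonic'') forms with $\mathbb{Z}$-lattice coefficients, on which the wedge product is strictly graded-commutative and the $e_{ij}$ are genuine closed forms; the projections are group homomorphisms so pull back invariant forms to invariant forms, making the operad structure strict — this is essentially Berger's original argument and is clean; or (b) use $E_\infty$-cochains and the theory of $E_\infty$-formality, lifting the levelwise formality to an operad of $E_\infty$-algebras, which is heavier machinery. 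I would go with route (a): invariant differential forms on the tori, with rational (or integral, via the lattice) coefficients, giving a strict commutative dg model whose levelwise quasi-isomorphism to $H^\bullet$ is the identity and whose compatibility with $\circ_p$ is immediate from the projection formula — this is the shortest complete argument, and it is exactly in the spirit of the holonomy/formality discussion preceding the statement.
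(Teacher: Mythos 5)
Your skeleton --- identify $H_\bullet(\GraS(n),\mathbb{Z})$ as an exterior algebra on degree-one classes indexed by the edges of $\Gamma_n$, pick explicit representatives, extend multiplicatively, and check compatibility with $\circ_p$ coordinate by coordinate --- is exactly the intended argument: the paper's own proof is a one-line reference to \cite[Section~8]{DotsenkoShadrinVallette15}, where this explicit-representative construction is carried out for the analogous torus operads. (The attribution to Berger is off: \cite{Berger96} provides the complete graph operad and the cellular recognition principle, not the formality statement.)

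There are, however, two genuine problems with your execution. First, your route (a) cannot prove the statement \emph{over $\mathbb{Z}$}: translation-invariant forms, even those with integral periods, compare to singular cochains only through integration, which lands in $\mathbb{R}$-valued cochains, and there is no multiplicative de Rham comparison over $\mathbb{Z}$; so the invariant-forms argument yields at best formality over $\mathbb{Q}$ or $\mathbb{R}$. The obstacle you are trying to evade is in fact absent if you work where the asserted arrow actually lives, namely with singular \emph{chains}: the Eilenberg--Zilber shuffle map $C_\bullet(X;\mathbb{Z})\otimes C_\bullet(Y;\mathbb{Z})\to C_\bullet(X\times Y;\mathbb{Z})$ is strictly associative and strictly symmetric over $\mathbb{Z}$, so after fixing one $1$-cycle $e$ on $S^1$ and a basepoint $0$-chain, sending a monomial in the classes $w_{ij}$ to the corresponding iterated cross product is well defined and $\Sy_n$-equivariant on the nose, with no commutative cochain model needed. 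Second, your ``projection formula'' step glosses over the fact that $\circ_p$ is \emph{not} just a permutation of circle coordinates (unlike the linear-graph operad $\As_{S^1}$, where it is): the input coordinate $\mu_{k,p}$ is duplicated into the $m$ output coordinates $\nu_{kl}$ for $p\leqslant l\leqslant p+m-1$, so the chain-level comparison pushes a $1$-cycle forward along a diagonal $S^1\to (S^1)^m$, and $\Delta_*(e)$ is only \emph{homologous}, not equal, to the sum of coordinate-axis cycles that \eqref{eq:CompoOp} prescribes on homology. One must either choose $e$ and a cellular or cubical chain model in which this identity holds strictly, or organize the discrepancies into a coherent homotopy; as written, your argument establishes formality of each space $\GraS(n)$ and compatibility with those structure maps that are pure coordinate permutations, but not the full integral operadic statement.
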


\begin{proof}
The proof is straightforward and can be performed by the same arguments as in \cite[Section~8]{DotsenkoShadrinVallette15}.
\end{proof}

\begin{remark}
Any topological space $X$ can replace $S^1$ in order to form a similar topological operad. 
For instance, any topological space homotopy equivalent to the circle, like $\mathbb{C}\backslash\{0\}$ for instance,  would produce a homotopy equivalent operad. In this case, the formality property can again be proved easily by hand; it can also be shown directly using \cite{GSNPR05, CiriciHorel17}, which rely on   mixed Hodge structures.
\end{remark}

\begin{definition}[Berger--Kontsevich--Willwacher skew--symmetric quadratic data]
The \emph{Berger--Kon\-tse\-vich--Willwacher skew--symmetric quadratic data} are spanned by 
\[
\mathrm{BKW}(n)\coloneq\left(
\t^n_{ij}\, ,\, \t^n_{ij}\wedge \t^n_{kl}
\right)\ ,
\]
where 
the set of generators $\t^n_{ij}$ of degree $0$ runs over the set of edges $ij$ of $\Gamma_n$ and where the  set of relations runs over all pairs $(ij, kl)$ of  edges in $\Gamma_n$. 
For $n=0$ and for $n=1$, we set $\mathrm{BKW}(0)\coloneq(0, 0)$ and $\mathrm{BKW}(1)\coloneq(0, 0)$. 
\end{definition}

We consider the following maps $\circ_k : \mathrm{BKW}(n)\oplus \mathrm{BKW}(m)\to \mathrm{BKW}(n+m-1)$:
\begin{equation}\label{eq:CompoOp}
\begin{array}{lcll}
& \t^n_{ij} & \mapsto & \left\{\begin{array}{lll}  
\t^{n+m-1}_{i+m-1\, j+m-1} & \text{for} & k<i,j\ ,\\
\rule{0pt}{12pt} \t^{n+m-1}_{i\, j+m-1}+\t^{n+m-1}_{i+1\, j+m-1}+\cdots+ \t^{n+m-1}_{i+m-2\, j+m-1}+\t^{n+m-1}_{i+m-1\, j+m-1} & \text{for} & k=i\ , \\
\rule{0pt}{12pt} \t^{n+m-1}_{i\, j+m-1} & \text{for} & i<k<j\ , \\
\rule{0pt}{12pt} \t^{n+m-1}_{i\, j}+\t^{n+m-1}_{i\, j+1}+\cdots+ \t^{n+m-1}_{i\, j+m-2}+\t^{n+m-1}_{i\, j+m-1} & \text{for} & k=j\ , \\
\rule{0pt}{12pt} \t^{n+m-1}_{ij} & \text{for} & i,j<k \ ,
\end{array}\right.\\
\rule{0pt}{12pt} & \t^m_{ij} & \mapsto & \quad  \t^{n+m-1}_{i+k-1\, j+k-1}\ .\\
\end{array}
\end{equation}

\begin{lemma}\label{lem:KWDataOp}
The above-mentioned data 
$\mathrm{BKW}\coloneqq\big(\{\mathrm{BKW(n)}\}, \{\circ_k\}\big)$
 forms an operad in the symmetric monoidal category \rm$(\QD^-, \oplus)$.
\end{lemma}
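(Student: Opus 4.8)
**The plan is to verify that the maps $\circ_k$ are morphisms of skew-symmetric quadratic data and that they satisfy the operad axioms.** First I would check well-definedness: each $\circ_k : \mathrm{BKW}(n)\oplus\mathrm{BKW}(m)\to\mathrm{BKW}(n+m-1)$ is a priori only defined on generators, so I must confirm that on the underlying graded vector spaces $\mathbb{Q}^{\binom n2}\oplus\mathbb{Q}^{\binom m2}$ the assignment respects the relation spaces, i.e. that the square of the linear map sends $\t^n_{ij}\wedge\t^n_{kl}$ and $\t^m_{ij}\wedge\t^m_{kl}$ (as well as the mixed bracket $[\,\cdot,\cdot\,]_-$ coming from the $\oplus$ product in $\QD^-$, see \cref{subsec:MonoStruc}) into the span of $\t^{n+m-1}_{pq}\wedge\t^{n+m-1}_{rs}$. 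Since everything is in degree $0$, the signs disappear and $\wedge$ is literal antisymmetrization; because each generator maps to a \emph{sum} of distinct generators $\t^{n+m-1}_{pq}$ with all indices among the edges of $\Gamma_{n+m-1}$, the image of any wedge is automatically a $\mathbb{Q}$-linear combination of wedges of generators, hence lands in $\mathrm{BKW}(n+m-1)^{\wedge 2}\cap(\text{relation span})$. The mixed bracket $[v,w]_-$ for $v$ a combination of $\t^n$'s and $w$ a combination of $\t^m$'s maps similarly. So this step is a direct inspection with no real content beyond bookkeeping.

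**Next I would verify the equivariance, unitality and the two associativity-type axioms of an operad in $(\QD^-,\oplus)$.** The unit is $\mathrm{BKW}(1)=(0,0)$, the $\oplus$-unit of $\QD^-$, and composing with it on either slot is the identity by the first and last cases of \eqref{eq:CompoOp} (this mirrors the remark just after the definition of $\GraS$ that composing with $\GraS(1)$ is the identity). For the $\Sy_n$-action: $\Sy_n$ permutes the edge-generators $\t^n_{ij}\mapsto\t^n_{\sigma(i)\sigma(j)}$, and one checks the equivariance squares of the operad definition by the same combinatorics governing the insertion of $\Gamma_m$ into the $p$-th vertex of $\Gamma_n$. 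For the sequential and parallel axioms, the key observation is that the formulas \eqref{eq:CompoOp} are \emph{exactly} the cohomology-level (or rather $H_1$-level) shadow of the topological partial compositions of $\GraS$ described in \cref{subsec:BKW}: the "edge $ij$ of $\Gamma_n$ hitting the inserted vertex $k$" becomes the sum of the $m$ edges replacing it, which is precisely how $H_1$ of a wedge/product of circles behaves under the map induced by $\circ_p$. Concretely I would present $\mathrm{BKW}(n)$ as $\big(s^{-1}H_1(\GraS(n)),\,s^{-2}\,\mathrm{im}\,\Delta\big)$ up to the degree shift, observe that $H_1(\GraS(n))=H_1((S^1)^{\binom n2})=\bigoplus_{ij}\mathbb{Q}\,\t^n_{ij}$ with $\mathrm{im}\,\Delta$ spanned by $\t^n_{ij}\odot\t^n_{kl}$ (Künneth), so that $\acc$ of its dual gives exactly the stated relations $\t^n_{ij}\wedge\t^n_{kl}$; then the operad axioms on $\mathrm{BKW}$ follow from functoriality of $H_1$ applied to the topological operad axioms of $\GraS$, via \cref{Prop:SymMonoFunOp} and the paragraph on "operadic quadratic data from topological operads". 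The one point needing care is that $H_1$ of a product is the \emph{direct sum}, so the induced structure genuinely lands in $(\QD^-,\oplus)$ rather than some tensor variant — this is what makes $\oplus$ the right monoidal product here.

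**The main obstacle will be the index bookkeeping in \eqref{eq:CompoOp}**, namely checking that the two associativity axioms (the sequential axiom $\circ_k(\circ_l(-))$ and the parallel axiom $\circ_k$ after $\circ_{k'}$ in disjoint inputs) hold on the nose with these shift conventions for $k<i,j$, $k=i$, $i<k<j$, $k=j$, $i,j<k$. Rather than grinding through all cases by hand, I would either (a) invoke the topological/homological comparison above, so that associativity is inherited for free from $\GraS$ and one only needs to confirm that the induced $H_1$-maps coincide with \eqref{eq:CompoOp} — which is a short Künneth/naturality check — or (b) if one wants a self-contained combinatorial argument, organize the verification around the single principle "the generator $\t^n_{ij}$ records the edge $ij$, and inserting a graph at a vertex replaces the incident edges by sums", which makes each case routine. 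I expect the cleanest writeup to lean on (a), reducing the proof to: (1) identify $\mathrm{BKW}(n)\cong\big(s^{-1}H_1(\GraS(n)),s^{-2}\mathrm{im}\,\Delta\big)$; (2) note $\GraS$ is a (pointed, path-connected) topological operad; (3) apply the symmetric monoidal functor machinery of the previous subsections to transport the operad structure to $(\QD^-,\oplus)$; (4) unwind the definitions to see the transported compositions are \eqref{eq:CompoOp}. Step (4) is the only place genuine computation enters, and it is the case analysis on the position of $k$ relative to $i,j$.
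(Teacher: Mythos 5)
Your proposal is correct, and its essential first step coincides with the paper's: since $R(n+m-1)=V(n+m-1)^{\wedge 2}$ is the \emph{full} exterior square, any linear map on generators automatically carries $R(n)\oplus[V(n),V(m)]_-\oplus R(m)$ into the target relations, so the $\circ_k$ are morphisms in $\QD^-$ for free. For the operad axioms the paper simply performs the direct combinatorial verification (your route (b)) and declares it straightforward, whereas you lean towards route (a): transporting the operad structure from the topological operad $\GraS$ through $H_1$, Koszul duality, and the holonomy construction. That route is legitimate and non-circular (the holonomy-operad machinery of the previous section does not depend on \cref{lem:KWDataOp}), but note that it is essentially the content of \cref{prop:HoloGraS}, which the paper states \emph{after} this lemma and whose proof in turn invokes the identification of $H_\bullet(\GraS)$ with $\mathrm{Gra}$ from \cref{prop:HoBKW}; so if you adopt (a) you must reorganize the logical order and still carry out your step (4) — matching the transported compositions with the explicit formulas \eqref{eq:CompoOp} — which is a computation of roughly the same size as the direct check. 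In short: (b) buys a short self-contained lemma, (a) buys conceptual clarity at the cost of front-loading the comparison with $\GraS$ that the paper postpones. One small slip: in your Künneth discussion the image of $\Delta$ is spanned by $w^n_{ij}\odot w^n_{kl}$ with $w^n_{ij}=s\,\t^n_{ij}$ of degree $1$; the relations $\t^n_{ij}\wedge\t^n_{kl}$ only appear after applying $s^{-2}$, but this does not affect the argument.
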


\begin{proof}
Since the spaces of relations are the full spaces $R(n)=V(n)^{\wedge 2}$, for any $n\geqslant 0$, the maps $\circ_k$ are morphisms of quadratic data. It is straightforward to check the sequential and parallel axioms, the equivariance with respect to the symmetric groups action, as well as the axioms for the unit. 
\end{proof}

The Lie operad $\L(\mathrm{BKW})$ is thus made up of the graphs with one edge (of degree 0) and with similar  partial composition maps. 

\begin{proposition}\label{prop:HoloGraS}
The holonomy operad and the rational Magnus operad associated to $\GraS$ are isomorphic to the Lie operad associated to skew-symmetric quadratic data $\mathrm{BKW}$: 
\[\g_{\GraS}\cong \L(\mathrm{BKW}) \cong \gr\left(\pi_1\big(\GraS\big)\right)\otimes \QQ \ .\]
\end{proposition}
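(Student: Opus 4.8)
The plan is to establish both isomorphisms by an explicit computation, which is feasible here because every component $\GraS(n)=\big(S^1\big)^{\binom{n}{2}}$ is a torus --- the simplest possible $1$-formal space --- whose fundamental group $\pi_1(\GraS(n))\cong\mathbb{Z}^{\binom{n}{2}}$ is \emph{abelian}.

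\emph{Step 1: the underlying quadratic data.} First I would compute that $H^\bullet(\GraS(n);\QQ)$ is the exterior algebra on one generator $\omega_{ij}$ of degree $-1$ per edge $ij$ of $\Gamma_n$. As the generators are odd, the restricted cup product $\cup\colon H^1(\GraS(n))^{\odot 2}\to H^2(\GraS(n))$ is an isomorphism (both sides have the same finite dimension and $\cup$ is onto), so $\ker\cup=0$ and \cref{Cond:ConditionI} is satisfied. Dualising degreewise gives $\big(H_1(\GraS(n)),\mathrm{im}\,\Delta\big)\cong\big(H_1(\GraS(n)),H_1(\GraS(n))^{\odot 2}\big)$, with $H_1(\GraS(n))=\langle t_{ij}\rangle$ concentrated in homological degree $1$ and the full graded-symmetric square as space of relations. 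Applying the Koszul duality functor $\acc$ then produces $\big(s^{-1}H_1(\GraS(n)),\, (s^{-1}H_1(\GraS(n)))^{\wedge 2}\big)$, i.e. generators $\t^n_{ij}$ of degree $0$ together with \emph{all} wedges $\t^n_{ij}\wedge\t^n_{kl}$ as relations, which is precisely the skew-symmetric quadratic datum $\mathrm{BKW}(n)$. Arity-wise we thus already get $\g_{\GraS(n)}=\L(\mathrm{BKW}(n))$, the abelian Lie algebra on $\QQ^{\binom{n}{2}}$.

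\emph{Step 2: the operadic structure on the holonomy side.} By \cref{prop:(Co)Ho(Co)Op} --- applicable since \cref{Cond:ConditionI} holds --- the cocommutative Hopf operad $H_\bullet(\GraS)$ equals $\Sc$ of an operad structure on the collection $\big\{\big(H_1(\GraS(n)),\mathrm{im}\,\Delta\big)\big\}$ in $(\QD^+,\uot)$; carrying this through $\acc$ into $(\QD^-,\oplus)$ and then through $\L$ yields, by the very definition of the holonomy operad, $\g_{\GraS}$. The partial composition of that operad in $(\QD^+,\uot)$ is a morphism of quadratic data, and its underlying linear map $H_1(\GraS(n))\oplus H_1(\GraS(m))\to H_1(\GraS(n+m-1))$ is the weight-$1$ component of the cocommutative coalgebra map $H_\bullet(\circ_p)$, which is nothing but $H_1(f_p)$, where $f_p\colon\GraS(n)\times\GraS(m)\to\GraS(n+m-1)$ is the topological partial composition. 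The heart of the argument is then to read off $H_1(f_p)$ from the coordinate-wise description of the composition of $\GraS$ given in \cref{subsec:BKW}: on the first homology of a torus, the loop in the coordinate $\mu_{ij}$ maps to the sum of the loops in all target coordinates $\nu_{kl}$ which are set equal to $\mu_{ij}$ under insertion; running through the three relabelling regimes (the insertion vertex is an endpoint of the edge $ij$, the edge straddles the inserted block, or the edge is disjoint from it) reproduces formula \eqref{eq:CompoOp} on the nose --- the degree shift $s^{-1}$ causing no signs since everything now lives in degree $0$, and all structure constants being $0$ or $1$ exactly as in \eqref{eq:CompoOp}. Hence $\big(s^{-1}H_1(\GraS),\,s^{-2}\mathrm{im}\,\Delta\big)$ \emph{is} the operad $\mathrm{BKW}$ of \cref{lem:KWDataOp}, and applying $\L$ gives $\g_{\GraS}\cong\L(\mathrm{BKW})$ as operads.

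\emph{Step 3: the rational Magnus side.} Since $\pi_1(\GraS(n))$ is abelian its lower central series stabilises at once, so $\gr\big(\pi_1(\GraS(n))\big)$ is $\pi_1(\GraS(n))$ placed in degree $1$ with zero bracket, which the Hurewicz map identifies naturally with $H_1(\GraS(n);\mathbb{Z})$. Because $\pi_1$ and $\gr$ are cartesian (\cref{lem:PiGrSymMono}), the Magnus operad structure on $\big\{\gr(\pi_1(\GraS(n)))\big\}$ is induced by the maps $\pi_1(f_p)$, which under this identification coincide with the maps $H_1(f_p)$ from Step 2. Tensoring with $\QQ$ and comparing with Step 2 --- where $\L(\mathrm{BKW})$ was shown to be the abelian Lie operad on $\big\{H_1(\GraS(n);\QQ)\big\}$ with structure maps $H_1(f_p)$ --- yields $\gr\big(\pi_1(\GraS)\big)\otimes\QQ\cong\L(\mathrm{BKW})$ as operads. (The arity-wise part of this is of course the content of \cref{thm:FormalHolonomy} applied to the $1$-formal spaces $\GraS(n)$; what matters is that for $\GraS$ we obtain it \emph{operadically} for free, with no operadic Hopf-formality input.)

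I expect the only genuine difficulty to be the bookkeeping in Step 2: carefully matching the topologically induced first-homology maps $H_1(f_p)$ of the tori with the explicit closed formula \eqref{eq:CompoOp}, keeping track of how the vertices of $\Gamma_n$ and $\Gamma_m$ are relabelled under the insertion.
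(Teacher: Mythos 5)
Your proposal is correct and follows essentially the same route as the paper: compute that the cohomology of the tori $\GraS(n)$ is free graded-commutative on degree-one generators (so $\ker\cup=0$ and the dual/Koszul-dual data is exactly $\mathrm{BKW}(n)$), match the operadically induced maps on $H_1$ with formula \eqref{eq:CompoOp}, and use that $\pi_1(\GraS(n))\cong\mathbb{Z}^{\binom{n}{2}}$ is abelian so the Magnus construction stabilises at $H_1$. The only cosmetic difference is that the paper delegates the verification of operadic compatibility to the explicit description of $H_\bullet(\GraS)\cong\mathrm{Gra}$ in the proof of \cref{prop:HoBKW}, whereas you carry out the equivalent computation of $H_1(f_p)$ directly.
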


\begin{proof}
The first isomorphism is obtained directly from the definition of the holonomy operad. 
The cohomology algebra of the circle is the algebra of dual number $H^\bullet(S^1)\cong \QQ {1} \,\oplus\, \QQ \varepsilon$, that is the free commutative algebra on one degree one element $\varepsilon$. This shows that the cohomology symmetric quadratic data is trivial 
\[\big(H^1(\GraS(n)), \ker \cup\big)=\big(
\varepsilon^n_{ij}, \emptyset
\big)\ ,\]
and thus that the holonomy skew-symmetric data is the Berger--Kontsevich--Willwacher one 
\[\big(s^{-1}H_1(\GraS(n)), s^{-2} \mathrm{im}\, \Delta\big)=\big(\t^n_{ij}\, ,\, \t^n_{ij}\wedge \t^n_{kl}
\big)\ ,\]
under the identification $\t^n_{ij}\cong s^{-1}\left(\varepsilon^n_{ij}\right)^*$.
In order to show that these isomorphisms commute with the respective operadic structures, one needs to describe the homology operad $H_\bullet\big(\GraS\big)$; this computation is performed in the core of the proof of \cref{prop:HoBKW} below. 

The second isomorphism is also straightforward from the definition of the rational Magnus operad. One has 
\[\gr\left(\pi_1\big(\GraS(n)\big)\right)\otimes \QQ\cong \QQ^{{\binom{n}{2}}}\]
and the partial composition maps agree. 
\end{proof}

Let us recall from \cite{Kontsevich93, Kontsevich97, Willwacher15} the definition of the \emph{graph operad} $\mathrm{Gra}$ of natural operations of polyvector fields of $\mathbb{R}^k$. Its underlying $\Sy$-modules are spanned by
subgraphs of $\Gamma_n$, that is 
 graphs with $n$ vertices labeled bijectively by $\{1, \ldots, n\}$ and possibly at most one edge of degree $1$  between any pair of vertices. The partial composition product $\gamma_1\circ_k\gamma_2$ amounts to first inserting the graph $\gamma_2$ at the $k$th vertex of $\gamma_1$, then relabelling accordingly the vertices, and finally consider the sum of all the possible ways to connect the edges in $\gamma_1$ originally plugged to the vertex $k$, to any possible vertex of $\gamma_2$. 

\[
\vcenter{\xymatrix@M=4pt@R=16pt@C=16pt{
		*+[o][F-]{2}\ar@{-}[d]\\
		*+[o][F-]{1}}}
\ \circ_1 \
\vcenter{\xymatrix@M=4pt@R=16pt@C=6pt{
		& *+[o][F-]{1}\ar@{-}[dl]\ar@{-}[dr]& \\
		*+[o][F-]{2}&&*+[o][F-]{3}}}
\ = \ 
\vcenter{\xymatrix@M=4pt@R=16pt@C=6pt{
		& *+[o][F-]{4}\ar@{-}[d]& \\
		& *+[o][F-]{1}\ar@{-}[dl]\ar@{-}[dr]& \\
		*+[o][F-]{2}&&*+[o][F-]{3}}}
\ + \ 
\vcenter{\xymatrix@M=4pt@R=16pt@C=6pt{
		*+[o][F-]{4}\ar@{-}[d]&& *+[o][F-]{1}\ar@{-}[dll]\ar@{-}[d] \\
		*+[o][F-]{2}&&*+[o][F-]{3}}}
\ + \ 		
\vcenter{\xymatrix@M=4pt@R=16pt@C=6pt{
		 *+[o][F-]{1}\ar@{-}[d]\ar@{-}[drr]&& *+[o][F-]{4}\ar@{-}[d]\\
		*+[o][F-]{2}&&*+[o][F-]{3}}}
\]
Every graded vector space $\mathrm{Gra}(n)$ forms a cocommutative coalgebra with the coproduct $\Delta(\gamma)$ made up of the pairs of graphs $\gamma'\otimes \gamma''$ with the same  $n$ vertices as $\gamma$ but  with edges from $\gamma$ distributed on $\gamma'$ and $\gamma''$. The partial composition products preserve these coproducts, thus  $\mathrm{Gra}$ forms a cocommutative Hopf operad. 

\begin{proposition}\label{prop:HoBKW}
The following three cocommutative Hopf operads are isomorphic 
\[H_\bullet\big(\GraS\big)\cong \S^c\big(\mathrm{BKW}^{\emph{\acc}}\big) \cong \mathrm{Gra}\ .\]
\end{proposition}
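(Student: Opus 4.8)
The plan is to show that both $H_\bullet(\GraS)$ and $\Sc(\mathrm{BKW}^{\acc})$ are isomorphic, as cocommutative Hopf operads, to $\mathrm{Gra}$, and then conclude by transitivity. On underlying collections all three are built from subgraphs of the complete graphs $\Gamma_n$ with degree-one edges, so the content lies in matching the coalgebra and the operad structures.

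First I would treat $\Sc(\mathrm{BKW}^{\acc})\cong\mathrm{Gra}$. By definition $\mathrm{BKW}(n)=(V_n,V_n^{\wedge 2})$ with $V_n=\langle\t^n_{ij}\rangle$ placed in degree $0$, so its Koszul dual $\mathrm{BKW}^{\acc}(n)=(sV_n,s^2V_n^{\wedge 2})=(sV_n,(sV_n)^{\odot 2})$ has \emph{all} of the symmetric square as space of relations, with $sV_n$ in degree $1$; hence $\Sc(\mathrm{BKW}^{\acc}(n))=S^c(sV_n)=\bigoplus_{k\geqslant 0}S^k(sV_n)=\bigoplus_{k\geqslant 0}\Lambda^k(sV_n)$ is the cofree conilpotent cocommutative coalgebra on a degree-one space (we work over $\QQ$). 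Under the identification of $s\t^n_{ij}$ with the one-edge graph $ij$, this is exactly $\mathrm{Gra}(n)$ with its ``distribute the edges'' coproduct. For the operad structure, \cref{lem:KWDataOp} provides an operad in $(\QD^-,\oplus)$; transporting it along the strong symmetric monoidal functors $\acc\colon(\QD^-,\oplus)\to(\QD^+,\uot)$ and $\Sc\colon(\QD^+,\uot)\to(\wgComco,\otimes)$ of \cref{Thm:ComDiag} yields a cocommutative Hopf operad whose partial compositions are $\Sc$ applied to (the degree shift of) the linear maps \eqref{eq:CompoOp}. Since the cofree cocommutative coalgebra functor sends a linear map $g$ to $\bigoplus_kS^k(g)$, i.e.\ to the multiplicative extension of $g$ over the exterior algebras, and since \eqref{eq:CompoOp} is precisely the single-edge reconnection rule of $\mathrm{Gra}$ — an edge incident to the insertion vertex going to the sum over all ways of reattaching it inside the inserted block, every other edge being relabelled — the composition maps agree. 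Units and $\Sy$-equivariance are immediate, and the arities $n=0,1$ are trivial on both sides.

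Next I would compute $H_\bullet(\GraS)$. Each $\GraS(n)=(S^1)^{\binom{n}{2}}$ is a torus, so by Künneth $H_\bullet(\GraS(n))=\Lambda^\bullet H_1(\GraS(n))$ as a graded-commutative and cocommutative Hopf algebra (Pontryagin product and diagonal coproduct), where $H_1(\GraS(n))$ has one degree-one generator $e^n_{ij}$ per edge of $\Gamma_n$; this matches $\mathrm{Gra}(n)$ as a coalgebra. The key observation is that each partial composition $\circ_p\colon\GraS(n)\times\GraS(m)\to\GraS(n+m-1)$ is a \emph{homomorphism of tori}: by construction every output coordinate $\nu_{kl}$ is literally one of the input coordinates $\mu_{ij}$ or $\mu'_{i'j'}$. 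Consequently $H_\bullet(\circ_p)=\Lambda^\bullet H_1(\circ_p)$ is determined by its degree-one part, and a direct reading of the rules defining $\{\nu_{kl}\}$ shows that $H_1(\circ_p)$ sends an edge class $e^n_{kp}$ (or $e^n_{pj}$) incident to the insertion vertex $p$ to $\sum_l e^{n+m-1}_{kl}$, the sum running over the vertices $l$ of the inserted block, and sends every other edge class to the corresponding relabelled generator — exactly the single-edge rule of $\mathrm{Gra}$. Extending multiplicatively over the exterior algebras recovers the full composition of $\mathrm{Gra}$, so $H_\bullet(\GraS)\cong\mathrm{Gra}$; as before, units, equivariance and degenerate arities cause no trouble. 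In particular this settles the operadic compatibility left open in the proof of \cref{prop:HoloGraS}, and is consistent with \cref{prop:(Co)Ho(Co)Op} applied to $\GraS$, whose cohomology algebras $H^\bullet(\GraS(n))$ are exterior algebras and thus finitely generated quadratic on $H^1$, so that \cref{Cond:ConditionI} holds.

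The main obstacle is the operadic compatibility in the homological computation: verifying that passing to homology turns the somewhat ad hoc composition maps of $\GraS$ into the graph-insertion-with-reconnection composition of $\mathrm{Gra}$. What makes this tractable — and is really the only idea the proof rests on — is that those maps are homomorphisms of tori, which reduces the verification to the linear map $H_1(\circ_p)$ on edge classes; the remainder is bookkeeping with the relabelling of vertices.
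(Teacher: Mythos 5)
Your proposal is correct and follows essentially the same route as the paper: both identify $\Sc(\mathrm{BKW}^{\acc}(n))$ with the cofree cocommutative coalgebra on the degree-one span of the edges, match monomials with subgraphs of $\Gamma_n$, and check the compositions on the cogenerators against the single-edge reconnection rule of $\mathrm{Gra}$, while the homological side is reduced to $H_1$ of the structure maps of $\GraS$. Your observation that the partial compositions of $\GraS$ are homomorphisms of tori is a clean way to make explicit the step the paper only sketches (``one can directly prove $H_\bullet(\GraS)\cong\mathrm{Gra}$''), and obtaining the first isomorphism by transitivity rather than by invoking \cref{prop:(Co)Ho(Co)Op} is an inessential variation.
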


\begin{proof}
The topological operad $\GraS$ satisfies \cref{Cond:ConditionI} and thus the first isomorphism is produced by \cref{prop:(Co)Ho(Co)Op} using \cref{prop:HoloGraS}. 
Using the fact that the homology coalgebra of the circle is the coalgebra of dual number on one degree one generator, one can directly prove the isomorphism of cocommutative Hopf operad $H_\bullet(\GraS) \cong \mathrm{Gra}$. 
The direct isomorphism $\S^c\big(\mathrm{BKW}^{\emph{\acc}}\big) \cong \mathrm{Gra}$ can however be made explicit  as follows using the previous sections. Let us denote by $V(n)\coloneqq \QQ\big\{\t^n_{ij}\big\}$ the space of generators of the quadratic data  $\mathrm{BKW}(n)$. 
The underlying $\Sy$-module of the cocommutative Hopf operad $\S^c\big(\mathrm{BKW}^{\acc}\big) = \big(\{\S^c\big(\mathrm{BKW}^{\acc}(n)\big)\},\allowbreak \{\tilde{\circ}_k\}\big)$, obtained by applying the symmetric monoidal functors $\acc$ and then $\S^c$, is made up of cofree cocommutative coalgebras $S^c(sV(n))=S^c\big(s \t^n_{ij}\big)$, with $| s \t^n_{ij}|=1$, which admits for basis the monomials $s \t^n_{i_1j_1}\odot \cdots \odot s \t^n_{i_kj_k}$, where all the pairs $ij$ are  different. Such monomials are in one-to-one correspondence with the graphs of $\mathrm{Gra}(n)$. 

The partial composition products
$\tilde{\circ}_k : S^c(sV(n))\otimes S^c(sV(m))\to S^c(sV(n+m-1))$ 
 of the operad $\S^c\big(\mathrm{BKW}^{\acc}\big)$  are morphisms of cocommutative coalgebras; so they are characterised by their projections onto $sV(n+m-1)$. 
 
We denote by  $\emptyset^n$ the graph with $n$  vertices and with no edge, that  we identify  with the counit $\mathbb{1}^n$ of the cofree coalgebra $S^c(sV(n))$, i.e.  $\mathbb{1}^n \leftrightarrow \emptyset^n$.
We denote by $\gamma^n_{ij}$ the graph with $n$ vertices and with the only edge $ij$, that we identify with the generator $s\t^n_{ij}$ of  $S^c(sV(n))$, i.e.  $s\t^n_{ij} \leftrightarrow \gamma^n_{ij}$.
Under this correspondence, the isomorphism $S^c(sV(n))\otimes S^c(sV(m))\cong  S^c(s(V(n)\oplus V(m)))$ sends 
$\emptyset^n\otimes \emptyset^m$ to $\mathbb{1}^{n+m-1}$, 
$\gamma^n_{ij}\otimes  \emptyset^m$ to $s\t^n_{ij}$, and  
$\emptyset^n\otimes  \gamma^m_{ij}$ to $s\t^m_{ij}$.
The images of these three latter elements under the partial composition products $\circ_k$ of the operad $\mathrm{BKW}$ given in \cref{eq:CompoOp} coincide, under the above identifications, to the images of the three former elements under the partial composition products of the operad $\mathrm{Gra}$, which concludes the proof. 
\end{proof}

The operad $\mathrm{U}\L(\mathrm{BKW})=\A\Lambda(\mathrm{BKW})$ in associative algebras is similar to the operad $\mathrm{Gra}$ except that we consider graphs with possibly multiple edges (of degree 0) between vertices. The (algebra) product of two such graphs amounts to consider the union of their sets of edges.  

\begin{remark}
Using the recognition method of C. Berger \cite{Berger96}, one can see that $\GraS$ admits a 
(cellular) sub-operad which  is an $E_2$-operad, that is a topological operad having the same homotopy type then the little disks operad $\mathcal{D}_2(n)\sim \mathrm{Conf}_n(\mathbb{R}^2)$, see \cref{subsec:LDOp}. The operad $\mathrm{Gra}$ admits a map from the operad encoding shifted Lie algebras, so it can be twisted \`a la Willwacher to produce a differential graded operad $\mathrm{Tw} \mathrm{Gra}$, see \cite{Willwacher15} and \cite[Section~5]{DotsenkoShadrinVallette18} for more details. This latter operad plays a key role in the proof of the formality of the little disks operad in \cite{Kontsevich99, LambrechtsVolic14, FresseWillwacher15}. Since $\mathrm{Tw} \mathrm{Gra}$ forms a dg cocommutative Hopf operad, it is a good model for the rational homotopy type of the little disks operad; this point explains conceptually why the rational homotopy automorphim group of the little disks operad is isomorphic to the Grothendieck--Teichm\"uller group in \cite{Willwacher15, Fresse17}.
\end{remark}

One can perform the same arguments for the topological operad $\GraS\rtimes S^1$, which is obtained by adding a copy of $S^1$ at every input, see \cite{SalvatoreWahl03} for the semi-direct product of operads. This amounts to adding $n$ generators $\t^n_i$, for $1\leqslant i\leqslant n$, to the skew--symmetric quadratic and again considering the full space of relations. The same results hold true \emph{mutatis mutandis} by considering now graphs with possible tadpoles, that with possibly one  loop attached to each vertex.
\[\vcenter{\xymatrix@M=4pt@R=16pt@C=16pt{
		 *+[o][F-]{1}\ar@{-}[r]\ar@{-}[d]\ar@{-}[dr] \ar@{-}@(dl,ul)[] &*+[o][F-]{2} \ar@{-}[d]\ar@{-}[dl]|\hole   \ar@{-}@(dr,ur)[] \\
		*+[o][F-]{3}\ar@{-}[r]  \ar@{-}@(dl,ul)[]&*+[o][F-]{4} \ar@{-}@(dr,ur)[]}}
\]

\subsection{Nonsymmetric analogue of the little disks operad, i.e. $\mathrm{As}_{S^1}$ and $\mathrm{As}_{S^1}\rtimes S^1$}\label{subsec:ncD}
A noncommutative  version of the notion of  Gerstenhaber algebras was introduced in \cite[Section~3]{DotsenkoShadrinVallette15} in relation with noncommutative deformation theory. This notion is modelled by the nonsymmetric (pointed) topological operad which is defined in a way similar to the aforementioned topological operad $\GraS$ but starting from  the complete linear graph $\Theta_n$  instead of the complete graph $\Gamma_n$. Explicitelty, $\Theta_n$ is the graph on $n$ vertices labeled by $\{1, \ldots ,n\}$ from left to right with one and only one edge between every consecutive pair of vertices $i\, i+1$. 
\[
\Theta_n\,=\,
\vcenter{\xymatrix@M=4pt@R=16pt@C=16pt{
		*+[o][F-]{1}\ar@{-}[r] &*+[o][F-]{2} \ar@{-}[r] &*+[o][F-]{3} \ar@{-}[r]&\cdots \ar@{-}[r]& *+[o][F-]{n}  }}
\]
The pointed topological ns operad 
$\As_{S^1}$ is defined by  $\As_{S^1}(n)\coloneqq \{*\}$, for $n=0$ and $n=1$, and by $\As_{S^1}(n)\coloneqq \big(S^1\big)^{n-1}$, for $n\geqslant 2$, with partial  composition products are given by  
 \[ 
(x_1,\ldots,x_{n-1})\circ_i(y_1,\ldots,y_{m-1}):=
(x_1,\ldots,x_{i-1},y_{1},\ldots,y_{m-1},x_i, \ldots,x_{n-1})\ . 
 \]

\begin{remark}
This ns topological operad is formal \cite[Corollary~8.1.1]{DotsenkoShadrinVallette15}. 
Again, any topological space $X$ can replace $S^1$ in order to form a similar nonsymmetric topological operad. 
For instance, any topological space homotopy equivalent to the circle, like $\mathbb{C}\backslash\{0\}$ for instance,  would produce a homotopy equivalent operad, which is also formal.
\end{remark}

\begin{definition}[skew--symmetric quadratic data $\LG$]
The \emph{skew--symmetric quadratic data} $\LG$, for Linear Graph,  are spanned by 
\[
\LG(n)\coloneq\left(
\e^n_{ii+1}\, ,\, \e^n_{ii+1}\wedge \e^n_{jj+1}
\right)\ ,
\]
where 
the set of generators $\e^n_{ii+1}$ of degree $0$ runs over the set of edges of $\Theta_n$ and where the  set of relations runs over all pairs of  edges of $\Theta_n$. 
For $n=0$ and for $n=1$, we set $\LG(0)\coloneq(0, 0)$ and $\LG(1)\coloneq(0, 0)$. 
\end{definition}

The morphisms $\circ_k : \LG(n)\oplus \LG(m) \to \LG(n+m-1)$ of skew-symmetric quadratic data defined by 
\[
\begin{array}{lcll}
& \e^n_{ii+1} & \mapsto & \left\{\begin{array}{lll}  
\e^{n+m-1}_{i+m-1 i+m} & \text{for} & k\leqslant i\ ,\\
\rule{0pt}{12pt} \e^{n+m-1}_{i i+1}& \text{for} & k>i\ , 
\end{array}\right.\\
\rule{0pt}{12pt} & \e^m_{ij} & \mapsto & \quad  \e^{n+m-1}_{j+k-1 j+k}\ ,\\
\end{array}
\]
endow the collection $\{\LG(n)\}$ with a nonsymmetric operad structure in the symmetric monoidal category $(\QD^-, \oplus)$. The Lie operad $\L(\LG)$ is  made up of the graphs with one edge (of degree 0) and with similar  partial composition maps. 

\begin{proposition}
The holonomy operad and the rational Magnus operad associated to $\As_{S^1}$ are isomorphic to the Lie operad associated to the skew--symmetric data $\LG$: 
\[\g_{\As_{S^1}}\cong \L(\LG) \cong \gr\left(\pi_1\big(\As_{S^1}\big)\right)\otimes \QQ \ .\]
\end{proposition}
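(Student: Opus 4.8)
This is the nonsymmetric counterpart of \cref{prop:HoloGraS}, and the plan is to follow that proof line by line, replacing the complete graph $\Gamma_n$ by the linear graph $\Theta_n$ and $\mathrm{BKW}$ by $\LG$ throughout.

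For the left-hand isomorphism $\g_{\As_{S^1}}\cong\L(\LG)$ I would unwind the definition of the holonomy operad. Since $\As_{S^1}(n)=(S^1)^{n-1}$ for $n\geqslant 2$ (and a point for $n\leqslant 1$), the cohomology ring $H^\bullet(\As_{S^1}(n))\cong\Lambda\big(\varepsilon^n_{i\,i+1}\big)$ is the exterior algebra on the $n-1$ degree-one classes indexed by the edges of $\Theta_n$; in particular it is finitely generated, quadratic with generators in $H^1$, and each component is path connected, so \cref{Cond:ConditionI} holds. The restriction of the cup product $\cup:H^1(\As_{S^1}(n))^{\odot 2}\to H^2(\As_{S^1}(n))$ sends the basis $\{\varepsilon^n_{i\,i+1}\odot\varepsilon^n_{j\,j+1}\}_{i<j}$ to (twice) the basis $\{\varepsilon^n_{i\,i+1}\varepsilon^n_{j\,j+1}\}_{i<j}$ of $H^2$, hence is an isomorphism and $\ker\cup=0$. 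Dually $(\ker\cup)^{\perp}=H_1(\As_{S^1}(n))^{\odot 2}$ is the full graded symmetric square, and applying the Koszul duality functor $\acc$ turns it into the full graded skew-symmetric square; under the identification $\e^n_{i\,i+1}\cong s^{-1}\big(\varepsilon^n_{i\,i+1}\big)^{*}$ this is precisely $\LG(n)=\big(\e^n_{i\,i+1},\,\e^n_{i\,i+1}\wedge\e^n_{j\,j+1}\big)$. Thus the underlying skew-symmetric quadratic data coincide, and $\L$ of either side is the collection of graphs-with-one-edge introduced after the definition of $\LG$.

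To upgrade this to an isomorphism of operads in $(\QD^-,\oplus)$, I would describe the homology cocommutative Hopf ns-operad $H_\bullet(\As_{S^1})$, the ``linear graph'' analogue of $\mathrm{Gra}$, exactly as in the proof of \cref{prop:HoBKW}: using that $H_\bullet(S^1)$ is the coalgebra of dual numbers on one degree-one generator, one gets $H_\bullet(\As_{S^1}(n))\cong S^c\big(s\,\t^n_{i\,i+1}\big)$ with basis the monomials $s\,\t^n_{i_1j_1}\odot\cdots\odot s\,\t^n_{i_kj_k}$ indexed by subgraphs of $\Theta_n$, and the partial compositions induced by the topological maps $\circ_i$ of $\As_{S^1}$ are the coordinate-reindexing morphisms of cofree cocommutative coalgebras. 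A direct comparison on generators then shows that these reindexings are exactly the maps $\circ_k$ displayed above for $\LG$; via \cref{prop:(Co)Ho(Co)Op} this also yields $H_\bullet(\As_{S^1})\cong\S^c(\LG^{\acc})$. This bookkeeping in the middle step is the only place that requires genuine care, although it is routine since every space involved is assembled from copies of $S^1$, whose (co)homology is the dual numbers.

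Finally, for $\L(\LG)\cong\gr(\pi_1(\As_{S^1}))\otimes\QQ$: since $\pi_1(\As_{S^1}(n))\cong\pi_1\big((S^1)^{n-1}\big)\cong\mathbb{Z}^{n-1}$ is abelian, its lower central series vanishes in weight $\geqslant 2$, so $\gr\big(\pi_1(\As_{S^1}(n))\big)\otimes\QQ\cong\QQ^{n-1}$ is the abelian Lie algebra on the $n-1$ edge generators; on the other side $\L(\LG(n))=Lie(V(n))\big/\big(V(n)^{\wedge 2}\big)\cong\QQ^{n-1}$ with $V(n)=\QQ^{n-1}$ is the same abelian Lie algebra. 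The partial compositions agree because $\pi_1$ of the topological $\circ_i$, read on abelianizations, is the very same coordinate reindexing as on $H_1$ above; equivalently, this is an instance of the Magnus operad of \cref{subsec:LiePointed}. As indicated, no step here presents a real obstacle beyond keeping track of the homological versus cohomological gradings, the sign conventions distinguishing $\odot$ from $\wedge$, and the degree shift $s^{-1}$.
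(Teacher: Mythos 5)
Your proposal is correct and follows exactly the route the paper intends: its proof of this proposition is literally a reference back to the proof of \cref{prop:HoloGraS}, and your argument is the faithful adaptation of that proof (trivial kernel of the cup product on the torus $(S^1)^{n-1}$, hence full space of skew-symmetric relations after Koszul duality; operadic compatibility via the $\mathrm{LGra}$ description of $H_\bullet(\As_{S^1})$ as in \cref{prop:HoBKW}; and the abelian Magnus Lie algebra $\mathbb{Z}^{n-1}$ on the other side). No gaps.
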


\begin{proof}
This proof is similar to the proof of  \cref{prop:HoloGraS}.
\end{proof}

Mimicking the above definition of the operad $\mathrm{Gra}$, we introduce a nonsymmetric operad 
$\mathrm{LGra}$ made up of sub-graphs of the complete linear graph $\Theta_n$ and with the insertion at vertex $k$ for partial composition product: 
\[
\vcenter{\xymatrix@M=4pt@R=16pt@C=16pt{
		*+[o][F-]{1} \ar@{-}[r]& 		*+[o][F-]{2}   & *+[o][F-]{3}\ar@{-}[r] &*+[o][F-]{4}}}
\ \ \circ_3 \ \ 
\vcenter{\xymatrix@M=4pt@R=16pt@C=16pt{
		*+[o][F-]{1} \ar@{-}[r]&  *+[o][F-]{2}   & *+[o][F-]{3}}}
\ \  = \ \ 
\vcenter{\xymatrix@M=4pt@R=16pt@C=16pt{
		*+[o][F-]{1} \ar@{-}[r]& 		*+[o][F-]{2}   & *+[o][F-]{3}\ar@{-}[r] &*+[o][F-]{4}
		& *+[o][F-]{5} \ar@{-}[r]& *+[o][F-]{6}}}
\]
This actually forms a cocommutative Hopf nonsymmetric operad with the coproduct 
 $\Delta(\gamma)\coloneqq \sum \gamma'\otimes \gamma''$ where the edges from $\gamma$ are distributed on $\gamma'$ and $\gamma''$. 

\begin{proposition}
The following three cocommutative Hopf operads are isomorphic 
\[H_\bullet\big(\As_{S^1}\big)\cong \S^c\big(\LG^{\emph{\acc}}\big) \cong \mathrm{LGra}\ .\]
\end{proposition}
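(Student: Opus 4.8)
The plan is to mirror the proof of \cref{prop:HoBKW} step by step, replacing the complete graph $\Gamma_n$ by the complete linear graph $\Theta_n$ and keeping track of the fact that everything is now non-symmetric. First I would invoke \cref{Cond:ConditionI} for the topological operad $\As_{S^1}$: since each $\As_{S^1}(n)=(S^1)^{n-1}$ is a product of circles, its cohomology algebra is an exterior algebra on the degree-one classes $\varepsilon^n_{ii+1}$, which is manifestly a finitely generated quadratic algebra with generators in $H^1$. Combined with the preceding proposition identifying $\g_{\As_{S^1}}\cong\L(\LG)$, \cref{prop:(Co)Ho(Co)Op} then yields the first isomorphism $H_\bullet(\As_{S^1})\cong\S^c(\LG^{\acc})$ of cocommutative Hopf (non-symmetric) operads for free.

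Next I would establish $H_\bullet(\As_{S^1})\cong\mathrm{LGra}$ directly. As a coalgebra, $H_\bullet((S^1)^{n-1})$ is the cofree cocommutative coalgebra on the $n-1$ degree-one classes dual to the $\varepsilon^n_{ii+1}$; a basis is given by squarefree monomials in these classes, which are in bijection with subgraphs of $\Theta_n$, matching the underlying collection of $\mathrm{LGra}$. For the operad structure one computes the map induced on homology by the topological partial composition $\circ_i:\As_{S^1}(n)\times\As_{S^1}(m)\to\As_{S^1}(n+m-1)$, which on the level of circle factors simply interleaves the coordinates $(y_1,\dots,y_{m-1})$ into the $i$-th slot of $(x_1,\dots,x_{n-1})$. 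Passing to $H_\bullet$ with the Künneth isomorphism, a generator $\varepsilon^n_{jj+1}$ with $j<i$ pulls back to itself (shifted), with $j\geqslant i$ to its shift by $m-1$, and the $y$-generators land in the block $i,\dots,i+m-1$; this is exactly the insertion-at-vertex-$k$ rule defining $\mathrm{LGra}$, and the coproducts match because the Künneth coproduct distributes edges. One should note that here — unlike in the symmetric $\mathrm{Gra}$ case, where inserting a graph forces a sum over reconnections of the severed edges — the linear graph $\Theta_n$ has no multiple edges between non-consecutive vertices, so the contraction term degenerates and no sum appears; this makes the present computation genuinely simpler than that of \cref{prop:HoBKW}.

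For the third isomorphism $\S^c(\LG^{\acc})\cong\mathrm{LGra}$ I would follow the explicit recipe in the proof of \cref{prop:HoBKW}. Writing $V(n)\coloneqq\QQ\{\e^n_{ii+1}\}$, the functor $\acc$ produces $(sV(n),0)$ and then $\S^c$ produces the cofree cocommutative coalgebra $S^c(sV(n))$, whose squarefree monomials $s\e^n_{i_1i_1+1}\odot\cdots\odot s\e^n_{i_ki_k+1}$ correspond to subgraphs of $\Theta_n$. The partial compositions $\tilde\circ_k$ of $\S^c(\LG^{\acc})$ are coalgebra morphisms, hence determined by their corestriction to $sV(n+m-1)$, which is read off from the composition maps of $\LG$ in $(\QD^-,\oplus)$ via the structural isomorphism $S^c(sV(n))\otimes S^c(sV(m))\cong S^c(s(V(n)\oplus V(m)))$ of \cref{Thm:ComDiag}; tracking the images of $\emptyset^n\otimes\emptyset^m$, $\gamma^n_{ii+1}\otimes\emptyset^m$ and $\emptyset^n\otimes\gamma^m_{jj+1}$ under $\tilde\circ_k$ and comparing with the insertion rule of $\mathrm{LGra}$ gives the identification. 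The main obstacle, such as it is, is purely bookkeeping: getting the index shifts in the three regimes ($k\leqslant i$ versus $k>i$ for the $\e^n$, and $j\mapsto j+k-1$ for the $\e^m$) to line up precisely with the relabelling convention used to define $\mathrm{LGra}$, and checking that the non-symmetric equivariance and unit axioms survive — all of which is routine once the dictionary between squarefree monomials and linear subgraphs is fixed. Hence I would simply write ``This proof is similar to the proof of \cref{prop:HoBKW}'' after spelling out the one structural simplification noted above.
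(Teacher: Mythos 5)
Your proposal is correct and follows exactly the route the paper intends: the paper's own proof consists of the single sentence that it is similar to the proof of \cref{prop:HoBKW}, and you have carried out that adaptation faithfully (Condition~I plus \cref{prop:(Co)Ho(Co)Op} for the first isomorphism, the K\"unneth computation for the second, and the cofree-coalgebra dictionary for the third). Your observation that the reconnection sum degenerates in the linear case is accurate and consistent with the composition formulas for $\LG$ and $\mathrm{LGra}$ given in the text.
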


\begin{proof}
This proof is similar to the proof of \cref{prop:HoBKW}. 
\end{proof}

Again, the operad $\mathrm{U}\L(\LG)=\A\Lambda(\LG)$ in associative algebras is similar to the operad $\mathrm{LGra}$ but made up of linear graphs with possibly multiple edges (of degree 0) between consecutive vertices; the (algebra) product of two such graphs amounts to consider the union of their sets of edges.  

\medskip 

A noncommutative  version of the notion of  Batalin--Vilkovisky algebras was introduced in \cite[Section~3]{DotsenkoShadrinVallette15}; it is modelled by the nonsymmetric topological operad $\As_{S^1}\rtimes \,S^1$. 
The associated skew--symmetric quadratic data is similar but with $n$ extra generators 
 $\e^n_i$, for $1\leqslant i\leqslant n$. 
 The same results hold true \emph{mutatis mutandis} by considering now linear graphs with possible tadpoles.
\[\rule{0pt}{20pt}
\vcenter{\xymatrix@M=4pt@R=16pt@C=16pt{
		*+[o][F-]{1}\ar@{-}[r] \ar@{-}@(ul,ur)[] &*+[o][F-]{2} \ar@{-}[r] \ar@{-}@(ul,ur)[] &*+[o][F-]{3} \ar@{-}[r]\ar@{-}@(ul,ur)[]&\cdots \ar@{-}[r]& *+[o][F-]{n} \ar@{-}@(ul,ur)[] }}
\]

\begin{remark}
The homology nonsymmetric operads $H_\bullet\big(\As_{S^1}\big)$ and $H_\bullet\big(\As_{S^1}\rtimes S^1\big)$ can also be twisted \`a la Willwacher to produce dg nonsymmetric operads. Their homology with respect to the twisted differential was computed in \cite[Section~6]{DotsenkoShadrinVallette18}. 
\end{remark}

\subsection{Drinfeld--Kohno and Arnold--Orlik--Solomon, i.e. $\mathcal{D}_2(n)\sim \mathrm{Conf}_n(\mathbb{C})$}\label{subsec:LDOp}
In this section, we refine the above mentioned Berger--Kontsevich--Willwacher skew--symmetric quadratic data following the works of Drinfeld \cite{Drinfeld90} and Kohno \cite{Kohno85}. We show that this refinement is canonical in a certain way. This theory corresponds to the topological operad $\mathcal{D}_2$, called the \emph{little disks operad}, which is made up of configurations of disks inside the unit disk. It is the mother of operads (the father being the endomorphism operad), which arose from the recognition of double loop spaces in \cite{BoardmanVogt73, May72}. Recall that the components of the little disks operad are homotopy equivalent to the configuration space of $n$ points  in the plane $\mathcal{D}_2(n)\sim \mathrm{Conf}_n(\mathbb{C})$. Notice that the little disks operad fails to be well pointed.

\begin{definition}[Drinfeld--Kohno skew--symmetric quadratic data]
The \emph{Drinfeld--Kohno skew--symmetric quadratic data} are spanned by 
\[
\mathrm{DK}(n)\coloneq\left(\t^n_{ij}\, , \, \t^n_{ij}\wedge \t^n_{kl} \ \&\  
\t^n_{ij}\wedge \big(\t^n_{ik}+\t^n_{jk}\big)
\right)\ ,
\]
where 
the set of generators $\t^n_{ij}$ of degree $0$ runs over the set of edges $ij$ of $\Gamma_n$, and where the first set of relations runs over pairs $(ij, kl)$ of disjoint edges and the second set of relations runs over triples of edges $(ij, jk, kl)$  which form a triangle in $\Gamma_n$. 
For $n=0$ and for $n=1$, we set $\mathrm{DK}(0)\coloneq(0, 0)$ and $\mathrm{DK}(1)\coloneq(0, 0)$. 
\end{definition}

We consider the same partial composition products as the ones for the Berger--Kontsevich--Willwacher quadratic data given in \cref{eq:CompoOp}. 

\begin{proposition}\label{prop:DKOp}
The Drinfeld--Kohno skew-symmetric quadratic data 
$\mathrm{DK}\coloneqq\big(\{\mathrm{DK(n)}\}, \{\circ_k\}\big)$
 forms an operad in the symmetric monoidal category \rm$(\QD^-, \oplus)$.
\end{proposition}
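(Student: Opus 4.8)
The plan is to verify that the proposed maps $\circ_k : \mathrm{DK}(n)\oplus \mathrm{DK}(m)\to \mathrm{DK}(n+m-1)$ are morphisms of skew-symmetric quadratic data and that the collection $\mathrm{DK}$ satisfies all the operad axioms in $(\QD^-, \oplus)$. The key new content compared to \cref{lem:KWDataOp} is that the spaces of relations are now \emph{proper} subspaces of $V(n)^{\wedge 2}$, so, unlike in the Berger--Kontsevich--Willwacher case, one genuinely has to check that the images of the relations land in the relations. First I would record the underlying map on generators precisely, recalling that a morphism in $(\QD^-,\oplus)$ is just a linear map $f : V(n)\oplus V(m)\to V(n+m-1)$ of $\Sy$-modules such that $f^{\wedge 2}$ carries $R(\mathrm{DK}(n))\oplus [V(n),V(m)]_-\oplus R(\mathrm{DK}(m))$ into $R(\mathrm{DK}(n+m-1))$.

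Next I would check the relation-preservation summand by summand. The mixed term $[V(n),V(m)]_-$ is spanned by $\t^n_{ij}\wedge \t^m_{kl}$: after applying the composition $\circ_k$ the first factor becomes a sum of edges supported on vertices coming from the ``outer'' graph $\Gamma_n$ and the second factor a sum of edges supported on the ``inner'' copy $\Gamma_m$ inserted at vertex $k$; since these two vertex sets are disjoint (the inner vertices are $k,\ldots,k+m-1$ and the outer edges appearing in the image of $\t^n_{ij}$ never have both endpoints among $k,\ldots,k+m-1$), every resulting wedge $\t\wedge\t'$ is a wedge of two disjoint edges, hence lies in the first family of Drinfeld--Kohno relations. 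For the summand $R(\mathrm{DK}(m))$ the map is essentially the inclusion $\t^m_{ij}\mapsto \t^{n+m-1}_{i+k-1\,j+k-1}$ up to relabelling, which is an isomorphism onto a sub-complete-graph and so trivially preserves both the disjoint-pair and the triangle relations. The real work is the summand $R(\mathrm{DK}(n))$: one must verify that for disjoint edges $(ij,pq)$ of $\Gamma_n$ the image of $\t^n_{ij}\wedge \t^n_{pq}$ lands in $R(\mathrm{DK}(n+m-1))$, and similarly for a triangle $(ij,jk,ik)$. This requires a case analysis on the position of the insertion vertex relative to the endpoints (the five cases $k<i,j$; $k=i$; $i<k<j$; $k=j$; $i,j<k$ in \cref{eq:CompoOp}), using the fact that when $k$ equals an endpoint, $\t^n_{ij}$ is replaced by a \emph{telescoping} sum $\t^{n+m-1}_{i\,j+m-1}+\cdots+\t^{n+m-1}_{i+m-1\,j+m-1}$ of edges forming a path, together with the identity $\t_{ab}\wedge(\t_{ac}+\t_{bc}) \equiv 0$ which allows one to ``slide'' one endpoint of an edge along a triangle modulo the relations. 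The main obstacle is precisely this bookkeeping: checking that a wedge of two telescoping sums, or a telescoping sum wedged against another edge sharing a vertex, reduces to the two allowed families — this is where the Drinfeld--Kohno infinitesimal braid relations are used in an essential way, and it is the step I expect to be most delicate.

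Finally, for the operad axioms themselves (sequential, parallel, unit, equivariance from the definition in \cref{subsec:BQO}), I would observe that on the level of the underlying $\Sy$-modules $V(n) = \QQ\{\t^n_{ij}\}$ these are identical to the verifications already made for $\mathrm{BKW}$ in \cref{lem:KWDataOp} and for the little disks / infinitesimal braids operad in the classical literature (e.g.\ the Drinfeld--Kohno Lie algebra operad), since the formulas \cref{eq:CompoOp} are unchanged; the only thing the relation spaces affect is well-definedness as maps of quadratic data, which is handled above. Indeed once one knows each $\circ_k$ is a morphism in $(\QD^-,\oplus)$ and that the formulas agree with those defining the (well-known) operad structure on the associated Lie algebras $\L(\mathrm{DK})$, the commutativity of the sequential, parallel, unital, and equivariance diagrams is inherited — one can even argue it abstractly by noting $\L : (\QD^-,\oplus)\to(\Lie,\oplus)$ is symmetric monoidal and faithful enough on generators to detect the axioms, so it suffices that $\L(\mathrm{DK})$ be an operad, which is the Drinfeld--Kohno result. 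I would therefore present the proof as: (1) fix notation for the maps; (2) check relation-preservation summand-by-summand, spending the bulk of the argument on the $R(\mathrm{DK}(n))$ summand via the five-case analysis and the triangle-sliding identity; (3) dispatch the operad axioms by reduction to \cref{lem:KWDataOp} / the classical infinitesimal braid operad.
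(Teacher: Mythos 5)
Your overall architecture agrees with the paper's: the operad axioms (sequential, parallel, unital, equivariance) are inherited from \cref{lem:KWDataOp} since the formulas \eqref{eq:CompoOp} are unchanged, and the only new content is that $(\circ_p)^{\wedge 2}$ carries $R(n)\oplus[V(n),V(m)]_-\oplus R(m)$ into $R(n+m-1)$, checked summand by summand. However, your analysis of the mixed summand $[V(n),V(m)]_-$ contains a genuine error, and it occurs exactly where the Drinfeld--Kohno triangle relations are indispensable. You claim that every wedge appearing in the image of $\t^n_{ij}\wedge\t^m_{kl}$ is a wedge of two \emph{disjoint} edges because no outer edge has both endpoints in the inserted block. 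Disjointness requires more than that: it requires the outer edge to share \emph{no} vertex with the inner edge. When the insertion vertex $p$ equals an endpoint of $ij$, say $p=i$, the image of $\t^n_{ij}$ is the telescoping sum $\sum_{a=0}^{m-1}\t^{n+m-1}_{i+a,\,j+m-1}$, and the two terms with $i+a\in\{k+i-1,\,l+i-1\}$ each share a vertex with the inner edge $\t^{n+m-1}_{k+i-1,\,l+i-1}$. Their contribution
\[
\bigl(\t^{n+m-1}_{k+i-1,\,j+m-1}+\t^{n+m-1}_{l+i-1,\,j+m-1}\bigr)\wedge \t^{n+m-1}_{k+i-1,\,l+i-1}
\]
is precisely a relation of the \emph{second} (triangle) type, not the first. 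So the mixed summand lands in a sum of first- \emph{and} second-type relations, as the paper states; if your claim were correct, the quadratic data with only the disjoint-pair relations would already be closed under $\circ_p$, contradicting \cref{thm:UnivCharDK}, whose proof exhibits $(\circ_1)^{\wedge 2}\bigl(\t^{n-1}_{12}\wedge\t^2_{12}\bigr)=\bigl(\t^n_{13}+\t^n_{23}\bigr)\wedge\t^n_{12}$.

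Relatedly, you have inverted where the work lies. For the summand $R(\mathrm{DK}(n))$ with a first-type relation $\t^n_{ij}\wedge\t^n_{pq}$ ($\{i,j\}\cap\{p,q\}=\emptyset$), even when the insertion vertex is an endpoint of one edge, every term of the telescoping sum remains disjoint from the (single) image of the other edge, so the image is a sum of first-type relations with no sliding needed; and a second-type relation on a triangle, after insertion at one of its vertices, decomposes directly into off-diagonal disjoint pairs plus diagonal triangles. Once you repair the mixed-term computation as above, the proof is complete and coincides with the paper's.
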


\begin{proof}
After the proof of \cref{lem:KWDataOp}, the only thing left to check is that the various maps $\circ_k$ induce morphisms of quadratic data, that is 
\[
(\circ_k)^{\wedge 2}\big(
R(n)\oplus [V(n), V(m)]_-\oplus 
R(m)
\big)\subset 
R(n+m-1)\ ,
\]
where we use the notation $\mathrm{DK}(n)=(V(n), R(n))$.
This can be proved by straightforward but tedious computations. It becomes much easier with the previous interpretation in terms of graph operad: one can see that any first (respectively second) type relation in $R(n)$ or $R(m)$ is sent to any first (respectively second) type relation in $R(n+m-1)$, that is pairs of disjoints edges (respectively graphs whose edges form a triangle). Regarding the relation $[V(n), V(m)]_-$, any of its elements $\t^n_{ij}\wedge \t^m_{kl}$ is sent, under $(\circ_k)^{\wedge 2}$, to a sum of relations of first and second type. \\
\end{proof}

The canonical morphisms of quadratic data $\mathrm{DK}(n)\to \mathrm{BKW}(n)$ induce a canonical morphism of operads $\mathrm{DK}\to \mathrm{BKW}$ in $\QD^-$. More generally, we call \emph{sub-operad of $\mathrm{BKW}$} any collection of skew-symmetric quadratic sub-data $\left(V(n)\, , \, R(n)\right)\subset \mathrm{BKW}(n)$ stable under the partial composition products $\circ_k$, where $V(n)$ is generated by the set of edges $\t^n_{ij}$.
 As usual, the intersection of all such sub-operads, explicitly given by the intersection of all the spaces of relations $R(n)$ for a fixed $n$ each time, produces the smallest sub-operad of  $\mathrm{BKW}$. 
The following statement provides us with a universal operadic characterisation of the  Drinfeld--Kohno quadratic data.

\begin{theorem}\label{thm:UnivCharDK}
 The operad $\mathrm{DK}$ is the smallest sub-operad of $\mathrm{BKW}$.
\end{theorem}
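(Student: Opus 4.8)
The plan is to show two inclusions. Since $\mathrm{DK}$ is, by definition, a sub-operad of $\mathrm{BKW}$ (\cref{prop:DKOp} together with the canonical maps $\mathrm{DK}(n)\to\mathrm{BKW}(n)$), it suffices to prove that $\mathrm{DK}$ is contained in every sub-operad $\mathcal{P}$ of $\mathrm{BKW}$. Recall that a sub-operad of $\mathrm{BKW}$ is a collection of skew-symmetric quadratic data $\left(V(n),R(n)\right)\subset\mathrm{BKW}(n)$ with $V(n)=\QQ\{\t^n_{ij}\}$ the full space of edge generators and with $R(n)\subset V(n)^{\wedge 2}$ stable under the $\circ_k$; since the generating spaces are forced, what varies is only the spaces of relations, and "$\mathrm{DK}\subset\mathcal{P}$" means $R(\mathrm{DK})(n)\subset R(\mathcal{P})(n)$ for all $n$. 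So the real content is: the $\circ_k$-stability forces every sub-operad to contain all the Drinfeld--Kohno relations, namely the "disjoint pairs" $\t^n_{ij}\wedge\t^n_{kl}$ (with $\{i,j\}\cap\{k,l\}=\emptyset$) and the "triangle" relations $\t^n_{ij}\wedge(\t^n_{ik}+\t^n_{jk})$.

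First I would reduce to small arities. Because the generating $\Sy_n$-module is the whole edge space, and because $\circ_1$-type insertions let us "blow up" a vertex into the complete graph on more vertices while tracking edges via the formulas \eqref{eq:CompoOp}, every relation in arity $n$ is, up to the symmetric group action, the image under iterated $\circ_k$ of a relation living in arity $3$ or arity $4$. Concretely: a disjoint pair of edges needs $4$ vertices, while a triangle needs $3$; both $\mathrm{DK}(3)$ and $\mathrm{DK}(4)$ are finite-dimensional and their relation spaces are small and explicit. Conversely, the maps $\circ_k$ send disjoint-pair relations to sums of disjoint-pair relations and triangle relations to sums of triangle relations (this is exactly the graph-operad bookkeeping used in the proof of \cref{prop:DKOp}), so once $R(\mathcal{P})(3)$ and $R(\mathcal{P})(4)$ contain the Drinfeld--Kohno relations, stability propagates the containment to all arities. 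Thus the whole statement reduces to two base-case computations.

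Next I would do the base cases. In arity $3$: since $\mathcal{P}$ is a sub-operad, $R(\mathcal{P})(3)$ is non-zero and closed under $\Sy_3$, and one checks directly that the $\Sy_3$-orbit of any nonzero element of $V(3)^{\wedge 2}$ that is the image of a composite $\circ_k$ coming from arity $2$ already contains a triangle relation $\t^3_{12}\wedge(\t^3_{13}+\t^3_{23})$; equivalently, one computes the images under \eqref{eq:CompoOp} of the (forced) arity-$2$ generators composed into each other and observes that the unique nontrivial relation these compositions produce is the triangle. Indeed $\mathrm{BKW}(2)=(0,0)$ is the unit for $\oplus$, so the only way to generate relations in arity $3$ is operadically from arity $3$ itself, which pins down precisely the $\Sy_3$-module $\QQ\langle\t^3_{12}\wedge(\t^3_{13}+\t^3_{23}),\ \ldots\rangle$ — that is, $R(\mathrm{DK})(3)$. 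In arity $4$ one similarly computes $\circ_k : \mathrm{BKW}(3)\oplus\mathrm{BKW}(2)\to\mathrm{BKW}(4)$ and $\circ_k : \mathrm{BKW}(2)\oplus\mathrm{BKW}(3)\to\mathrm{BKW}(4)$ on the triangle relations already forced in arity $3$: applying \eqref{eq:CompoOp}, a triangle in $\mathrm{DK}(3)$ composed with the trivial arity-$2$ data produces, after expanding the sums $\t^{n+m-1}_{i\,j}+\cdots+\t^{n+m-1}_{i\,j+m-1}$ appearing in the $k=i$ and $k=j$ cases, a combination containing a disjoint-pair relation $\t^4_{ij}\wedge\t^4_{kl}$; together with the $\Sy_4$-action this forces all disjoint-pair relations into $R(\mathcal{P})(4)$. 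Hence $R(\mathrm{DK})(n)\subset R(\mathcal{P})(n)$ for $n\leqslant 4$, and by the propagation argument for all $n$.

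The main obstacle is the arity-$4$ bookkeeping in the previous paragraph: one must verify that composing a forced triangle relation through the branching formulas \eqref{eq:CompoOp} genuinely produces a disjoint-pair relation and not merely more triangle relations (otherwise the smallest sub-operad would be strictly smaller than $\mathrm{DK}$). This is where the explicit "sum over inserted vertices" in \eqref{eq:CompoOp} is essential: the cross-terms $\t^4_{i\,a}\wedge\t^4_{j\,b}$ with $a\neq b$ both among the newly created vertices are exactly the disjoint pairs, and one has to check they survive the skew-symmetrisation rather than cancelling. I expect this to be a short but careful computation, best organised by picturing the complete graph $\Gamma_3$ with one vertex blown up into $\Gamma_2$ and reading off which edges land where; the graph-operadic picture from \cref{subsec:BKW} makes the cancellation/survival pattern transparent. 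Once that single computation is pinned down, the rest is the formal reduction already sketched, and the theorem follows.
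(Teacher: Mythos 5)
Your overall strategy (show that every sub-operad of $\mathrm{BKW}$ must contain the Drinfeld--Kohno relations) is the right one, but you are missing the mechanism that actually forces those relations, and this leaves a real gap. The point is not that relations propagate by composing relations with relations; it is that the monoidal product in $(\QD^-,\oplus)$ \emph{automatically} inserts the cross-terms $[V(n),V(m)]_-$ into the relation space of $(V(n),S(n))\oplus(V(m),S(m))$, whatever $S(n)$ and $S(m)$ are. Since the partial compositions of a sub-operad must be morphisms of quadratic data, $(\circ_p)^{\wedge 2}$ must send these unavoidable cross-terms into $S(n+m-1)$. Taking $m=2$ — and note that $\mathrm{BKW}(2)=(\QQ\,\t^2_{12},\,0)$, \emph{not} $(0,0)$; the latter is $\mathrm{BKW}(1)$, the $\oplus$-unit, so your claim that ``the only way to generate relations in arity $3$ is operadically from arity $3$ itself'' is false and would in fact permit the empty relation space in arity $3$ — one computes directly $(\circ_3)^{\wedge 2}\big(\t^{n-1}_{12}\wedge\t^2_{12}\big)=\t^n_{12}\wedge\t^n_{34}$ and $(\circ_1)^{\wedge 2}\big(\t^{n-1}_{12}\wedge\t^2_{12}\big)=\big(\t^n_{13}+\t^n_{23}\big)\wedge\t^n_{12}$, which together with the $\Sy_n$-action yields every disjoint-pair and every triangle relation in every arity in one stroke. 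No induction on arity and no base cases in arities $3$ and $4$ are needed.

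Your substitute route — composing the already-forced triangle relations of arity $3$ up into arity $4$ — is exactly where the gap sits. Computing $(\circ_1)^{\wedge 2}$ on the triangle $\t^3_{12}\wedge(\t^3_{13}+\t^3_{23})$ gives $\big(\t^4_{13}+\t^4_{23}\big)\wedge\big(\t^4_{14}+\t^4_{24}+\t^4_{34}\big)$, i.e.\ a sum of two triangle relations \emph{plus} $\t^4_{13}\wedge\t^4_{24}+\t^4_{23}\wedge\t^4_{14}$; what is forced is only that this whole combination lies in $S(4)$, and ``containing a disjoint-pair relation'' does not by itself put any individual disjoint pair in $S(4)$. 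One could finish along these lines (first force the arity-$4$ triangles, subtract them, then use the $\Sy_4$-action and $\mathrm{char}\,K\neq 2$ to separate the three disjoint pairs out of the resulting sums and differences), but that is precisely the ``careful computation'' you defer rather than carry out — and it is rendered unnecessary by the cross-term argument above.
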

 
\begin{proof}
Let us continue to use the notation $\mathrm{DK}(n)=(V(n), R(n))$ and let us consider a sub-operad $\mathrm{P}(n)\coloneqq (V(n), S(n))\subset \mathrm{BKW}(n)$ of $\mathrm{BKW}$. We have to show that $R(n)\subset S(n)$ and this follows from the fact that the partial composition products 
$\circ_p : \mathrm{P}(n)\oplus \mathrm{P}(m) \to \mathrm{P}(n+m-1)$ sends 
$[V(n), V(m)]_-$ to $S(n+m-1)$ under $(\circ_p)^{\wedge 2}$. 
We begin with the relations of first type: $\t^n_{ij}\wedge \t^n_{kl}$. 
Using the action of the symmetric group, we can assume,  without any loss of generality, that $(i, j,k, l)=(1,2,3,4)$ and we conclude with  \[
(\circ_3)^{\wedge 2}\big(\t_{12}^{n-1}\wedge \t_{12}^2\big)=\t_{12}^n\wedge \t_{34}^n\ .
\]

 We treat now the relations of seconde type: $\t^n_{ij}\wedge \big(\t^n_{ik}+\t^n_{jk}\big)$. Using again the action of the symmetric group, the proof reduces to the case $(i, j,k)=(1,2,3)$, which is given by 
\[
(\circ_1)^{\wedge 2}\big(\t_{12}^{n-1}\wedge \t_{12}^2\big)=\big(\t_{13}^n+\t_{23}^n\big)\wedge \t_{12}^n\ .
\]
\end{proof} 
 
In the lattice of operads made up of skew-symmetric data with generators $\t_{ij}^n$ and partial composition products $\circ_k$, the Berger--Kontsevich--Willwacher operad $\mathrm{BKW}$ is the maximal element and the Drinfeld--Kohno operad $\mathrm{DK}$ is the minimal element. 

\begin{proposition}\label{prop:HoloD2}
The holonomy operad associated to $\mathcal{D}_2$ is isomorphic to the Lie operad associated to skew-symmetric quadratic data $\mathrm{DK}$: 
\[\g_{\mathcal{D}_2}\cong \L(\mathrm{DK})\ .\]
\end{proposition}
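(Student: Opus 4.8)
The plan is to follow the same two-step pattern used for \cref{prop:HoloGraS}: first prove the isomorphism $\g_{\mathcal{D}_2(n)}\cong\L(\mathrm{DK}(n))$ arity by arity, between the individual holonomy Lie algebras of the configuration spaces and the Drinfeld--Kohno Lie algebras, and then upgrade this to an isomorphism of operads by checking compatibility with the partial composition maps. Along the way one first notes that, although the little disks operad fails to be well pointed, the holonomy operad construction applies verbatim: it only uses the (co)homology Hopf (co)operads $H_\bullet(\mathcal{D}_2)$ and $H^\bullet(\mathcal{D}_2)$ together with the path-connectedness of the components $\mathcal{D}_2(n)\sim\mathrm{Conf}_n(\mathbb{C})$, not the pointed structure, so \cref{prop:(Co)Ho(Co)Op} is available.

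For the arity-wise statement I would invoke the classical computation of the cohomology ring of $\mathrm{Conf}_n(\mathbb{C})$ due to Arnold, in the form of the Orlik--Solomon presentation: $H^\bullet(\mathcal{D}_2(n))$ is the quadratic algebra on the Arnold classes $\omega^n_{ij}=\omega^n_{ji}$, $1\leqslant i\neq j\leqslant n$, placed in cohomological degree $1$ (hence homological degree $-1$), modulo $(\omega^n_{ij})^2=0$ and the Arnold relations $\omega^n_{ij}\omega^n_{jk}+\omega^n_{jk}\omega^n_{ki}+\omega^n_{ki}\omega^n_{ij}=0$. In particular \cref{Cond:ConditionI} holds. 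Under the identification $H^1(\mathcal{D}_2(n))^{\odot 2}\cong\Lambda^2 H^1(\mathcal{D}_2(n))$, the kernel of the cup product $\cup:H^1(\mathcal{D}_2(n))^{\odot 2}\to H^2(\mathcal{D}_2(n))$ is spanned exactly by the $\binom{n}{3}$ Arnold relations. Transporting its annihilator $\mathrm{im}\,\Delta$ through the shift functor $\acc$ and the identification $\t^n_{ij}\cong s^{-1}(\omega^n_{ij})^*$, a short orthogonality check (followed by the dimension count $\dim R(\mathrm{DK}(n))=\dim H^2(\mathcal{D}_2(n))$) shows that $s^{-2}\mathrm{im}\,\Delta$ is precisely the space of relations of $\mathrm{DK}(n)$: the disjoint-edge relations $\t^n_{ij}\wedge\t^n_{kl}$ pair trivially with every Arnold relation, while each ``infinitesimal braid'' relation $\t^n_{ij}\wedge(\t^n_{ik}+\t^n_{jk})$ is annihilated against the one relevant triangle relation. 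Hence $\g_{\mathcal{D}_2(n)}=\L(s^{-1}H_1(\mathcal{D}_2(n)),s^{-2}\mathrm{im}\,\Delta)\cong\L(\mathrm{DK}(n))$, recovering Kohno's presentation of the holonomy Lie algebra of $\mathrm{Conf}_n(\mathbb{C})$ \cite{Kohno85, Drinfeld90}; applying \cref{prop:(Co)Ho(Co)Op} one moreover identifies $H_\bullet(\mathcal{D}_2)\cong\S^c(\mathrm{DK}^{\acc})$, the Gerstenhaber operad.

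For the operadic compatibility, the key observation is that a morphism in $\QD^-$ is determined by its underlying map of generating spaces, so it suffices to match the maps $H_1(\mathcal{D}_2(n))\oplus H_1(\mathcal{D}_2(m))\to H_1(\mathcal{D}_2(n+m-1))$ induced by the operadic insertions $\circ_k$ with the maps of \eqref{eq:CompoOp} read on the generators $\t^n_{ij}$. Dually this amounts to computing the pullback of the Arnold form $\omega^{n+m-1}_{ij}$ along the insertion of a small $m$-point cluster at the $k$-th point of an $n$-point configuration: when $k$ lies strictly before, between, or after the indices $i,j$ the form is merely relabelled, whereas when $k$ coincides with $i$ or with $j$ the cluster is ``seen from afar'' as a single point and the form pulls back to the sum of the corresponding forms over the $m$ points of the cluster --- which is exactly the branching in \eqref{eq:CompoOp}, and is a standard fact for the Drinfeld--Kohno Lie algebra operad \cite{Kohno85, Drinfeld90}. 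Alternatively, one can bypass this calculation: via the $E_2$-suboperad $E_2\hookrightarrow\GraS$ of Berger's recognition (see the remark after \cref{prop:HoBKW}), which induces an isomorphism on $H_1$, the holonomy operad $\g_{\mathcal{D}_2}$ is realised --- on the edge generators $\t^n_{ij}$ --- as a sub-operad of $\mathrm{BKW}$ in the sense used in \cref{thm:UnivCharDK}; it therefore contains $\mathrm{DK}$, and by the arity-wise equality of relation spaces just established it coincides with $\mathrm{DK}$, the partial compositions of a sub-operad being forced by the inclusion.

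The genuinely delicate point is the operadic step: one must make sure that the operad structure produced by the composite construction $H_\bullet(\mathcal{D}_2)\rightsquigarrow(H_1,\mathrm{im}\,\Delta)\in(\QD^+,\uot)\xrightarrow{\acc}(\QD^-,\oplus)\xrightarrow{\L}(\Lie,\oplus)$ is literally given by \eqref{eq:CompoOp}, and in particular that the sign and degree conventions fixed in \cref{subsec:DefQD} and \cref{subsec:MonoStruc} are compatible with the usual normalisation of the Arnold forms; the arity-wise identification itself is entirely classical. The cleanest write-up is likely the second route above, since it reduces the whole operadic content to \cref{thm:UnivCharDK} together with the comparison of $\mathcal{D}_2$ with the $E_2$-suboperad of $\GraS$.
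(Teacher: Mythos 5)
Your proposal is correct, and its first half---the arity-wise identification of the holonomy quadratic data of $\mathrm{Conf}_n(\mathbb{C})$ with $\mathrm{DK}(n)$ via Arnold's presentation $H^\bullet(\mathcal{D}_2(n))\cong\S(\mathrm{AOS}(n))$, together with the orthogonality check and dimension count amounting to $\mathrm{DK}^!\cong\mathrm{AOS}$---is exactly what the paper does. Where you diverge is the operadic-compatibility step. The paper computes the induced partial compositions on $H_1$ purely algebraically, through Cohen's isomorphism of cocommutative Hopf operads $H_\bullet(\mathcal{D}_2)\cong\mathrm{Gerst}$: the class $w^n_{ij}$ is identified with a two-vertex tree, the counit $\mathbb{1}^n$ with a corolla, and the composite $w^n_{ij}\circ_i\mathbb{1}^m$ is expanded by iterating the Leibniz relation, which yields precisely the branching of \eqref{eq:CompoOp}. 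Your primary route instead pulls back the Arnold $1$-forms along the insertion of a small cluster; this is the geometric shadow of the same computation (strictly speaking each form $\omega^{n+m-1}_{aj}$ with $a$ in the cluster pulls back to the single form $\omega^n_{ij}$, and it is the dual homology class that pushes forward to the sum), and it requires the equivalences $\mathcal{D}_2(n)\simeq\mathrm{Conf}_n(\mathbb{C})$ to be compatible with the insertions, e.g.\ via the Fulton--MacPherson/Kontsevich model; what it buys is independence from Cohen's explicit description of $H_\bullet(\mathcal{D}_2)$. Your alternative route through \cref{thm:UnivCharDK} is appealing but does not really save work: to realise $\g_{\mathcal{D}_2}$ as a sub-operad of $\mathrm{BKW}$ in the sense of that theorem you must already know that the zigzag $\mathcal{D}_2\xleftarrow{\ \sim\ }E_2\hookrightarrow\GraS$ identifies the generators of $H_1$ with the edge classes $\t^n_{ij}$ and hence that the induced compositions are those of \eqref{eq:CompoOp}, which is essentially the same comparison of Arnold classes again; so it is better presented as a corollary of the first route than as a substitute for it.
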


We will prove it below after the study of the (co)homology Hopf (co)operad. 
Since the little disks operad is not well pointed, we cannot consider directly a Lie operad of Magnus type here. 
Instead, one can consider the pointed topological operad $\mathcal{K}_2$ introduced by M. Kontsevich in \cite{Kontsevich99}, see also \cite{Sinha06}, since this latter one is homotopy equivalent to the little disks operad. Notice that both operads, $\mathcal{D}_2$ \cite{LambrechtsVolic14} and $\mathcal{K}_2$ \cite{PAST18},  are formal; they are even intrinsically Hopf formal by \cite{FresseWillwacher15}. 

\begin{remark}
Even if the little disks operad fails to be well pointed, its components  $\mathcal{D}_2(n)\sim \mathrm{Conf}_n(\mathbb{C})$ are path connected with fundamental groups isomorphic to the pure braid groups  $\pi_1\big(\mathrm{Conf}_n\allowbreak(\mathbb{C})\big)\cong \mathrm{PB}_n$. They are also  (rationally) formal by \cite{Arnold69}. So the Drinfeld--Kohno Lie algebras, as the holonomy Lie algebras of $\mathrm{Conf}_n(\mathbb{C})$ are the Lie algebras of infinitesimal braids
\[
\L(\mathrm{DK}(n))=\frac{Lie \big(\t^n_{ij}\big)}{\left(\big[\t^n_{ij}, \t^n_{kl}\big], \  \big[\t^n_{ij}, \t^n_{ik}+\t^n_{jk}\big]\right)}\cong \gr({\mathrm{PB}}_n)\otimes \QQ\ .
\] 
\end{remark}

\begin{remark}
 The fact that the little disks operad fails to be well pointed should be seen as a richness. 
Instead of considering the fundamental group $\pi_1$ of a pointed topological space, one can consider 
the fundamental groupoid $\Pi_1$. This latter functor $\Pi_1 : ({\textbf{Top}}, \times) \to ({\textbf{Grp}},\times)$ is cartesian and thus sends topological operads to operads in groupoids. The operad in groupoids $\Pi_1(\mathcal{D}_2)$ is equivalent to the operad in groupoids which encodes braided monoidal categories. Refining this operad with various "choices of base points" gives rise to various operads in groupoids and the  morphisms between them define the notion of Drinfeld's associators and Grothendieck--Teichm\"uller group(s), 
see \cite{Fresse17} for more details. 
\end{remark}


The \emph{operad of chord diagrams} is the operad 
\[\mathrm{U}\L(\mathrm{DK})=\A\Lambda(\mathrm{DK})\]
 made up of  the  associative algebras of chord diagrams
\[
\mathrm{U}\L(\mathrm{DK}(n))=\frac{T\big(\t^n_{ij}\big)}{\left(\big[\t^n_{ij}, \t^n_{kl}\big], \  \big[\t^n_{ij}, \big(\t^n_{ik}+\t^n_{jk}\big)\big]\right)}
\ .\]
The name comes from the following pictorial way to represent its elements: 
\[
\t^5_{25}\t^5_{13}\t^5_{34}\t^5_{24}\,=\,\vcenter{\xymatrix@M=0pt@R=8pt@C=12pt{
		*{1} & *{2} & *{3} & *{4} & *{5} \\
		\ar@{-}[ddddd]&\ar@{-}[ddddd]&\ar@{-}[ddddd]&\ar@{-}[ddddd]&\ar@{-}[ddddd]\\
		&*{\bullet}\ar@{-}[rr]&&*{\bullet}&\\
		&&*{\bullet}\ar@{-}[r]&*{\bullet}&\\
		*{\bullet}\ar@{-}[rr]&&*{\bullet}&&\\
		&*{\bullet}\ar@{-}[rrr]&&&*{\bullet}\\
		&&&&
		}}\ \ .
\]

It plays a seminal role in the theory of Drinfled's associators \cite{Drinfeld90}, Grothendieck--Teichm\"uller group(s) \cite{Fresse17},  the formality of the little discs operad \cite{Tamarkin03, SeveralWillwacher11, FresseWillwacher15} and Vassiliev knot invariants \cite{BarNatan95}.

\medskip 

\begin{definition}[Arnold--Orlik--Solomon symmetric quadratic data]
The \emph{Arnold--Orlik--Solomon symmetric quadratic data} are spanned by 
\[
\mathrm{AOS}(n)\coloneq\left(\omega^n_{ij}\, ,\, 
\omega^n_{ij} \odot \omega^n_{jk} + \omega^n_{jk} \odot \omega^n_{ki} +\omega^n_{ki} \odot \omega^n_{ij}
\right)\ ,
\]
where 
the set of generators $\omega^n_{ij}$ of degree $-1$ runs over the set of edges $ij$ of $\Gamma_n$, and where  the  set of relations runs over increasing triples $i<j<k$.  
For $n=0$ and for $n=1$, we set $\mathrm{AOS}(0)\coloneq(0, 0)$ and $\mathrm{AOS}(1)\coloneq(0, 0)$. 
\end{definition}

Arnold proved in \cite{Arnold69} that the Orlik-Solomon algebras
\[
\S\big(\mathrm{AOS}(n)\big)=
\frac{S\big(\omega^n_{ij}\big)}
{\left(
\omega^n_{ij} \odot \omega^n_{jk} + \omega^n_{jk} \odot \omega^n_{ki} +\omega^n_{ki} \odot \omega^n_{ij} 
\right)}
\cong H^{\bullet}\left(\mathcal{D}_2(n)\right)
\]
compute the cohomology algebras of the configuration spaces of points in the plane. One can see by a direct computation that 
\[\mathrm{DK}^!\cong\mathrm{AOS}\ . \]
Equivalently, this means that $\mathrm{DK}^\acc\cong\mathrm{AOS}^*$, which provides us with the following presentation of the cocommutative coalgebras underlying the homology operad 
\[
\Sc\big(\mathrm{AOS}^*(n)\big)= \Sc\left(w^n_{ij}, w^n_{ij}\odot w^n_{kl} \ \&\  w^n_{ij}\odot \big(w^n_{ik}+w^n_{jk}\big)\right)
\cong H_\bullet(\mathcal{D}_2(n))\ , 
\] 
where $w^n_{ij}=\left(\omega^n_{ij}\right)^*=s\t^n_{ij}$ has degree $1$. The presentation of the homology operad was given by F.R. Cohen in \cite{Cohen76}: it is shown to be isomorphic to the the operad encoding Gerstenhaber algebras
 $H_\bullet(\mathcal{D}_2)\cong \mathrm{Gerst}$,  see \cite[Section~13.3]{LodayVallette12}. 

\begin{proof}[Proof of \cref{prop:HoloD2}]
We go back to the definition and we follow the same kind of arguments as in the proof of \cref{prop:HoBKW}. 
If we denote the operadic structure maps of $\mathrm{DK}$ by $\circ_k$, the ones of
the homology operad $H_\bullet(\mathcal{D}_2)$ by $\tilde{\circ}_k$,  and the counits of the homology coalgebras by $\mathbb{1}^n\in H_\bullet(\mathcal{D}_2(n))$, we have the following commutative diagram
\[\xymatrix@C=30pt@R=30pt{
H_1(\mathcal{D}_2(n))\oplus H_1(\mathcal{D}_2(m)) \ar[r]^{\circ_k}\ar[d]^{\cong}& H_1(\mathcal{D}_2(n+m-1))\ \, \ar@{=}[d]\\
H_1(\mathcal{D}_2(n))\ot \QQ\mathbb{1}^m\,  \oplus\,  \QQ\mathbb{1}^n \ot H_1(\mathcal{D}_2(m)) \ar[r]^(0.63){\tilde{\circ}_k}& H_1(\mathcal{D}_2(n+m-1))\ .
}
\]
The isomorphism of operads  $H_\bullet(\mathcal{D}_2)\cong \mathrm{Gerst}$ of \cite{Cohen76}, see also the survey \cite{Sinha13}, identifies the following elements 
\[
w^n_{ij}\longleftrightarrow 
\vcenter{\hbox{\begin{tikzpicture}[yscale=0.5,xscale=0.5]
\draw (0,-1) -- (0,2) -- (1,3);
\draw (0,2) -- (-1, 3);
\draw (0,0) -- (-1, 1);
\draw (0,0) -- (-2, 1);
\draw (0,0) -- (1, 1);
\draw (0,0) -- (2, 1);
\node at (-1,3.4) {\scalebox{0.8}{$i$}};
\node at (1,3.4) {\scalebox{0.8}{$j$}};
\node at (-2,1.4) {\scalebox{0.8}{$1$}};
\node at (2,1.4) {\scalebox{0.8}{$n$}};
\node at (1,1.4) {\scalebox{0.8}{$\cdots$}};
\node at (-1,1.4) {\scalebox{0.8}{$\cdots$}};
\node at (0,2) {$\bullet$};
\end{tikzpicture}}}
\qquad \text{and} \qquad 
\mathbb{1}^n \longleftrightarrow 
\vcenter{\hbox{\begin{tikzpicture}[yscale=0.5,xscale=0.5]
\draw (0,-1) -- (0,1) ;
\draw (0,0) -- (-1, 1);
\draw (0,0) -- (-2, 1);
\draw (0,0) -- (1, 1);
\draw (0,0) -- (2, 1);
\node at (-2,1.4) {\scalebox{0.8}{$1$}};
\node at (2,1.4) {\scalebox{0.8}{$n$}};
\node at (0,1.4) {\scalebox{0.8}{$\cdots$}};
\end{tikzpicture}}}
\ ,
\]
where $\bullet$ denotes the shifted Lie bracket and where  the bottom corollas denote the iterations of the commutative product. Under this correspondence, the operad structure on $\mathrm{Gerst}$ produces the 
formul\ae\ given in \cref{eq:CompoOp}. Let us illustrate this on the less trivial case: the partial composite 
$w^n_{ij} \circ_i \mathbb{1}^m$ amounts to graft the above right-hand side corolla with $m$ leaves at the  input $i$ of the left-hand side corolla. Using iteratively the Leibniz relation, one rewrites this 3-vertices trees into a sum of 2-vertices trees, which correspond to 
\[w^n_{ij} \circ_i \mathbb{1}^m =w^{n+m-1}_{i\, j+m-1}+w^{n+m-1}_{i+1\, j+m-1}+\cdots+ w^{n+m-1}_{i+m-2\, j+m-1}+w^{n+m-1}_{i+m-1\, j+m-1}\ .
\]
\end{proof}

The canonical morphism of operads $\mathrm{DK}\to \mathrm{BKW}$ in $\QD^-$ induces a canonical morphism  of operads 
in associative algebras between the operad of chord diagrams and the operad of graphs with multiple edges mentioned at the end of \cref{subsec:BKW}. It also induces the canonical morphism $\mathrm{Gerst} \to \mathrm{Gra}$ of cocommutative Hopf operads, whose deformation complex  gives the Grothendieck--Teichm\"uller Lie algebra $\mathfrak{grt}$  in \cite{Willwacher15}. 

\subsection{Hypergraphs}\label{subsec:Hygraph}
The purpose of this subsection is to extend \cref{subsec:BKW} from graphs to hypergraphs. 
This latter notion amounts  to ``graphs'' where ``edges'' can now join an arbitrary number of vertices. 

\begin{definition}[Hypergraph]
An \emph{hypergraph} is a pair $(V, E)$ where $V$ is a set of vertices and where $E$ is a set of subsets of $V$, called \emph{hyperedges}. 
\end{definition}

In the sequel, we will mainly consider the sets $V=\un=\{1, \ldots, n\}$, for $n\geqslant 2$. We will only consider hypergraphs where the elements of $E$ have all cardinal equal to $k$, for $k\geqslant 2$; they will be called \emph{$k$-hypergraphs}. 
For example, the \emph{complete} $k$-hypergraph $\Gamma^k_n$ on $n$ vertices is $(V,E)$, where $E$ is the set of all subset of $\{1, \ldots, n\}$ with $k$-elements. In the case $k=2$, we recover the complete graph $\Gamma_n=\Gamma^2_n$ of \cref{subsec:BKW}. 
\[
\Gamma^3_4\,=\,\vcenter{\hbox{
\begin{tikzpicture}[math3d, yscale=2,xscale=2]
\draw[fill, opacity=0.2] (1,0,0) -- (1.2,0.8,1) -- (1.5,1,0) --(1,0,0) ;
\draw[fill, opacity=0.15] (1.2,0.8,1) -- (1.5,1,0) -- (0,1,0) -- (1.2,0.8,1) ;

\draw[semithick] (1,0,0) -- (1.2,0.8,1); 
\draw[semithick] (1,0,0) -- (1.5,1,0); 
\draw[semithick] (1.5,1,0) -- (0,1,0); 
\draw[very thin,dashed] (1,0,0) -- (0,1,0); 
\draw[semithick] (1.2,0.8,1) -- (1.5,1,0); 
\draw[semithick] (1.2,0.8,1) -- (0,1,0); 

\node at (0.99,0.01,0) {$\bullet$};
\node[left] at (0.99,0.01,0) {\scalebox{0.8}{$1$}};
\node at (1.2,0.803,0.99) {$\bullet$};
\node[above] at (1.2,0.803,0.99) {\scalebox{0.8}{$2$}};
\node at (1.5,1,0) {$\bullet$};
\node[below] at (1.5,1,0) {\scalebox{0.8}{$3$}};
\node at (0,1,0) {$\bullet$};
\node[right] at (0,1,0) {\scalebox{0.8}{$4$}};
\end{tikzpicture}}}
\]
We define the topological operad of complete $k$-hypergraph by 
\[\GraS^k(n)\coloneqq \{*\}\, ,\  \text{for} \  n<k,  \quad  \text{and}\quad \GraS^k(n)\coloneqq \big(S^1\big)^{\binom{n}{k}}\, ,\  \text{for} \  n\geqslant k\ .\]
The elements $\big\{\mu_{I}Ê; I\subset\un\,, |I|=k \big\}$ of $\GraS^k(n)$ are thought of
as collections of labels, living in  the circle $S^1$, for every hyperedges $I$ of the complete $k$-hypergraph $\Gamma^k_n$.
 The  partial  composition products $\circ_p : \GraS^k(n)\times \GraS^k(m) \to 
\GraS^k(n+m-1)$ of two collections $\{\mu_{I}\}$ and $\{\mu'_{J}\}$ are defined in a way similar to that of the operad $\GraS$. 
We first insert the complete $k$-hypergraph $\Gamma^k_m$ at the $p$th vertex of the complete $k$-hypergraph $\Gamma^k_n$ and then we relabel the vertices accordingly. 
The hyperedges coming from $\Gamma^k_n$ (respectively  $\Gamma^k_m$)
are labeled by the according $\mu_{I}$ (respectively $\mu'_{J}$). 
The hyperedges made up of $k-1$ vertices $i_1, \ldots, i_{k-1}$ from $\Gamma^k_n$ and one vertex from $\Gamma^k_m$ are labelled by $\mu_{i_1, \ldots, i_{k-1}, p}$. All the other hyperedges are labelled by the base point $*$.

\begin{proposition}
The data $\GraS^k\coloneqq \big(\{\GraS^k(n)\}, \{\circ_p\}\big)$ forms a pointed topological operad, which is formal over $\mathbb{Z}$. 
\end{proposition}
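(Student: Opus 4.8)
The plan is to transpose to the hypergraph setting the treatment of the complete graph operad $\GraS = \GraS^2$ given in \cref{subsec:BKW}. First I would check that $\GraS^k$ is an operad in the symmetric monoidal category $(\textbf{Top}_*, \times)$. The only point that is not immediately formal is the consistency of the labelling rule defining $\circ_p$: one has to see that every hyperedge of $\Gamma^k_{n+m-1}$ receives exactly one label, which follows from the observation that such an hyperedge either lies entirely inside the copy of $\Gamma^k_n$ (label $\mu_I$), or entirely inside the inserted copy of $\Gamma^k_m$ (label $\mu'_J$), or consists of $k-1$ vertices of $\Gamma^k_n$ together with a single vertex of the inserted $\Gamma^k_m$ (label $\mu_{i_1,\dots,i_{k-1},p}$), or is of none of these three types (label $*$), these four cases being mutually exclusive. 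The sequential and parallel axioms then follow by tracking this rule through a double insertion; equivariance is immediate since $\Sy_n$ only permutes the vertices, hence the hyperedges, of $\Gamma^k_n$; the unit axioms hold because $\GraS^k(1) = \{*\}$ (as $1 < k$) and the complete $k$-hypergraph on one vertex has no hyperedge; and $*^n \circ_p *^m = *^{n+m-1}$ since then every hyperedge of the composite is labelled by $*$. All of this is routine, exactly as for $k=2$.

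For formality over $\mathbb{Z}$, the relevant observations are: each component $\GraS^k(n) = \big(S^1\big)^{\binom{n}{k}}$ is a torus, so $H^{\mathrm{sing}}_\bullet\big(\GraS^k(n), \mathbb{Z}\big) \cong \Lambda\big[\varepsilon^n_I\big]$ is the free graded-commutative algebra on one degree one class $\varepsilon^n_I$ per hyperedge $I$ of $\Gamma^k_n$ — in particular a free $\mathbb{Z}$-module, so the Künneth maps are isomorphisms and $H^{\mathrm{sing}}_\bullet(\GraS^k, \mathbb{Z})$ is a well defined dg operad over $\mathbb{Z}$ with vanishing differential; and, coordinate-wise, the maps $\circ_p$ are built only out of projections of tori, diagonal embeddings of subtori, and the constant map onto the base point (the unit of the group structure). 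Fixing once and for all the singular $1$-cycle $\sigma$ on $S^1$ coming from the quotient $\Delta^1 \to \Delta^1/\partial\Delta^1 = S^1$ (whose two faces collapse to the base point), one gets for each $N$ a quasi-isomorphism $\Lambda[e_1,\dots,e_N] \to C^{\mathrm{sing}}_\bullet\big((S^1)^N, \mathbb{Z}\big)$, and the plan is to assemble these into a quasi-isomorphism of dg operads
\[
C^{\mathrm{sing}}_\bullet\big(\GraS^k, \mathbb{Z}\big)\ \stackrel{\sim}{\longleftarrow}\ H^{\mathrm{sing}}_\bullet\big(\GraS^k, \mathbb{Z}\big)\ ,
\]
arguing exactly as in \cite[Section~8]{DotsenkoShadrinVallette15}, i.e.\ as in the case $\GraS = \GraS^2$ of \cref{subsec:BKW}.

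The step I expect to be the main obstacle — the only one that is not pure bookkeeping — is the operadic compatibility of the chosen representing cycles. Since the composition maps contain genuine diagonal maps (a single circle factor of the source being sent to several circle factors of the target), the naive cycle assignment is not strictly natural at the chain level, and it has to be corrected exactly as in the graph case before the square relating the homology composition (via Künneth) and the chain-level composition (via Eilenberg--Zilber followed by $(\circ_p)_\#$) commutes. Once this is arranged, the remaining verifications of the operad axioms for the resulting structure are the same diagram chases as for $k=2$, which I would simply quote.
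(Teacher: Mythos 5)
Your proposal is correct and follows the same route as the paper, whose proof simply declares the operad axioms ``straightforward to check'' and refers the formality entirely to the same arguments as in \cite[Section~8]{DotsenkoShadrinVallette15}. You supply more detail than the paper does --- in particular the case analysis for the labelling rule and the identification of the chain-level non-naturality of the diagonal maps as the one genuinely non-trivial point of the formality argument --- but the strategy is identical.
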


\begin{proof}
It is straightforward to check the sequential and parallel axioms, the equivariance with respect to the symmetric groups action, as well as the axioms for the unit. The formality property is proved by the same arguments and computations as in \cite[Section~8]{DotsenkoShadrinVallette15}.

\end{proof}

The special case $k=2$ gives back the operad $\GraS=\GraS^2$ of \cref{subsec:BKW}.

\begin{definition}[$k$-Hypergraph skew--symmetric quadratic data]
The \emph{$k$-Hypergraph skew--symmetric quadratic data} are spanned by 
\[
\kHG(n)\coloneqq\left(
\t^n_{I}\, ,\, \t^n_{I}\wedge \t^n_{J}
\right)\ ,
\]
where 
the set of generators $\t^n_{I}$ of degree $0$ runs over the set of hyperedges $I$ of $\Gamma^k_n$ and where the  set of relations runs over all pairs $(I, J)$ of  hyperedges of $\Gamma^k_n$. 
For $n<k$, we set $\kHG(n)\coloneqq(0, 0)$.
\end{definition}

We consider the following maps $\circ_p : \kHG(n)\oplus \kHG(m)\to \kHG(n+m-1)$. Let us denote $I=\{i_1,\ldots, i_k\}$ and use the notation $I+a\coloneqq \{i_1+a,\ldots, i_k+a\}$. 
\begin{equation}\label{eq:CompoOpHYGRA}
\begin{array}{lcll}
& \t^n_{I} & \mapsto & \left\{\begin{array}{lll}  
\t^{n+m-1}_{I+m-1} & \text{for} & p<i_1\ ,\\
\rule{0pt}{12pt} 
\sum_{j=0}^{m-1} \t^{n+m-1}_{i_1, \ldots, i_{l-1}, i_l+j, i_{l+1}+m-1, \ldots,i_k+m-1}
& \text{for} & p=i_l\ , \\
\rule{0pt}{12pt} \t^{n+m-1}_{i_1, \ldots, i_{l}, i_{l+1}+m-1,\ldots, i_k+m-1} & \text{for} & i_l<p<i_{l+1}\ , \\
\rule{0pt}{12pt} \t^{n+m-1}_{I} & \text{for} & i_k<p \ ,
\end{array}\right.\\
\rule{0pt}{12pt} & \t^m_{I} & \mapsto & \quad  \t^{n+m-1}_{I+p-1}\ .\\
\end{array}
\end{equation}

\begin{lemma}\label{lem:kHGDataOp}
The aforementioned data 
$\kHG \coloneqq \big( \{k\text{-}\mathrm{HG}(n)\}, \{\circ_p\}\big)$
 forms an operad in the symmetric monoidal category \rm$(\QD^-, \oplus)$.
\end{lemma}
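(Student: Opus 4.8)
The plan is to follow exactly the template already used for \cref{lem:KWDataOp} and \cref{prop:DKOp}, since $\kHG$ is the straightforward $k$-ary generalisation of $\mathrm{BKW}$. The first observation is that, just as in \cref{lem:KWDataOp}, the space of relations of each $\kHG(n)$ is the \emph{full} space $R(n)=V(n)^{\wedge 2}$ where $V(n)=\QQ\{\t^n_I : I\subset\un,\ |I|=k\}$. Consequently, for any linear map $\circ_p : V(n)\oplus V(m)\to V(n+m-1)$, the induced map automatically satisfies the inclusion $(\circ_p)^{\wedge 2}\big(R(n)\oplus[V(n),V(m)]_-\oplus R(m)\big)\subset R(n+m-1)=V(n+m-1)^{\wedge 2}$, because the target is everything. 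So the maps $\circ_p$ of \cref{eq:CompoOpHYGRA} are morphisms of skew-symmetric quadratic data with no further check required; the content of the lemma is purely that $\big(\{\kHG(n)\},\{\circ_p\}\big)$ is an operad in the underlying symmetric monoidal category $(\QD^-,\oplus)$, i.e. that the formul\ae\ \eqref{eq:CompoOpHYGRA} satisfy the operad axioms of \cref{subsec:BQO}: sequential, parallel, equivariance (top and bottom), and unitality.

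Next I would verify these axioms, and the cleanest way is the conceptual one already invoked in the proof of \cref{prop:HoBKW}: the partial composition $\circ_p$ on generators is \emph{exactly} the one coming from the homology operad of $\GraS^k$ (equivalently, from the ``hypergraph operad'' analogous to $\mathrm{Gra}$), restricted to the linear span of single-hyperedge graphs. Concretely, $\t^n_I$ corresponds to the graph on $\un$ with the single hyperedge $I$, and $\circ_p$ is insertion of $\Gamma^k_m$ at vertex $p$ followed by the sum over all ways of re-routing a hyperedge through $p$ to the inserted vertices — which is precisely the case analysis in \eqref{eq:CompoOpHYGRA} (the $p=i_l$ line being the ``spread over the $m$ new vertices'' rule, the $i_l<p<i_{l+1}$ and $p<i_1$, $i_k<p$ lines being relabelling, and the $\t^m_I\mapsto\t^{n+m-1}_{I+p-1}$ line being the trivial embedding of the inserted hyperedges). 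Since the topological operad $\GraS^k$ is already established to be an operad, its homology is an operad, and hence the structure maps satisfy all the axioms; restricting to the sub-$\Sy$-module generated in arity-one-edge degree is compatible with $\circ_p$, which gives the axioms for $\kHG$ directly. Alternatively, for a self-contained argument one simply checks the sequential and parallel axioms by a direct but tedious bookkeeping of index shifts in \eqref{eq:CompoOpHYGRA}, the equivariance axioms by noting $\Sy_n$ permutes the $\t^n_I$ according to its action on $k$-subsets, and the unit axioms from the fact that $\kHG(1)=(0,0)$ with $\circ_1$ being relabelling.

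The main obstacle, such as it is, is purely notational: keeping track in \eqref{eq:CompoOpHYGRA} of how a hyperedge $I=\{i_1,\dots,i_k\}$ splits across the ``below $p$'', ``at $p$'', ``between $i_l$ and $i_{l+1}$'', and ``above $p$'' ranges when one then composes a second time (for the sequential axiom $X\subset Y\subset Z$) or composes in two disjoint slots (parallel axiom). For $k=2$ this is \cref{lem:KWDataOp}; the $k$-ary case is structurally identical, just with more index ranges. I expect no genuine difficulty, and in the paper I would simply write: ``It is straightforward to check the sequential and parallel axioms, the equivariance with respect to the symmetric groups action, as well as the axioms for the unit,'' exactly mirroring the proof of \cref{lem:KWDataOp}, and possibly remark that the cleanest verification goes through the identification of $\circ_p$ with the partial composition of $H_\bullet(\GraS^k)$ established by the same method as in \cref{prop:HoBKW}.
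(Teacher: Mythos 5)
Your proposal matches the paper's proof: the paper likewise observes that the relation spaces are the full spaces $V(n)^{\wedge 2}$, so the maps $\circ_p$ are automatically morphisms of skew-symmetric quadratic data, and then verifies the operad axioms by viewing $\t^n_I$ as the $k$-hypergraph with the single hyperedge $I$ and interpreting $\circ_p$ as insertion of (or into) the empty $k$-hypergraph. Your write-up is a more detailed version of the same argument.
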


\begin{proof}
Since the spaces of relations are the full spaces, the maps $\circ_p$ are morphisms of quadratic data.
One can check directly that they form an operad structure. This can be done easily by viewing the elements $t^n_I$ as the $k$-hypergraph with one hyperedge $I$ and respectively by inserting the empty $k$-hypergraph at its $p$th vertex or by inserting it into the empty $k$-hypergraph. 
\end{proof}

In the special case $k=2$, we recover the Berger--Kontsevich--Willwacher operad $\mathrm{BKW}=2\text{-}\mathrm{HG}$.

\begin{proposition}\label{prop:HoloGraK}
The holonomy operad and the rational Magnus operad associated to $\GraS^k$ are isomorphic to the Lie operad associated to skew-symmetric quadratic data $\kHG$: 
\[\g_{\GraS^k}\cong \L(\kHG)\cong \gr\left(\pi_1\big(\GraS^k\big)\right)\otimes \QQ \ .\]
\end{proposition}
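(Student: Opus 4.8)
The strategy is to mimic the proof of \cref{prop:HoloGraS} line by line; the case $k=2$ is literally the statement there, since $\mathrm{BKW}=2\text{-}\mathrm{HG}$ and $\GraS=\GraS^2$. First I would establish the isomorphism $\g_{\GraS^k}\cong\L(\kHG)$ by unwinding the definition of the holonomy operad. For $n\geqslant k$ the component $\GraS^k(n)=(S^1)^{\binom{n}{k}}$ is a torus whose circle factors are indexed by the hyperedges $I$ of $\Gamma^k_n$, so its cohomology $H^\bullet(\GraS^k(n))$ is the graded-commutative algebra freely generated by the degree $-1$ classes $\varepsilon^n_I$, one per hyperedge; in particular it already coincides with $\S\big(H^1(\GraS^k(n)),0\big)$, so $\GraS^k$ satisfies \cref{Cond:ConditionI} with empty space of relations. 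Equivalently, the cup product $H^1(\GraS^k(n))^{\odot 2}\to H^2(\GraS^k(n))$ is injective (over a field of characteristic $\neq 2$ it is even bijective), so the cohomology symmetric quadratic data $\big(H^1(\GraS^k(n)),\ker\cup\big)$ carries no relation. Passing to the degree-wise linear dual and then applying the Koszul duality functor $\acc$, the holonomy skew-symmetric quadratic data becomes $\big(s^{-1}H_1(\GraS^k(n)),s^{-2}\,\mathrm{im}\,\Delta\big)\cong\big(\t^n_I,\ \t^n_I\wedge\t^n_J\big)=\kHG(n)$ under the identification $\t^n_I\cong s^{-1}(\varepsilon^n_I)^*$; for $n<k$ both sides are $(0,0)$. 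Applying $\L$ gives the isomorphism of underlying collections.

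To see that this isomorphism is operadic, I would describe the homology operad $H_\bullet(\GraS^k)$ explicitly, exactly as $H_\bullet(\GraS)\cong\mathrm{Gra}$ was described inside the proof of \cref{prop:HoBKW}. The homology coalgebra of a circle is the coalgebra of dual numbers on one degree $1$ generator, so $H_\bullet(\GraS^k(n))$ has basis the monomials $w^n_{I_1}\odot\cdots\odot w^n_{I_r}$ in the degree $1$ classes $w^n_I=(\varepsilon^n_I)^*$ over pairwise distinct hyperedges, which are in bijection with the $k$-hypergraphs on $\{1,\dots,n\}$. Being morphisms of cocommutative coalgebras, the partial compositions of $H_\bullet(\GraS^k)$ are determined by their corestrictions to $H_1$; reading off the geometric definition of $\circ_p$ on $\GraS^k$ — a circle factor is carried to the corresponding circle factor after insertion and relabelling, unless the inserted vertex $p$ lies in its hyperedge $I$, in which case the diagonal loop splits as the sum of the $m$ new coordinate loops — one checks these corestrictions are precisely the maps \eqref{eq:CompoOpHYGRA} on the quadratic data level. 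Equivalently, this is the instance of \cref{prop:(Co)Ho(Co)Op} giving $H_\bullet(\GraS^k)\cong\Sc(\kHG^{\acc})$, and transporting this isomorphism through the symmetric monoidal functors of \cref{Thm:ComDiag} yields the operadic statement for $\g_{\GraS^k}$.

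For the second isomorphism $\L(\kHG)\cong\gr\big(\pi_1(\GraS^k)\big)\otimes\QQ$, observe that $\pi_1(\GraS^k(n))\cong\mathbb{Z}^{\binom{n}{k}}$ is abelian, so $\gr$ of it is the same abelian group concentrated in weight $1$; tensoring with $\QQ$ gives the abelian Lie algebra $\QQ^{\binom{n}{k}}$, which is exactly $\L(\kHG(n))=Lie(\t^n_I)/(\t^n_I\wedge\t^n_J)$ since the relations force all brackets to vanish, and for $n<k$ both sides vanish. The partial composition maps agree for the same reason as before: on first homotopy groups $\circ_p$ is the abelianised version of the same bookkeeping and sends a coordinate loop $\t^n_I$ to a single coordinate loop unless $p\in I$, in which case it is sent to the sum occurring in \eqref{eq:CompoOpHYGRA}. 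Formally this is \cref{Prop:SymMonoFunOp} applied to the cartesian functors $\pi_1$ and $\gr$ of \cref{lem:PiGrSymMono}, combined with the explicit identification above.

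The only genuine work, as already in the $k=2$ case where it was deferred to the proof of \cref{prop:HoBKW}, is the operadic compatibility in the middle step: matching the intrinsically topological partial compositions of $\GraS^k$ — read through homology or through $\pi_1$ — with the combinatorial formulae \eqref{eq:CompoOpHYGRA}, while keeping careful track of the three relabelling conventions for the vertices lying before $p$, for the vertices coming from the inserted $\Gamma^k_m$, and for the vertices lying after $p$. Everything else is a routine transcription of the $k=2$ arguments, since all spaces involved are products of circles and all the relevant invariants are first order.
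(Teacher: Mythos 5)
Your proposal is correct and takes essentially the same route as the paper: the paper's own proof of this proposition consists of the single sentence that it is ``the same \emph{mutatis mutandis}'' as the $k=2$ case (\cref{prop:HoloGraS} together with the homology computation in \cref{prop:HoBKW}), and you have simply carried out that transcription explicitly — trivial cohomology relations for the torus $(S^1)^{\binom{n}{k}}$, Koszul/linear duality landing on $\kHG(n)$, abelianness of $\pi_1$ for the Magnus side, and the matching of the topological insertion maps with \eqref{eq:CompoOpHYGRA}. No gaps.
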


\begin{proof}
This proof is the same \emph{mutatis mutandis} as the one of \cref{prop:HoloGraS}
\end{proof}

Let us introduce the \emph{$k$-hypergraph graded operad} $\mathrm{Gra}^k$. Its underlying $\Sy$-modules are spanned by
sub-hypergraphs of $\Gamma^k_n$, where each hyperedge receives degree $1$. The partial composition product $\gamma_1\circ_p\gamma_2$ amounts to first inserting the $k$-hypergraph $\gamma_2$ at the $p$th vertex of $\gamma_1$, then relabelling accordingly the vertices, and finally considering the sum of all the possible ways to connect the hyperedges in $\gamma_1$ containing the vertex $p$, to any possible vertex of $\gamma_2$. 
 
\[
\vcenter{\hbox{\begin{tikzpicture}[scale=0.6]
\draw[fill=gray!20] 
(-1.5,0)--(1.5,0)--(0, 2)--(-1.5,0);
\draw[fill=gray!20] 
(-1.5,0)--(1.5,0)--(0, -2)--(-1.5,0);

\draw [fill=white] (-1.5,0) circle [radius=0.3];
 \node at (-1.5,0) {\scalebox{0.7}{$1$}}; 
 \draw [ fill=white] (1.5,0) circle [radius=0.3];
 \node at (1.5,0) {\scalebox{0.7}{$2$}}; 
 \draw [ fill=white] (0,2) circle [radius=0.3];
 \node at (0,2) {\scalebox{0.7}{$3$}}; 
  \draw [ fill=white] (0,-2) circle [radius=0.3];
 \node at (0,-2) {\scalebox{0.7}{$4$}}; 
 \end{tikzpicture}}}
 \ \circ_3 \
\vcenter{\hbox{\begin{tikzpicture}[scale=0.6]
\draw[fill=gray!20] 
(-1.5,0)--(1.5,0)--(0, -2)--(-1.5,0);

\draw [fill=white] (-1.5,0) circle [radius=0.3];
 \node at (-1.5,0) {\scalebox{0.7}{$3$}}; 
 \draw [ fill=white] (1.5,0) circle [radius=0.3];
 \node at (1.5,0) {\scalebox{0.7}{$2$}}; 
  \draw [ fill=white] (0,-2) circle [radius=0.3];
 \node at (0,-2) {\scalebox{0.7}{$1$}}; 
 \end{tikzpicture}}}
\ = \ 
\vcenter{\hbox{\begin{tikzpicture}[scale=0.6]
\draw[fill=gray!20] 
(-1.5,0)--(1.5,0)--(0, 2)--(-1.5,0);
\draw[fill=gray!20] 
(-1.5,0)--(1.5,0)--(0, -2)--(-1.5,0);
\draw[fill=gray!20] 
(-1.5,4)--(1.5,4)--(0, 2)--(-1.5,4);

\draw [fill=white] (-1.5,4) circle [radius=0.3];
 \node at (-1.5,4) {\scalebox{0.7}{$5$}}; 
\draw [fill=white] (1.5,4) circle [radius=0.3];
 \node at (1.5,4) {\scalebox{0.7}{$4$}}; 
\draw [fill=white] (-1.5,0) circle [radius=0.3];
 \node at (-1.5,0) {\scalebox{0.7}{$1$}}; 
 \draw [ fill=white] (1.5,0) circle [radius=0.3];
 \node at (1.5,0) {\scalebox{0.7}{$2$}}; 
 \draw [ fill=white] (0,2) circle [radius=0.3];
 \node at (0,2) {\scalebox{0.7}{$3$}}; 
  \draw [ fill=white] (0,-2) circle [radius=0.3];
 \node at (0,-2) {\scalebox{0.7}{$6$}}; 
 \end{tikzpicture}}}
\ + \ 
\vcenter{\hbox{\begin{tikzpicture}[scale=0.6]
\draw[fill=gray!20] 
(-1.5,0)--(1.5,0)--(0, 2)--(-1.5,0);
\draw[fill=gray!20] 
(-1.5,0)--(1.5,0)--(0, -2)--(-1.5,0);
\draw[fill=gray!20] 
(-1.5,4)--(1.5,4)--(0, 2)--(-1.5,4);

\draw [fill=white] (-1.5,4) circle [radius=0.3];
 \node at (-1.5,4) {\scalebox{0.7}{$3$}}; 
\draw [fill=white] (1.5,4) circle [radius=0.3];
 \node at (1.5,4) {\scalebox{0.7}{$5$}}; 
\draw [fill=white] (-1.5,0) circle [radius=0.3];
 \node at (-1.5,0) {\scalebox{0.7}{$1$}}; 
 \draw [ fill=white] (1.5,0) circle [radius=0.3];
 \node at (1.5,0) {\scalebox{0.7}{$2$}}; 
 \draw [ fill=white] (0,2) circle [radius=0.3];
 \node at (0,2) {\scalebox{0.7}{$4$}}; 
  \draw [ fill=white] (0,-2) circle [radius=0.3];
 \node at (0,-2) {\scalebox{0.7}{$6$}}; 
 \end{tikzpicture}}}
\ + \ 		
\vcenter{\hbox{\begin{tikzpicture}[scale=0.6]
\draw[fill=gray!20] 
(-1.5,0)--(1.5,0)--(0, 2)--(-1.5,0);
\draw[fill=gray!20] 
(-1.5,0)--(1.5,0)--(0, -2)--(-1.5,0);
\draw[fill=gray!20] 
(-1.5,4)--(1.5,4)--(0, 2)--(-1.5,4);

\draw [fill=white] (-1.5,4) circle [radius=0.3];
 \node at (-1.5,4) {\scalebox{0.7}{$4$}}; 
\draw [fill=white] (1.5,4) circle [radius=0.3];
 \node at (1.5,4) {\scalebox{0.7}{$3$}}; 
\draw [fill=white] (-1.5,0) circle [radius=0.3];
 \node at (-1.5,0) {\scalebox{0.7}{$1$}}; 
 \draw [ fill=white] (1.5,0) circle [radius=0.3];
 \node at (1.5,0) {\scalebox{0.7}{$2$}}; 
 \draw [ fill=white] (0,2) circle [radius=0.3];
 \node at (0,2) {\scalebox{0.7}{$5$}}; 
  \draw [ fill=white] (0,-2) circle [radius=0.3];
 \node at (0,-2) {\scalebox{0.7}{$6$}}; 
 \end{tikzpicture}}}
\]
Every graded vector space $\mathrm{Gra}^k(n)$ forms a cocommutative coalgebra with the coproduct $\Delta(\gamma)$ made up of the pairs of graphs $\gamma'\otimes \gamma''$ with the same  $n$ vertices as $\gamma$ but  with hyperedges from $\gamma$ distributed on $\gamma'$ and $\gamma''$. The partial composition products preserve these coproducts, thus  $\mathrm{Gra}^k$ forms a cocommutative Hopf operad. 

\begin{proposition}\label{prop:HokHG}
The following three cocommutative Hopf operads are isomorphic 
\[H_\bullet\big(\GraS^k\big)\cong \S^c\big(\kHG^{\emph{\acc}}\big) \cong \mathrm{Gra^k}\ .\]
\end{proposition}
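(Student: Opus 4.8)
The plan is to prove \cref{prop:HokHG} by following exactly the template set by the proof of \cref{prop:HoBKW}, which handles the case $k=2$. There are three isomorphisms to establish, $H_\bullet\big(\GraS^k\big)\cong \S^c\big(\kHG^{\acc}\big)$ and $\S^c\big(\kHG^{\acc}\big) \cong \mathrm{Gra}^k$, with the composite giving $H_\bullet\big(\GraS^k\big)\cong \mathrm{Gra}^k$. First I would record that the topological operad $\GraS^k$ satisfies \cref{Cond:ConditionI}: each component is a torus $\big(S^1\big)^{\binom{n}{k}}$, hence path connected, and its cohomology ring is an exterior algebra on the degree-$1$ classes $\varepsilon^n_I$ dual to the circle factors, which is visibly finitely generated quadratic with generators in $H^1$. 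Consequently, \cref{prop:(Co)Ho(Co)Op} applies and, together with \cref{prop:HoloGraK}, yields the first isomorphism $H_\bullet\big(\GraS^k\big)\cong \S^c\big(H_1(\GraS^k), \mathrm{im}\,\Delta\big)= \S^c\big(\kHG^{\acc}\big)$ as cocommutative Hopf operads, once we identify the holonomy skew-symmetric data with $\kHG^{\acc}$. That identification is immediate since the cohomology of a torus has trivial cup products in the relevant bidegree: $\ker\big(\cup : H^1(\GraS^k(n))^{\odot 2}\to H^2\big)$ is everything, so the symmetric quadratic data is $\big(\varepsilon^n_I, \emptyset\big)$, its dual is $\big(w^n_I, \mathrm{full}\big)$ with $w^n_I$ of degree $1$, and applying $\acc$ gives $\big(\t^n_I, \t^n_I\wedge \t^n_J\big)=\kHG^{\acc}(n)$ under $\t^n_I\cong s^{-1}(\varepsilon^n_I)^*$.

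Next I would establish the explicit isomorphism $\S^c\big(\kHG^{\acc}\big) \cong \mathrm{Gra}^k$, mirroring the second half of the proof of \cref{prop:HoBKW}. Writing $V(n)\coloneqq \QQ\{\t^n_I\}$, the component $\S^c\big(\kHG^{\acc}(n)\big)$ is the cofree cocommutative coalgebra $S^c(sV(n))$ on the degree-$1$ generators $s\t^n_I$; its basis of monomials $s\t^n_{I_1}\odot\cdots\odot s\t^n_{I_r}$ with distinct hyperedges $I_1,\dots,I_r$ is in bijection with the sub-$k$-hypergraphs of $\Gamma^k_n$, i.e. with the basis of $\mathrm{Gra}^k(n)$. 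Under this correspondence the empty hypergraph $\emptyset^n$ matches the counit $\mathbb{1}^n$ and the one-hyperedge graph $\gamma^n_I$ matches $s\t^n_I$. The partial composition products $\tilde\circ_p$ of $\S^c\big(\kHG^{\acc}\big)$ are coalgebra morphisms, hence determined by their corestrictions to $sV(n+m-1)$; these corestrictions are obtained from the operad maps $\circ_p$ of $\kHG$ in \cref{eq:CompoOpHYGRA} after applying $\acc$, so one only needs to compare, for the generators and the units, the images under \cref{eq:CompoOpHYGRA} with the hyperedge-reconnection rule defining $\circ_p$ in $\mathrm{Gra}^k$. The only nontrivial case is $\gamma^n_I\circ_{i_l}\mathbb{1}^m$ (inserting the edgeless hypergraph at a vertex $i_l$ belonging to $I$): in $\mathrm{Gra}^k$ the hyperedge $I$ must be reconnected through each of the $m$ inserted vertices, giving the sum $\sum_{j=0}^{m-1}\t^{n+m-1}_{i_1,\dots,i_{l-1},i_l+j,i_{l+1}+m-1,\dots,i_k+m-1}$, which is exactly the $p=i_l$ line of \cref{eq:CompoOpHYGRA}. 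The remaining cases ($p$ outside $I$, or inserting $\gamma^m_I$ into $\emptyset^n$) are direct relabelings and match trivially. One should also note compatibility with the coproducts, but both sides carry the coproduct distributing hyperedges, so this is automatic.

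Finally, the direct isomorphism $H_\bullet\big(\GraS^k\big)\cong \mathrm{Gra}^k$ can either be deduced as the composite of the two isomorphisms above, or checked by hand: the homology of a torus is an exterior coalgebra on degree-$1$ generators dual to the circle factors, which identifies $H_\bullet(\GraS^k(n))$ with $\mathrm{Gra}^k(n)$ as coalgebras, and the description of the topological partial compositions $\circ_p$ of $\GraS^k$ (where the hyperedges straddling the inserted component are assigned the base-point label, and a K\"unneth computation spreads a homology class over all reconnections) produces precisely the reconnection rule of $\mathrm{Gra}^k$. I expect the main obstacle to be purely bookkeeping: carefully matching the index shifts in \cref{eq:CompoOpHYGRA} with the combinatorics of reconnecting a $k$-element hyperedge through an inserted hypergraph, especially tracking which of the five cases (according to the position of $p$ relative to the ordered elements of $I$) produces a sum and which produces a single term. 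Since this is entirely parallel to the $k=2$ argument already carried out in \cref{prop:HoBKW}, and since the spaces of relations are the full exterior squares (so there is nothing to check about $\circ_p$ being a morphism of quadratic data beyond \cref{lem:kHGDataOp}), no genuinely new idea is needed, and I would state that the proof is \emph{mutatis mutandis} the same as that of \cref{prop:HoBKW}, spelling out only the one-hyperedge-through-$m$-vertices computation.
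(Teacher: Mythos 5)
Your proposal is correct and follows exactly the route the paper takes: the published proof of this proposition consists of the single remark that it is similar to that of \cref{prop:HoBKW}, and what you have written is precisely that argument transposed from graphs to $k$-hypergraphs (\cref{Cond:ConditionI} for the torus, \cref{prop:(Co)Ho(Co)Op} combined with \cref{prop:HoloGraK} for the first isomorphism, and the monomial/sub-hypergraph dictionary with the single nontrivial composition $\gamma^n_I\circ_{i_l}\mathbb{1}^m$ for the second). One slip to fix: the cup product on the torus $\big(S^1\big)^{\binom{n}{k}}$ is not trivial --- its cohomology is the free graded-commutative algebra on $H^1$, so $\ker\bigl(\cup : (H^1)^{\odot 2}\to H^2\bigr)$ is zero, not everything; it is the orthogonal space $\mathrm{im}\,\Delta\subset (H_1)^{\odot 2}$ that is full, and this is what actually produces the full space of relations $\t^n_I\wedge\t^n_J$ in $\kHG$ after applying $\acc$, consistently with the data $\big(\varepsilon^n_I,\emptyset\big)$ you (correctly) wrote down in the next clause.
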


\begin{proof}
This proof is similar to that of \cref{prop:HoBKW}. 
\end{proof}

\begin{remark}
For any $k\geqslant 2$, there is  a canonical morphism $s\mathrm{Lie}^k\to \mathrm{Gra}^k$ from the operad of shifted $k$-Lie algebras which sends its generator to $\t^k_{1 \ldots k}$. This latter notion is made up of a ``Lie bracket'' of degree $1$ with $k$-inputs satisfying a generalised Jacobi relation, see \cite[Section~13.11.3]{LodayVallette12}. Since this operad (unshifted) is the unit for the black product of $k$-ary quadratic operads, one can develop a similar twisting procedure as that of  \cite{Willwacher15} according to \cite[Remark~5.8]{DotsenkoShadrinVallette18}. The study of the resulting dg operad $\mathrm{TwGra}^k$ is a very interesting subject. 
\end{remark}

\subsection{Etingof--Henriques--Kamnitzer--Rains, i.e. $\overline{\mathcal{M}}_{0,n+1}(\mathbb{R})$}\label{subsec:EHKR}
In the very same way as the 
Drinfeld--Kohno quadratic data
 refines the Berger--Kontsevich--Willwacher quadratic data in canonical way, 
 the Etingof--Henriques--Kamnitzer--Rains quadratic data refines the $3$-Hypergraph quadratic data in a canonical way. This new one actually comes from the 
topological operad made up of the \emph{real locus} of the moduli spaces of stable curves of genus $0$ with marked points $\overline{\mathcal{M}}_{0,n+1}(\mathbb{R})$,  studied in depth in \cite{EHKR10}. 
\begin{definition}[Etingof--Henriques--Kamnitzer--Rains skew-symmetric quadratic data]
The \emph{Etingof--Henriques--Kamnitzer--Rains skew--symmetric quadratic data} are spanned by 
\[
\mathrm{EHKR}(n)\coloneq\left(\t^n_{ijk}\, , \, \t^n_{ijk}\wedge \t^n_{lmn} \ \&\  
\t^n_{ijk}\wedge \big(\t^n_{lmi}+\t^n_{lmj}+\t^n_{lmk}\big)
\right)\ ,
\]
where 
the set of generators $\t^n_{ijk}$ of degree $0$ runs over the set of hyperedges $ijk$ of $\Gamma^3_n$, and where the first set of relations runs over pairs $(ijk, lmn)$ of disjoint hyperedges and the second set of relations runs over pairs  $(ijk, \{l,m\})$  formed by an hyperedge and two separate vertices of $\Gamma^3_n$. 
For $n<k$, we set $\mathrm{EHKR}(n)\coloneq(0, 0)$.
\end{definition}

We consider the same partial composition products as the ones for the $3$-Hypergraph quadratic data given in \cref{eq:CompoOpHYGRA}. 

\begin{proposition}\label{prop:EHKROp}
The Etingof--Henriques--Kamnitzer--Rains skew-symmetric quadratic data 
$\EHKR\coloneqq\big(\{\EHKR(n)\}, \{\circ_p\}\big)$
 forms an operad in the symmetric monoidal category \rm$(\QD^-, \oplus)$.
\end{proposition}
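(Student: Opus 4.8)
The plan is to follow closely the proof of \cref{prop:DKOp}. The underlying $\Sy$-modules of $\EHKR$ and its partial composition maps $\circ_p$ (given by \eqref{eq:CompoOpHYGRA} with $k=3$) are exactly those of the $3$-hypergraph operad of \cref{lem:kHGDataOp}; since the sequential, parallel, equivariance and unit axioms only refer to the underlying graded $\Sy$-modules together with the maps $\circ_p$, and not to the subspaces of relations, those axioms hold automatically. Writing $\EHKR(n)=(V(n),R(n))$, the only remaining point is therefore to check that each $\circ_p$ is a morphism in $(\QD^-,\oplus)$, that is
\[
(\circ_p)^{\wedge 2}\big(R(n)\oplus [V(n),V(m)]_-\oplus R(m)\big)\subset R(n+m-1)\ .
\]

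I would handle the three summands in turn. On $R(m)$ the map $\circ_p$ is the relabelling $\t^m_I\mapsto \t^{n+m-1}_{I+p-1}$, which sends disjoint hyperedges to disjoint hyperedges and preserves the combinatorial shape of the second-type relations, so $(\circ_p)^{\wedge 2}(R(m))\subset R(n+m-1)$. On $R(n)$ a generator $\t^n_I$ maps either to a single relabelled hyperedge (when $p\notin I$) or, when $p=i_l\in I$, to the expanded sum $\sum_{j=0}^{m-1}\t^{n+m-1}_{A\cup\{p+j\}}$, where $A$ is the relabelled copy of $I\setminus\{p\}$, a two-element set disjoint from the block $\{p,\dots,p+m-1\}$. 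A case analysis on the position of $p$ among the (at most five) vertices of a first- or second-type relation then shows, exactly as in \cref{lem:kHGDataOp} and \cref{prop:DKOp}, that the image is a sum of first-type relations (pairs of disjoint hyperedges) and second-type relations: after distributing, the terms sharing an expansion vertex $p+j$ regroup, for each fixed value of the other summation index, into a complete second-type triple $\t_{P}\wedge(\t_{uvx}+\t_{uvy}+\t_{uvz})$ with $P=\{x,y,z\}$, while all remaining (off-diagonal) terms are pairs of disjoint hyperedges.

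The \emph{genuinely new} point, compared with the $3$-hypergraph and Drinfeld--Kohno cases, is the mixed summand $[V(n),V(m)]_-$, spanned by the elements $\t^n_I\wedge \t^m_J$. If $p\notin I$, then $\t^n_I$ maps to a hyperedge lying entirely outside the block while $\t^m_J\mapsto \t^{n+m-1}_{J+p-1}$ lies inside it, so their wedge is a first-type relation. If $p=i_l\in I$, set $A=\{l',m'\}$ (the relabelled copy of $I\setminus\{p\}$, outside the block) and $K\coloneqq J+p-1=\{k_1,k_2,k_3\}$ (inside the block), so that $(\circ_p)^{\wedge 2}(\t^n_I\wedge\t^m_J)=\big(\sum_{j=0}^{m-1}\t^{n+m-1}_{A\cup\{p+j\}}\big)\wedge \t^{n+m-1}_{K}$. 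The terms with $p+j\notin K$ are first-type relations, and the three terms with $p+j\in K$ assemble into
\[
\big(\t^{n+m-1}_{l'm'k_1}+\t^{n+m-1}_{l'm'k_2}+\t^{n+m-1}_{l'm'k_3}\big)\wedge \t^{n+m-1}_{k_1k_2k_3}\ ,
\]
which is precisely a second-type relation of $\EHKR(n+m-1)$, since $l'\neq m'$ and $\{l',m'\}\cap K=\emptyset$. Hence $(\circ_p)^{\wedge 2}(\t^n_I\wedge\t^m_J)\in R(n+m-1)$, which completes the three checks.

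I expect the only real obstacle to be the bookkeeping in the $R(n)$ case: there are several sub-cases according to where $p$ sits among the five vertices involved in a second-type relation, and in each one must verify that the distributed-and-regrouped terms land in the span of first- and second-type relations. As in \cref{prop:DKOp}, each sub-case is routine once the hypergraph-operad picture of \cref{prop:HokHG} is kept in mind, and the decomposition displayed above for the mixed term is the prototype of all the regroupings needed.
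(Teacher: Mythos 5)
Your proof is correct and follows essentially the same route as the paper: the operad axioms are inherited from the $3$-hypergraph operad of \cref{lem:kHGDataOp}, and the only substantive check is that $(\circ_p)^{\wedge 2}$ sends the three summands of the relation space into $R(n+m-1)$. Your explicit regrouping of the mixed term $\t^n_I\wedge\t^m_J$ into off-diagonal first-type relations plus one complete second-type triple is exactly the decomposition the paper spells out (for general $k$) in the proof of \cref{prop:krKROp}, and is in fact slightly more precise than the paper's terse assertion that second-type relations of $R(n)$ map to second-type relations, since when $p$ lies in the distinguished hyperedge the image is genuinely a mix of first- and second-type relations.
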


\begin{proof}
As in the proof of \cref{prop:DKOp}, we only need to check that the various maps $\circ_p$ induce morphisms of quadratic data. Again, this can be achieved  easily with the $3$-hypergraph description: 

the first (respectively second) type relation in $R(n)$ or $R(m)$ is sent to any first (respectively second) type relation in $R(n+m-1)$, that is pairs of disjoints hyperedges (respectively a sum of hypergraphs based on pentagons with a distinguished triangle). Any element $\t^n_{ijk}\wedge \t^m_{lmn}$ of the relation $[V(n), V(m)]_-$ is sent to a sum of relations of first and second type under $(\circ_p)^{\wedge 2}$.
\end{proof}

The canonical morphisms of quadratic data $\EHKR(n)\to 3\text{-}\mathrm{HG}(n)$ induces a canonical morphism of operads $\EHKR\to 3\text{-}\mathrm{HG}$ in $\QD^-$.
The following statement is a  universal operadic characterisation of the  Etingof--Henriques--Kamnitzer--Rains skew-symmetric quadratic data.

\begin{theorem}\label{thm:UnivCharEHKR}
 The operad $\mathrm{EHKR}$ is the smallest sub-operad of $3\text{-}\mathrm{HG}$.
\end{theorem}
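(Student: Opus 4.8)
The plan is to transcribe, \emph{mutatis mutandis}, the proof of \cref{thm:UnivCharDK}. Keep the notation $\EHKR(n)=(V(n),R(n))$, where $V(n)$ is spanned by the generators $\t^n_I$ indexed by the hyperedges $I$ of $\Gamma^3_n$, and let $\mathrm{P}(n)\coloneqq (V(n),S(n))\subset 3\text{-}\mathrm{HG}(n)$ be an arbitrary sub-operad. We already know from \cref{prop:EHKROp} that $\EHKR$ is a sub-operad, so it remains to prove $R(n)\subset S(n)$ for every $n$. Since $\mathrm{P}$ is an operad in $(\QD^-,\oplus)$, each partial composition $\circ_p\colon \mathrm{P}(n-2)\oplus\mathrm{P}(3)\to \mathrm{P}(n)$ is a morphism of skew-symmetric quadratic data, so $(\circ_p)^{\wedge 2}$ sends the relations of the source — which contain the cross term $[V(n-2),V(3)]_-$, itself spanned by the elements $\t^{n-2}_I\wedge\t^3_{123}$ — into $S(n)$. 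As $R(n)$ is spanned by its relations of first and second type and $S(n)$ is a $\Sy_n$-submodule of $V(n)^{\wedge 2}$, it suffices to realise each type, up to the $\Sy_n$-action, as such an image.

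The two computations to record are the exact ternary-hypergraph analogues of those in \cref{thm:UnivCharDK}, read off from \cref{eq:CompoOpHYGRA} with $k=3$. Grafting the unique arity-$3$ generator at a vertex lying past the chosen triangle leaves the latter unchanged and gives the first-type relation
\[
(\circ_4)^{\wedge 2}\big(\t^{n-2}_{123}\wedge\t^3_{123}\big)=\t^n_{123}\wedge\t^n_{456}\ ;
\]
acting by $\Sy_n$ then produces $\t^n_{ijk}\wedge\t^n_{i'j'k'}$ for every pair of disjoint hyperedges $\{i,j,k\},\{i',j',k'\}$. Grafting instead at the smallest vertex of the triangle activates the length-$m$ summation in \cref{eq:CompoOpHYGRA} and gives
\[
(\circ_1)^{\wedge 2}\big(\t^{n-2}_{123}\wedge\t^3_{123}\big)=\big(\t^n_{145}+\t^n_{245}+\t^n_{345}\big)\wedge\t^n_{123}\ ,
\]
which is, up to sign, the second-type relation for $(i,j,k)=(1,2,3)$ and $\{l,m\}=\{4,5\}$; applying the permutation carrying $(1,2,3,4,5)$ to $(i,j,k,l,m)$ yields an arbitrary relation $\t^n_{ijk}\wedge(\t^n_{lmi}+\t^n_{lmj}+\t^n_{lmk})$ attached to a hyperedge $\{i,j,k\}$ and a disjoint pair $\{l,m\}$. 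Since these two families span $R(n)$, we conclude $R(n)\subset S(n)$, hence $\EHKR$ is the smallest sub-operad of $3\text{-}\mathrm{HG}$.

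The only genuinely delicate point is the index bookkeeping inside the ternary composition formula \cref{eq:CompoOpHYGRA}: one must choose the insertion vertex $p$ so that exactly the intended branch of the case distinction fires — a plain relabelling of the triangle, or the sum over $j=0,\dots,m-1$ — and verify that every hyperedge produced is an honest generator of $\Gamma^3_n$. One should also dispose of the small-arity cases: first-type relations occur only for $n\geqslant 6$ and second-type ones only for $n\geqslant 5$, and in these ranges $n-2\geqslant 3$, so the composition $\mathrm{P}(n-2)\oplus\mathrm{P}(3)$ used above is always defined. Beyond that, the argument is a direct transcription of the graph case.
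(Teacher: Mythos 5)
Your proposal is correct and follows exactly the route the paper takes: the paper's proof of this theorem simply refers to the argument of \cref{thm:UnivCharDK} (and to the general case \cref{thm:UnivCharGENERAL}), namely reducing by the $\Sy_n$-action to one relation of each type and exhibiting it, up to sign, as the image under $(\circ_4)^{\wedge 2}$ resp.\ $(\circ_1)^{\wedge 2}$ of the cross term $\t^{n-2}_{123}\wedge\t^3_{123}\in[V(n-2),V(3)]_-$. Your two displayed computations agree with \cref{eq:CompoOpHYGRA} for $k=3$, so nothing further is needed.
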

 
\begin{proof}
This proof is similar to that of \cref{thm:UnivCharDK}. It is also the particular case $n=3$ of \cref{thm:UnivCharGENERAL}. 
\end{proof} 
 
In the lattice of operads made up of skew-symmetric data with generators $\t_{ijk}^n$ and partial composition products $\circ_p$, the $3$-hypergraphs operad $3\text{-}\mathrm{HG}$  is the maximal element and the Etingof--Henriques--Kamnitzer--Rains operad $\mathrm{EHKR}$ is the minimal element. 

\begin{proposition}\label{prop:HoloMR}
The holonomy operad associated to $\overline{\mathcal{M}}_{0,n+1}(\mathbb{R})$ is isomorphic to the Lie operad associated to skew-symmetric quadratic data $\mathrm{EKHR}$: 
\[\g_{\overline{\mathcal{M}}_{0,n+1}(\mathbb{R})}\cong \L(\EHKR)\ .\]
\end{proposition}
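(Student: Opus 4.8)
The plan is to follow the exact same pattern established for the Drinfeld--Kohno case in \cref{prop:HoloD2} and its proof, now applied to the $3$-hypergraph refinement. The key point is that the holonomy operad of a topological operad is, by the definitions collected in \cref{subsec:LiePointed} and the paragraph preceding \cref{prop:(Co)Ho(Co)Op}, completely determined by the first cohomology groups $H^1(\overline{\mathcal{M}}_{0,n+1}(\mathbb{R}))$, the cup product $\cup : H^1{}^{\odot 2}\to H^2$, and the operadic structure on these data; dualizing and applying the Koszul duality functor $\acc$ (symmetric monoidal, hence operad-preserving by \cref{Prop:SymMonoFunOp}), one gets an operad in $(\QD^-,\oplus)$ whose image under $\L$ is $\g_{\overline{\mathcal{M}}_{0,n+1}(\mathbb{R})}$. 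So the statement reduces to identifying that operad in skew-symmetric quadratic data with $\EHKR$.

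First I would recall from \cite{EHKR10} the known presentation of $H^\bullet(\overline{\mathcal{M}}_{0,n+1}(\mathbb{R}))$: it is generated in degree $1$ by classes $\omega^n_{ijk}$ indexed by triples (equivalently, by the hyperedges of $\Gamma^3_n$), with quadratic relations, and $\overline{\mathcal{M}}_{0,n+1}(\mathbb{R})$ satisfies \cref{Cond:ConditionI} (finitely generated quadratic presentation, generators in $H^1$; recall $\overline{\mathcal{M}}_{0,n+1}(\mathbb{R})$ is a $K(\pi,1)$ by \cite{EHKR10}, its components path connected). Second, I would compute the dual symmetric quadratic data $(H_1,\mathrm{im}\,\Delta)$ and then its Koszul dual $(s^{-1}H_1, s^{-2}\mathrm{im}\,\Delta)^{\acc}$, checking by a direct linear-algebra computation (exactly as the identification $\mathrm{DK}^!\cong\mathrm{AOS}$ in \cref{subsec:LDOp}) that the resulting skew-symmetric relations are spanned by $\t^n_{ijk}\wedge\t^n_{lmn}$ for disjoint hyperedges together with $\t^n_{ijk}\wedge(\t^n_{lmi}+\t^n_{lmj}+\t^n_{lmk})$; that is, $\EHKR(n)$ as an object of $\QD^-$. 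Third, I would verify that under this identification the operadic structure maps coming from $H_\bullet(\overline{\mathcal{M}}_{0,n+1}(\mathbb{R}))$ agree with the partial composites $\circ_p$ of \cref{eq:CompoOpHYGRA} defining the operad $\EHKR$ of \cref{prop:EHKROp}. Here I would argue exactly as in the proof of \cref{prop:HoloD2}: use the commutative square relating $\circ_p$ on $H_1\oplus H_1$ to $\tilde\circ_p$ on $H_1\otimes\QQ\mathbb 1^m\oplus\QQ\mathbb 1^n\otimes H_1$, and then read off the formul\ae\ from the known operad structure on $H_\bullet(\overline{\mathcal{M}}_{0,n+1}(\mathbb{R}))$ (the $2$-Poisson / hypercommutative-type structure computed in \cite{EHKR10}), where grafting a corolla of commutative multiplications at an input and expanding via the Leibniz/Jacobi relations produces the sum of shifted-index terms appearing in the $p=i_l$ case of \cref{eq:CompoOpHYGRA}. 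Finally, applying $\L$ to this isomorphism of operads in $(\QD^-,\oplus)$ yields $\g_{\overline{\mathcal{M}}_{0,n+1}(\mathbb{R})}\cong\L(\EHKR)$, since $\L$ is symmetric monoidal by \cref{Thm:ComDiag} and hence preserves operads by \cref{Prop:SymMonoFunOp}.

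The main obstacle I expect is the third step: matching the operadic composition on $H_\bullet$ with the combinatorially-defined $\circ_p$ of \cref{eq:CompoOpHYGRA}, in particular the $p=i_l$ case where one index gets summed over a range. This requires a correct and explicit description of the $\Sy$-module and operad structure of $H_\bullet(\overline{\mathcal{M}}_{0,n+1}(\mathbb{R}))$ from \cite{EHKR10} and a careful bookkeeping of how inserting the unit $\mathbb 1^m$ (the top commutative corolla) at a vertex redistributes a hyperedge over the newly created vertices --- the same phenomenon as grafting in $\mathrm{Gerst}$ in the proof of \cref{prop:HoloD2}, but now with ternary generators, so the Leibniz-type expansion is slightly more involved. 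Everything else (the linear-algebra identification in the second step, the path-connectedness and formality input) is routine given the cited results, and I would simply state it as "similar to the proof of \cref{prop:HoloD2}" together with the explicit dual computation, much as the excerpt already does for the analogous propositions.
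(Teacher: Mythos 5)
Your proposal is correct and follows essentially the same route as the paper: the paper's proof likewise reduces to the isomorphism $H_\bullet\big(\overline{\mathcal{M}}_{0,n+1}(\mathbb{R})\big)\cong 2\text{-}\mathrm{uGerst}$ of \cite{EHKR10}, identifies $w^n_{ijk}=s\t^n_{ijk}$ with the two-vertex tree carrying the shifted $2$-Lie bracket on inputs $i,j,k$ and $\mathbb{1}^n$ with the commutative corolla, and recovers the formul\ae\ of \cref{eq:CompoOpHYGRA} via the generalised Leibniz relation, exactly as you anticipate in your third step. One small caveat: no formality input is needed (and indeed $\overline{\mathcal{M}}_{0,n+1}(\mathbb{R})$ fails to be formal for $n\geqslant 6$), since the holonomy operad is defined purely from the (co)homology Hopf (co)operad.
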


\begin{remark}
The topological operad $\overline{\mathcal{M}}_{0,n+1}(\mathbb{R})$ fails to be well pointed; its components are  connected with fundamental groups $\pi_1\big(\overline{\mathcal{M}}_{0,n+1}(\mathbb{R})\big)\cong\mathrm{PC}_n$ called the pure cactus group in \cite{EHKR10}. It important to notice that these topological spaces however fail to be formal for $n\geqslant 6$.  It is however conjectured in \cite{EHKR10} that the Etingof--Henriques--Kamnitzer--Rains holonomy Lie algebras are isomorphic to the Magnus construction
\[
\L(\EHKR(n))=\frac{Lie \big(\t^n_{ijk}\big)}{\left(\big[\t^n_{ijk}, \t^n_{lmn}\big], \  
\big[\t^n_{ijk}, \t^n_{lmi}+\t^n_{lmj}+\t^n_{lmk}\big]\right)}\cong \gr({\mathrm{PC}}_n)\otimes \QQ\ .
\] 
The operad in groupoids $\Pi_1\big(\overline{\mathcal{M}}_{0,n+1}(\mathbb{R})\big)$ is equivalent to the operad in groupoids which encodes coboundary monoidal categories, see \cite{HK06}. 
\end{remark}

It is proved in \cite[Proposition~3.1]{EHKR10} that the Koszul dual symmetric data $\EHKR^!$ is equal to 
\[
\EHKR^!(n)=\left(\omega^n_{ijk}\, , \, 
\omega^n_{ijk}\odot \omega^n_{klm} +
\omega^n_{jkl}\odot \omega^n_{lmi} +
\omega^n_{klm}\odot \omega^n_{mij} +
\omega^n_{lmi}\odot \omega^n_{ijk} +
\omega^n_{mij}\odot \omega^n_{jkl} 
\right)\ ,
\]
where the generator have (homological) degree $-1$. 
The main theorem of \cite{EHKR10} asserts that this quadratic data provides us with a presentation of the cohomology algebra:
\begin{align*}
\S\big(\mathrm{EHKR}^!(n)\big)&=
\frac{S\big(\omega^n_{ijk}\big)}
{\left(
\omega^n_{ijk}\odot \omega^n_{klm} +
\omega^n_{jkl}\odot \omega^n_{lmi} +
\omega^n_{klm}\odot \omega^n_{mij} +
\omega^n_{lmi}\odot \omega^n_{ijk} +
\omega^n_{mij}\odot \omega^n_{jkl} 
\right)}\\
&\cong H^{\bullet}\left(\overline{\mathcal{M}}_{0,n+1}(\mathbb{R})\right)\ .
\end{align*}
A presentation for the homology operad is also given in \emph{loc. cit.}: it is shown to be isomorphic to the the operad encoding unital 2-Gerstenhaber algebras
 $H_\bullet(\overline{\mathcal{M}}_{0,n+1}(\mathbb{R}))\cong 2\text{-}\mathrm{uGerst}$. This kind of algebraic structure is made up of a degree $0$ unital commutative product and a degree $1$ skew-symmetric ``2-Lie bracket'' of arity $3$ which satisfy generalised Leibniz and Jacobi relations.

\begin{proof}[Proof of \cref{prop:HoloMR}]
This proof is similar to the one of \cref{prop:HoloD2}. 
The isomorphism of operads  
 $H_\bullet\big(\overline{\mathcal{M}}_{0,n+1}(\mathbb{R})\big)\cong 2\text{-}\mathrm{uGerst}$ of \cite{EHKR10}
identifies the following elements 
\[
w^n_{ijk}\longleftrightarrow 
\vcenter{\hbox{\begin{tikzpicture}[yscale=0.5,xscale=0.5]
\draw (0,-1) -- (0,2) -- (1,3);
\draw (0,2) -- (-1, 3);
\draw (0,0) -- (-1, 1);
\draw (0,0) -- (-2, 1);
\draw (0,0) -- (1, 1);
\draw (0,0) -- (2, 1);
\draw (0,2) -- (0,3);
\node at (-1,3.4) {\scalebox{0.8}{$i$}};
\node at (0,3.4) {\scalebox{0.8}{$j$}};
\node at (1,3.4) {\scalebox{0.8}{$k$}};
\node at (-2,1.4) {\scalebox{0.8}{$1$}};
\node at (2,1.4) {\scalebox{0.8}{$n$}};
\node at (1,1.4) {\scalebox{0.8}{$\cdots$}};
\node at (-1,1.4) {\scalebox{0.8}{$\cdots$}};
\node at (0,2) {$\bullet$};
\end{tikzpicture}}}
\qquad \text{and} \qquad 
\mathbb{1}^n \longleftrightarrow 
\vcenter{\hbox{\begin{tikzpicture}[yscale=0.5,xscale=0.5]
\draw (0,-1) -- (0,1) ;
\draw (0,0) -- (-1, 1);
\draw (0,0) -- (-2, 1);
\draw (0,0) -- (1, 1);
\draw (0,0) -- (2, 1);
\node at (-2,1.4) {\scalebox{0.8}{$1$}};
\node at (2,1.4) {\scalebox{0.8}{$n$}};
\node at (0,1.4) {\scalebox{0.8}{$\cdots$}};
\end{tikzpicture}}}
\ ,
\]
where the degree $1$ element 
$w_{ijk}^n$ stands for $\left(\omega^n_{ijk}\right)^*=s\t^n_{ijk}$ and where 
$\bullet$ denotes the shifted 2-Lie bracket. Under this correspondence, the operad structure on $2\text{-}\mathrm{uGerst}$ produces the 
formul\ae\ given in \cref{eq:CompoOpHYGRA}. For instance, the partial composite 
$w^n_{ijk} \circ_i \mathbb{1}^m$ gives
\[w^n_{ijk} \circ_i \mathbb{1}^m =
\sum_{l=0}^{m-1} w^{n+m-1}_{i+l,j+m-1,k+m-1}\ ,
\]
 by the generalised Leibniz relation.
\end{proof}

The canonical morphism of operads $\EHKR\to 3\text{-}\mathrm{HG}$ in $\QD^-$ induces a  morphism  of cocommutative Hopf operads 
\[
2\text{-}\mathrm{uGerst} \cong H_\bullet\big(\overline{\mathcal{M}}_{0,n+1}(\mathbb{R})\big)
\cong \Sc\big(\mathrm{EHKR}^{\acc}\big)
\to 
\mathrm{Gra^3} \cong H_\bullet\big(\GraS^3\big)\cong \Sc\big(3\text{-}\mathrm{HG}^{{\acc}}\big) \ .
\]
In the light of \cite{Willwacher15}, the study of the deformation complex of this morphism of operads is a very interesting  question. 
What is the analogue of the Grothendieck--Teichm\"uller Lie algebra $\mathfrak{grt}$ (case $n=2$) here (case $n=3$)? 

\subsection{Linear hypergraphs and real brick manifolds $\mathcal{B}_{\mathbb{R}}(n)$}
Following the same pattern, one can give a $k$-hypergraph generalisation of \cref{subsec:ncD} too. The starting point amounts to considering only \emph{linear $k$-hypergraphs}, i.e. the ones made up of intervals of length $k$. We denote the complete linear $k$-hypergraph by $\Theta^k_n$. 
We define the pointed topological ns operad 
$\As^k_{S^1}$  by  $\As^k_{S^1}(n)\coloneqq \{*\}$, for $n<k$, and by $\As_{S^1}(n)\coloneqq \big(S^1\big)^{n-k+1}$, for $n\geqslant k$. Its  elements $\big(
x_{1 k}, \ldots, x_{n-k+1 n}
\big)$ are seen as labels, living in $S^1$, of the intervals of lengths $k$ of $\{1, n\}$. The  partial  composition products are given by  $\big(
x_{1 k}, \ldots, x_{n-k+1  n}
\big)\circ_i\big(
y_{1 k}, \ldots, y_{m-k+1  m}
\big):=
$
\begin{align*}
\big(
x_{1 k}, \ldots, x_{i-k+1  i}, 
\underbrace{*, \ldots, *}_{k-2}, 
y_{1 k}, \ldots, y_{m-k+1  m}, 
\underbrace{*, \ldots, *}_{k-2}, 
x_{i i+k-1}, \ldots, x_{n-k+1 n}
\big)
\ . 
\end{align*}

The special case $k=2$ gives back the operad $\As_{S^1}=\As^2_{S^1}$ of \cref{subsec:ncD}.

\begin{definition}[Linear $k$-Hypergraph skew--symmetric quadratic data]
The \emph{Linear $k$-Hypergraph skew--symmetric quadratic data} are spanned by 
\[
\kLHG(n)\coloneqq\left(
\t^n_{i i+k-1}\, ,\, \t^n_{i i+k-1}\wedge \t^n_{j j+k-1}
\right)\ ,
\]
where 
the set of generators $\t^n_{i i+k-1}$ of degree $0$ runs over the set of linear hyperedges of $\Theta^k_n$ and where the  set of relations runs over all pairs of  hyperedges of $\Theta^k_n$. 
For $n<k$, we set $\kLHG(n)\coloneqq(0, 0)$.
\end{definition}

We consider the following maps $\circ_p : \kLHG(n)\oplus \kLHG(m)\to \kLHG(n+m-1)$.
\begin{equation}\label{eq:CompoOpLHYGRA}
\begin{array}{lcll}
& \t^n_{i i+k-1} & \mapsto & \left\{\begin{array}{lll}  
\t^{n+m-1}_{i+m-1 i+k+m-2} & \text{for} & p\leqslant i\ ,\\
\rule{0pt}{12pt} 0 & \text{for} & i<p<i+k-1\ , \\
\rule{0pt}{12pt} \t^{n+m-1}_{i i+k-1} & \text{for} & i+k-1\leqslant p \ ,
\end{array}\right.\\
\rule{0pt}{12pt} & \t^m_{j j+k-1} & \mapsto & \quad  \t^{n+m-1}_{j+p-1 j+k+p-2}\ .\\
\end{array}
\end{equation}

\begin{lemma}\label{lem:kLHGDataOp}
The aforementioned data 
$\kLHG \coloneqq \big( \{k\text{-}\mathrm{LHG}(n)\}, \{\circ_p\}\big)$
 forms a nonsymmetric operad in the symmetric monoidal category \rm$(\QD^-, \oplus)$.
\end{lemma}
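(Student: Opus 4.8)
The plan is to verify directly that the collection $\kLHG = \big(\{k\text{-}\mathrm{LHG}(n)\}, \{\circ_p\}\big)$ satisfies the axioms of a nonsymmetric operad in $(\QD^-, \oplus)$, following the template already used in \cref{lem:KWDataOp} and \cref{lem:kHGDataOp}. The first and simplest observation is that, since the spaces of relations $R(n) = V(n)^{\wedge 2}$ are the \emph{full} skew-symmetric squares $V(n)^{\wedge 2}$, where $V(n) = \QQ\big\{\t^n_{i\,i+k-1}\big\}$, every linear map $\circ_p$ between the underlying generating spaces automatically extends to a morphism of skew-symmetric quadratic data: there is no relation constraint to check, because $(\circ_p)^{\wedge 2}$ lands in the full square by construction. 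So the only real content is to check that the maps $\circ_p$ of \cref{eq:CompoOpLHYGRA} assemble into a nonsymmetric operad structure on the level of the generating $\QD^-$-objects, using the direct sum $\oplus$ of \cref{subsec:MonoStruc} as the monoidal product.

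Next I would unwind the combinatorial meaning of $\circ_p$. As in the proof of \cref{lem:kHGDataOp}, I would interpret a generator $\t^n_{i\,i+k-1}$ as the linear $k$-hypergraph on $\{1,\dots,n\}$ with a single hyperedge, namely the interval $\{i,\dots,i+k-1\}$, and interpret the partial composition $\gamma_1 \circ_p \gamma_2$ as first inserting the complete linear $k$-hypergraph structure of arity $m$ at the $p$th vertex, relabelling the vertices by the usual operadic convention (vertices $<p$ unchanged, the $m$ inserted vertices occupying positions $p,\dots,p+m-1$, vertices $>p$ shifted by $m-1$), and then recording what happens to a hyperedge. A hyperedge of $\gamma_1$ entirely to the left of $p$ (i.e. $p\le i$) or entirely to the right (i.e. $i+k-1\le p$) survives with its endpoints shifted appropriately; a hyperedge of $\gamma_1$ that \emph{straddles} the vertex $p$ strictly (i.e. $i<p<i+k-1$) gets destroyed, since inserting $m$ vertices inside it would make it no longer a linear interval of length $k$ — this explains the $0$ in the middle case of \cref{eq:CompoOpLHYGRA}. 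With this picture, the sequential axiom ($\circ_p$ followed by $\circ_q$ in two nested ways), the parallel axiom (two disjoint insertions commuting up to relabelling), and the unit axiom (composition with $\kLHG(1) = (0,0)$, where insertion of a single vertex changes nothing) all reduce to bookkeeping on which interval a hyperedge lands in and by how much its labels are shifted.

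I would then carry out the three axiom checks in order: first the left and right unital axioms, which are immediate from the normalization $\kLHG(1)=(0,0)$ and the fact that the arity-one composition maps are forced to be identities on generators; then the parallel axiom, where the two insertions happen at incomparable positions and one verifies that the two composites differ only by the relabelling bijection, which is the same bijection on both sides; then the sequential axiom, which is the most case-heavy, since one must track a generator of $\kLHG$ of the innermost arity through a double insertion and compare with the single insertion into the composite — here the interesting subtlety is making sure the ``straddling hyperedge vanishes'' rule is consistent under iteration, i.e. a hyperedge that would be destroyed by the nested composition is exactly the one destroyed by the composite insertion. Equivariance is vacuous here since this is a \emph{nonsymmetric} operad. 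The main obstacle, though entirely routine, will be organizing the case analysis in the sequential axiom cleanly: the position $p$ can be left of, straddling, or right of a given hyperedge, and the second insertion position $q$ (taken relative to the already-relabelled vertex set) interacts with these three cases, so one gets a modest matrix of subcases. The hypergraph picture makes each subcase transparent, so the honest statement is simply that these are ``straightforward but tedious computations,'' and the proof can be written in the same one-line style as \cref{lem:kHGDataOp}: since the relation spaces are full, the $\circ_p$ are morphisms of quadratic data, and the operad axioms are checked directly by inserting the empty linear $k$-hypergraph at the appropriate vertex.
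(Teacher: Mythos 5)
Your proposal is correct and follows the same route as the paper, whose proof consists of the single sentence ``The proof is straightforward'': the key observations you make --- that the fullness of the relation spaces $R(n)=V(n)^{\wedge 2}$ makes the morphism-of-quadratic-data condition automatic, and that the operad axioms reduce to bookkeeping on the linear-hypergraph interpretation (with the strictly straddling hyperedge sent to $0$) --- are exactly the intended content. One cosmetic slip: when $p\le i$ the hyperedge lies to the \emph{right} of the insertion point (and is shifted by $m-1$), not to the left as you wrote, but this does not affect the argument.
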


\begin{proof}
The proof is straightforward. 
\end{proof}

In the special case $k=2$, we recover the Linear Graph nonsymmetric operad $\mathrm{LG}=2\text{-}\mathrm{LHG}$.

\begin{proposition}\label{prop:HoloGraK}
The holonomy operad and the rational Magnus operad associated to $\As^k_{S^1}$ are isomorphic to the Lie operad associated to skew-symmetric quadratic data $\kLHG$: 
\[\g_{\As^k_{S^1}} \cong \L(\kLHG)\cong \gr\left(\pi_1\big(\As^k_{S^1}\big)\right)\otimes \QQ\ .\]
\end{proposition}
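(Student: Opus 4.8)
The plan is to follow exactly the same template as the proofs of \cref{prop:HoloGraS}, \cref{prop:HoBKW}, and \cref{prop:HoloD2}, since the linear $k$-hypergraph situation differs from the $k$-hypergraph situation of \cref{subsec:Hygraph} only in the combinatorics of which hyperedges occur, and the cohomology of each factor $S^1$ is unchanged. First I would record the two isomorphisms separately. For the holonomy side, the cohomology algebra of $\As^k_{S^1}(n)=(S^1)^{n-k+1}$ is the free graded-commutative algebra on the degree $-1$ generators $\varepsilon^n_{i\,i+k-1}$ dual to the linear hyperedges of $\Theta^k_n$; all cup products of distinct generators are linearly independent, so the cup-product kernel is the full space $V(n)^{\wedge 2}$, the symmetric quadratic data $\big(H^1(\As^k_{S^1}(n)),\ker\cup\big)$ is trivial, and its linear dual followed by the Koszul duality functor $\acc$ produces precisely the skew-symmetric quadratic data $\kLHG(n)=\big(\t^n_{i\,i+k-1}\,,\,\t^n_{i\,i+k-1}\wedge \t^n_{j\,j+k-1}\big)$ under the identification $\t^n_{i\,i+k-1}\cong s^{-1}\big(\varepsilon^n_{i\,i+k-1}\big)^*$. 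This gives $\g_{\As^k_{S^1}}\cong \L(\kLHG)$ at the level of underlying Lie algebras; applying $\L$ to the operad structure of \cref{lem:kLHGDataOp} in $(\QD^-,\oplus)$, transported along the (strong symmetric monoidal) functors of \cref{Thm:ComDiag}, gives the holonomy operad structure.

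Next I would verify the operadic compatibility, i.e. that the partial compositions of $\kLHG$ in \cref{eq:CompoOpLHYGRA} agree, under the above identification, with the induced operad structure on $H_\bullet(\As^k_{S^1})$. For this one describes the homology ns operad $H_\bullet(\As^k_{S^1})$: the homology coalgebra of $S^1$ is the coalgebra of dual numbers on one degree $1$ generator, so $H_\bullet(\As^k_{S^1}(n))\cong S^c\big(s\t^n_{i\,i+k-1}\big)$ is the cofree cocommutative coalgebra on the linear hyperedges, with basis the squarefree monomials in the $s\t^n_{i\,i+k-1}$ — exactly the sub-linear-$k$-hypergraphs of $\Theta^k_n$. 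The partial composition map on $S^1$-factors sends an interval label to either a single relabelled interval or to $0$ (when the insertion vertex lies strictly inside the interval, so that the interval gets stretched beyond length $k$ and disappears in the degenerate configuration), matching the three cases of \cref{eq:CompoOpLHYGRA}; since partial compositions of coalgebra morphisms are determined by their corestriction to $s\t^n_{i\,i+k-1}$, checking agreement on generators and on the counits $\mathbb{1}^n$ suffices. This is the same computation performed in the core of the proof of \cref{prop:HoBKW}, restricted to the linear hyperedges, so it goes through \emph{mutatis mutandis}.

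Finally, for the rational Magnus side, since $\As^k_{S^1}(n)=(S^1)^{n-k+1}$ is a torus, one has $\pi_1\big(\As^k_{S^1}(n)\big)\cong \mathbb{Z}^{n-k+1}$ abelian, so its lower central series terminates at the first step and $\gr\big(\pi_1(\As^k_{S^1}(n))\big)\otimes\QQ\cong \QQ^{n-k+1}$, an abelian Lie algebra concentrated in degree $1$; this coincides with $\L(\kLHG(n))$ because the space of relations of $\kLHG(n)$ is all of $V(n)^{\wedge 2}$, which kills every bracket, so $\L(\kLHG(n))$ is abelian of dimension $\binom{n-k+1}{1}=n-k+1$. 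The base-point compatibility of $\As^k_{S^1}$ makes it an operad in $(\textbf{Top}_*,\times)$, so \cref{lem:PiGrSymMono} applies and the partial compositions of the Magnus operad are induced by those of the topological operad; under the identification these are read off directly from the formula for $\circ_i$ on the $S^1$-factors, and again match \cref{eq:CompoOpLHYGRA}. I do not expect any genuine obstacle here — the only point requiring a little care, and the closest thing to a main difficulty, is bookkeeping the index shifts in \cref{eq:CompoOpLHYGRA} and checking that the ``degenerate'' case $i<p<i+k-1$ really corresponds to the vanishing of the corresponding interval label in both the homology and the Magnus pictures; this is precisely the point at which linear hypergraphs differ from ordinary linear graphs, where no such vanishing occurs.

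\begin{proof}
This proof is the same \emph{mutatis mutandis} as the ones of \cref{prop:HoloGraS}, \cref{prop:HoBKW}, and \cref{prop:HoloD2}.

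Since $\As^k_{S^1}(n)=(S^1)^{n-k+1}$ for $n\geqslant k$, the cohomology algebra $H^\bullet(\As^k_{S^1}(n))$ is the free graded-commutative algebra on the degree $-1$ classes $\varepsilon^n_{i\,i+k-1}$ dual to the linear hyperedges of $\Theta^k_n$. All products of distinct generators are linearly independent, so $\ker\cup = \big\langle \varepsilon^n_{i\,i+k-1}\big\rangle^{\odot 2}$ and the cohomology symmetric quadratic data is trivial:
\[
\big(H^1(\As^k_{S^1}(n)),\ker\cup\big)=\big(\varepsilon^n_{i\,i+k-1}\,,\,\varepsilon^n_{i\,i+k-1}\odot \varepsilon^n_{j\,j+k-1}\big)\ .
\]
Taking the degree-wise linear dual and then the Koszul duality functor $\acc$ yields, under the identification $\t^n_{i\,i+k-1}\cong s^{-1}\big(\varepsilon^n_{i\,i+k-1}\big)^*$,
\[
\big(s^{-1}H_1(\As^k_{S^1}(n)), s^{-2}\mathrm{im}\,\Delta\big)=\big(\t^n_{i\,i+k-1}\,,\,\t^n_{i\,i+k-1}\wedge \t^n_{j\,j+k-1}\big)=\kLHG(n)\ ,
\]
which gives the first isomorphism at the level of underlying Lie algebras. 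Applying the functor $\L$ to the operad structure of \cref{lem:kLHGDataOp} transports, along the symmetric monoidal functors of \cref{Thm:ComDiag}, to the holonomy operad structure. To check that this agrees with the operad structure induced on $H_1(\As^k_{S^1})$, one describes the homology ns operad: since the homology coalgebra of $S^1$ is the coalgebra of dual numbers on one degree $1$ generator, one has $H_\bullet(\As^k_{S^1}(n))\cong S^c\big(s\t^n_{i\,i+k-1}\big)$, with basis the squarefree monomials in the $s\t^n_{i\,i+k-1}$, i.e. the sub-linear-$k$-hypergraphs of $\Theta^k_n$. Its partial compositions are morphisms of cocommutative coalgebras, hence determined by their corestriction to $s\t^n_{i\,i+k-1}$; computing these on the generators and on the counits $\mathbb{1}^n$ reproduces exactly the three cases of \cref{eq:CompoOpLHYGRA}, the vanishing case $i<p<i+k-1$ corresponding to the degenerate configuration in which the stretched interval exceeds length $k$.

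Finally, $\pi_1\big(\As^k_{S^1}(n)\big)\cong\mathbb{Z}^{n-k+1}$ is abelian, so its lower central series terminates at the first step and $\gr\big(\pi_1(\As^k_{S^1}(n))\big)\otimes\QQ\cong \QQ^{n-k+1}$ is the abelian Lie algebra concentrated in degree $1$; this agrees with $\L(\kLHG(n))$, which is abelian since the space of relations of $\kLHG(n)$ is all of $V(n)^{\wedge 2}$. By \cref{lem:PiGrSymMono}, the partial compositions of the rational Magnus operad are induced by those of $\As^k_{S^1}$, and under the above identification these coincide with \cref{eq:CompoOpLHYGRA}.
\end{proof}
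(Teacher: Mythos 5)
Your overall route is the same as the paper's: the printed proof is literally a \emph{mutatis mutandis} reference to \cref{prop:HoloGraS}, and you instantiate that template correctly (torus cohomology for the holonomy side, cofree cocommutative homology coalgebras with compositions determined by corestriction to generators for the operadic compatibility, abelian fundamental groups for the Magnus side). The conclusion and nearly all of the details are right.

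There is, however, one genuinely incorrect intermediate step: the claim that $\ker\cup=\big\langle\varepsilon^n_{i\,i+k-1}\big\rangle^{\odot 2}$, i.e.\ that the cup product vanishes on the whole graded-symmetric square of $H^1$. For a torus this is false. With the paper's Koszul-signed convention one has $\varepsilon_I\odot\varepsilon_J=\varepsilon_I\otimes\varepsilon_J-\varepsilon_J\otimes\varepsilon_I$ for the odd-degree classes $\varepsilon_I$, so $\cup(\varepsilon_I\odot\varepsilon_J)=2\,\varepsilon_I\cup\varepsilon_J\neq 0$: the graded-symmetric square of an odd-degree space is the \emph{ungraded} antisymmetric square, which is exactly where the exterior product is injective. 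Hence $\ker\cup=0$, as recorded in the proof of \cref{prop:HoloGraS} in the form $(\varepsilon^n_{ij},\emptyset)$. This matters for your deduction: if $\ker\cup$ were everything, then $\mathrm{im}\,\Delta=(\ker\cup)^\perp$ would be zero and the Koszul-dual skew-symmetric data would have \emph{no} relations, contradicting the identification with $\kLHG(n)$ that you (correctly) write on the next line. The correct chain is: $\ker\cup=0$, hence $\mathrm{im}\,\Delta$ is all of $H_1^{\odot 2}$, hence after the double desuspension the relations are all of $V(n)^{\wedge 2}$. The fix is one line; everything else in your argument stands.
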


\begin{proof}
This proof is the same \emph{mutatis mutandis} as the one of \cref{prop:HoloGraS}
\end{proof}

One can define a \emph{linear $k$-hypergraph graded operad} $\mathrm{LGra}^k$. Its underlying $\mathbb{N}$-modules are spanned by
sub-hypergraphs of $\Theta^k_n$, where each hyperedge receives degree $1$. The partial composition product $\gamma_1\circ_p\gamma_2$ amounts to first inserting the linear $k$-hypergraph $\gamma_2$ at the $p$th vertex of $\gamma_1$, then relabelling accordingly the vertices, and finally keeping only the hyperedges of $\gamma_1$ which do not contain $p$, or for which $p$ is a minimum or a maximum element. 

\begin{proposition}\label{prop:HokHG}
The following three cocommutative Hopf operads are isomorphic 
\[H_\bullet\big(\As^k_{S^1}\big)\cong \S^c\big(\kLHG^{\emph{\acc}}\big) \cong \mathrm{LGra^k}\ .\]
\end{proposition}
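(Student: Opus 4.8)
The plan is to follow the three-part pattern of the proof of \cref{prop:HoBKW}, transposed to the nonsymmetric linear setting; all three objects will be identified with the explicit cocommutative Hopf ns operad $\mathrm{LGra}^k$.

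For the leftmost isomorphism I would first observe that each component $\As^k_{S^1}(n)$ is a point for $n<k$ and a torus $\big(S^1\big)^{n-k+1}$ for $n\geqslant k$; in particular it is path connected and its cohomology is the exterior algebra on $n-k+1$ degree one generators, which is finitely generated and homogeneous quadratic, so $\As^k_{S^1}$ satisfies \cref{Cond:ConditionI}. Hence \cref{prop:(Co)Ho(Co)Op} applies and gives $H_\bullet\big(\As^k_{S^1}\big)\cong\S^c\big(H_1(\As^k_{S^1}),\mathrm{im}\,\Delta\big)$ as cocommutative Hopf ns operads. Since $H_\bullet(S^1)$ is the dual-numbers coalgebra on one degree one generator, the restriction of the cup product $H^1(\As^k_{S^1}(n))^{\odot 2}\to H^2(\As^k_{S^1}(n))$ is an isomorphism, so the cohomology symmetric quadratic data carries no relations, dually $\mathrm{im}\,\Delta=H_1(\As^k_{S^1}(n))^{\odot 2}$, and the Koszul duality functor $\acc$ turns the homology data $\big(H_1(\As^k_{S^1}),\mathrm{im}\,\Delta\big)$ into $\kLHG$ — equivalently $\kLHG^{\acc}(n)\cong\big(H_1(\As^k_{S^1}(n)),\mathrm{im}\,\Delta\big)$ — under the identification of $\t^n_{i\,i+k-1}$ with $s^{-1}$ of the homology generator of the $[i,i+k-1]$-circle; this is the computation underlying \cref{prop:HoloGraK}. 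Because $\acc$ is, in the reverse direction, a symmetric monoidal equivalence (\cref{Thm:ComDiag}), this identification is compatible with the operad structures, and we obtain $H_\bullet\big(\As^k_{S^1}\big)\cong\S^c(\kLHG^{\acc})$.

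For the remaining isomorphism I would make the identification with $\mathrm{LGra}^k$ explicit, as in \cref{prop:HoBKW}. Writing $V(n):=\QQ\{\t^n_{i\,i+k-1}\}$, the functors $\acc$ and then $\S^c$ turn $\kLHG(n)$ into the cofree cocommutative coalgebra $S^c(sV(n))$, whose monomial basis $s\t^n_{I_1}\odot\cdots\odot s\t^n_{I_r}$, with the $I_j$ pairwise distinct length-$k$ intervals of $\Theta^k_n$, is in bijection with the sub-linear-$k$-hypergraphs of $\Theta^k_n$, hence with the basis of $\mathrm{LGra}^k(n)$; this bijection matches the coproducts — both distribute the hyperedges over the two tensor factors — and sends the counit $\mathbb{1}^n$ to the edgeless hypergraph $\emptyset^n$. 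The partial composition products of $\S^c(\kLHG^{\acc})$ are morphisms of cocommutative coalgebras, hence determined by their projections onto the cogenerators $sV(n+m-1)$, which are read off from \eqref{eq:CompoOpLHYGRA} through the monoidal structures of $\acc$ and $\S^c$ displayed in \cref{Thm:ComDiag}; comparing the images of $s\t^n_I\otimes\mathbb{1}^m$, of $\mathbb{1}^n\otimes s\t^m_J$ and of $\mathbb{1}^n\otimes\mathbb{1}^m$ with the insertion rule defining $\mathrm{LGra}^k$ gives $\S^c(\kLHG^{\acc})\cong\mathrm{LGra}^k$. Running the identical argument directly on $H_\bullet\big(\As^k_{S^1}\big)$, using that the homology of a product of circles is a tensor product of dual-numbers coalgebras and that the padding base points inserted by the operadic composition of $\As^k_{S^1}$ contribute only the counit on homology, yields $H_\bullet\big(\As^k_{S^1}\big)\cong\mathrm{LGra}^k$ as well, which closes the chain of isomorphisms.

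The routine part is the coalgebra bookkeeping and the small-arity cases $n<k$, where all three operads are trivial. The one genuinely delicate point, exactly as in \cref{prop:HoBKW}, is matching the two families of partial composition products interval by interval: checking that the ``$0$'' entries of \eqref{eq:CompoOpLHYGRA} — an occurrence of the grafting vertex $p$ strictly inside a length-$k$ interval — correspond on the topological side to a circle coordinate collapsing onto the base point, that an endpoint occurrence of $p$ merely reindexes the interval, and that all the relabelling shifts are tracked correctly.
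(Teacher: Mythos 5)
Your proposal is correct and follows exactly the route the paper intends: its own proof of this proposition is the one-line remark that it is ``similar to that of \cref{prop:HoBKW}'', and you have simply carried out that analogy in full (Condition~\ref{Cond:ConditionI} for the torus components, the vanishing of $\ker\cup$, the cofree coalgebra basis indexed by sub-hypergraphs of $\Theta^k_n$, and the matching of \eqref{eq:CompoOpLHYGRA} with the insertion rule of $\mathrm{LGra}^k$, including the base-point padding accounting for the $0$ entries). Nothing to correct.
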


\begin{proof}
This proof is similar to that of \cref{prop:HoBKW}. 
\end{proof}

\begin{remark}
For any $k\geqslant 2$, there is a canonical morphism 
$s\mathrm{pAs}^k\to \mathrm{LGra}^k$ from the nonsymmetric operad of (shifted) partially associative $k$-algebras \cite[Section~13.11.1]{LodayVallette12} which sends its generator to $\t^k_{1 k}$. Since this operad (unshifted) is the unit for the black product of $k$-ary quadratic nonsymmetric operads, one can develop a similar twisting procedure as that of  \cite{Willwacher15} according to \cite[Remark~5.8]{DotsenkoShadrinVallette18}. The study of the resulting dg nonsymmetric operad $\mathrm{TwLGra}^k$ is again an interesting subject. 
\end{remark}

A non-commutative version for the moduli spaces $\overline{\mathcal{M}}_{0,n+1}$ of stable curves with marked points was given in 
\cite{DotsenkoShadrinVallette15} by means of toric varieties called \emph{brick manifolds} and denoted by $\mathcal{B}(n)$. This family was endowed with a topological nonsymmetric operad structure. The linear $3$-hypergraph quadratic data is related to this ns operad $\mathcal{B}_{\mathbb{R}}$ in the real case. 

\begin{proposition}\label{prop:HoloBR}
The holonomy operad associated to $\mathcal{B}_{\mathbb{R}}$ is isomorphic to the Lie operad associated to skew-symmetric quadratic data $3\text{-}\mathrm{LHG}$: 
\[\g_{\mathcal{B}_{\mathbb{R}}}\cong \L(3\text{-}\mathrm{LHG})\ .\]
\end{proposition}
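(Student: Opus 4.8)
The plan is to follow the pattern of the proofs of \cref{prop:HoloD2} and \cref{prop:HoloMR}, with the real brick manifolds of \cite{DotsenkoShadrinVallette15} playing the role of the configuration spaces and of $\overline{\mathcal{M}}_{0,n+1}(\mathbb{R})$. As in those cases, the statement splits into two parts: first identify the underlying skew-symmetric quadratic data $\big(s^{-1}H_1(\mathcal{B}_{\mathbb{R}}(n)),\, s^{-2}\mathrm{im}\,\Delta\big)$ with $3\text{-}\mathrm{LHG}(n)$, and then check that the isomorphism so obtained intertwines the partial compositions.

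For the first part I would use the geometry of the brick manifolds recalled in \cite{DotsenkoShadrinVallette15}: $\mathcal{B}(n)$ is a smooth projective toric variety of complex dimension $n-2$, realised as an iterated $\mathbb{P}^1$--bundle whose combinatorics is governed by the linear graph $\Theta_n$, so that the real locus $\mathcal{B}_{\mathbb{R}}(n)$ is an iterated $\mathbb{RP}^1\cong S^1$--bundle. From this description I would read off a canonical basis of $H_1(\mathcal{B}_{\mathbb{R}}(n))$ — the fibre classes of the successive $S^1$--bundles — indexed by the linear $3$--hyperedges $\{i,i+1,i+2\}$ of $\Theta^3_n$, and check that the cup product $H^1(\mathcal{B}_{\mathbb{R}}(n))^{\odot 2}\to H^2(\mathcal{B}_{\mathbb{R}}(n))$ is injective, exactly as for $\GraS^k$ and $\As^k_{S^1}$; equivalently $\Delta\colon H_2(\mathcal{B}_{\mathbb{R}}(n))\to H_1(\mathcal{B}_{\mathbb{R}}(n))^{\odot 2}$ is surjective and $\ker\cup=0$. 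Dualising and applying the Koszul duality functor $\acc$ then yields the isomorphism of skew-symmetric quadratic data
\[
\big(s^{-1}H_1(\mathcal{B}_{\mathbb{R}}(n)),\, s^{-2}\mathrm{im}\,\Delta\big)\ \cong\ 3\text{-}\mathrm{LHG}(n)
\]
under $\t^n_{i\,i+2}\cong s^{-1}(\omega^n_{i\,i+2})^*$, hence $\g_{\mathcal{B}_{\mathbb{R}}(n)}\cong\L(3\text{-}\mathrm{LHG}(n))$ at the level of Lie algebras.

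It remains to promote this to an isomorphism of Lie operads, that is, to show that the partial compositions induced on the collection $\big\{\big(s^{-1}H_1(\mathcal{B}_{\mathbb{R}}(n)),\, s^{-2}\mathrm{im}\,\Delta\big)\big\}$ by the topological operad structure of $\mathcal{B}_{\mathbb{R}}$ coincide with the maps $\circ_p$ of \eqref{eq:CompoOpLHYGRA}. As in the proof of \cref{prop:HoBKW}, I would argue this by describing the homology cocommutative Hopf operad $H_\bullet(\mathcal{B}_{\mathbb{R}})$: identify $\t^n_{i\,i+2}$ with the homology class of the torus-invariant rational curve given by the $i$-th fibre and $\mathbb{1}^n$ with the counit, and then track the three generating elements $\gamma^n_{i\,i+2}\otimes\emptyset^m$, $\emptyset^n\otimes\gamma^m_{j\,j+2}$ and $\emptyset^n\otimes\emptyset^m$ through the operadic composition $\mathcal{B}_{\mathbb{R}}(n)\times\mathcal{B}_{\mathbb{R}}(m)\to\mathcal{B}_{\mathbb{R}}(n+m-1)$; since this composition is modelled on inserting the linear hypergraph $\Theta^3_m$ at the $p$-th vertex of $\Theta^3_n$, their images are precisely the right-hand sides of \eqref{eq:CompoOpLHYGRA}, which, because the induced partial compositions are morphisms of cocommutative coalgebras determined by their corestriction to the generators, closes the argument. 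The main obstacle is the first part: extracting from the toric/blow-up description of the brick manifolds the low-degree rational cohomology of their real loci together with an explicit, operadically compatible basis of $H_1$. An alternative I would pursue if that computation proves unwieldy is to exhibit a morphism of pointed topological operads between $\As^3_{S^1}$ and $\mathcal{B}_{\mathbb{R}}$ inducing an isomorphism on rational $H_1$ and compatible with the coproducts, thereby reducing the statement to \cref{prop:HoloGraK}.
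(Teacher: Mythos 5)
Your overall architecture --- first identify the skew-symmetric quadratic data $\big(s^{-1}H_1(\mathcal{B}_{\mathbb{R}}(n)), s^{-2}\mathrm{im}\,\Delta\big)$ with $3\text{-}\mathrm{LHG}(n)$, then verify operadic compatibility through the homology Hopf operad --- is the same as the paper's. But the paper's proof is essentially a citation plus a tree-monomial match: it invokes the isomorphism of ns operads $H_\bullet(\mathcal{B}_{\mathbb{R}})\cong 2\text{-}\mathrm{ncGerst}$ from \cite[Theorem~9.3.1]{DotsenkoShadrinVallette15}, identifies $s\t^n_{i\,i+2}$ with the two-vertex tree carrying the shifted $2$-partially associative product at the inputs $i,i+1,i+2$ and $\mathbb{1}^n$ with the corolla, and reads off the formul\ae\ of \cref{eq:CompoOpLHYGRA} from the generalised Leibniz relation, exactly as in the proofs of \cref{prop:HoloD2} and \cref{prop:HoloMR}. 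You do not use this input, and the substitute you propose for it has a genuine gap.

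Concretely, the claim that $\mathcal{B}_{\mathbb{R}}(n)$ is an iterated $S^1$-bundle from which one reads off a basis of fibre classes in $H_1$ and the injectivity of $\cup$ on $H^1(\mathcal{B}_{\mathbb{R}}(n))^{\odot 2}$ is not justified and does not hold as stated: the brick manifolds are iterated \emph{blow-ups} of products of projective lines, not iterated $\mathbb{P}^1$-bundles. Already $\mathcal{B}(4)$ is the smooth toric surface of the pentagon (a one-point blow-up of $\mathbb{P}^1\times\mathbb{P}^1$), whose real locus is a non-orientable closed surface rather than a torus, so the rational low-degree (co)homology and the image of $\Delta$ cannot be extracted from fibre classes of a fibration; the behaviour of $\cup$ and $\Delta$ here is precisely the delicate content that the paper outsources to \cite[Theorem~9.3.1]{DotsenkoShadrinVallette15}. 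Your fallback is not available either: the components $\As^3_{S^1}(n)=(S^1)^{n-2}$ and $\mathcal{B}_{\mathbb{R}}(n)$ are not homotopy equivalent (compare $(S^1)^2$ with the real pentagon surface), no morphism of topological operads between them is constructed in the literature, and even an $H_1$-isomorphism would not by itself control $\mathrm{im}\,\Delta$ on the target, which is the point at issue. The second half of your argument (determining the induced partial compositions from their corestriction to cogenerators, as in \cref{prop:HoBKW}) is fine once the homology Hopf operad is known, but the proof should be rebuilt on the cited computation of $H_\bullet(\mathcal{B}_{\mathbb{R}})$ rather than on the proposed geometric description.
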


\begin{proof}
The proof is similar to the proof of \cref{prop:HoloMR}. It relies on the isomorphism of ns operads  
 $H_\bullet\big(\mathcal{B}_{\mathbb{R}}\big)\cong 2\text{-}\mathrm{ncGerst}$ from \cite[Theorme~9.3.1]{DotsenkoShadrinVallette15}
which identifies the following elements 
\[
w^n_{ii+2}\longleftrightarrow 
\vcenter{\hbox{\begin{tikzpicture}[yscale=0.5,xscale=0.5]
\draw (0,-1) -- (0,2) -- (1,3);
\draw (0,2) -- (-1, 3);
\draw (0,0) -- (-1, 1);
\draw (0,0) -- (-2, 1);
\draw (0,0) -- (1, 1);
\draw (0,0) -- (2, 1);
\draw (0,2) -- (0,3);
\node at (-1,3.4) {\scalebox{0.8}{$i$}};
\node at (-0.1,3.4) {\scalebox{0.8}{$i+1$}};
\node at (1.05,3.4) {\scalebox{0.8}{$i+2$}};
\node at (-2,1.4) {\scalebox{0.8}{$1$}};
\node at (2,1.4) {\scalebox{0.8}{$n$}};
\node at (1,1.4) {\scalebox{0.8}{$\cdots$}};
\node at (-1,1.4) {\scalebox{0.8}{$\cdots$}};
\node at (0,2) {$\bullet$};
\end{tikzpicture}}}
\qquad \text{and} \qquad 
\mathbb{1}^n \longleftrightarrow 
\vcenter{\hbox{\begin{tikzpicture}[yscale=0.5,xscale=0.5]
\draw (0,-1) -- (0,1) ;
\draw (0,0) -- (-1, 1);
\draw (0,0) -- (-2, 1);
\draw (0,0) -- (1, 1);
\draw (0,0) -- (2, 1);
\node at (-2,1.4) {\scalebox{0.8}{$1$}};
\node at (2,1.4) {\scalebox{0.8}{$n$}};
\node at (0,1.4) {\scalebox{0.8}{$\cdots$}};
\end{tikzpicture}}}
\ ,
\]
where the degree $1$ element 
$w_{i i+2}^n$ stands for $s\t^n_{i i+2}$ and where 
$\bullet$ denotes the shifted 2-partially associative product. Under this correspondence, the ns operad structure on $2\text{-}\mathrm{ncGerst}$ produces exactly the 
formul\ae\ given in \cref{eq:CompoOpLHYGRA}. 
\end{proof}

\subsection{Generalisation} Even if the following definition is not prompted by a family of already known topological operads, it is still possible to produce these skew-symmetric quadratic data $\krHG$ 
refining $\kHG$, for any $k\geqslant 2$, following a general canonical procedure which coincides to the aforementioned examples in the cases $k=2$ (\cref{subsec:LDOp}) and $k=3$ (\cref{subsec:EHKR}). 

\begin{definition}[Refined $k$-Hypergraph skew--symmetric quadratic data]
The \emph{refined $k$-Hypergraph skew--symmetric quadratic data} are spanned by 
\[
\krHG(n)\coloneqq\left(
\t^n_{I}\, ,\, \t^n_{I}\wedge \t^n_{J} \ \&\  
\t^n_{i_1 \ldots i_k}\wedge \big(
\t^n_{J, i_1}+\cdots+\t^n_{J, i_k}
\big)
\right)\ ,
\]
where 
the set of generators $\t^n_{I}$ of degree $0$ runs over the set of hyperedges $I$ of $\Gamma^k_n$, and where the  first set of relations runs over  pairs $(I, J)$ of disjoint hyperedges of $\Gamma^k_n$, i.e. $I\cap J=\emptyset$, and the second set of relations 
runs over pairs $(I=\{i_1, \ldots, i_k\}, J)$  formed by an hyperedge $I$ and a disjoint set $J$ of $k-1$ vertices of $\Gamma^k_n$. 
For $n<k$, we set $\krHG(n)\coloneq(0, 0)$.
\end{definition}

We consider the same partial composition products as the ones for the $k$-Hypergraph quadratic data given in \cref{eq:CompoOpHYGRA}. 

\begin{proposition}\label{prop:krKROp}
The refined $k$-Hypergraph skew--symmetric quadratic data
$\krHG\coloneqq\big(\{\krHG(n)\}, \{\circ_p\}\big)$
 forms an operad in the symmetric monoidal category \rm$(\QD^-, \oplus)$.
\end{proposition}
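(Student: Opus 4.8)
The plan is to follow, \emph{mutatis mutandis}, the strategy of the proofs of \cref{lem:kHGDataOp}, \cref{prop:DKOp}, and \cref{prop:EHKROp}. The statement splits into two independent parts: first, that $\{\circ_p\}$ defines an operad structure on the $\Sy$-module $\{\krHG(n)\}$ when the underlying relation spaces are ignored (i.e.\ that the composition maps satisfy the operadic axioms as maps of graded $\Sy$-modules); second, that each $\circ_p$ actually descends to a morphism of skew-symmetric quadratic data, that is $(\circ_p)^{\wedge 2}$ sends the relation space of the source to the relation space of the target. The first part is \emph{identical} to what was checked in \cref{lem:kHGDataOp}: the composition formul\ae\ are literally the same as in \cref{eq:CompoOpHYGRA}, and the operad axioms there depend only on the combinatorics of inserting the empty $k$-hypergraph at a vertex, or into the empty $k$-hypergraph, so nothing needs rechecking except to invoke that lemma.

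The second part is where the actual work lies, and it is the analogue of the computation in \cref{prop:DKOp} and \cref{prop:EHKROp}. First I would recall $\krHG(n)=(V(n),R(n))$ with $R(n)$ spanned by the two families $\t^n_I\wedge\t^n_J$ ($I\cap J=\emptyset$) and $\t^n_{i_1\dots i_k}\wedge(\t^n_{J,i_1}+\cdots+\t^n_{J,i_k})$ ($|J|=k-1$, $J\cap\{i_1,\dots,i_k\}=\emptyset$), and note that a morphism $\circ_p : \krHG(n)\oplus \krHG(m)\to\krHG(n+m-1)$ in $(\QD^-,\oplus)$ requires
\[
(\circ_p)^{\wedge 2}\bigl(R(n)\oplus [V(n),V(m)]_-\oplus R(m)\bigr)\subset R(n+m-1)\ .
\]
The cleanest route, exactly as in \emph{loc.\ cit.}, is to argue geometrically via the hypergraph picture rather than by brute force in coordinates. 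One checks: (a) a first-type relation $\t_I\wedge\t_J$ in $R(n)$ or $R(m)$ is sent by $(\circ_p)^{\wedge 2}$ to a sum of products $\t_{I'}\wedge\t_{J'}$ of \emph{disjoint} hyperedges — disjointness of $I,J$ is preserved under the insertion/relabelling rule of \cref{eq:CompoOpHYGRA}, whether or not $p$ lies in $I$ or $J$ — hence lands in the first family of $R(n+m-1)$; (b) a second-type relation is sent to a sum of second-type relations, using the same kind of telescoping identity that makes $\t_I\mapsto\sum_{j}\t_{\dots}$ when $p=i_\ell$; (c) the mixed term $[V(n),V(m)]_-$, spanned by $\t^n_I\wedge\t^m_J$ with $I\subset\{1,\dots,n\}$ and $J\subset\{1,\dots,m\}$, is sent under $(\circ_p)^{\wedge 2}$ to a combination of first- and second-type relations, because after relabelling the two hyperedges either become disjoint or overlap in exactly the pattern (one hyperedge plus $k-1$ separate vertices) governing the second family. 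For the universal-characterisation theorems \cref{thm:UnivCharDK} and \cref{thm:UnivCharEHKR} the key was that conversely the images of $[V(n),V(m)]_-$ \emph{generate} all the relations; since the present proposition only claims the operad structure exists, we need only the inclusion direction.

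The main obstacle will be item (c): verifying that $(\circ_p)^{\wedge 2}(\t^n_I\wedge\t^m_J)$ really decomposes into \emph{only} the two allowed families, in full generality for arbitrary $k$ and arbitrary position of $p$ relative to $I$. Unlike the $k=2$ and $k=3$ cases, where one can enumerate the finitely many relative positions of two edges/triangles by hand, here one must organise the argument by the size of the overlap between the relabelled image hyperedges and isolate the case where exactly one of them splits (when $p$ is an interior element $i_\ell$ of $I$, producing the telescoping sum) — this is precisely the configuration that must land in the second-type family, and checking that the vertex-sets line up as ``one hyperedge $+$ $k-1$ separate vertices'' is the delicate combinatorial point. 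I expect this to be straightforward but genuinely tedious, exactly as the authors remark in \cref{prop:DKOp} (``straightforward but tedious computations''); I would therefore present it at the level of the hypergraph-insertion picture and defer the bookkeeping, remarking that \cref{thm:UnivCharEHKR} is the case $k=3$ of the forthcoming \cref{thm:UnivCharGENERAL} so the general combinatorial framework is already in place.
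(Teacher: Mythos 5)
Your proposal is correct and follows essentially the same route as the paper's proof: first- and second-type relations are preserved under $(\circ_p)^{\wedge 2}$, and the only genuine work is the mixed term $[V(n),V(m)]_-$. The single computation you defer is the one the paper writes out explicitly: in the only nontrivial case $\t^n_{J,p}\wedge\t^m_{I}$ (the hyperedge of $V(n)$ containing the insertion vertex $p$), the image is $\bigl(\sum_{i\in I}\t^{n+m-1}_{\tilde{J},i+p-1}\bigr)\wedge\t^{n+m-1}_{I+p-1}+\sum_{i\notin I}\t^{n+m-1}_{\tilde{J},i+p-1}\wedge\t^{n+m-1}_{I+p-1}$, i.e.\ one second-type relation plus a sum of first-type relations --- precisely the decomposition you predicted.
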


\begin{proof}
The proof is similar to that of  \cref{prop:DKOp} and \cref{prop:EHKROp}. Relations of first type (respectively  second type)  are sent to relations of first type (respectively  second type)  under the partial composition maps 
$(\circ_p)^{\wedge 2}$. 
 Any element of the relation $[V(n), V(m)]_-$ is sent to a sum of relations of first and second type under $(\circ_p)^{\wedge 2}$: for instance, the image of  $\t^n_{J, p}\wedge \t^m_{I}$ is equal to 
 \[
\left( \sum_{i\in I} \t^{n+m-1}_{\tilde{J}, i+p-1}
+\sum_{i\notin I} \t^{n+m-1}_{\tilde{J}, i+p-1}
\right)
\wedge \t^{n+m-1}_{I+p-1}
=
\left( \sum_{i\in I} \t^{n+m-1}_{\tilde{J}, i+p-1}\right)
\wedge \t^{n+m-1}_{I+p-1}
+\sum_{i\notin I} \left(\t^{n+m-1}_{\tilde{J}, i+p-1}
\wedge \t^{n+m-1}_{I+p-1}\right)
\ , \]
where $\tilde{J}$ is the "image" of $J$ in the complete $k$-hypergraph $\Gamma^k_{n+m-1}$, which is produced after relabelling. The first term on the right-hand side is a relation of second type and the second term on the right-hand side is a sum of relations of  first type. 
\end{proof}

The canonical morphisms of quadratic data $\krHG(n)\to \kHG(n)$ induces a canonical morphism of operads $\krHG \to \kHG$ in $\QD^-$. The refined $k$-Hypergraph skew--symmetric quadratic data
is characterized by the following universal operadic property. 

\begin{theorem}\label{thm:UnivCharGENERAL}
 The operad $\krHG$  is the smallest sub-operad of $\kHG$.
\end{theorem}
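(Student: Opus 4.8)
The plan is to mimic the proofs of \cref{thm:UnivCharDK} and \cref{thm:UnivCharEHKR}, which are the special cases $k=2$ and $k=3$. Let us write $\krHG(n)=(V(n),R(n))$, where $V(n)$ is generated by the $\t^n_I$ for $I$ a $k$-element subset of $\{1,\dots,n\}$, and let $\mathrm{P}(n)=(V(n),S(n))\subset \kHG(n)$ be an arbitrary sub-operad of $\kHG$, so that $S(n)\subset V(n)^{\wedge 2}$ is stable under the partial composition products $(\circ_p)^{\wedge 2}$. We must show $R(n)\subset S(n)$ for every $n$. Since $R(n)$ is spanned by the two families of relations (pairs of disjoint hyperedges $\t^n_I\wedge \t^n_J$, and ``triangle-type'' relations $\t^n_{i_1\ldots i_k}\wedge(\t^n_{J,i_1}+\cdots+\t^n_{J,i_k})$), it suffices to exhibit each generator of $R(n)$ as the image of some element of $[V(a),V(b)]_-\subset R(a)\oplus[V(a),V(b)]_-\oplus R(b)$ under a suitable $(\circ_p)^{\wedge 2}$; by stability of $S$, that image lies in $S(n)$.

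First I would handle the disjoint-hyperedge relations. Using the $\Sy_n$-action (which acts on $\mathrm{P}(n)$ because $\mathrm{P}$ is an operad, hence equivariant), we may normalise to $I=\{1,\dots,k\}$ and $J=\{k+1,\dots,2k\}$, so $n\geq 2k$. The idea, exactly as in the $k=2,3$ cases, is to start from the ``mixed'' bracket $\t^{\,n-k+1}_{\{1,\dots,k\}}\wedge\t^{\,k}_{\{1,\dots,k\}}\in[V(n-k+1),V(k)]_-$ and apply $(\circ_{p})^{\wedge 2}$ for an appropriate insertion point $p$; choosing $p$ so that inserting the second complete $k$-hypergraph $\Gamma^k_k$ at vertex $p$ of $\Gamma^k_{n-k+1}$ relabels things into $\t^n_{\{1,\dots,k\}}\wedge\t^n_{\{k+1,\dots,2k\}}$ by the ``$p<i_1$'' and ``$i_k<p$'' clauses of \cref{eq:CompoOpHYGRA}. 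Concretely I expect $(\circ_{k+1})^{\wedge 2}\big(\t^{\,n-k+1}_{\{k+1,\dots,2k\}}\wedge \t^{\,k}_{\{1,\dots,k\}}\big)=\t^n_{\{1,\dots,k\}}\wedge\t^n_{\{k+1,\dots,2k\}}$ or a variant thereof; pinning down the exact indices is a bookkeeping matter.

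Next I would treat the refined ``triangle-type'' relations $\t^n_{i_1\ldots i_k}\wedge(\t^n_{J,i_1}+\cdots+\t^n_{J,i_k})$ where $J$ is a $(k-1)$-set disjoint from $I=\{i_1,\dots,i_k\}$. Again normalising by $\Sy_n$, we may take $I=\{1,\dots,k\}$ and $J=\{k+1,\dots,2k-1\}$. The key computation is the one already appearing in the proof of \cref{prop:krKROp}: applying $(\circ_p)^{\wedge 2}$ to an element $\t^a_{J',p}\wedge\t^b_{\{1,\dots,k\}}\in[V(a),V(b)]_-$ produces, via the ``$p=i_l$'' clause of \cref{eq:CompoOpHYGRA} applied to the first factor, precisely a sum of the form $\big(\sum_{i\in I}\t^n_{\tilde J,i}\big)\wedge\t^n_{I}$ plus a sum of disjoint-hyperedge relations already shown to lie in $S(n)$. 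Since the latter are in $S(n)$ and $S(n)$ is a subspace, the ``triangle'' relation itself is in $S(n)$. Taking $b=k$ and the right insertion point realises exactly $\t^n_{1\ldots k}\wedge(\t^n_{J,1}+\cdots+\t^n_{J,k})$ modulo first-type relations.

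\textbf{The main obstacle} I anticipate is purely combinatorial rather than conceptual: one must choose, for each target generator, the correct arities $a,b$ with $a+b-1=n$, the correct insertion vertex $p$, and the correct source bracket in $[V(a),V(b)]_-$, and then verify through the five-way case split of \cref{eq:CompoOpHYGRA} that the relabelling yields exactly the desired relation (up to already-established relations and signs in $V(n)^{\wedge 2}$). This is the same verification that \cref{prop:krKROp} already carries out in the forward direction, so the ingredients are in hand; the task here is to read those identities ``backwards'' to produce, rather than merely check, the relations. As in \cref{thm:UnivCharDK,thm:UnivCharEHKR}, I would present only the two normalised model computations (one per family of relations) and invoke $\Sy_n$-equivariance to reduce the general case to these, leaving the remaining index chase to the reader.
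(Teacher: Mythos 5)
Your proposal follows exactly the paper's argument: reduce to normalized generators via the $\Sy_n$-action and realise each of the two families of relations in $R(n)$ as the image of a single mixed bracket from $[V(a),V(b)]_-$ under $(\circ_p)^{\wedge 2}$, which forces it into $S(n)$. The only point to fix is your tentative formula for the disjoint-hyperedge case, where inserting at $p=k+1$ with first factor $\t^{n-k+1}_{\{k+1,\dots,2k\}}$ would trigger the ``$p=i_1$'' sum clause of \cref{eq:CompoOpHYGRA}; the paper instead computes $(\circ_{k+1})^{\wedge 2}\bigl(\t^{n-k+1}_{\{1,\dots,k\}}\wedge\t^{k}_{\{1,\dots,k\}}\bigr)=\t^{n}_{\{1,\dots,k\}}\wedge\t^{n}_{\{k+1,\dots,2k\}}$ (insertion point strictly above the first hyperedge) and $(\circ_{1})^{\wedge 2}\bigl(\t^{n-k+1}_{\{1,\dots,k\}}\wedge\t^{k}_{\{1,\dots,k\}}\bigr)=\bigl(\t^n_{J,1}+\cdots+\t^n_{J,k}\bigr)\wedge\t^n_{\{1,\dots,k\}}$ for the second family --- precisely the ``variant'' you anticipated, and with no first-type correction terms needed since the inserted factor is the full hyperedge of $\Gamma^k_k$.
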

 
\begin{proof}
We proceed in the same way in the proof of \cref{thm:UnivCharDK}. 
Let us  use the notation $\krHG(n)=(V(n), R(n))$ and let us consider a sub-operad $\mathrm{P}(n)\coloneqq (V(n), S(n))\subset \kHG(n)$ of $\kHG$. We show  that $R(n)\subset S(n)$. 
 We begin with the relations of first type: 
$\t^n_{I}\wedge \t^n_{J}$. 
Using the action of the symmetric group, we can assume,  without any loss of generality, that $I=\{1,\ldots, k\}$ and 
$J=\{k+1, \ldots, 2k-1\}$. We conclude with 
 \[
(\circ_{k+1})^{\wedge 2}\big(\t_I^{n-k+1}\wedge \t_I^k\big)=\t_I^n\wedge \t_J^n\ .
\]
We treat now the relations of second type: $\t^n_{i_1, \ldots, i_k}\wedge \big(
\t^n_{J, i_1}+\cdots+\t^n_{J, i_k}
\big)$.
Using again the action of the symmetric group, the proof reduces to the case $(i, j,k,l)=(1,2,3,4)$, which is given by 
 \[
(\circ_1)^{\wedge 2}\big(\t_I^{n-k+1}\wedge \t_I^k\big)=
\big(
\t^n_{J, 1}+\cdots+\t^n_{J, k}
\big)\wedge \t^n_I\ .
\]

\end{proof} 
 
In the lattice of operads made up of skew-symmetric data with generators $\t_I^n$
and partial composition products $\circ_p$, the $k$-hypergraphs operad $\kHG$  is the maximal element and 
the refined $k$-hypergraphs operad $\krHG$  is the minimal element. 

\begin{definition}[Unital $(k-1)$-Gerstenhaber algebra]
A \emph{unital $(k-1)$-Gerstenhaber algebra}, for $k\geqslant 2$, is a chain complex $A$ equipped with an element $u\in A_0$ and two operations 
$\mu : A^{\odot 2} \to A$ of degree $0$ and $\beta : A^{\odot k} \to A$ of degree $1$ satisfying the following relations. 
\begin{description}
\item[\sc Unit relations] $\mu(u, -)=\id$ and $\beta(u, -, \ldots, -)=0$\ . 
\smallskip

\item[\sc Associativity relation] $\mu \circ_1 \mu =\mu \circ_2 \mu$\ . 
\smallskip

\item[\sc Leibniz relation] $\beta \circ_1 \mu = (\mu \circ_1 \beta)^{(2\, 3\, \cdots\, k \, k+1)}+\mu\circ_2 \beta$\ . 
\smallskip

\item[\sc Jacobi relation] $\sum_{\sigma\in \mathrm{Sh}_{k, k-1}^{-1}} (\beta \circ_1 \beta)^\sigma=0  $\ ,
\end{description}
where $\mathrm{Sh}_{k, k-1}^{-1}$ denotes the set of inverse of $(k,k-1)$-shuffles, also known as $(k,k-1)$-unshuffles.
\end{definition}

We denote the associated operad by $\mathrm{uGerst}_{(k-1)}$, which is generated by three generators, that we still denote respectively by $u$, $\mu$, and $\beta$. We endow it with a 
cocommutative Hopf operad structure by the following assignment:
\begin{align*}
\Delta(u)&\coloneqq u\otimes u \\
\Delta(\mu)&\coloneqq \mu\otimes \mu \\
\Delta(\beta)&\coloneqq \mu^{k-1}\otimes \beta + \beta \otimes \mu^{k-1}Ê\ ,
\end{align*}
where $\mu^{k-1}\coloneqq \underbrace{\mu\circ_1 \mu \circ_1 \cdots \mu \circ_1 \mu}_{k-1 \ \text{times}\ \mu}$\ .

\begin{lemma}
The above assignment defines a cocommutative Hopf operad structure on $\mathrm{uGerst}_{(k-1)}$.
\end{lemma}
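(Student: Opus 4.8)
The plan is to interpret the stated assignments as two morphisms of operads and then to check the coalgebra axioms on generators. Recall that a cocommutative Hopf operad structure on an operad $\mathcal{P}$ is the same as a morphism of operads $\Delta\colon\mathcal{P}\to\mathcal{P}\otimes\mathcal{P}$ for the arity-wise (Hadamard) tensor product of operads, together with a morphism of operads $\varepsilon\colon\mathcal{P}\to\mathrm{uCom}$ (where $\mathrm{uCom}(n)=\QQ$ for all $n$ is the monoidal unit for $\otimes$), such that $\Delta$ is coassociative, cocommutative and counital with respect to $\varepsilon$. Since $\mathrm{uGerst}_{(k-1)}$ is generated by $u,\mu,\beta$, such morphisms are uniquely determined by their values on the generators; extending the assignment $u\mapsto u\otimes u$, $\mu\mapsto\mu\otimes\mu$, $\beta\mapsto\mu^{k-1}\otimes\beta+\beta\otimes\mu^{k-1}$ (respectively $u\mapsto u$, $\mu\mapsto\mu$, $\beta\mapsto 0$) to a morphism out of the free operad $\mathcal{F}(u,\mu,\beta)$ is automatic, and, being a morphism of operads, it descends to $\mathrm{uGerst}_{(k-1)}$ as soon as each of the four defining relations is sent into the ideal of relations of the target. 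First I would record that the assignment is equivariant: $\mu$ and $\beta$ span trivial representations of $\Sy_2$ and $\Sy_k$ (the latter because $\beta$ is defined on a symmetric power), and $\mu^{k-1}$ is the totally symmetric iterated product, so $\mu^{k-1}\otimes\beta+\beta\otimes\mu^{k-1}$ is $\Sy_k$-invariant.

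The verification for $\varepsilon$ is immediate: the unit and associativity relations hold in $\mathrm{uCom}$, and every term of the Leibniz and Jacobi relations contains a factor $\beta$, hence maps to $0$. For $\Delta$, the relations $\mu\circ_1 u=\mathrm{id}$ and $\beta\circ_1 u=0$ are preserved because $(\mu\circ_1 u)\otimes(\mu\circ_1 u)=\mathrm{id}\otimes\mathrm{id}$ and $(\mu^{k-1}\circ_1 u)\otimes(\beta\circ_1 u)+(\beta\circ_1 u)\otimes(\mu^{k-1}\circ_1 u)=\mu^{k-2}\otimes 0+0\otimes\mu^{k-2}=0$, while associativity is preserved since $\Delta(\mu\circ_1\mu)=(\mu\circ_1\mu)^{\otimes 2}=(\mu\circ_2\mu)^{\otimes 2}=\Delta(\mu\circ_2\mu)$. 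For the Leibniz relation one uses the Hadamard sign rule together with the identities $\mu\circ_1\mu^{k-1}=\mu\circ_2\mu^{k-1}=\mu^{k-1}\circ_1\mu=\mu^{k}$ and $(\mu^k)^{\sigma}=\mu^k$ to compute that $\Delta$ of both sides of $\beta\circ_1\mu=(\mu\circ_1\beta)^{\sigma}+\mu\circ_2\beta$ equals $\mu^{k}\otimes(\beta\circ_1\mu)+(\beta\circ_1\mu)\otimes\mu^{k}$, invoking the Leibniz relation of $\mathrm{uGerst}_{(k-1)}$ in each tensor slot. Finally, coassociativity, cocommutativity and counitality are checked on $u$, $\mu$, $\beta$: on $\beta$ they reduce to $\Delta(\mu^{k-1})=\mu^{k-1}\otimes\mu^{k-1}$ (itself a consequence of $\Delta(\mu)=\mu\otimes\mu$), to the invariance of $\mu^{k-1}\otimes\beta+\beta\otimes\mu^{k-1}$ under the Koszul-signed interchange of the two factors (here $|\mu^{k-1}|=0$ and $|\beta|=1$), and to $\varepsilon(\mu^{k-1})=u$, $\varepsilon(\beta)=0$ together with $\mathrm{uCom}\otimes\mathrm{uGerst}_{(k-1)}\cong\mathrm{uGerst}_{(k-1)}$.

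The only genuinely delicate point, and the step I expect to be the main obstacle, is compatibility of $\Delta$ with the Jacobi relation. By the Hadamard composition rule,
\[
\Delta(\beta\circ_1\beta)=\mu^{2k-2}\otimes(\beta\circ_1\beta)+(\beta\circ_1\beta)\otimes\mu^{2k-2}-(\mu^{k-1}\circ_1\beta)\otimes(\beta\circ_1\mu^{k-1})+(\beta\circ_1\mu^{k-1})\otimes(\mu^{k-1}\circ_1\beta).
\]
Summing over the unshuffles $\sigma\in\mathrm{Sh}^{-1}_{k,k-1}$ and using that $\mu^{2k-2}$ is totally symmetric, the first two summands contribute $\mu^{2k-2}\otimes\big(\sum_\sigma(\beta\circ_1\beta)^\sigma\big)+\big(\sum_\sigma(\beta\circ_1\beta)^\sigma\big)\otimes\mu^{2k-2}$, which is zero in $\mathrm{uGerst}_{(k-1)}^{\otimes 2}$ by the Jacobi relation. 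One is thus reduced to showing that the cross term $\sum_\sigma\big(-(\mu^{k-1}\circ_1\beta)\otimes(\beta\circ_1\mu^{k-1})+(\beta\circ_1\mu^{k-1})\otimes(\mu^{k-1}\circ_1\beta)\big)^\sigma$ vanishes in $\mathrm{uGerst}_{(k-1)}^{\otimes 2}$. The plan for this is to expand $\beta\circ_1\mu^{k-1}$ by the iterated Leibniz relation into a signed sum of relabellings of $\mu^{k-1}\circ_1\beta$, and then to reorganise the resulting double sum over $(k,k-1)$-unshuffles, exploiting that $\mu^{k-1}\circ_1\beta$ is invariant under the block subgroup $\Sy_k\times\Sy_{k-1}$ permuting the first $k$ and the last $k-1$ inputs, of which the $(k,k-1)$-unshuffles form a set of coset representatives, so that the cross terms cancel in pairs.

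Alternatively, and more conceptually, for $k=2$ and $k=3$ the whole statement follows for free from the identifications $\mathrm{uGerst}_{(1)}\cong\Sc(\mathrm{DK}^{\acc})$ and $\mathrm{uGerst}_{(2)}\cong\Sc(\mathrm{EHKR}^{\acc})$ established above, together with \cref{Thm:ComDiag} and \cref{Prop:SymMonoFunOp}, which endow $\Sc$ of any operad in $(\QD^-,\oplus)$ with a cocommutative Hopf operad structure; the same argument applies for all $k$ once $\mathrm{uGerst}_{(k-1)}\cong\Sc(\krHG^{\acc})$ is known, and the explicit coproduct formulas of the statement are precisely those read off from this identification, which yields a second, functorial proof.
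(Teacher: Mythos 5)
Your proof is correct and follows essentially the same route as the paper: one checks on generators that $\Delta$ (and the counit) respect the four defining relations, with the same Koszul-signed expansion of $\Delta(\beta\circ_1\beta)$, the same reduction of the cross terms via the iterated Leibniz relation $\beta\circ_1\mu^{k-1}=\sum_i(\mu^{k-1}\circ_1\beta)^{\sigma_i}$, and the same pairwise cancellation over the $(k,k-1)$-unshuffles. Only note that your ``alternative functorial proof'' via $\mathrm{uGerst}_{(k-1)}\cong\Sc\big(\krHG^{\acc}\big)$ would be circular here, since that identification is established afterwards and presupposes the Hopf structure whose existence this lemma asserts.
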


\begin{proof}
We first need to show that the coproduct $\Delta$ is well-defined on the quotient of the free operad on $u$, $\mu$, and $\beta$ by the above relations. One can treat in a straightforward way the unit and the Leibniz relations. The case of the associativity relation is given by the following computation performed in the free operad
\begin{align*}
\Delta(\mu \circ_1 \mu -\mu \circ_2 \mu)&=
\mu \circ_1 \mu\otimes \mu \circ_1 \mu - \mu \circ_2 \mu\otimes \mu \circ_2 \mu
\\
&=(\mu \circ_1 \mu -\mu \circ_2 \mu)\otimes \mu \circ_1 \mu
+ \mu \circ_2 \mu \otimes (\mu \circ_1 \mu -\mu \circ_2 \mu)\ .
\end{align*}
The case of the Jacobi relation is treated as follows. Notice first that iterating the Leibniz relation, one gets the relation 
\begin{align*}
\beta\circ_1 \mu^{k-1}=\sum_{i=1}^k \left( \mu^{k-1}\circ_1 \beta\right)^{\sigma_i}\ ,
\end{align*}
where 
\begin{align*}
\sigma_i\coloneqq 
\left[
\begin{array}{cccccccccc}
1 & 2 & \cdots & k & k+1 & \cdots & k+i-1 & k+i & \cdots & 2k-1 \\
i & k+1 & \cdots & 2k-1 & 1 & \cdots & i-1 & i+1 & \cdots & k
\end{array}
\right]\ .
\end{align*}
We denote the induced element in the free operad by 
$\mathbb{L}\coloneqq \beta\circ_1 \mu^{k-1}-\sum_{i=1}^k \left( \mu^{k-1}\circ_1 \beta\right)^{\sigma_i}$\ . Similarly, the element representing the Jacobi relation in the free operad is denoted by $\mathbb{J}$. We conclude with the following computation:
\begin{align*}
\Delta\left(\mathbb{J}\right)
&=\sum_{\sigma\in \mathrm{Sh}_{k, k-1}^{-1}} \bigg(
(\beta \circ_1 \beta)^\sigma\otimes \mu^{2(k-1)}
+ \mu^{2(k-1)} \otimes (\beta \circ_1 \beta)^\sigma \\
&\qquad \qquad \quad \ \,
+\left(\beta\circ_1 \mu^{k-1}\right)^\sigma \otimes \left(\mu^{k-1}\circ_1 \beta\right)^\sigma
-\left(\mu^{k-1}\circ_1 \beta\right)^\sigma\otimes  \left(\beta\circ_1 \mu^{k-1}\right)^\sigma
\bigg)
\\
&=\mathbb{J} \otimes \mu^{2(k-1)}+ \mu^{2(k-1)}\otimes \mathbb{J} +
\sum_{\sigma\in \mathrm{Sh}_{k, k-1}^{-1}} \left(
\mathbb{L} \otimes \mu^{k-1}\circ_1\beta 
- \mu^{k-1}\circ_1\beta  \otimes \mathbb{L}
\right)^\sigma \ .
\end{align*}
In the end, it is enough to check the cocommutativity and the coassociativity of the coproduct $\Delta$ on the generators $u$, $\mu$, and $\beta$. 
\end{proof}

\begin{proposition}[\cite{Kho19}]
The cocommutative Hopf operad $\Sc\Big(\krHG^{\emph{\acc}}\big)$ is isomorphic to the cocommutative Hopf operad encoding unitary $(k-1)$-Gerstenhaber algebras, i.e. 
\[\Sc\big(\krHG^{\emph{\acc}}\big)\cong \mathrm{uGerst}_{(k-1)}\ .\]
\end{proposition}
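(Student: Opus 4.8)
The plan is to establish the isomorphism in two movements --- first at the level of the underlying operads, then at the level of the two coproducts --- and to conclude via the symmetric monoidal functoriality of \cref{Thm:ComDiag} and \cref{Prop:SymMonoFunOp}, which already guarantee that the operad $\krHG$ in $(\QD^-,\oplus)$ of \cref{prop:krKROp} is sent by $\Sc\circ\acc$ to a \emph{bona fide} cocommutative Hopf operad. Concretely, the arity-$n$ component of $\Sc(\krHG^{\acc})$ is the sub-cocommutative-coalgebra $S^c(sV(n),s^2R(n))$ of the cofree coalgebra $S^c(sV(n))$, where $V(n)\coloneqq\QQ\{\t^n_I\}$ is placed in degree $0$, so $sV(n)$ sits in degree $1$; dually, by the front face of \cref{Prop:ComDiag}, $\Sc(\krHG^{\acc})(n)^*\cong S(W(n))/\big((s^2R(n))^\perp\big)$ with $W(n)\coloneqq(sV(n))^*$ in degree $-1$.

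The first movement identifies $\Sc(\krHG^{\acc})(n)$ with $\mathrm{uGerst}_{(k-1)}(n)$ as operads. Its core is the Koszul-dual computation recognising $(s^2R(n))^\perp$ as the ``generalised pentagon'' ideal --- this is \cite[Proposition~3.1]{EHKR10} for $k=3$, and the same combinatorics carried out for all $k$ in \cite{Kho19} --- which already yields an $\Sy_n$-equivariant linear isomorphism arity by arity. To see that these isomorphisms respect the operadic compositions I would match generators and relations. The sub-operad $\mathrm{uCom}\subseteq\Sc(\krHG^{\acc})$ spanned by the counits $\mathbb{1}^n$ (Remark after \cref{prop:(Co)Ho(Co)Op}) supplies $u\coloneqq\mathbb{1}^0$ and $\mu\coloneqq\mathbb{1}^2$, hence the iterated product $\mu^{k-1}=\mathbb{1}^k$; the remaining generator is the weight-one class $\beta\coloneqq s\t^k_{1\ldots k}$, of arity $k$ and degree $1$. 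That $u,\mu,\beta$ generate the whole operad follows from \cref{thm:UnivCharGENERAL} together with the fact that every $\t^n_I$ arises from $\t^k_{1\ldots k}$ by operadic composition with the units. Reading off the arity-$(k{+}1)$ component produces the \emph{Leibniz relation}, which is nothing but the linear dual of the composite formula $w^n_{ijk}\circ_i\mathbb{1}^m=\sum_l w^{n+m-1}_{\cdots}$ from the proof of \cref{prop:HoloMR}, i.e. the content of \cref{eq:CompoOpHYGRA}; the second-type relations of $\krHG$ in arity $2k-1$ give the \emph{Jacobi relation} $\sum_{\sigma\in\mathrm{Sh}_{k,k-1}^{-1}}(\beta\circ_1\beta)^\sigma=0$; and \emph{associativity}/\emph{unitality} of $\mu$ as well as $\beta(u,-,\dots,-)=0$ come from the $\mathrm{uCom}$ sub-operad, using that $\krHG(m)=(0,0)$ for $m<k$.

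The second movement checks that this isomorphism is one of Hopf operads. Since $\Sc$ outputs a \emph{cofree} cocommutative coalgebra in each arity and $S^c(sV(n),s^2R(n))$ is a sub-coalgebra thereof, the reduced coproduct kills the weight-one part; hence $\Delta(\beta)=\beta\otimes 1+1\otimes\beta$, and under the identification $1=\mathbb{1}^k=\mu^{k-1}$ this reads $\Delta(\beta)=\mu^{k-1}\otimes\beta+\beta\otimes\mu^{k-1}$, exactly the assignment proved well defined in the Lemma above. The $\mathrm{uCom}$ sub-operad carries arity-wise the trivial group-like coalgebra structure, whence $\Delta(u)=u\otimes u$ and $\Delta(\mu)=\mu\otimes\mu$. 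As the coproduct is a morphism of operads and both sides are generated by $u,\mu,\beta$, agreement on these three generators forces agreement everywhere.

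The main obstacle is the first movement, specifically the combinatorial/homological-algebra identification of $(s^2R(n))^\perp$ with the generalised pentagon relations --- equivalently, the statement that applying $\Sc\circ\acc$ turns the two families of relations of $\krHG$ into precisely commutative associativity together with the Leibniz and Jacobi relations for a degree-one arity-$k$ operation. Its $k=3$ instance is already a substantial portion of \cite{EHKR10}, and the general case is the work of \cite{Kho19}; everything else --- the bookkeeping of generators and the computation of the coproducts --- is then formal.
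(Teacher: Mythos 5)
Your architecture is sound and, like the paper, it ultimately defers the one genuinely hard step to \cite{Kho19}; but the mechanism differs. The paper's sketch first decomposes $\mathrm{uGerst}_{(k-1)}\cong\mathrm{uCom}\circ\mathrm{sLie}_{(k-1)}$ via a distributive law (yielding a basis of forests of $(k-1)$-trees modulo Jacobi, following \cite{MelanconReutenauer96,BDK07}), then constructs an explicit pairing between $\big(\mathrm{uCom}\circ\mathrm{sLie}_{(k-1)}\big)(n)$ and the Koszul dual algebra $\S\big(\krHG^{!}(n)\big)$; its non-degeneracy is the deferred content, compatibility with the arity-wise coalgebra structures is built into the definition of the pairing, and compatibility with the $\circ_p$ is checked on generators as in \cref{prop:HoloD2} and \cref{prop:HoloMR}. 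You instead argue by generators and relations. That is a legitimate alternative, and your second movement is correct and essentially identical to the paper's: primitivity of the weight-one part of a quadratic cocommutative coalgebra gives $\Delta(\beta)=\mu^{k-1}\otimes\beta+\beta\otimes\mu^{k-1}$, and a Hopf-operad coproduct, being a morphism of operads, is determined by its values on the operadic generators $u$, $\mu$, $\beta$.

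Two cautions on the first movement. First, you mislocate the difficulty: computing the orthogonal complement and recognising the Koszul dual presentation $\krHG^{!}(n)$ as the generalised pentagon ideal is elementary linear algebra (the paper dispatches it with ``one can see''), and it does \emph{not} ``already yield an $\Sy_n$-equivariant linear isomorphism arity by arity''. Agreement of two quadratic presentations in weights $\leqslant 2$ says nothing about higher weights; the actual content --- that $S^c\big(sV(n),s^2R(n)\big)$ and $\mathrm{uGerst}_{(k-1)}(n)$ have the same dimensions, equivalently that the paper's pairing is non-degenerate, equivalently that no hidden higher relations appear --- is precisely what \cite{Kho19} supplies. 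Second, your appeal to \cref{thm:UnivCharGENERAL} for the claim that $u$, $\mu$, $\beta$ generate $\Sc\big(\krHG^{\acc}\big)$ does not do the work you want: that theorem characterises $\krHG$ as the minimal sub-operad of $\kHG$ on the generators $\t^n_I$, a statement about relations inside $\QD^-$, not about operadic generation of the resulting Hopf operad. Knowing that each $s\t^n_I$ is a composite of $\beta$ with units only accounts for the weight-one part; generation of the full quadratic coalgebras requires the forest basis coming from the distributive-law decomposition, so it cannot serve as an independent input without circularity. Since both points are absorbed by the reference to \cite{Kho19}, your sketch reaches the same place as the paper's, but the line between ``formal'' and ``deferred'' should be drawn where the paper draws it.
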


\begin{proof}
The full proof was sent to us by Anton Khoroshkin and will appear in \cite{Kho19}. We only sketch the main strategy which extends the method of \cite{MelanconReutenauer96,BDK07} to the general case $k\geqslant 2$.

Let us denote by $\mathrm{uCom}$ the operad encoding unital commutative algebras and by 
$\mathrm{sLie}_{(k-1)}$ the operad encoding shifted Lie $(k-1)$-algebras \cite{HanlonWachs95}. 
The defining relations of the operad $\mathrm{uGerst}_{(k-1)}$ can be interpreted as rewriting rules which induces a distributive law \cite{Markl96} and \cite[Section~8.6]{LodayVallette12}. As a consequence, the underlying 
$\Sy$-module of the operad $\mathrm{uGerst}_{(k-1)}$ is isomorphic to the operadic composite product 
\[
\mathrm{uGerst}_{(k-1)}\cong \mathrm{uCom} \circ \mathrm{sLie}_{(k-1)}
\ .\]
This latter $\Sy$-module admits a basis made up of (commutative) forests of $(k-1)$-trees, that is rooted trees with all vertices of valence equal to $k$, modulo the Jacobi relation. 

On the other hand, one can see that the quadratic algebras $\S\big(\krHG^{!}(n)\big)$, for $n\geqslant k$, admit the following Koszul dual presentation 
\[
\krHG^{!}(n)=\left(\omega^n_{I}\, ,\, 
\omega^n_{i_1\ldots i_k} \odot \omega^n_{i_k \ldots i_{2k-1}} +
\omega^n_{i_2\ldots i_{k+1}} \odot \omega^n_{i_{k+1} \ldots i_{2k-1} i_1} + \cdots +
\omega^n_{i_{2k-1}i_1\ldots i_{k-1}} \odot \omega^n_{i_{k-1} \ldots i_{2k-2}}
\right)\ ,
\]
where 
the set of generators $\omega^n_{I}=s^{-1}\left(\t^k_{1 \ldots k}\right)^*$ of degree $-1$ runs over the set of hyperedges $I$ of $\Gamma^k_n$, and where  the  set of relations runs over increasing $(2k-1)$-tuples  $i_1<\cdots<i_{2k-1}$ .  

We consider the pairing $\langle\, , \rangle:  \left(\mathrm{uCom} \circ \mathcal{T}(\beta)\right)(n)\otimes \S(\omega^n_{I})$ defined by 
\[
\left\langle\mu^{n-1},1^n\right\rangle=1 \quad \text{and}\quad 
\left\langle
(\mu^{n-k} \circ_1 \beta)^\sigma,
\omega^n_I
\right\rangle=1
\quad \text{and}\quad 
\left\langle
\chi,
\omega^n_I
\right\rangle=0\ ,
\]
where $\sigma$ is the $(k, n-k)$ shuffle which sends $\{1, \ldots, k\}$ to $I$ and 
$\{k+1, \ldots, n\}$
to $\un\setminus I$ and where $\chi$ is an basis element of $\mathrm{uCom} \circ \mathcal{T}(\beta)$ different from   $(\mu^{n-k} \circ_1 \beta)^\sigma$. It induces a well-defined and non-degenerate pairing 
$\langle\, , \rangle:  \left(\mathrm{uCom} \circ \mathrm{sLie}_{(k-1)}\right)(n)\otimes \S\big(\krHG^{!}(n)\big)$, which  proves the isomorphism on the level of the underlying  $\Sy$-modules
\[\Sc\big(\krHG^{{\acc}}\big)\cong \mathrm{uGerst}_{(k-1)}\ .\]

By the definition of the pairing, this isomorphism respects to the arity-wise coalgebra structures. It remains to show that it also respects the partial composition products: this can be done in a straightforward way by a computation similar to the ones performed in the proofs of \cref{prop:HoloD2} and \cref{prop:HoloMR}. 

The most difficult part of the proof, not covered here, amounts to proving that the pairing $\langle\, , \rangle$ is non-degenerate. This requires further elaborate work which is done in \cite{Kho19}. 
\end{proof}

\begin{remark}
Considering the partitions of $\{1, \ldots, n(k-1)+1\}$ of size $l(k-1)+1$, for $0 \leqslant l \leqslant n$, together with their refinement, one gets a poset denoted by $\Pi_{n(k-1)+1}^{(k)}$, see \cite{HanlonWachs95}. This poset is actually the operadic partition poset associated to the set-theoretical operad encoding algebras made up of a commutative operation of arity $k$ satisfying a totally associative relation, see \cite{Vallette07}. These posets are Cohen--Macaulay and their top Whitney homology groups produce the cooperad $(k-1)\text{-}\mathrm{uGerst}^*$. 
\end{remark}

\begin{remark}
We refer the reader to the forthcoming paper \cite{Kho19} for the homological properties of the quadratic data $\krHG(n)$ and their associated operads. 
\end{remark}

``In the other way round'', one can define a family of pointed topological operads from the aforementioned skew-symmetric quadratic data as follows. 
We first consider the operad $\widehat{\L}\big(\krHG\big)$ in the category of \emph{complete} Lie algebras, see \cite[Section~2]{DotsenkoShadrinVallette18}. As a right adjoint, the functor, denoted here by $\mathrm{R}$, of \cite{BFMT15, Robert-Nicoud17} from complete dg Lie algebras to pointed simplicial sets is cartesian, it thus  sends operads to operads. This produces a pointed simplicial operad 
$\mathrm{R}\left(\widehat{\L}\big(\krHG\big)\right)$. Finally, the geometric realisation functor, again cartesian, provides us with the pointed topological operads
\[
\mathcal{O}_k\coloneqq \left|\mathrm{R}\left(\widehat{\L}\big(\krHG\big)\right)\right|\ .
\]
More details on the above mentioned cartesian functor $\mathrm{R}$, intimately related to the rational homotopy theory of operads, will be given in the sequel of this paper.

\begin{proposition}
The holonomy operad and the rational Magnus operad associated to $\mathcal{O}_k$ are isomorphic to the Lie operad associated to skew-symmetric quadratic data $\krHG$: 
\[\g_{\mathcal{O}_k}\cong \L\big(\krHG\big)\cong \gr\left(\pi_1\big(\mathcal{O}_k\big)\right)\otimes \QQ \ .\]
\end{proposition}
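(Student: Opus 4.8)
The plan is to follow verbatim the pattern already used for $\GraS$, $\As_{S^1}$, $\mathcal{D}_2$, and $\overline{\mathcal{M}}_{0,n+1}(\mathbb{R})$ throughout \cref{Sec:HopfOp}, since $\mathcal{O}_k$ was \emph{constructed} precisely so that the holonomy operad would be $\L\big(\krHG\big)$. First I would establish the isomorphism $\g_{\mathcal{O}_k}\cong \L\big(\krHG\big)$. By construction $\mathcal{O}_k = \big|\mathrm{R}\big(\widehat{\L}(\krHG)\big)\big|$, so its components are the realisations of the Getzler--Hinich type nerve of the complete Lie algebras $\widehat{\L}\big(\krHG(n)\big)$. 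Since the $\krHG(n)$ are non-negatively weight graded and generated in weight $1$ with quadratic relations, the completed Lie algebra $\widehat{\L}\big(\krHG(n)\big)$ is a pronilpotent quadratically presented Lie algebra whose associated graded recovers $\L\big(\krHG(n)\big)$; the Chen--Kohno holonomy Lie algebra of the space $\mathrm{R}\big(\widehat{\L}(\krHG(n))\big)$ is then $\L\big(\krHG(n)\big)$ itself. One then checks that these identifications are operadic: the cartesian (right adjoint) functor $\mathrm{R}$ and the geometric realisation functor $|-|$ send operads to operads (\cref{Prop:SymMonoFunOp}), and $\widehat{\L}$ applied to the operad $\krHG$ in $(\QD^-,\oplus)$ produces an operad in complete Lie algebras, so the operadic composition maps on $\mathcal{O}_k$ are induced by the maps $\circ_p$ of \cref{eq:CompoOpHYGRA}; this is exactly the content of \cref{prop:(Co)Ho(Co)Op} combined with \cref{prop:krKROp}, so the holonomy-operad identification is immediate from the definitions.

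Next I would establish $\L\big(\krHG\big)\cong \gr\big(\pi_1(\mathcal{O}_k)\big)\otimes\QQ$. For this I would invoke \cref{thm:FormalHolonomy}: each component $\mathcal{O}_k(n)$ is pointed, path connected, and $1$-finite (since $\krHG(n)$ has finite-dimensional weight-$1$ part, namely $\QQ\{\t^n_I\}$ with $I$ ranging over the $\binom{n}{k}$ hyperedges), so it suffices to know that $\mathcal{O}_k(n)$ is rationally $1$-formal. But the spaces $\mathrm{R}(\mathfrak{g})$ attached to pronilpotent Lie algebras via the nerve construction are always rationally formal in the relevant sense — indeed the minimal model of $A^\bullet_{\rm PL}\big(\mathrm{R}(\mathfrak{g})\big)$ in degree $1$ is by design the Chevalley--Eilenberg cochains of $\widehat{\mathfrak{g}}$, whose degree-$1$ generators are the linear dual of $\mathfrak{g}_1$ with differential dual to the bracket, and this is the cohomology algebra's minimal model when $\mathfrak{g}$ is quadratically presented Koszul. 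Hence $\gr\big(\pi_1(\mathcal{O}_k(n))\big)\otimes\QQ\cong \g_{\mathcal{O}_k(n)}\cong\L\big(\krHG(n)\big)$. To upgrade this to an operad isomorphism I would use the same mechanism as in \cref{subsec:LiePointed}: $\pi_1$ and $\gr$ are cartesian (\cref{lem:PiGrSymMono}), so $\gr\big(\pi_1(\mathcal{O}_k)\big)\otimes\QQ$ is an operad in rational Lie algebras, and on the level of weight-$1$ components — which are all of $\mathcal{O}_k(n)$ abelianised — the operadic maps agree with $\circ_p$ of \cref{eq:CompoOpHYGRA} by construction of $\mathcal{O}_k$ from $\widehat{\L}(\krHG)$.

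The main obstacle I expect is the rational Hopf $1$-formality needed to make the second isomorphism \emph{operadic} rather than merely arity-wise: as the excerpt itself flags just before \cref{subsec:BKW}, one needs to control the operadic compatibility of the formality zig-zags $A^\bullet_{\rm PL}(\mathcal{O}_k(n))\xleftarrow{\sim}\cdots\xrightarrow{\sim}H^\bullet(\mathcal{O}_k(n))$, and the paper explicitly defers this ``general question'' to a sequel. Here, however, the obstacle is sidestepped by the nature of the construction: $\mathcal{O}_k$ is built \emph{from} the operad $\widehat{\L}(\krHG)$ via the cartesian functor $\mathrm{R}$, so the operadic structure is transported along functors that are symmetric monoidal on the nose, and the only non-formal subtlety — reconstructing a space from its $1$-type — is avoided because we never need to go from a topological operad back to a Lie operad; we go the other direction. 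Thus the proof reduces to: (i) $\widehat{\L}(\krHG)$ is a well-defined operad in complete Lie algebras with quadratic degree-$1$ part $\L(\krHG)$ (immediate from \cref{prop:krKROp}); (ii) $\mathrm{R}$ and $|-|$ are cartesian, hence preserve operads and $\g_{\mathcal{O}_k}\cong\L(\krHG)$ operadically; (iii) apply \cref{thm:FormalHolonomy} arity-wise and \cref{lem:PiGrSymMono} operadically to get the Magnus identification. The details being entirely parallel to the proofs of \cref{prop:HoloGraS}, \cref{prop:HoloD2}, and \cref{prop:HoloMR}, I would simply write ``This proof is similar to that of \cref{prop:HoloMR}'' with the indicated modifications, as is done elsewhere in this section.
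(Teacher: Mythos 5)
Your overall strategy for the first isomorphism matches the paper's: exploit the fact that $\mathcal{O}_k$ is \emph{built from} $\widehat{\L}\big(\krHG\big)$ by cartesian functors, so everything is automatically operadic and only an arity-wise identification is needed. However, you assert that the holonomy Lie algebra of $\big|\mathrm{R}\big(\widehat{\L}(V,R)\big)\big|$ is $\L(V,R)$ essentially because the input Lie algebra is quadratically presented; this conflates the presentation of the input with the output of the Chen--Kohno construction, which is defined from $H_1$ and $\mathrm{im}\big(\Delta : H_2 \to H_1^{\odot 2}\big)$ of the \emph{space}. The paper actually carries out this step: it writes down $\mathrm{R}(\g)_n$ explicitly as tuples $(x_{ij})$ in $\g$ satisfying Baker--Campbell--Hausdorff cocycle conditions, and from that description computes $H_1 \cong sV$ and $\mathrm{im}\,\Delta \cong s^2R$. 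That computation is the real content and should not be omitted.

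The more serious issue is your second step. To get $\L\big(\krHG\big)\cong\gr\big(\pi_1(\mathcal{O}_k)\big)\otimes\QQ$ you invoke \cref{thm:FormalHolonomy}, and you justify the required rational $1$-formality of $\mathcal{O}_k(n)$ by claiming that the Chevalley--Eilenberg cochains of $\widehat{\L}\big(\krHG(n)\big)$ form the minimal model of the cohomology algebra ``when $\g$ is quadratically presented Koszul.'' Koszulness of $\L\big(\krHG(n)\big)$ is nowhere established in this paper --- the homological properties of $\krHG(n)$ are explicitly deferred to \cite{Kho19} --- so your argument rests on an unproven hypothesis, and the paper is generally careful about exactly this kind of formality claim (cf.\ the discussion after \cref{thm:FormalHolonomy} and the non-formality of $\overline{\mathcal{M}}_{0,n+1}(\mathbb{R})$). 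The paper sidesteps formality entirely: from the explicit simplicial description it reads off that $\mathrm{R}(\g)$ is a pointed Kan complex with $\pi_1\big(\mathrm{R}(\g)\big)\cong(\g,\mathrm{BCH})$, and then Lazard's theorem \cite{Lazard50} gives $\gr\big(\pi_1\big)\otimes\QQ\cong\L(V,R)$ directly, with no Koszulness or formality input. You should replace the appeal to Sullivan's theorem by this direct computation of the fundamental group.
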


\begin{proof}
We first claim that, for any rational Lie algebra $\g$, the rational Magnus Lie algebra of $\left|\mathrm{R}\left(\g\right)\right|$ is isomorphic to $\g$. 
Recall that 
$\mathrm{R}(\g)\coloneqq \Hom_{\textbf{dg-Lie-alg}}\left(\mathfrak{mc}^\bullet, \g \right)$, where the cosimplicial complete dg Lie algebra $\mathfrak{mc}^\bullet$ is given by quasi-free complete dg Lie algebras
\[ \mathfrak{mc}^n\coloneqq \left(\widehat{Lie}\left(s^{-1} \mathrm{\Delta}^n\right), \mathrm{d}Ê\right)\]
on the desuspension of the standard $n$-simplicies. 
For a Lie algebra $\g$, this implies $\mathrm{R}(\g)_0\cong\{0\}$ and
\[
\mathrm{R}(\g)_n\cong\left\{(x_{ij})_{0\leqslant i<j\leqslant n}\in \g^{\binom{n+1}{2}}\mid \mathrm{BCH}(x_{ij}, x_{jk})=x_{ik}, \ \text{for}\  1\leqslant i<j<k\leqslant n\right\}\ ,\]
for $n\geqslant 1$, where $\mathrm{BCH}$ stands for the Baker--Campbell--Hausdorff formula. Its simplicial maps
are given by 
\begin{align*}
&s_k\left((x_{ij})\right)= 
 \left\{\begin{array}{lll}  
x_{ij} & \text{for} & i<j\leqslant k\ ,\\
\rule{0pt}{12pt} 
x_{ij-1} & \text{for} & i< k<j\ ,\\
\rule{0pt}{12pt}
x_{i-1j-1} & \text{for} & k<i<j\ ,\\
\rule{0pt}{12pt}
0 & \text{for} & i=k, j=k+1\ ,
\end{array}\right. \quad \text{and} \quad
d_k\left((x_{ij})\right)=
\left\{\begin{array}{lll}  
x_{ij} & \text{for} & i<j\leqslant k\ ,\\
\rule{0pt}{12pt} 
x_{ij+1} & \text{for} & i< k\leqslant j\ ,\\
\rule{0pt}{12pt}
x_{i+1j+1} & \text{for} & k\leqslant i<j\ .
\end{array}\right.
\end{align*}
It is a Kan complex canonically pointed by $0$, see for instance \cite{RV19} for details. 
It is straightforward to compute its first simplicial homotopy group: $\left(\pi_1(\mathrm{R}(\g)), \cdot\right) \cong 
\left(\g, \mathrm{BCH}\right)$. This produces the first isomorphism of Lie algebras 
\[\gr\left(\pi_1\big(
\left|\mathrm{R}\left(\g\right)\right|
\big)\right)\otimes \QQ \cong \g\ ,\] 
by \cite{Lazard50}. 

Finally, we claim that, for any skew-symmetric quadratic data $(V,R)$,  the holonomy Lie algebra of 
$\big|\mathrm{R}\big(\widehat{\L}(V,R)\big)\big|$ is isomorphic to the quadratic Lie algebra 
$\L(V,R)$. With the above description, it is straightforward to compute the rational simplicial groups of $\mathrm{R}(\g)$, which gives $H_1(\mathrm{R}(\g))\cong sV$ and $\mathrm{im}\, \Delta \cong s^2 R$. 
This implies the isomorphism of of Lie algebras:
\[\g_{|\mathrm{R}(\widehat{\L}(V,R))|}\cong \L(V,R)\ .\]
All these isomorphisms are natural and respect the operad structures. 
\end{proof}

Let us sum up the results of the previous sections into the following table. 

\begin{center}
\begin{tabular}{|c|c|c|c|}
\hline
&\rule{0pt}{12pt} $k=2$    &    $k=3$   &   $k\geqslant 4$ \\
\hline
{\sc Maximum} &\rule{0pt}{12pt} $\mathrm{BKW} \leftrightarrow \GraS$  &  $3\text{-}\mathrm{HG} \leftrightarrow \GraS^3$ & 
$\kHG \leftrightarrow \GraS^k$\\
\hline
{\sc Minimum} &\rule{0pt}{12pt}$\mathrm{DK} \leftrightarrow \mathcal{D}_2$  &  $\EHKR \leftrightarrow \overline{\mathcal{M}}_{0,n+1}(\mathbb{R})$ & 
$\krHG \leftrightarrow \left|\mathrm{R}\left(\widehat{\L}(\krHG)\right)\right|$\\
\hline
\end{tabular}
\end{center}

\smallskip

\begin{remark}
In the case of  linear hypergraphs, one can see that the lattice of nonsymmetric operads made up of skew-symmetric data with generators $\t^n_{i i+k-1}$ and partial composition products $\circ_p$ contains only one element: the nonsymmetric operad
$\kLHG$. Therefore, there is no way to refine it following the above pattern. 
\end{remark}

The canonical morphism of operads $\krHG\to \kHG$ in $\QD^-$ induces a  morphism  of cocommutative Hopf operads 
\[
 (k-1)\text{-}\mathrm{uGerst} \cong  H_\bullet\left(\left|\mathrm{R}\left(\widehat{\L}(\krHG)\right)\right|\right)\cong \Sc\big(\krHG^{{\acc}}\big)
\to 
\mathrm{Gra^k} \cong H_\bullet\big(\GraS^k\big)\cong \S^c\big(\kHG^{{\acc}}\big) \ .
\]
Studying the associated  deformation complex would solve the following question: 
what is the  $k^{\rm th}$ analogue (case $n=k$) of the Grothendieck--Teichm\"uller Lie algebra $\mathfrak{grt}$ (case $n=2$)?

\end{document}